\newcommand\fg{\mathfrak g}
\newcommand\inverse{{^{-1}}}
\newcommand\opt{\mathrm {opt}}
\newcommand{\iso}{\cong}
\newcommand{\NN}{{\mathbb N}}
\newcommand{\ZZ}{{\mathbb Z}}
\newcommand{\QQ}{{\mathbb Q}}
\newcommand{\RR}{{\mathbb R}}
\newcommand{\KK}{{\mathbb K}}
\newcommand{\Gm}{\mathbb{G}_m}
\DeclareMathOperator{\Aut}{Aut}
\DeclareMathOperator{\Inn}{Inn}
\DeclareMathOperator{\Isom}{Isom}
\DeclareMathOperator{\GL}{GL}
\DeclareMathOperator{\Gal}{Gal}
\DeclareMathOperator{\SL}{SL}
\DeclareMathOperator{\Lie}{Lie}
\DeclareMathOperator{\proj}{proj}
\DeclareMathOperator{\IM}{Im}
\DeclareMathOperator{\sgn}{sgn}
\DeclareMathOperator{\red}{red}
\DeclareMathOperator{\supp}{supp}
\numberwithin{equation}{section}
\newtheorem{thm}[equation]{Theorem}
\newtheorem{lem}[equation]{Lemma}
\newtheorem{cor}[equation]{Corollary}
\newtheorem{prop}[equation]{Proposition}
\newtheorem{conj}[equation]{Conjecture}
\theoremstyle{definition}
\newtheorem{defn}[equation]{Definition}
\newtheorem{exmp}[equation]{Example}
\newtheorem{exmp/rem}[equation]{Example/Remark}
\theoremstyle{remark}
\newtheorem{rem}[equation]{Remark}
\newtheorem{rems}[equation]{Remarks}
\newcommand{\ovl}{\overline}
\subjclass[2020]{51E24, 20E42, 20G15}
\keywords{Spherical buildings, Edifices, Tits Centre Conjecture, Geometric invariant theory}
\date{May 19, 2023.}
\title[Edifices]{Edifices: Building-like spaces associated to linear algebraic groups}
\dedicatory{In memory of Jacques Tits}
\author[M.\  Bate]{Michael Bate}
\address
{Department of Mathematics,
University of York,
York YO10 5DD,
United Kingdom}
\email{michael.bate@york.ac.uk}
\author[B.\ Martin]{Benjamin Martin}
\address
{Department of Mathematics,
University of Aberdeen,
King's College,
Fraser Noble Building,
Aberdeen AB24 3UE,
United Kingdom}
\email{b.martin@abdn.ac.uk}
\author[G. R\"ohrle]{Gerhard R\"ohrle}
\address
{Fakult\"at f\"ur Mathematik,
Ruhr-Universit\"at Bochum,
D-44780 Bochum, Germany}
\email{gerhard.roehrle@rub.de}
\begin{document}

\begin{abstract}
 Given a semisimple linear algebraic $k$-group $G$, one has a spherical building $\Delta_G$, and one can interpret the geometric realisation $\Delta_G(\RR)$ of $\Delta_G$ in terms of cocharacters of $G$.  The aim of this paper is to extend this construction to the case when $G$ is an arbitrary connected linear algebraic group; we call the resulting object $\Delta_G(\RR)$ the \emph{spherical edifice} of $G$.  We also define an object $V_G(\RR)$ which is an analogue of the vector building for a semisimple group; we call $V_G(\RR)$ the \emph{vector edifice}.  The notions of a linear map and an isomorphism between edifices are introduced; we construct some linear maps arising from natural group-theoretic operations.  We also devise a family of metrics on $V_G(\RR)$ and show they are all bi-Lipschitz equivalent to each other; with this extra structure, $V_G(\RR)$ becomes a complete metric space.  Finally, we present some motivation in terms of geometric invariant theory and variations on the Tits Centre Conjecture.
\end{abstract}

\maketitle

\setcounter{tocdepth}{1}
\tableofcontents


\section{Introduction}
\label{sec:intro}

Jacques Tits showed in his seminal 1974 monograph \cite{tits1} how to associate a spherical building $\Delta_G$ to an isotropic semisimple linear algebraic group $G$ over a field $k$: the simplices of the building correspond to the $k$-parabolic subgroups of $G$, ordered by reverse inclusion.  This has led to a rich and fruitful interaction between the theory of algebraic groups and the theory of buildings; for example, Serre has formulated his theory of $G$-completely reducible subgroups of $G$ in building-theoretic language \cite{serre1.5}, \cite{serre2}.  Tits's celebrated Centre Conjecture for spherical buildings---whose proof is the culmination of a series of papers involving several authors \cite{muhlherrtits}, \cite{lrc}, \cite{rc}---has consequences for algebraic groups: for instance, it implies the Borel-Tits Theorem \cite[\S 3]{boreltits2}.  The authors in previous and current work have studied the interactions among the Centre Conjecture, $G$-complete reducibility and geometric invariant theory (GIT), e.g., see \cite{sphericalcochars}, \cite{BMR}, \cite{BMR:tits}, \cite{BMR:strong}, \cite[\S 5]{BMR:semisimplification}, \cite[\S 5.4]{GIT}, and \cite{BMR:typeA}.  The link with geometric invariant theory comes via a rational version of the Hilbert-Mumford Theorem, used to study orbits of an action of $G$ on an affine variety, see~\cite{cochars}, \cite[\S 3]{GIT}.

The geometric realisation $\Delta_G(\RR)$ of $\Delta_G$ has a striking interpretation in terms of cocharacters of $G$.  Given a cocharacter $\lambda$ of $G$, one can associate to it a parabolic subgroup $P_\lambda$ of $G$ in a standard way, see ~\cite[\S 2]{rich2}, \cite[\S 8.4]{spr2}.  Roughly speaking, points in $\Delta_G(\RR)$ correspond to formal $\RR$-linear combinations of cocharacters, modulo a natural equivalence relation.  If $x\in \Delta_G(\RR)$ is the point corresponding to $\lambda$ then $x$ belongs to the geometric realisation of the simplex corresponding to $P_\lambda$.  This construction is based on ideas of Tits and Chevalley and is described in 
\cite{curtislehrertits}, \cite{BMR:typeA} and below.  See \cite[Ch.\ 2, Sec.\ 2]{mumford} for further discussion (note that $\Delta_G(\QQ)$ is called the \emph{rational flag complex} in \emph{loc.\ cit.}).  For applications to geometric invariant theory, one must work with this geometric realisation rather than the simplicial building, since in the Hilbert-Mumford Theorem one is concerned with the behaviour of individual cocharacters; see Section~\ref{sec:TCCGIT}.

Our purpose in this paper is to generalise the construction of a building via cocharacters to the case when $G$ is an arbitrary connected linear algebraic $k$-group, not necessarily semisimple.  This was achieved for reductive groups in \cite{curtislehrertits};  in that case one can still form a spherical building from the $k$-parabolic subgroups of $G$, which is canonically isomorphic to the spherical building of the semisimple group $G/Z(G)^0$, but one sees a difference at the level of cocharacters as $G$ may admit non-trivial cocharacters with image in $Z(G)^0$.  We define spaces $\Delta_G(\RR)$ and $\Delta_G$ for arbitrary $G$ in Section~\ref{sec:edifices}, which coincide with the spaces described above when $G$ is semisimple or reductive.  We call $\Delta_G(\RR)$ the \emph{spherical edifice} associated to $G$ and $\Delta_G$ the \emph{combinatorial edifice} associated to $G$.  We also define a space $V_G(\RR)$, which we call the \emph{vector edifice} associated to $G$; this is our generalisation for arbitrary $G$ of the vector building for a semisimple group defined in \cite{rousseau}.  In fact, we concentrate on vector edifices rather than spherical edifices as the former have some technical advantages (see Remark~\ref{rem:spherical_deficient}).

In general $\Delta_G(\RR)$ is not the geometric realisation of $\Delta_G$ (or of any spherical building); nonetheless it enjoys some of the same properties.  For instance, $G(k)$ acts on $\Delta_G(\RR)$ and we may endow $\Delta_G(\RR)$ with a $G(k)$-invariant complete metric: see Section~\ref{sec:admmetric}.  We prove the following key property (Proposition~\ref{prop:gphom}).  Let $f\colon G\to H$ be a homomorphism of $k$-groups.  Then $f$ gives rise in a natural way to a map $\kappa_f\colon V_G(\RR)\to V_H(\RR)$, and $\kappa_f$ is continuous with respect to the metric topologies (Corollary~\ref{cor:lincts}).  For example, if $G$ is reductive and $P$ is a parabolic subgroup of $G$ then $\kappa_i\colon V_P(\RR)\to V_G(\RR)$ is a bijection, where $i\colon P\to G$ is inclusion (see Example~\ref{ex:bijectivenotiso}).  We also obtain an induced map from $\Delta_G(\RR)$ to $\Delta_H(\RR)$.  Even when $G$ and $H$ are reductive, $f$ need not map parabolic subgroups of $G$ to parabolic subgroups of $H$, so $f$ does not give rise to a map $\Delta_G\to \Delta_H$ of simplicial buildings (see Remark~\ref{rem:not_combnl}).  Our constructions therefore provide extra flexibility: maps between the geometric objects exist even when maps between the corresponding combinatorial objects do not.  We rely on this heavily in our forthcoming work \cite{BMR:typeA}.

While edifices are worth studying as objects in their own right, we have two further motivations for investigating them. First, assume $G$ is reductive.  Let $P$ be a parabolic subgroup of $G$ and let $L$ be a Levi subgroup of $P$.  For reasons discussed in Section~\ref{sec:TCCGIT}, we wish to construct a map with certain properties from $V_G(\RR)$ to $V_L(\RR)$.  It is well known that such a map exists at the level of the simplicial building: this is the so-called projection map from $\Delta_G$ to $\Delta_L$, see Section \ref{sec:projection}.  We need a map on the geometric realisations.  Our constructions give us what we want: we define the map to be $\kappa_\pi\circ \kappa_i^{-1}$, where $i$ is the inclusion of $P$ in $G$ from above and $\pi\colon P\to L$ is the canonical projection.

Second, edifices give us a framework for studying $G$-complete reducibility and geometric invariant theory for non-reductive $G$: they play the role that the geometric realisation of the spherical building plays when $G$ is semisimple.  We mention here in particular the case of pseudo-reductive $G$.  
As explained in \cite{prasad}, one can associate a spherical Tits system to $G$ \cite[Thm.\ 4.1.7]{CP}, and one can then construct a spherical building $\Delta_G$ using the pseudo-parabolic subgroups of $G$;
however, there does not seem to be an analogue of the ``geometric realisation by cocharacters'' previously known here.
Moreover, when working with pseudo-reductive $G$, one often needs to extend scalars to a purely inseparable field extension $k'/k$; if the base change $G_{k'}$ has non-trivial unipotent radical over $k'$ then $\Delta_G$ need no longer be a spherical building, but the formalism of edifices is flexible enough to handle this situation.  The authors and their collaborators will pursue these ideas in forthcoming work.

The paper is set out as follows.  In Section~\ref{sec:prelims} we review some background material on algebraic groups and cocharacters, and we define the key notion of an R-parabolic subgroup.  We define the vector edifice $V_G(\KK)$, the spherical edifice $\Delta_G(\KK)$ and the combinatorial edifice $\Delta_G$ in Section~\ref{sec:edifices}, where $\KK= \RR$ or $\QQ$, and introduce some related notions: apartments (certain subsets of the vector edifice), opposites, addition, and convexity.  One substantial difference from the reductive group case is that the ``common apartment property'' can fail: not every pair of points in $V_G(\KK)$ need belong to a common apartment.  In Section~\ref{sec:linmapsI} we introduce the notions of linear maps and isomorphisms between vector edifices.  We show that an algebraic group homomorphism $f\colon G\to H$ gives rise to a linear map $\kappa_f\colon V_G(\KK)\to V_H(\KK)$, and that $\kappa_f$ is injective if $\ker(f)$ is finite.  In Section~\ref{sec:fieldexts} we construct linear maps of vector edifices arising from base change of the ground field $k$.

We turn to metrics in Section~\ref{sec:admmetric}.  We define the notion of an \emph{admissible metric} on $V_G(\KK)$.  The idea is to do this first for reductive groups, where we can work inside individual apartments; for an arbitrary $G$, we can embed $G$ inside some reductive $G'$, which gives an embedding of $V_G(\KK)$ inside $V_{G'}(\KK)$, then pull back a metric from $V_{G'}(\KK)$ to $V_G(\KK)$.  Admissible metrics on $V_G(\KK)$ are not unique, but we prove that any two are bi-Lipschitz equivalent (Corollary~\ref{cor:quasi-isom}); this takes some work.  In the final section (Section~\ref{sec:TCCGIT}), we discuss applications of our formalism to the Tits Centre Conjecture and to geometric invariant theory.

\section{Preliminaries}
\label{sec:prelims}

Throughout $k$ denotes a field with separable closure $k_s$ and algebraic closure $\ovl{k}$.
We work with affine (linear) algebraic $k$-groups.
Formally, we view such a group $G$ as a functor from $k$-algebras to groups, 
represented by an affine $k$-algebra $k[G]$, the coordinate algebra of $G$. 
If $X$ is a $k$-variety and $k'/k$ is a field extension, then $X_{k'}$ denotes the $k'$-variety given by the base change 
of $X$ to $k'$, with coordinate algebra $k'[X_{k'}]:= k[X]\otimes_k k'$; if $f\colon X\to Y$ is a morphism of $k$-varieties then $f_{k'}\colon X_{k'}\to Y_{k'}$ is the morphism obtained by base extension.
Unless specified otherwise, we use the phrase ``let $G$ be a $k$-group'' as shorthand for ``let $G$ be a smooth affine algebraic $k$-group''.  We write $\fg$ for the Lie algebra of $G$.  For a $k$-group $G$ and a $k$-algebra $A$, we let $G(A)$ denote the group of $A$-points of $G$.
We adopt the framework of \cite{CGP}: so by a subgroup of $G$ we mean a $k$-defined subgroup, by a homomorphism we mean a $k$-homomorphism and by a cocharacter we mean a $k$-defined cocharacter.  We assume, however, that subgroups are smooth; in cases when smoothness is not evident, we use the terminology subgroup scheme.  By the kernel of a homomorphism we mean the scheme-theoretic kernel (note that this need not be smooth).  Likewise, by the centraliser $C_G(H)$ of a subgroup $H$ we mean the scheme-theoretic centraliser and by the intersection of subgroups we mean the scheme-theoretic intersection.  Note that $C_G(H)$ is smooth if $H$ is a torus.  By an embedding of varieties we mean a closed embedding.  For background on linear algebraic groups, see \cite{CGP}, \cite{Bo}, \cite{spr2}.
We record one crucial result here \cite[Thm.\ C.2.3]{CGP}, which is used frequently in the sequel without further comment:
the maximal split $k$-tori in a $k$-group $G$ are all conjugate under the action of $G(k)$.  By the rank of $G$ we mean the $k$-rank: that is, the dimension of a maximal split torus of $G$.  If $T$ is a maximal split torus of $G$ then $W_k: = (N_G(T)/C_G(T))(k)$ is the \emph{relative Weyl group} of $G$.  If $g\in G(k)$ then $\Inn_g\colon G\to G$ is conjugation by $g$.

For an algebraic $k$-scheme $X$ we let $X_{\red}$ denote the unique reduced subscheme of $X$ with the same underlying topological space as $X$ \cite[A.30]{milne}. 
When $k$ is perfect $(X_{\red})_{\bar{k}} = (X_{\bar{k}})_{\red}$ is reduced, and so when $X$ is a group scheme over a perfect field $k$, $X_{\red}$ is smooth
\cite[Prop.~1.26, Cor.~1.39]{milne}.

Our convention in this paper is that reductive groups are connected.  If $G$ is a $k$-group then we write $R_{u,k}(G)$ for the $k$-unipotent radical of $G$ (the largest connected normal unipotent $k$-subgroup).  If $R_u(G_{\ovl{k}})$ descends to a subgroup of $G$ then we write $R_u(G)$ for $R_{u,k}(G)$.  We denote by $Y_G$ the set of cocharacters (one-parameter subgroups) of $G$ --- that is, an element $\lambda\in Y_G$ is a homomorphism $\lambda$ from the multiplicative group $\Gm$ to $G$.  If $H$ is a subgroup of $G$ then we may regard $Y_H$ as a subset of $Y_G$ via the inclusion $H\subseteq G$.  We say that $\lambda\in Y_G$ \emph{evaluates in $H$} if $\lambda$ arises from an element of $Y_H$ in this way.  We say that two cocharacters \emph{commute} if their images commute.
Given $\lambda\in Y_H$ and $g\in G(k)$, we define $g\cdot \lambda\in Y_{gHg^{-1}}$ by $(g\cdot \lambda)(a)= g\lambda(a)g^{-1}$ for any $k$-algebra $A$ and any $a\in {\mathbb G}_m(A) = A^\times$; this gives an action of $G(k)$ on $Y_G$.

Now suppose $T$ is a split torus, and let $X_T$ denote the group of characters of $T$: that is, $\chi\in X_T$ is a homomorphism $T\to \Gm$.
There is the usual pairing between $Y_T$ and $X_T$: given $\lambda\in Y_T$ and $\chi\in X_T$, the map $\chi\circ\lambda\colon \Gm\to \Gm$ is an endomorphism of $\Gm$, and hence 
is given by raising elements to a power; we denote this power by $\langle \lambda,\chi \rangle$.
If now $T$ is a subtorus of some $k$-group $G$, then for $g\in G(k)$ and $\chi\in X_T$, we get a character $g\cdot\chi \in X_{gTg^{-1}}$ via the formula $(g\cdot\chi)(gtg^{-1}) = \chi(t)$ for any $k$-algebra $A$ and $t\in T(A)$.
It is easy to check that $\langle g\cdot\lambda,g\cdot\chi \rangle =  \langle \lambda,\chi \rangle$ for all $g\in G(k)$.

Throughout, the symbol $\KK$ denotes $\RR$ or $\QQ$.  We write $\KK^+$ for $\KK\cap (0,\infty)$.  If $T$ is a torus then $Y_T$ with the usual addition of cocharacters is a free $\ZZ$-module of finite rank. 
We define $Y_T(\KK):= Y_T\otimes_\ZZ \KK$, a finite-dimensional $\KK$-vector space, and identify $Y_T$ with the subset of $Y_T(\KK)$ consisting of elements of the form $\lambda\otimes 1$ for $\lambda\in Y_T$ in the usual way.  We give $Y_T(\KK)$ the usual topology arising from a positive-definite bilinear form (this topology does not depend on the form). 
We may similarly define $X_T(\KK):= X_T\otimes_\ZZ \KK$,
and then the pairing $Y_T \times X_T \to \ZZ$ extends to give a $\KK$-bilinear map $Y_T(\KK)\times X_T(\KK)\to \KK$.

Below $G$ denotes a connected $k$-group.  At various points we make the extra assumption that $G$ is reductive.
For $f\colon G\to H$ a homomorphism of $k$-groups, we say that $f$ is surjective provided $f(\ovl{k})\colon G(\ovl{k})\to H(\ovl{k})$ is surjective.  An \emph{isogeny} is a surjective homomorphism of $k$-groups with finite kernel.

\subsection{Limits and R-parabolic subgroups}
For full details on the following construction, the reader is referred to \cite[Sec.\ 2.1]{CGP}.
Let $V$ be an affine $k$-scheme equipped with an action of $\Gm$.
Given $v\in V(A)$ for some $k$-algebra $A$, we say that $\lim_{a\to 0} a\cdot v$ \emph{exists} if the
orbit map $(\Gm)_A\to V_A, a\mapsto a\cdot v$ over $A$ extends to an $A$-morphism $F : \mathbb{A}^1_A\to V_A$, and in this case we write 
$\lim_{a\to0} a\cdot v$ for the limit $F(0)$ (which is necessarily uniquely defined). 
In \cite[Lem.~2.1.4]{CGP}, it is shown that there is a closed subscheme $V'$ of $V$ such that for any $k$-algebra $A$, $V'(A)$
consists of the $v\in V(A)$ such that the limit $\lim_{a\to 0} a\cdot v$ exists.  If $f\colon V\to W$ is an equivariant map of $\Gm$-varieties, $v\in V(A)$ and $\lim_{a\to 0} a\cdot v$ exists then $\lim_{a\to 0} a\cdot f(v)$ exists, and the converse holds is $f$ is an embedding.

The construction in the previous paragraph is of central importance in this paper when $V=G$ is a $k$-group, 
and the action of $\Gm$ on $G$ is the conjugation action via a cocharacter $\lambda\colon \Gm\to G$.
In this case, for each $\lambda\in Y_G$, one obtains a closed subgroup $P_\lambda(G)$ of $G$: for each $k$-algebra $A$,
$$
P_\lambda(G)(A):=\left\{g\in G(A) \ \left|\  \lim_{a\to 0} \lambda(a)g\lambda(a)^{-1} \textrm{ exists}\right\}\right.
$$
(that $P_\lambda(G)$ is a smooth $k$-subgroup scheme is in \cite[Lem.~2.1.5]{CGP}).
We also write $\lim_{a\to 0} \lambda(a)\cdot g$ as a shorthand for the limit.

We call the subgroups of $G$ of the form $P_\lambda(G)$ for $\lambda\in Y_G$ the \emph{R-parabolic subgroups} of $G$.\footnote{The ``R'' here is in honour of Roger Richardson, who made extensive use of the link between cocharacters and parabolic subgroups in reductive groups, in the context of geometric invariant theory (see \cite{rich2}).}
We usually suppress the $G$ in $P_\lambda(G)$ unless there is some ambiguity --- this occurs, for instance, when $H$ is a subgroup of $G$ and 
we wish to view $\lambda\in Y_H$ as a cocharacter of both $G$ and $H$, with two corresponding subgroups $P_\lambda(H)\subseteq P_\lambda(G)$.  
In case $G$ is pseudo-reductive the subgroups $P_\lambda(G)$ are the so-called pseudo-parabolic subgroups of $G$, but this is not the case in general: a pseudo-parabolic subgroup of a general connected $k$-group $G$ must also contain $R_{u,k}(G)$ but $P_\lambda$ need not; see \cite[Sec.~2.2]{CGP}.

We define $L_\lambda(G)$ (or just $L_\lambda$) to be $C_G({\rm Im}(\lambda)) = P_\lambda\cap P_{-\lambda}$ 
and we define a subgroup $U_\lambda = U_\lambda(G)$ functorially by
$$
U_\lambda(G)(A) := \left\{g\in G(A)\ \left|\ \lim_{a\to0} \lambda(a)g\lambda(a)^{-1} = 1\right\}\right. ,
$$
for each $k$-algebra $A$.
It follows from \cite[Lem.\ 2.1.5, Prop.\ 2.1.8]{CGP} that $P_\lambda$, $L_\lambda$ and $U_\lambda$ are smooth connected subgroup schemes of $G$, $U_\lambda$ is a normal unipotent subgroup of $P_\lambda$, and $P_\lambda = L_\lambda\ltimes U_\lambda$.
If $P$ is an R-parabolic subgroup of $G$ then we call any subgroup $L_\lambda$ for $\lambda\in Y_G$ such that $P= P_\lambda$ an \emph{R-Levi subgroup} of $P$.  Given $\lambda\in Y_G$, we have a surjective homomorphism $c_\lambda\colon P_\lambda\to L_\lambda$ given by $c_\lambda(g)= \lim_{a\to 0} \lambda(a)g\lambda(a)^{-1}$ for each $k$-algebra $A$ and each $g\in P_\lambda(A)$. 

We collect some basic properties of R-parabolic subgroups.
First off, it follows from the definition of limit in this case that for any subgroup $H$ of $G$ and a cocharacter $\lambda\in Y_H\subseteq Y_G$, we have
\begin{equation*}
\label{eqn:subgppar}
 P_\lambda(H)= P_\lambda(G)\cap H, \ L_\lambda(H)= L_\lambda(G)\cap H, \ U_\lambda(H)= U_\lambda(G)\cap H;
\end{equation*}
see \cite[Lem.\ 2.1.5]{CGP} and the paragraph following the proof of \cite[Lem.~2.1.4]{CGP}.
  
Note that every R-parabolic subgroup of $G$ contains a maximal split torus of $G$ --- for $P = P_\lambda$ for some $\lambda$, and if $T$ is a maximal split torus of $G$ such that $\lambda\in Y_T$, then $T\subseteq L_\lambda \subseteq P_\lambda$.
It is clear that if $g\in G(k)$ and $\lambda\in Y_G$, then $P_{g\cdot\lambda} = gP_\lambda g^{-1}$, $L_{g\cdot\lambda} = gL_\lambda g^{-1}$ and $U_{g\cdot\lambda} = gU_\lambda g^{-1}$; in particular, if $g\in P_\lambda(k)$, then $P_{g\cdot\lambda} = P_\lambda$.  If $k'/k$ is a field extension then $P_{\lambda_{k'}}= (P_\lambda)_{k'}$, $L_{\lambda_{k'}}= (L_\lambda)_{k'}$ and $U_{\lambda_{k'}}= (U_\lambda)_{k'}$.

We say the R-parabolic subgroups $P$ and $Q$ are \emph{opposite} if there exists $\lambda\in Y_G$ such that $P = P_\lambda$ and 
$Q = P_{-\lambda}$: in this case, $P_\lambda\cap P_{-\lambda}= L_\lambda$.

The next result follows from \cite[Prop.\ 2.1.8, Prop.\ 2.1.12]{CGP}.

\begin{lem}
\label{lem:big_cell}
 Let $G$ be a $k$-group and let $\lambda\in Y_G$.  Then the multiplication map $P_\lambda\times U_{-\lambda}\to G$ is an open immersion.  If $G$ is connected and solvable then this map is an isomorphism.
\end{lem}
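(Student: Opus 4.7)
The plan is to follow the standard ``big cell'' construction, isolating two ingredients: that the multiplication map $m\colon P_\lambda\times U_{-\lambda}\to G$ is \'etale, and that it is a monomorphism; an \'etale monomorphism of finite type between Noetherian schemes is automatically an open immersion. For the solvable case one then has to upgrade the open immersion to an isomorphism. Both parts are precisely \cite[Prop.~2.1.8, Prop.~2.1.12]{CGP}, but here is how I would carry out the argument.

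First I would compute the differential of $m$ at $(e,e)$. The conjugation action of $\Gm$ via $\lambda$ induces a weight decomposition $\fg=\bigoplus_{n\in\ZZ}\fg_n$ of the Lie algebra. Combined with the functorial definitions of $P_\lambda$, $L_\lambda$ and $U_{\pm\lambda}$, and the smoothness statement of \cite[Lem.~2.1.5]{CGP}, this identifies $\Lie(P_\lambda)$ with $\bigoplus_{n\ge 0}\fg_n$ and $\Lie(U_{-\lambda})$ with $\bigoplus_{n<0}\fg_n$. Hence $dm_{(e,e)}$ is the sum map $\Lie(P_\lambda)\oplus\Lie(U_{-\lambda})\to\fg$, which is an isomorphism. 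Translating by elements $(p,u)$ using the group structures on the two factors propagates \'etaleness to every point of $P_\lambda\times U_{-\lambda}$. To show $m$ is a monomorphism, it suffices to check that the scheme-theoretic intersection $P_\lambda\cap U_{-\lambda}$ is trivial: for any $k$-algebra $A$ and any $g$ in both, the orbit morphism $\Gm\to G$, $a\mapsto \lambda(a)g\lambda(a)^{-1}$, extends across $0$ from both the $P_\lambda$-side and, after inverting $\lambda$, from the $U_{-\lambda}$-side with value $e$, so the extension $\mathbb{A}^1\to G$ is constant with value $e$ and $g=e$. Cancellation of the equation $p_1u_1=p_2u_2$ then gives injectivity on $A$-points, and the \'etale-monomorphism principle yields the open immersion.

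For the second statement, $m$ is now an open immersion with dense image, and the complement $C$ of this image is a closed subset stable under the left action of $P_\lambda$ and the right action of $U_{-\lambda}$. The expected main obstacle is ruling out a non-trivial $C$ without adding hypotheses, and this is where solvability is essential: one passes to $\ovl{k}$ (where ``isomorphism'' descends harmlessly), chooses a maximal torus $T$ of $G_{\ovl{k}}$ containing the image of $\lambda$, and uses the structure theorem $G_{\ovl{k}}=T\ltimes R_u(G_{\ovl{k}})$ to write every element of $G_{\ovl{k}}$ in the form $p\cdot u$ with $p\in P_\lambda(\ovl{k})$ and $u\in U_{-\lambda}(\ovl{k})$; concretely, the unipotent radical decomposes as a product of its intersections with $U_\lambda$, $L_\lambda$ and $U_{-\lambda}$ by dynamic-method arguments. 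This forces $C=\emptyset$, so $m$ is surjective on points, and being an open immersion makes it an isomorphism. In practice I would simply cite \cite[Prop.~2.1.8, Prop.~2.1.12]{CGP} for the whole argument.
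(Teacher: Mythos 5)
Your proposal takes exactly the same route as the paper, which simply cites \cite[Prop.~2.1.8, Prop.~2.1.12]{CGP}; your sketch is an accurate unpacking of what those results prove (\'etale because the differential at the identity gives the weight-space decomposition $\fg=\Lie(U_{-\lambda})\oplus\Lie(L_\lambda)\oplus\Lie(U_\lambda)$, monomorphism because the scheme-theoretic intersection $P_\lambda\cap U_{-\lambda}$ is trivial via the $\mathbb{P}^1\to G$ rigidity argument, and surjectivity in the solvable case via the torus-equivariant filtration of the unipotent radical). The only phrasing wrinkle is that the $U_{-\lambda}$-condition gives an extension of the orbit map across $\infty$ rather than across $0$, so one really invokes that $\mathbb{P}^1_A\to G_A$ is constant for $G$ affine; the conclusion $g=e$ is unaffected.
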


\begin{lem}
\label{lem:common_tor}
 Let $\lambda\in Y_G$ and let $\mu\in Y_{P_\lambda}$.  Then there exist $u\in U_\lambda(k)$ and a maximal split torus $T$ of $P_\lambda$ such that $\lambda$ and $u\cdot \mu$ belong to $Y_T$.
\end{lem}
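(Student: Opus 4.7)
The plan is to exploit the semidirect product decomposition $P_\lambda = L_\lambda \ltimes U_\lambda$ together with conjugacy of maximal split tori in $P_\lambda$ under $P_\lambda(k)$.

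First I would establish two preliminary facts about maximal split tori of $L_\lambda$.
(a) Every maximal split torus $T'$ of $L_\lambda$ contains $\IM(\lambda)$. Indeed, $L_\lambda = C_G(\IM(\lambda))$, so $\IM(\lambda)$ is a central split subtorus of $L_\lambda$; then $T'\cdot \IM(\lambda)$ is still a split torus of $L_\lambda$ (they commute), hence equals $T'$ by maximality, and $\lambda\in Y_{T'}$.
(b) Every maximal split torus $T'$ of $L_\lambda$ is also a maximal split torus of $P_\lambda$. Here I would use that $U_\lambda$ is unipotent, so any split subtorus of $P_\lambda$ maps injectively to $L_\lambda$ under the quotient $P_\lambda \to L_\lambda$; hence the dimension of any split torus of $P_\lambda$ is at most that of a maximal split torus of $L_\lambda$, a bound attained by $T'$.

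Next, choose a maximal split torus $S$ of $P_\lambda$ containing $\IM(\mu)$ (possible since $\IM(\mu)$ is a split subtorus of $P_\lambda$). Fix a maximal split torus $T_0$ of $L_\lambda$; by (b) it is also maximal split in $P_\lambda$, so by the conjugacy theorem \cite[Thm.~C.2.3]{CGP} there exists $p\in P_\lambda(k)$ with $S = pT_0p^{-1}$. Using the semidirect product structure, write $p = u\ell$ with $u\in U_\lambda(k)$ and $\ell\in L_\lambda(k)$; then
\[
S = u(\ell T_0\ell^{-1})u^{-1} = uT'u^{-1},
\]
where $T' := \ell T_0\ell^{-1}$ is again a maximal split torus of $L_\lambda$.

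Finally, since $\mu$ evaluates in $S$, the cocharacter $u^{-1}\cdot \mu$ evaluates in $T'$; by (a), $\lambda$ also evaluates in $T'$. Taking the required element of $U_\lambda(k)$ to be $u^{-1}$ and the torus to be $T=T'$ yields the lemma. There is no serious obstacle in this argument; the main subtlety is the observation (b) that a maximal split torus of $L_\lambda$ cannot be enlarged inside $P_\lambda$, which hinges on $U_\lambda$ being unipotent. Care is also needed to apply the semidirect decomposition in the form $p = u\ell$ rather than $p = \ell u$, so that the conjugating element lies in $U_\lambda(k)$ as required.
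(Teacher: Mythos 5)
Your proof is correct and follows essentially the same route as the paper: pick maximal split tori adapted to $\lambda$ and to $\mu$, conjugate one to the other by an element of $P_\lambda(k)$, and use the $L_\lambda\ltimes U_\lambda$ decomposition of that element so that the $L_\lambda(k)$-factor (which centralises $\lambda$) can be absorbed, leaving only the $U_\lambda(k)$-factor. The paper is marginally more economical — it simply chooses $T_0$ and $T_1$ maximal split in $P_\lambda$ with $\lambda\in Y_{T_0}$, $\mu\in Y_{T_1}$, writes $g=lu$ with $gT_1g^{-1}=T_0$, and takes $T=l^{-1}T_0l$, never needing your auxiliary observations (a) and (b) about maximal split tori of $L_\lambda$; but those observations are true and your version is a valid variant of the same argument.
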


\begin{proof}
 Choose maximal split tori $T_0$ and $T_1$ of $P_\lambda$ such that $\lambda\in Y_{T_0}$ and $\mu\in Y_{T_1}$.  By conjugacy of maximal split tori, there exists $g\in P_\lambda(k)$ such that $gT_1g^{-1}= T_0$.  Write $g= lu$ with $l\in L_\lambda(k)$ and $u\in U_\lambda(k)$.  Then $\lambda, u\cdot \mu\in Y_T$, where $T:= l^{-1}T_0l$.
\end{proof}

\begin{lem}\label{lem:rigidity}
Let $T$ be a maximal split torus of $G$, and suppose $\lambda\in Y_T$.
We have $N_{P_\lambda(k)}(T) = N_{L_\lambda(k)}(T)$ and $N_{P_\lambda(k)}(L_\lambda) = L_\lambda(k)$.
\end{lem}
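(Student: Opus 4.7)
The plan is to work with the Levi decomposition $P_\lambda = L_\lambda \ltimes U_\lambda$ and to exploit the defining limit property of $U_\lambda$ to force the unipotent part of any element in either normaliser to be trivial. Note that $\lambda \in Y_T$ gives $T \subseteq L_\lambda$, and since $T$ is a maximal split torus of $G$ it is automatically a maximal split torus of $L_\lambda$.

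For the first identity the inclusion $\supseteq$ is immediate. For the reverse, given $g \in N_{P_\lambda(k)}(T)$ I would factor $g = lu$ with $l \in L_\lambda(k)$ and $u \in U_\lambda(k)$. Then $uTu^{-1} = l^{-1}Tl \subseteq L_\lambda$, so for any $k$-algebra $A$ and any $t \in T(A)$ the element $utu^{-1}$ lies in $L_\lambda$ and is thus centralised by $\mathrm{Im}(\lambda)$. On the other hand, using that $\lambda$ and $t$ commute inside $T$, one computes
\[
\lambda(a)(utu^{-1})\lambda(a)^{-1} = \bigl(\lambda(a)u\lambda(a)^{-1}\bigr)\, t\, \bigl(\lambda(a)u\lambda(a)^{-1}\bigr)^{-1}.
\]
The left-hand side is constant in $a$, while the right-hand side extends to a morphism $\mathbb{A}^1_A \to G_A$ taking value $t$ at $a = 0$ because $u \in U_\lambda(G)$. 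Hence $utu^{-1} = t$, so $u$ centralises $T$; since $\mathrm{Im}(\lambda) \subseteq T$, this forces $u \in C_G(T) \subseteq L_\lambda$, so $u \in L_\lambda \cap U_\lambda = 1$ and $g = l \in L_\lambda(k)$.

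For the second identity, given $g \in N_{P_\lambda(k)}(L_\lambda)$, again write $g = lu$; the factor $l$ already normalises $L_\lambda$, so $u \in N_G(L_\lambda)$ and $uTu^{-1}$ is a maximal split torus of $L_\lambda$. By conjugacy of maximal split tori in $L_\lambda(k)$, there exists $l' \in L_\lambda(k)$ with $l'u \in N_{P_\lambda(k)}(T)$. The first part then gives $l'u \in L_\lambda(k)$, hence $u \in L_\lambda(k) \cap U_\lambda(k) = 1$ and $g = l \in L_\lambda(k)$.

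The main obstacle is the limit argument in the first part: one must verify that both sides of the displayed identity are morphisms on $\mathbb{A}^1_A$ --- which is precisely guaranteed by $u \in U_\lambda$ together with $utu^{-1} \in L_\lambda$ --- so that one can legitimately evaluate at $a = 0$. Everything else reduces to routine bookkeeping with the Levi decomposition and conjugacy of maximal split tori.
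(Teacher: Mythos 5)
Your proof is correct, and for the first identity you take a genuinely different (and arguably more self-contained) route than the paper. Both arguments reduce to showing that the $U_\lambda$-factor $u$ of $g=lu$ is trivial, and both exploit the defining limit property $\lim_{a\to 0}\lambda(a)u\lambda(a)^{-1}=1$. The paper observes that $u$ normalises $T$ (since $l=\lim_{a\to 0}\lambda(a)\cdot g$ does), notes that the extended morphism $\mathbb{A}^1\to U_\lambda$, $a\mapsto \lambda(a)\cdot u$, has connected image in $N_G(T)$ through $1$ and $u$, and then invokes rigidity of tori ($N_G(T)^0=C_G(T)^0$) to conclude $u\in C_G(T)\subseteq L_\lambda$, forcing $u=1$. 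You instead compute directly: from $uTu^{-1}=l^{-1}Tl\subseteq L_\lambda$ you get that $utu^{-1}$ is $\lambda$-invariant, rewrite $\lambda(a)(utu^{-1})\lambda(a)^{-1}$ as $(\lambda(a)u\lambda(a)^{-1})\,t\,(\lambda(a)u\lambda(a)^{-1})^{-1}$ using commutativity of $T$, and pass to the limit $a\to 0$ (using that $\Gm$ is schematically dense in $\mathbb{A}^1$ and $G$ is separated) to deduce $utu^{-1}=t$. This gives $u\in C_G(T)\subseteq L_\lambda$ without appealing to the rigidity theorem, at the cost of the explicit density argument. For the second identity your argument is essentially the paper's: reduce to the first identity via $G(k)$-conjugacy of maximal split tori in $L_\lambda$. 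Both proofs are valid; the paper's first half is shorter if one takes rigidity of tori as a black box, while yours runs entirely on the definition of $U_\lambda$.
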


\begin{proof}
Suppose we have $g\in P_\lambda(k)$ such that $gTg^{-1} = T$.
Now write $g = lu$ with $l\in L_\lambda(k)$ and $u\in U_\lambda(k)$. 
Note that 
$\lim_{a\to 0} \lambda(a)\cdot g$ normalises $T$, because  $\lambda(a)$ and $g$ do, thus so does $l =\lim_{a\to 0} \lambda(a)\cdot g$, and hence $u$ normalises $T$ also.
Now the morphism $\Gm\to U_\lambda, a\mapsto \lambda(a)\cdot u$ extends to a morphism from all of $\mathbb{A}^1$ (by definition of the limit),
and this morphism has connected image contained in the normaliser of $T$, containing the points $1 = \lim_{a\to 0} \lambda(a)\cdot u$ and $u = \lambda(1)\cdot u$.
Rigidity of tori implies that the identity component of $N_G(T)$ is the identity component of $C_G(T)$,
\cite[8.10, Cor.~2]{Bo}, and since $\lambda$ evaluates in $T$, $C_G(T)\subseteq L_\lambda$.
But we also have $u\in U_\lambda$, so this implies $u=1$.
Thus $g\in L_\lambda(k)$, which proves the first statement.

Now suppose $h\in P_\lambda(k)$ normalises $L_\lambda$.
Then $hTh^{-1}$ is another maximal split torus of $L_\lambda$, and hence is conjugate to $T$ by some element of $L_\lambda(k)$.
But this says that we may multiply $h$ by an element of $L_\lambda(k)$ to obtain an element of $N_{P_\lambda(k)}(T)\subseteq L_\lambda(k)$,
so we're done.
\end{proof}

\begin{lem}\label{lem:basicpropertiesRpars}
Let $P = P_\lambda$ be an R-parabolic subgroup of $G$, and let $T$ be a maximal split torus of $G$ such that $\lambda\in Y_T$.
\begin{itemize}
\item[(i)] Given any other maximal split torus $T'$ of $G$ contained in $P$, there is a $\mu\in Y_{T'}$ with $P_\mu = P$ and $U_\mu = U_\lambda$.  In fact, we can take $\mu= u\cdot \lambda$ for some $u\in U_\lambda(k)$.
\item[(ii)] The R-Levi subgroup $L_\lambda$ is the unique $P_\lambda(k)$-conjugate of $L_\lambda$ containing $T$.
\item[(iii)] Let $\mu\in Y_G$ such that $P_\mu= P_\lambda$ and $L_\mu= L_\lambda$.  Then $U_\mu= U_\lambda$.
\end{itemize}
\end{lem}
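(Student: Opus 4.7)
For part (i), the plan is to produce $u\in U_\lambda(k)$ with $u\lambda(\Gm)u^{-1}\subseteq T'$ and then take $\mu:=u\cdot\lambda$; the conclusions $P_\mu=P_\lambda$ and $U_\mu=U_\lambda$ will then be automatic, since $u\in U_\lambda\subseteq P_\lambda$. To find such a $u$, I will use the retraction $c_\lambda\colon P_\lambda\to L_\lambda$. Because $\ker c_\lambda=U_\lambda$ is unipotent, $c_\lambda$ is injective on tori, so $c_\lambda(T')$ is a torus isomorphic to $T'$; since $T'$ is a maximal split torus of $P_\lambda$ and the $k$-ranks of $G$ and $L_\lambda$ coincide, $c_\lambda(T')$ is a maximal split torus of $L_\lambda$. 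Now $T'$ and $c_\lambda(T')$ are both maximal split tori of the connected solvable subgroup $H:=c_\lambda(T')\ltimes U_\lambda$ of $P_\lambda$; their conjugacy by some $h\in H(k)$, combined with the semidirect decomposition $h=tv$ ($t\in c_\lambda(T')(k)$, $v\in U_\lambda(k)$), will yield $u:=tvt^{-1}\in U_\lambda(k)$ with $uc_\lambda(T')u^{-1}=T'$. Since $\lambda(\Gm)\subseteq Z(L_\lambda)$ is contained in every maximal split torus of $L_\lambda$, in particular in $c_\lambda(T')$, this forces $u\lambda(\Gm)u^{-1}\subseteq T'$ as required.

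For part (ii), the plan is to apply Lemma~\ref{lem:rigidity} directly. If $g\in P_\lambda(k)$ satisfies $T\subseteq gL_\lambda g^{-1}$, then $g^{-1}Tg$ is a maximal split torus of $L_\lambda$, and $L_\lambda(k)$-conjugacy will produce $l\in L_\lambda(k)$ with $lg^{-1}\in N_{P_\lambda(k)}(T)$. Lemma~\ref{lem:rigidity} will then give $N_{P_\lambda(k)}(T)\subseteq L_\lambda(k)$, whence $g\in L_\lambda(k)$ and $gL_\lambda g^{-1}=L_\lambda$.

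Part (iii) is where the main obstacle lies. My plan is to locate a common maximal split torus containing both $\lambda(\Gm)$ and $\mu(\Gm)$: since $L_\mu=L_\lambda=C_G(\lambda(\Gm))$, both images are split tori sitting in $Z(L_\lambda)$, so any maximal split torus $T$ of $L_\lambda$ will contain them, giving $\lambda,\mu\in Y_T$. The key observation is that $T\subseteq L_\lambda$ normalises $U_\lambda$, so for any $k$-algebra $A$ and any $u\in U_\lambda(A)$, the morphism $(\Gm)_A\to G_A$ given by $a\mapsto\mu(a)u\mu(a)^{-1}$ takes values in $U_\lambda$. Since $u\in U_\lambda\subseteq P_\lambda=P_\mu$, this morphism extends to $\mathbb{A}^1_A\to G_A$, and closedness of $U_\lambda$ in $G$ will force the extension to factor through $U_\lambda$. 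Hence its value at $0$, which is $c_\mu(u)$, will lie in $U_\lambda\cap L_\mu=U_\lambda\cap L_\lambda=\{1\}$. This shows $U_\lambda\subseteq U_\mu$, and swapping the roles of $\lambda$ and $\mu$ delivers the reverse inclusion. The subtlety I anticipate is executing the closed-subscheme factorisation and exploiting the triviality of $U_\lambda\cap L_\lambda$ carefully enough to obtain scheme-theoretic equality rather than just equality of $k$-points.
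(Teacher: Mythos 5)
Your proof is correct; the interesting divergence is in parts (i) and (iii).

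For part (i), the paper's argument is much more direct: taking $g\in P(k)$ with $gTg^{-1}=T'$, one writes $g=ul$ with $u\in U_\lambda(k)$, $l\in L_\lambda(k)$, and observes that $l\cdot\lambda=\lambda$ (as $l$ centralises ${\rm Im}(\lambda)$), so $g\cdot\lambda=u\cdot\lambda\in Y_{T'}$, and the conclusions about $P_{u\cdot\lambda}$ and $U_{u\cdot\lambda}$ follow at once. Your route through $c_\lambda(T')$, the solvable subgroup $H=c_\lambda(T')\ltimes U_\lambda$, and a conjugacy argument inside $H$ reaches the same $u$ but by a considerably longer path; the shortcut you missed is that $L_\lambda(k)$ fixes $\lambda$, which makes the Levi decomposition of $g$ hand you $u$ immediately. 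Part (ii) matches the paper essentially verbatim.

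For part (iii), you argue the inclusion $U_\lambda\subseteq U_\mu$ by showing $c_\mu(u)\in U_\lambda\cap L_\lambda=\{1\}$ for $u\in U_\lambda$, whereas the paper argues the dual inclusion $U_\mu\subseteq U_\lambda$: take $v\in U_\mu$, write $v=lu$ with $l\in L_\lambda$, $u\in U_\lambda$, and use the commutativity $c_\lambda\circ c_\mu=c_\mu\circ c_\lambda$ to conclude $l=1$. Both are sound; yours is perhaps slightly more self-contained, the paper's slightly cleaner. The subtlety you flagged about scheme-theoretic equality is not actually a problem: your argument is carried out for arbitrary $k$-algebras $A$ and arbitrary $u\in U_\lambda(A)$, the factorisation of the extended $\mathbb{A}^1_A\to G_A$ through the closed subscheme $(U_\lambda)_A$ is exactly the statement recalled in the preliminaries (limits commute with closed embeddings of equivariant $\Gm$-varieties), and $U_\lambda\cap L_\lambda$ is trivial as a group scheme because $P_\lambda=L_\lambda\ltimes U_\lambda$ is a scheme-theoretic semidirect product; so Yoneda yields $U_\lambda\subseteq U_\mu$ as subgroup schemes directly.
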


\begin{proof}
(i). Since $T$ and $T'$ are maximal split tori of the smooth connected $k$-group $P$, they are $P(k)$-conjugate.
Let $g\in P(k)$ be such that $T' = gTg^{-1}$.  Write $g= ul$ with $u\in U_\lambda(k)$ and $l\in L_\lambda(k)$.
Then $u\cdot \lambda = g\cdot \lambda\in Y_{T'}$, $P_{u\cdot \lambda}= uP_\lambda u^{-1} = P_\lambda$, and $U_{u\cdot \lambda}= uU_\lambda u^{-1} = U_\lambda$, as required.

(ii). Suppose we have $g\in P(k)$ such that $T\subseteq gL_\lambda g^{-1}$.
Then $T$ and $g^{-1}Tg$ are maximal split tori of $L_\lambda$, so are conjugate by an element of $L_\lambda(k)$.
Hence, by adjusting $g$ with an element of $L_\lambda(k)$ if necessary, we may assume that $g$ normalises $T$.
Now Lemma~\ref{lem:rigidity} implies that $g\in L_\lambda(k)$, as required.

(iii). Since ${\IM}(\mu)\subseteq Z(L_\lambda)^0\subseteq T\subseteq L_\lambda$, $\mu$ belongs to $Y_T$.  It follows easily that $c_\mu\circ c_\lambda= c_\lambda\circ c_\mu$.  So let $v\in U_\mu(\ovl{k})$.  We can write $v= lu$ for some $l\in L_\lambda(\ovl{k})$ and some $u\in U_\lambda(\ovl{k})$.  Then
$$ 1= c_\lambda(c_\mu(v))= c_\lambda(c_\mu(lu))= c_\lambda(c_\mu(l))c_\lambda(c_\mu(u))= lc_\mu(c_\lambda(u))= l, $$
so $v= u$.  This shows that $U_\mu\subseteq U_\lambda$, and the reverse inclusion follows by symmetry.
\end{proof}

If $G$ is reductive then the R-parabolic subgroups of $G$ are precisely the parabolic subgroups, and the R-Levi subgroups of a parabolic subgroup are precisely its Levi subgroups; furthermore, we have $U_\lambda = R_u(P_\lambda)$. These facts mean that the result above has a rather simpler formulation and proof for reductive $G$: the content of parts (i) and (ii) of Lemma~\ref{lem:basicpropertiesRpars} for reductive $G$ are the well-known facts that each maximal torus of a parabolic subgroup is contained in a unique Levi subgroup, and
the unipotent radical of the parabolic subgroup acts simply transitively on its Levi subgroups.

If $G$ is not reductive then things are more complicated.  We have $U_\lambda\subseteq R_{u,k}(P_\lambda)$ since $U_\lambda$ is a connected normal unipotent subgroup of $P_\lambda$, but the inclusion can be proper.  We can have cocharacters $\lambda,\mu\in Y_G$ such that $P_\lambda\subseteq P_\mu$ but $U_\lambda\not\supseteq U_\mu$.  In fact, there can exist $\lambda,\mu\in Y_G$ such that $P_\lambda= P_\mu$ but $U_\lambda\not\supseteq U_\mu$ and $L_\mu$ is not conjugate to $L_\lambda$.   For instance, take $k= \ovl{k}$ and let $G= P_\mu(G')$ be a Borel subgroup of a non-trivial connected semisimple $k$-group $G'$.  Choose a maximal torus $T$ of $G$ such that $\mu\in Y_T$.  Let $\lambda= 0\in Y_G$.  Then $P_\lambda(G)= P_\mu(G)= G= L_\lambda(G)\neq L_\mu(G)= T$ but $R_u(P_\mu)= U_\mu(G)\nsubseteq U_\lambda(G)= 1$.

We do, however, have the following result.

\begin{lem}
\label{lem:Levi_factor}
 Let $P$ be an R-parabolic subgroup of $G$.  Suppose $P=P_\lambda= P_\mu$ and $U_\lambda\subseteq U_\mu$ for some  $\lambda, \mu\in Y_G$.  Let $H$ be a subgroup of $P$ such that $P\iso H\ltimes U_\mu$.  Then: 
 \begin{itemize}
 \item[(i)]  $H$ is $U_\mu(k)$-conjugate to $L_\mu$.
 \item[(ii)] $H$ is $U_\mu(k)$-conjugate to a subgroup of $L_\lambda$. In particular, $L_\mu$ is conjugate to a subgroup of $L_\lambda$.  
 \end{itemize}
\end{lem}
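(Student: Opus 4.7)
The plan is to prove (i) by transferring the cocharacter $\mu$ across the isomorphism $H\cong L_\mu$ and applying Lemma~\ref{lem:common_tor}, and then to deduce (ii) from (i) by establishing that $u_0 L_\mu u_0^{-1}\subseteq L_\lambda$ for a suitable $u_0\in U_\lambda(k)$.

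For (i): the restriction $c_\mu|_H\colon H\to L_\mu$ is a $k$-group isomorphism, since $H$ and $L_\mu$ are both Levi complements to $U_\mu$ in $P$. Let $\phi$ be its inverse and set $\nu:=\phi\circ\mu\in Y_H\subseteq Y_G$. Because $\mu(\Gm)\subseteq Z(L_\mu)$ and $\phi$ is a group isomorphism, $\nu(\Gm)\subseteq Z(H)$, so $H\subseteq L_\nu$. Now apply Lemma~\ref{lem:common_tor} with $\mu$ and $\nu$ in the roles of $\lambda$ and $\mu$ there: we obtain $u\in U_\mu(k)$ and a maximal split torus $T$ of $P_\mu=P$ with $\mu,\, u\cdot\nu\in Y_T$. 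Since $\mu\in Y_T$ forces $T\subseteq L_\mu$, both cocharacters take values in $L_\mu$; and since $c_\mu\circ\nu=\mu$ by construction, while $u\in U_\mu$ acts trivially modulo $U_\mu$, they coincide modulo $U_\mu$. The composition $L_\mu\hookrightarrow P\twoheadrightarrow P/U_\mu$ being an isomorphism, two cocharacters of $L_\mu$ agreeing modulo $U_\mu$ are equal, so $\mu=u\cdot\nu$. Hence $L_\nu=u^{-1}L_\mu u$; combined with $H\subseteq L_\nu$, $\dim H=\dim L_\mu=\dim L_\nu$, and the fact that $H$ is smooth and connected (being isomorphic to $P/U_\mu$ via $c_\mu$), we conclude $H=u^{-1}L_\mu u$, which proves~(i).

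For (ii), by~(i) it suffices to show that $L_\mu$ itself is $U_\mu(k)$-conjugate to a subgroup of $L_\lambda$ (which also yields the ``in particular'' clause). Apply Lemma~\ref{lem:common_tor} to $\lambda,\mu$ to obtain $u_0\in U_\lambda(k)\subseteq U_\mu(k)$ and a maximal split torus $T$ of $P$ with $\lambda,\, u_0\cdot\mu\in Y_T$. Write $\mu':=u_0\cdot\mu$; then $L_{\mu'}=u_0L_\mu u_0^{-1}$, $U_{\mu'}=U_\mu$ (since $u_0\in U_\mu$), and $\mu'$ commutes with $\lambda$, so $\mu'$ evaluates in $L_\lambda$. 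Using the intersection formulas for R-parabolic data with the subgroup $L_\lambda$, the $\mu'$-datum inside $L_\lambda$ gives a semidirect product $L_\lambda=(L_\lambda\cap L_{\mu'})\ltimes(L_\lambda\cap U_\mu)$. On the other hand, $c_{\mu'}|_{L_\lambda}\colon L_\lambda\to L_{\mu'}$ is surjective: any $l\in L_{\mu'}\subseteq P=L_\lambda U_\lambda$ may be written $l=gu$ with $g\in L_\lambda$ and $u\in U_\lambda\subseteq U_\mu=U_{\mu'}$, so $c_{\mu'}(g)=c_{\mu'}(l)=l$; and its kernel is exactly $L_\lambda\cap U_\mu$. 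These two facts force the inclusion $L_\lambda\cap L_{\mu'}\hookrightarrow L_{\mu'}$ (which agrees with $c_{\mu'}$ restricted to that subgroup) to be an isomorphism, so $L_{\mu'}\subseteq L_\lambda$. Conjugating by $u_0^{-1}$ gives $u_0 L_\mu u_0^{-1}\subseteq L_\lambda$, completing the proof.

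The main obstacle is coordinating the several cocharacters in play---$\lambda$, $\mu$, and, in~(i), the auxiliary $\nu\in Y_H$. The essential tool is Lemma~\ref{lem:common_tor}, which lets us bring a pair of cocharacters into a common maximal split torus after conjugation within a suitable unipotent radical. Once this is achieved, equality of cocharacters in~(i) is detected modulo $U_\mu$, while the containment $L_{\mu'}\subseteq L_\lambda$ in~(ii) follows from a short semidirect-product comparison inside $L_\lambda$.
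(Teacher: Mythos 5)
Your proof is correct, but it takes a noticeably more roundabout route than the paper's. The paper's key move is to apply the intersection formula for R-parabolic data directly to $H$: after conjugating $H$ by an element of $U_\mu(k)$ (via Lemma~\ref{lem:basicpropertiesRpars}(i)) so that $\mu$ evaluates in the maximal split torus $T$ of $H$, it writes $H = P_\mu(H) = L_\mu(H)\ltimes U_\mu(H)$ and observes that $U_\mu(H)=U_\mu\cap H=1$, so $H\subseteq L_\mu$; part (ii) is identical with $\lambda$ in place of $\mu$, using $U_\lambda\cap H\subseteq U_\mu\cap H = 1$. This gives both parts in a few lines. You instead construct for (i) the auxiliary cocharacter $\nu=\phi\circ\mu\in Y_H$, bring $\nu$ into a common torus with $\mu$, and detect the equality $\mu=u\cdot\nu$ modulo $U_\mu$ — an extra step the paper avoids by putting $\mu$ directly inside $Y_H$. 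Likewise, your (ii) decomposes $L_\lambda = (L_\lambda\cap L_{\mu'})\ltimes(L_\lambda\cap U_\mu)$ and argues via the surjection $c_{\mu'}|_{L_\lambda}$, whereas the paper decomposes $H$ itself. The underlying ingredients (common-torus conjugation, the intersection formula $P_\nu(M)=P_\nu(G)\cap M$, the triviality of the unipotent intersection with $H$) are the same; the paper simply chooses to apply them to $H$ rather than to $L_\mu$ or $L_\lambda$, which shortens everything considerably. Your argument is a fine alternative if one prefers to reason only about Levi subgroups, but if you want the streamlined version, the decisive observation is that $H$ is itself the domain to which the R-parabolic formalism should be applied.
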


\begin{proof}
	Since $H$ is a complement to $U_\mu$ in $P_\mu$, $H$ is isomorphic to $L_\mu$ under the (restriction of the) projection $P_\mu \to L_\mu$, and hence $H$ contains a maximal split torus $T$ of $P_\mu$. 
	Using Lemma \ref{lem:basicpropertiesRpars}(i), by conjugating $H$ with some element in $U_\mu(k)$, we may assume that $\mu$ evaluates in $T$ as well. 
	Now $H = P_\mu(H) = L_\mu(H) \ltimes U_\mu(H)$, but $U_\mu(H) = U_\mu\cap H$ is trivial, so $H = L_\mu(H)$.
	This implies that $H\subseteq L_\mu$, thus $H=L_\mu$ since $H$ is isomorphic to $L_\mu$ and both are smooth and connected, proving (i).
	
	The proof of (ii) is similar: by Lemma \ref{lem:basicpropertiesRpars}(i) and the fact that $U_\lambda\subseteq U_\mu$, we may also conjugate $H$ by an element of $U_\mu(k)$ so that the maximal split torus $T$ of $H$ contains the image of $\lambda$. 
	Then $H = P_\lambda(H) = L_\lambda(H)\ltimes U_\lambda(H)$. Again, since $U_\lambda\subseteq U_\mu$ and $U_\mu\cap H = 1$, we have $H = L_\lambda(H)$, so $H$ is contained in $L_\lambda$.	
\end{proof}

\begin{cor}
\label{cor:Levi_conj}
 Let $\lambda,\mu \in Y_G$ such that $P_\lambda = P_\mu$ and $U_\lambda = U_\mu$.
 Then there is  a unique $u\in U_\lambda(k)$ such that $L_\mu = uL_\lambda u^{-1}$.
\end{cor}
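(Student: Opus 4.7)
The plan is to derive this almost directly from Lemma~\ref{lem:Levi_factor}(i) and Lemma~\ref{lem:rigidity}, so there is really no hard step; the main task is to check that the hypotheses of those lemmas line up cleanly.

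For existence, I would set $P := P_\lambda = P_\mu$ and observe that, since $U_\lambda = U_\mu$, the decomposition $P = L_\lambda \ltimes U_\lambda$ can be rewritten as $P = L_\lambda \ltimes U_\mu$. Thus $H := L_\lambda$ is a subgroup of $P$ satisfying $P \cong H \ltimes U_\mu$ and $U_\lambda \subseteq U_\mu$ (with equality), so Lemma~\ref{lem:Levi_factor}(i) applies and yields an element $u \in U_\mu(k) = U_\lambda(k)$ with $u L_\lambda u^{-1} = L_\mu$.

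For uniqueness, suppose $u_1, u_2 \in U_\lambda(k)$ both conjugate $L_\lambda$ to $L_\mu$. Then $v := u_2^{-1} u_1 \in U_\lambda(k)$ normalises $L_\lambda$, so $v \in N_{P_\lambda(k)}(L_\lambda)$. By Lemma~\ref{lem:rigidity}, $N_{P_\lambda(k)}(L_\lambda) = L_\lambda(k)$, hence $v \in L_\lambda(k) \cap U_\lambda(k)$. Since $P_\lambda = L_\lambda \ltimes U_\lambda$ is a semidirect product, this intersection is trivial, so $v = 1$ and $u_1 = u_2$.

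The only point that requires a moment's care is making sure we can invoke Lemma~\ref{lem:Levi_factor}(i): its hypothesis "$U_\lambda \subseteq U_\mu$" is phrased asymmetrically, but since we have equality $U_\lambda = U_\mu$ here, it is satisfied verbatim, and the conclusion gives exactly a $U_\lambda(k)$-conjugation carrying $L_\lambda$ to $L_\mu$. Beyond this, the argument is purely formal, so I do not anticipate a genuine obstacle.
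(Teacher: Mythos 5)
Your proposal is correct and follows essentially the same approach as the paper: existence via Lemma~\ref{lem:Levi_factor}(i) with $H = L_\lambda$, and uniqueness via Lemma~\ref{lem:rigidity}. The only difference is that you spell out the final step of the uniqueness argument (that $L_\lambda(k) \cap U_\lambda(k)$ is trivial because $P_\lambda = L_\lambda \ltimes U_\lambda$), which the paper leaves implicit.
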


\begin{proof}
 The existence of $u$ follows from Lemma~\ref{lem:Levi_factor}(i) with $H= L_\lambda$.  If $u_1, u_2\in U_\lambda(k)$ and $u_1L_\lambda u_1^{-1}= u_2L_\lambda u_2^{-1}$ then $u_1^{-1}u_2$ normalises $L_\lambda$, so $u_1= u_2$ by Lemma~\ref{lem:rigidity}.  This proves uniqueness.
\end{proof}

\begin{rem}
\label{rem:weightspaces}
 We need some notation.  Let $V$ be a rational $G$-module and let $T$ be a maximal split torus of $G$.  Let $\Phi\subseteq X_T$ be the set of weights of $T$ on $V$.  Since $T$ is split, $V$ splits into a direct sum of weight spaces $V_\chi$ for $T$.  Given $v\in V(\ovl{k})$, we can write $v$ uniquely as $v= \sum_{\chi\in \Phi} v_\chi$, where each $v_\chi\in V_\chi(\ovl{k})$.  We define $\supp(v)= \{\chi\in \Phi \mid v_\chi\neq 0\}$. If $g\in G(k)$ then the weights of $gTg^{-1}$ on $V$ are the characters of the form $g\cdot \chi$, where $\chi$ runs over the set of weights of $T$ on $V$, and we have $V_{g\cdot \chi}= g\cdot V_\chi$ for each $\chi\in \Phi$.  Let  $\lambda\in Y_{T}$.  Set $\Phi_{\lambda, \epsilon}= \{\chi\in \Phi\mid\sgn(\langle \lambda, \chi\rangle)= \epsilon\}$, where $\epsilon\in \{+, -, 0\}$.  We define $V_{\lambda, \epsilon}$ to be the sum of the weight spaces $V_\chi$ for $\chi\in \Phi_{\lambda, \epsilon}$, and $V_{\lambda, \geq 0}:= V_{\lambda, 0}\oplus V_{\lambda, +}$ (as in \cite[Rem.~2.8]{GIT}).
 
 Now we consider an important special case of this set-up.  By \cite[Lem.~1.1(a)]{kempf}, there is a $G$-equivariant embedding $\rho$ of $G$ (as a $G$-variety under the conjugation action) into a rational $G$-module $V$.  Let $T$, etc., be as above.  For $v\in V(A)$, $\lim_{a\to 0} \lambda(a)\cdot v$ exists if and only if $v\in V_{\lambda, \geq 0}(A)$ \cite[Ex.\ 2.1.1]{CGP}.  We see that $P_\lambda= \rho^{-1}(V_{\lambda, \geq 0})$, and it follows that $P_{n\lambda}= P_\lambda$ for every $n\in \NN$.  By similar arguments, $L_\lambda= \rho^{-1}(V_{\lambda, 0})$ and $U_\lambda= \rho^{-1}(V_{\lambda, +})$.
\end{rem}

\begin{rem}
\label{rem:finitepar}
There are only finitely many R-parabolic subgroups $P$ containing a fixed maximal torus $S$ of $G$.  To see this, choose a $G$-equivariant embedding $\rho$ of $G$ into a rational $G$-module $V$, as in Remark \ref{rem:weightspaces}.  Let $T$ be the unique maximal split subtorus of $S$.  
By Lemma \ref{lem:basicpropertiesRpars}(i), we can assume that $P = P_\lambda$ for some $\lambda\in Y_T$.  We have $P_\lambda= \rho^{-1}(V_{\lambda, \geq 0})$.  Since $\Phi$ is finite, there are only finitely many possibilities for $\Phi_{\lambda, \geq 0}$ and $V_{\lambda, \geq 0}$ and hence only finitely many possibilities for $P$.
\end{rem}

\begin{lem}
\label{lem:sum}
 Let $T$ be a maximal split torus of $G$ and let $\lambda, \mu\in Y_T$.  Suppose $P_\lambda= P_\mu$.  Then $P_{\lambda+ \mu}= P_\lambda= P_\mu$.
\end{lem}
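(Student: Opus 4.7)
Plan: I first establish the inclusion $P_\lambda \subseteq P_{\lambda+\mu}$ directly from the equivariant embedding $\rho\colon G\hookrightarrow V$ of Remark~\ref{rem:weightspaces}, and then upgrade this to equality by computing Lie algebras.

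For the inclusion, I use the identity $P_\nu = \rho^{-1}(V_{\nu,\geq 0})$ valid for any $\nu\in Y_T$. Take $g\in P_\lambda = P_\mu$ and decompose $\rho(g)=\sum_{\chi\in\Phi}v_\chi$ into $T$-weight components. Every $\chi\in\supp(\rho(g))$ satisfies $\langle\lambda,\chi\rangle\geq 0$ and $\langle\mu,\chi\rangle\geq 0$, so by additivity $\langle\lambda+\mu,\chi\rangle\geq 0$. Hence $\rho(g)\in V_{\lambda+\mu,\geq 0}$, so $g\in P_{\lambda+\mu}$.

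For the reverse direction I pass to Lie algebras. Differentiating $P_\nu=\rho^{-1}(V_{\nu,\geq 0})$ at the identity (noting $\rho(e)\in V_0$ is $T$-fixed) and using that $d\rho$ is an injective $T$-equivariant linear map yields $\Lie(P_\nu)=\fg_{\nu,\geq 0}$, defined exactly as $V_{\nu,\geq 0}$ but with respect to the adjoint $T$-representation on $\fg$. Applying this to $\lambda$ and $\mu$, the hypothesis $P_\lambda=P_\mu$ becomes $\fg_{\lambda,\geq 0}=\fg_{\mu,\geq 0}$, which forces that for every $T$-weight $\alpha$ on $\fg$ with $\fg_\alpha\neq 0$, $\langle\lambda,\alpha\rangle\geq 0$ if and only if $\langle\mu,\alpha\rangle\geq 0$. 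A case split now gives $\langle\lambda+\mu,\alpha\rangle\geq 0$ iff $\langle\lambda,\alpha\rangle\geq 0$: if both pairings are $\geq 0$, so is the sum; if both are $<0$, so is the sum. Thus $\Lie(P_{\lambda+\mu})=\fg_{\lambda+\mu,\geq 0}=\fg_{\lambda,\geq 0}=\Lie(P_\lambda)$.

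To conclude, both $P_\lambda$ and $P_{\lambda+\mu}$ are smooth and connected with equal Lie algebras, hence of the same dimension; combined with the inclusion $P_\lambda\subseteq P_{\lambda+\mu}$ this forces $P_\lambda=P_{\lambda+\mu}$, and by symmetry $P_\mu=P_{\lambda+\mu}$ as well. The real obstacle is the reverse containment: a naive attempt to work purely with the support of a single $\rho(g)$ fails because distinct weights $\chi_0,\chi_1$ in $\supp(\rho(g))$ could individually witness $g\notin P_\lambda$ and $g\notin P_\mu$, and the embedding $\rho$ alone cannot rule out such pairs. Passing to $\fg$ provides the per-weight control that eliminates the problematic situation $\langle\lambda,\alpha\rangle<0<\langle\mu,\alpha\rangle$.
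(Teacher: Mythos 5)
Your proof is correct and follows essentially the same route as the paper's: first the inclusion $P_\lambda\subseteq P_{\lambda+\mu}$ via the embedding $\rho$, then equality by comparing Lie algebras via $\Lie(P_\nu)=\fg_{\nu,\geq 0}$ and the observation $\Phi_{\lambda,\geq 0}=\Phi_{\mu,\geq 0}$. The only cosmetic difference is that you re-derive $\Lie(P_\nu)=\fg_{\nu,\geq 0}$ by differentiating $\rho$, whereas the paper cites \cite[Prop.\ 2.1.8]{CGP} for this fact.
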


\begin{proof}
 Set $P= P_\lambda= P_\mu$.  We prove first that $P_{\lambda+ \mu}\supseteq P$.  Choose a $G$-equivariant embedding $\rho$ of $G$ into a rational $G$-module $V$, and fix a maximal split torus $T$ of $G$.  For any $\nu\in Y_T$ and any $g\in G(\ovl{k})$, we have $g\in P_\nu(\ovl{k})$ if and only if
 $$ \min \{\langle \nu, \chi\rangle\mid \chi\in \supp(\rho(g))\}\geq 0. $$
 So let $g\in P(\ovl{k})$.  Then
  $$ \min \{\langle \lambda, \chi\rangle\mid \chi\in \supp(\rho(g))\}\geq 0 $$
 and
   $$ \min \{\langle \mu, \chi\rangle\mid \chi\in \supp(\rho(g))\}\geq 0, $$
 so
  $$ \min \{\langle \lambda+ \mu, \chi\rangle\mid \chi\in \supp(\rho(g))\}\geq 0. $$
  Hence $g\in P_{\lambda+ \mu}(\ovl{k})$.  As $P$ and $P_{\lambda+ \mu}$ are smooth, this shows that $P_{\lambda+ \mu}\supseteq P$.
  
  To complete the proof, it is enough to show that $\Lie(P_{\lambda+ \mu})= \Lie(P)$.  Let $\Phi$ be the set of roots of $G$ with respect to $T$.  By \cite[Prop.\ 2.1.8]{CGP}, if $\nu\in Y_T$ then $\Lie(P_\nu)= \fg_{\nu, \geq 0}$.  Since $\Lie(P_\lambda)= \Lie(P_\mu)$, we deduce that $\Phi_{\lambda, \geq 0}= \Phi_{\mu, \geq 0}$, so we have $\Phi_{\lambda+ \mu, \geq 0}= \Phi_{\lambda, \geq 0}= \Phi_{\mu, \geq 0}$ and $\fg_{\lambda+ \mu, \geq 0}=\fg_{\lambda, \geq 0}= \fg_{\mu, \geq 0}$.  The result follows.
 \end{proof}

\begin{lem}
\label{lem:all_tor}
 Let $P$ be an R-parabolic subgroup of $G$ and let $T$ be a maximal torus of $P$.  Then there exists $\lambda\in Y_T$ such that $P_\lambda= P$.
\end{lem}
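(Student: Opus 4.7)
The plan is to start with an arbitrary cocharacter $\mu\in Y_G$ satisfying $P=P_\mu$ and then conjugate it by an element of $U_\mu(k)$ so that the modified cocharacter both still defines $P$ and takes its image inside $T$.

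From the Levi decomposition $P=L_\mu\ltimes U_\mu$, the maximal $k$-torus $T$ of $P$ projects isomorphically onto a maximal $k$-torus $\bar T$ of $L_\mu$: the projection is injective because $T\cap U_\mu$ is trivial (a torus meets a unipotent group trivially), and the dimensions match because the projection $P\to L_\mu$ induces a bijection on maximal $k$-tori. Consequently $T$ lies in the smooth connected solvable $k$-subgroup $\bar T\ltimes U_\mu$ of $P$. The key step is to produce $u\in U_\mu(k)$ with $uTu^{-1}=\bar T$. This rests on the standard conjugacy of maximal $k$-tori in a connected solvable $k$-group applied to $\bar T\ltimes U_\mu$, refined by the observation that elements of $\bar T(k)$ already normalise $\bar T$: any conjugator $g=tu\in\bar T(k)\cdot U_\mu(k)$ can be replaced by its $U_\mu(k)$-component $u$, since conjugation by $t\in\bar T(k)$ preserves $\bar T$.

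Set $\lambda:=u^{-1}\cdot\mu$. Because $u\in P(k)$, one has $P_\lambda=u^{-1}P_\mu u=P$ and, by Corollary~\ref{cor:Levi_conj}, $L_\lambda=u^{-1}L_\mu u$, so $T=u^{-1}\bar T u\subseteq L_\lambda$. Since $T\subseteq L_\lambda=C_G(\mathrm{Im}(\lambda))$, the tori $T$ and $\mathrm{Im}(\lambda)$ commute, and so $T\cdot\mathrm{Im}(\lambda)$ is a $k$-torus of $L_\lambda$ containing $T$. By the same Levi-decomposition dimension count (applied to $P_\lambda=P$), $T$ is already a maximal $k$-torus of $L_\lambda$; hence $T\cdot\mathrm{Im}(\lambda)=T$, giving $\mathrm{Im}(\lambda)\subseteq T$ and $\lambda\in Y_T$.

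The main obstacle is the $U_\mu(k)$-conjugacy step. Once that is in hand, everything else is a routine consequence of the Levi decomposition of R-parabolics and the maximality of $T$ as a $k$-torus of $P$ (equivalently, of $L_\lambda$).
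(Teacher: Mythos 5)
Your approach is genuinely different from the paper's, which uses Galois descent: the paper passes to a finite Galois extension $k'$ over which $T$ becomes split, applies Lemma~\ref{lem:basicpropertiesRpars}(i) to get a $\lambda'\in Y_{T_{k'}}$ with $P_{\lambda'}=P_{k'}$, averages $\lambda'$ over $\Gal(k'/k)$ to get a $\Gamma$-stable cocharacter descending to $Y_T$, and invokes Lemma~\ref{lem:sum} to see that the averaged cocharacter still defines $P$. Your proposal instead works directly over $k$ by conjugating a defining cocharacter $\mu$ into $Y_T$ via $U_\mu(k)$.

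The skeleton of your argument is correct, but the central step is not justified as stated. You appeal to ``the standard conjugacy of maximal $k$-tori in a connected solvable $k$-group'' to produce a $k$-rational conjugator in $\bar T\ltimes U_\mu$ taking $T$ to $\bar T$. This is \emph{not} a blanket theorem. Conjugacy of maximal $k$-tori by $G(k)$ (as opposed to $G(k_s)$) in a connected solvable $k$-group can fail over imperfect fields: the obstruction lives in $H^1(k,\cdot)$ of the relevant unipotent centraliser, which can be non-zero for wound unipotent groups. Since this paper explicitly works over arbitrary, possibly imperfect, fields (pseudo-reductive groups and purely inseparable extensions appear throughout), you cannot invoke such a conjugacy result without justification. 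Note also that the paper's Lemma~\ref{lem:common_tor} only asserts a conjugacy statement for maximal \emph{split} tori, precisely because the non-split case is delicate.

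Your argument can be repaired, but it needs an extra observation you did not make. Since $\mathrm{Im}(\mu)$ is central in $L_\mu$, it lies in every maximal torus of $L_\mu$, hence $\mathrm{Im}(\mu)\subseteq\bar T$; therefore $C_{U_\mu}(\bar T)\subseteq C_{U_\mu}(\mathrm{Im}(\mu))=U_\mu\cap L_\mu=1$. The transporter $\{u\in U_\mu: uTu^{-1}=\bar T\}$ is a torsor under this trivial group, so the conjugator (which exists in $U_\mu(k_s)$ by the conjugacy theorem over $k_s$) is \emph{unique}. Uniqueness combined with the $k$-rationality of $T$ and $\bar T$ then forces $u\in U_\mu(k)$ by Galois descent. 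Incidentally the same inclusion $\mathrm{Im}(\mu)\subseteq\bar T$ gives $\mathrm{Im}(\lambda)=u^{-1}\,\mathrm{Im}(\mu)\,u\subseteq u^{-1}\bar Tu=T$ directly, making your final maximality argument unnecessary. A small further note: the identity $L_{u^{-1}\cdot\mu}=u^{-1}L_\mu u$ is just the conjugation formula stated after Lemma~\ref{lem:big_cell}; Corollary~\ref{cor:Levi_conj} is not the right citation there. Once the centraliser observation is supplied, your approach gives a legitimate alternative proof, arguably more group-theoretic in flavour than the paper's Galois-averaging argument, which trades those rationality subtleties for the weight-space computation underlying Lemma~\ref{lem:sum}.
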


\begin{proof}
 Choose a finite Galois extension $k'/k$ such that $T$ is $k'$-split and set $\Gamma= \Gal(k'/k)$.  By Lemma~\ref{lem:basicpropertiesRpars}(i) there exists $\lambda'\in Y_{T_{k'}}$ such that $P_{\lambda'}= P_{k'}$.  Now $\gamma\cdot \lambda'$ belongs to $Y_{T_{k'}}$ for each $\gamma\in \Gamma$ and $\sum_{\gamma\in \Gamma} \gamma\cdot \lambda'$ is $\Gamma$-stable, so it descends to an element $\lambda$ of $Y_T$.  It is easily seen that $P_{\gamma\cdot \lambda'}= \gamma(P_{\lambda'})= \gamma(P)= P$ for all $\gamma\in \Gamma$.  It follows from Lemma~\ref{lem:sum} that $P_\lambda= P$, so we are done.
\end{proof}

\begin{lem}
\label{lem:smoothint}
 Let $P$ and $Q$ be R-parabolic subgroups of $G$.  Suppose $P\cap Q$ contains a maximal split torus $T$ of $G$.  Then $P\cap Q$ is smooth.
\end{lem}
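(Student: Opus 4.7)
The plan is to reduce to the smoothness of R-parabolic subgroups in smooth ambient groups via a Levi-type decomposition. By Lemma~\ref{lem:basicpropertiesRpars}(i) applied to each of $P$ and $Q$ (which both contain the maximal split torus $T$), we may choose cocharacters $\lambda,\mu\in Y_T$ with $P=P_\lambda$ and $Q=P_\mu$. Since $\mu(\Gm)\subseteq T\subseteq L_\lambda$, the $\mu$-conjugation action of $\Gm$ preserves both $L_\lambda$ and $U_\lambda$.

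The crucial step is the scheme-theoretic identification
\[
P_\lambda\cap P_\mu \;\cong\; P_\mu(L_\lambda)\times (U_\lambda\cap P_\mu)
\]
inside the product $L_\lambda\times U_\lambda$, transported along the multiplication isomorphism $L_\lambda\times U_\lambda\stackrel{\sim}{\longrightarrow} P_\lambda$. To justify this functorially, take $(l,u)\in L_\lambda(A)\times U_\lambda(A)$; then $\mu(s)\,lu\,\mu(s)^{-1}$ factors as $(\mu(s)\,l\,\mu(s)^{-1},\,\mu(s)\,u\,\mu(s)^{-1})$ in $L_\lambda\times U_\lambda$. Because $L_\lambda\times U_\lambda$ is a product of $k$-schemes, the orbit map $\Gm_A\to L_{\lambda,A}\times U_{\lambda,A}$ extends to $\mathbb{A}^1_A$ if and only if each of its two coordinate maps does. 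This yields the asserted identity of $A$-points for every $k$-algebra $A$, hence of closed subschemes.

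Both factors on the right are smooth. Since $\mu\in Y_T\subseteq Y_{L_\lambda}$, $P_\mu(L_\lambda)$ is an R-parabolic subgroup of the smooth $k$-group $L_\lambda$ and hence is smooth by \cite[Lem.~2.1.5, Prop.~2.1.8]{CGP}. For $U_\lambda\cap P_\mu$, form the smooth semidirect product $\tilde U:=\mu(\Gm)\ltimes U_\lambda$ (using the $\mu$-conjugation action); then $\mu$ defines a cocharacter of $\tilde U$, and an analogous limit computation gives $P_\mu(\tilde U)\cong\mu(\Gm)\times(U_\lambda\cap P_\mu)$ as schemes, so smoothness of $P_\mu(\tilde U)$ forces smoothness of $U_\lambda\cap P_\mu$. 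A product of smooth schemes is smooth, which concludes the argument. The main subtlety is the scheme-theoretic decomposition in the second paragraph; it requires that the functorial $A$-point argument upgrade cleanly to a closed-subscheme identity, and this is exactly what the product-of-schemes structure of the Levi decomposition delivers.
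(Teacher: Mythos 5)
Your proof is correct, but it takes a substantially longer route than the one in the paper. The paper's argument is essentially a one-liner: after writing $P = P_\lambda$ and $Q = P_\mu$ with $\lambda,\mu\in Y_T$ (as you do), it observes that since $\mu$ evaluates in $P_\lambda$, we have $P\cap Q = P_\lambda\cap P_\mu = P_\mu(P_\lambda)$, which is an R-parabolic subgroup of the smooth connected $k$-group $P_\lambda$ and hence smooth and connected by \cite[Lem.~2.1.5, Prop.~2.1.8]{CGP} directly. You instead pass the intersection through the Levi decomposition $P_\lambda\cong L_\lambda\times U_\lambda$ and split it coordinatewise as $P_\mu(L_\lambda)\times(U_\lambda\cap P_\mu)$; the Levi factor is then an R-parabolic of the smooth group $L_\lambda$, but the unipotent factor needs the auxiliary group $\tilde U = \mu(\Gm)\ltimes U_\lambda$ (since $\mu$ does not evaluate in $U_\lambda$) and a smoothness-descent argument for factors of a smooth product. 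Both ideas work, and your splitting argument --- deciding limit-existence coordinatewise in a $\Gm$-equivariant product of schemes --- is sound, as is the final descent (over a field, geometric reducedness of $\mu(\Gm)\times(U_\lambda\cap P_\mu)$ gives geometric reducedness of the second factor). But everything after your choice of $\lambda,\mu\in Y_T$ can be collapsed: had you invoked $P_\mu(H) = P_\mu(G)\cap H$ with $H = P_\lambda$ rather than only with $H = L_\lambda$ and $H=U_\lambda$, the decomposition, the auxiliary group $\tilde U$, and the product-smoothness lemmas would all become unnecessary. Your argument essentially re-derives from scratch the Levi decomposition of $P_\mu(P_\lambda)$ itself, which is already part of what \cite[Prop.~2.1.8]{CGP} supplies.
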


\begin{proof}
We can write $P = P_\lambda$ and $Q = P_\mu$ for some $\lambda,\mu \in Y_T$, by Lemma~\ref{lem:basicpropertiesRpars}(i).
Since $\mu$ evaluates in $P$, it follows that $P\cap Q = P_\lambda\cap P_\mu = P_\mu(P_\lambda)$.
Since $P_\lambda$ is smooth and connected, this means that the intersection is an R-parabolic subgroup of $P_\lambda$, and hence is also smooth and connected.
(A direct proof using the Lie algebras is also possible, like the one in \cite[Prop.~2.1.8]{CGP}.)
\end{proof}

\begin{lem}
\label{lem:common_tor_crit}
 Let $P$ and $Q$ be R-parabolic subgroups of $G$.  The following are equivalent.
 \begin{itemize}
  \item[(i)] $P\cap Q$ contains a maximal split torus of $G$.
  \item[(ii)] $P\cap Q$ contains a maximal torus of $G$.
  \item[(iii)] $P_{\ovl{k}}\cap Q_{\ovl{k}}$ contains a maximal torus of $G_{\ovl{k}}$.
 \end{itemize}
\end{lem}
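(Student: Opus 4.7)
The plan is to establish the cycle $(\mathrm{i}) \Rightarrow (\mathrm{ii}) \Rightarrow (\mathrm{iii}) \Rightarrow (\mathrm{ii}) \Rightarrow (\mathrm{i})$. Of these, $(\mathrm{ii}) \Rightarrow (\mathrm{iii})$ is immediate by base change, and for $(\mathrm{i}) \Rightarrow (\mathrm{ii})$, if $T \subseteq P \cap Q$ is a maximal split torus of $G$, I would apply Lemma~\ref{lem:basicpropertiesRpars}(i) to choose $\lambda, \mu \in Y_T$ with $P = P_\lambda$ and $Q = P_\mu$; then $C_G(T) \subseteq L_\lambda \cap L_\mu \subseteq P \cap Q$, and any maximal $k$-torus of $G$ containing $T$ lies in $C_G(T)$ and hence in $P \cap Q$.

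For $(\mathrm{iii}) \Rightarrow (\mathrm{ii})$, I would first show $P \cap Q$ is smooth over $k$. Over $\bar{k}$ every torus is split, so Lemma~\ref{lem:smoothint} applied over $\bar{k}$ ensures that $(P \cap Q)_{\bar{k}}$ is smooth; smoothness then descends along $\bar{k}/k$. The smooth $k$-group $P \cap Q$ therefore admits a maximal $k$-torus $S$. By (iii) the absolute rank of $(P \cap Q)_{\bar{k}}$ equals that of $G_{\bar{k}}$, so $\dim S$ equals the absolute rank of $G$, and $S$ is a maximal $k$-torus of $G$ contained in $P \cap Q$.

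For $(\mathrm{ii}) \Rightarrow (\mathrm{i})$, let $S \subseteq P \cap Q$ be a maximal $k$-torus of $G$. Since every R-parabolic subgroup of $G$ has the same absolute rank as $G$, $S$ is also a maximal $k$-torus of $P$, so by Lemma~\ref{lem:all_tor} we can find $\lambda \in Y_S$ with $P_\lambda = P$, and similarly $\mu \in Y_S$ with $P_\mu = Q$. Because $\mu$ evaluates in $S \subseteq P_\lambda$, Lemma~\ref{lem:common_tor} furnishes $u \in U_\lambda(k)$ and a maximal split torus $T$ of $P$ such that $\lambda, u \cdot \mu \in Y_T$. Since $P$ and $G$ share the same $k$-rank, $T$ is in fact a maximal split torus of $G$; and from $T \subseteq L_\lambda \cap L_{u \cdot \mu} \subseteq P \cap u Q u^{-1}$, conjugation by $u^{-1} \in P(k)$ yields $u^{-1} T u \subseteq P \cap Q$, the desired maximal split torus of $G$.

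The main obstacle is the final step $(\mathrm{ii}) \Rightarrow (\mathrm{i})$: upgrading an arbitrary maximal $k$-torus of $G$ in $P \cap Q$ to a maximal \emph{split} torus of $G$ is non-trivial because the maximal split subtorus of a given maximal $k$-torus need not itself be a maximal split torus of $G$. The remedy is to combine Lemma~\ref{lem:all_tor} (to extract cocharacters evaluating in the given torus) with Lemma~\ref{lem:common_tor} (to conjugate these cocharacters into a common maximal split torus of $P$, and hence of $G$, without leaving $P \cap Q$).
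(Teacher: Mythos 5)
Your proposal is correct, and for $(\mathrm{i})\Rightarrow(\mathrm{ii})$, $(\mathrm{ii})\Rightarrow(\mathrm{iii})$ and $(\mathrm{iii})\Rightarrow(\mathrm{ii})$ it follows the paper's argument in all essentials. The one real divergence is in $(\mathrm{ii})\Rightarrow(\mathrm{i})$, where you take a more roundabout route than is needed. You correctly identify that passing from a maximal $k$-torus $S\subseteq P\cap Q$ to a maximal \emph{split} torus of $G$ inside $P\cap Q$ requires an idea, since the maximal split subtorus of $S$ need not be maximal split in $G$; your remedy is to invoke Lemma~\ref{lem:common_tor} to conjugate $\mu$ by some $u\in U_\lambda(k)$ into a common maximal split torus $T$ of $P$, and then conjugate back by $u^{-1}$. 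This works, but it misses a simpler observation that the paper uses: once you have $\lambda,\mu\in Y_S$ with $P_\lambda = P$ and $P_\mu = Q$ via Lemma~\ref{lem:all_tor}, the subgroup $\IM(\lambda)\IM(\mu)$ is already a split torus (both factors are split tori and they commute inside $S$), so any maximal split torus $T'$ of $G$ containing $\IM(\lambda)\IM(\mu)$ satisfies $T'\subseteq C_G(\IM(\lambda))\cap C_G(\IM(\mu)) = L_\lambda\cap L_\mu\subseteq P\cap Q$ directly, with no need for $U_\lambda$-conjugation at all. In short, the trick is not to try to enlarge a split torus \emph{inside} $S$, but to drop $S$ entirely once you have the two commuting cocharacters and pick a fresh maximal split torus of $G$ centralising both their images.
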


\begin{proof}
 Clearly (ii) implies (iii).  If $P_{\ovl{k}}\cap Q_{\ovl{k}}$ contains a maximal torus of $G_{\ovl{k}}$ then $P_{\ovl{k}}\cap Q_{\ovl{k}}$ is smooth by Lemma~\ref{lem:smoothint}, so $P\cap Q$ is smooth; then $P\cap Q$ contains a maximal torus and this is a maximal torus of $G$.  This shows that (iii) implies (ii).  Now suppose $S$ is a maximal split torus of $G$ contained in $P\cap Q$.  We can choose $\lambda, \mu\in Y_S$ such that $P= P_\lambda$ and $Q= P_\mu$.  Let $T$ be a maximal torus of $G$ that contains $\IM(\lambda)$ and $\IM(\mu)$; then $T\subseteq L_\lambda\cap L_\mu\subseteq P_\lambda\cap P_\mu= P\cap Q$.  Conversely, suppose $T$ is a maximal torus of $G$ contained in $P\cap Q$.  By Lemma~\ref{lem:all_tor}, there exist $\lambda, \mu\in Y_T$ such that $P_\lambda= P$ and $P_\mu= Q$.  Then $\IM(\lambda)\IM(\mu)$ is a split torus, so it is contained in a maximal split torus $S$ of $G$.  We have $S\subseteq L_\lambda\cap L_\mu\subseteq P_\lambda\cap P_\mu= P\cap Q$.  This shows that (i) and (ii) are equivalent.
\end{proof}

\begin{lem}\label{lem:samecocharssameparabolic}
Suppose $P$ and $Q$ are two R-parabolic subgroups of a $k$-group $G$. Identify $Y_P$ and $Y_Q$ as subsets of $Y_G$ in the usual way. 
Then $P\subseteq Q$ if and only if $Y_P\subseteq Y_Q$.  
\end{lem}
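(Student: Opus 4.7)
The forward implication is trivial: if $P\subseteq Q$ and $\nu\in Y_P$, then $\IM(\nu)\subseteq P\subseteq Q$, so $\nu\in Y_Q$.

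For the reverse, assume $Y_P\subseteq Y_Q$. My plan is to first produce a common maximal split torus inside $P\cap Q$, then use $P(k)$-conjugates of cocharacters of this torus to control $P(k)$, and finally invoke rigidity of tori plus connectedness of $P$ to conclude. Pick any maximal split torus $T$ of $P$; since every R-parabolic of $G$ contains a maximal split torus of $G$, $T$ is maximal split in $G$. The hypothesis gives $Y_T\subseteq Y_P\subseteq Y_Q$, and since $T$ is split---hence generated as a group scheme by the images of its cocharacters---this forces $T\subseteq Q$, so $T$ is also maximal split in $Q$. By two applications of Lemma~\ref{lem:basicpropertiesRpars}(i), choose $\lambda,\mu\in Y_T$ with $P=P_\lambda$ and $Q=P_\mu$.

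Next I exploit the hypothesis more strongly via conjugation. For each $g\in P(k)$ and $\nu\in Y_T$, the cocharacter $g\cdot\nu$ lies in $Y_P\subseteq Y_Q$, whence $g\IM(\nu)g^{-1}\subseteq Q$; ranging $\nu$ over $Y_T$ yields $gTg^{-1}\subseteq Q$. Since $gTg^{-1}$ and $T$ are both maximal split tori of the smooth connected $k$-group $Q$, conjugacy of maximal split tori provides $q\in Q(k)$ with $q^{-1}gTg^{-1}q=T$, so $q^{-1}g\in N_G(T)(k)$ and $g\in Q(k)\cdot N_G(T)(k)$. Hence $P(k)\subseteq Q(k)\cdot N_G(T)(k)$.

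To conclude, invoke rigidity of tori \cite[8.10,~Cor.~2]{Bo}: $N_G(T)^0=C_G(T)$, so $N_G(T)/C_G(T)$ is finite. As $\IM(\mu)\subseteq T$, we have $C_G(T)\subseteq L_\mu\subseteq Q$, so $Q\cdot N_G(T)$ decomposes as a finite disjoint union of closed cosets of $Q$ in $G$, with the coset containing the identity being $Q$ itself. Connectedness of $P$, together with $e\in P$, then forces $P$ into the identity coset, i.e., $P\subseteq Q$. The main subtlety is upgrading the $k$-rational-point inclusion $P(k)\subseteq(Q\cdot N_G(T))(k)$ to a scheme-theoretic inclusion; this can be handled by reformulating the argument via the closed transporter subscheme $\{g\in G\colon gTg^{-1}\subseteq Q\}$ of $G$, or by passing to a separably closed extension $k'/k$ where $P(k')$ is Zariski dense in $P_{k'}$, noting that $Y_T=Y_{T_{k'}}$ since $T$ is $k$-split, so the cocharacter argument runs verbatim over $k'$.
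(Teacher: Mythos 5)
Your proof takes a genuinely different route from the paper, and the basic idea is appealing, but the passage from $k$-points to a scheme-theoretic inclusion contains a real gap that your two proposed fixes do not close.

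The first three steps are fine and agree with the paper's opening moves: the forward implication is trivial, and since any split torus is generated by the images of its cocharacters, $Y_P\subseteq Y_Q$ forces a maximal split torus $T$ of $P$ into $Q$, so one may write $P=P_\lambda$, $Q=P_\mu$ with $\lambda,\mu\in Y_T$. Your conjugation argument showing $gTg^{-1}\subseteq Q$ for $g\in P(k)$, hence $P(k)\subseteq Q(k)\cdot N_G(T)(k)$, is also correct, as is the cosetology: rigidity of tori and $C_G(T)\subseteq L_\mu\subseteq Q$ make $Q\cdot N_G(T)$ a finite disjoint union of $Q$-cosets whose identity component is $Q$ itself.

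The gap is exactly where you flagged it, and the proposed repairs do not work. To invoke connectedness of $P$ you need $P\subseteq Q\cdot N_G(T)$ as subschemes of $G$, which (both sides being smooth and $k$-defined) is equivalent to $P(k_s)\subseteq\bigl(Q\cdot N_G(T)\bigr)(k_s)$. Your cocharacter argument establishes this only for $g\in P(k)$, and $P(k)$ need not be dense in $P$ --- over a finite field it certainly is not. Neither repair succeeds: the transporter subscheme $\{g\in G:gTg^{-1}\subseteq Q\}$ is indeed closed and $k$-defined, but that only re-packages the problem, since one still has to verify the inclusion on $k_s$-points. Passing to $k'=k_s$ also fails: to run the argument with $g\in P(k')$ you need $g\cdot\nu\in Y_{Q_{k'}}$, which would follow from $Y_{P_{k'}}\subseteq Y_{Q_{k'}}$. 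But $Y_P\subseteq Y_Q$ over $k$ does not imply $Y_{P_{k'}}\subseteq Y_{Q_{k'}}$ --- a $k'$-cocharacter $g\cdot\nu$ with $g\in P(k')\setminus P(k)$ need not descend to $k$, so the hypothesis simply does not apply to it. (The fact $Y_T=Y_{T_{k'}}$ controls $T$'s cocharacters, not those of $P_{k'}$ or $Q_{k'}$.) Indeed, the implication $Y_P\subseteq Y_Q\Rightarrow Y_{P_{k'}}\subseteq Y_{Q_{k'}}$ is \emph{equivalent} to the lemma itself once the lemma is known, so assuming it here would be circular.

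By contrast, the paper's proof sidesteps the density issue entirely by arguing by contradiction and producing a single bad $k$-cocharacter: if $P\not\subseteq Q$, then $U_{-\mu}(P)$ is a nontrivial unipotent $k$-group by Lemma~\ref{lem:big_cell}, and since $\IM(\mu)$ acts nontrivially on it, it cannot be wound (\cite[Prop.~B4.4]{CGP}), hence has a nontrivial $k$-point $u$; then $u\cdot\mu\in Y_P\setminus Y_Q$, contradiction. This only requires the existence of one suitable $k$-point, which the wound/split dichotomy supplies uniformly over all fields $k$, including finite ones. Your argument, once corrected, would need exactly such an input --- something guaranteeing enough $k$-rational structure in $P$ --- and that is precisely what the paper's use of the structure theory of unipotent groups provides.
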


\begin{proof}
That $Y_P\subseteq Y_Q$ when $P\subseteq Q$ is obvious.
For the other implication,
first observe that for any $\lambda \in Y_G$ such that $Q = P_\lambda$, and any $1\neq u \in U_{-\lambda}(k)$, $u\cdot\lambda$ does not evaluate in $Q$.
To see this, note that since the image of $\lambda$ normalises $U_{-\lambda}$, 
for every $a \in k_s$ we can write
$$
u\lambda(a)u^{-1} = \lambda(a)(\lambda(a)^{-1}u\lambda(a))u^{-1} = \lambda(a)u_a
$$ 
for some $u_a \in U_{-\lambda}(k_s)$.
Since $1\neq u \in U_{-\lambda}$, $u$ does not centralise $\rm Im(\lambda)$, and hence there exists $a\in k_s$ such that
$u_a \neq 1$, which means that $\lambda(a)u_a \not\in P_\lambda$, and we're done.

Now if we assume that $Y_P\subseteq Y_Q$, then we can find a maximal split torus $T$ in the intersection $P\cap Q$:
for any split torus of $P$ is generated by the images of its cocharacters, and these also lie in $Y_Q$.
Choose cocharacters $\lambda,\mu\in Y_T$ such that $P=P_\lambda$ and $Q=P_\mu$.  
Suppose $P$ is not contained in $Q$.  Now $U_{-\mu}(P)$ is a non-trivial unipotent subgroup of $P$ by Lemma~\ref{lem:big_cell}.  Since the image of $\mu$ acts non-trivially on this subgroup, it cannot be wound, by \cite[Prop.~B4.4]{CGP}.  
In particular, $U_{-\mu}(P)$ has a non-identity 
$k$-point $u$.  
Now $\mu$ is a cocharacter of $Q$ evaluating in $P$, so it is a cocharacter of $P$, so $u\cdot\mu \in Y_P$.
The first paragraph of the proof shows that $u\cdot\mu \not\in Y_Q$, which gives a contradiction.
\end{proof}

\subsection{Spherical buildings}
As general references on buildings we refer to \cite{abro} and \cite{tits1}.  We regard a spherical building --- as opposed to its geometric realisation --- as a combinatorial object.  It is a simplicial complex with a distinguished set of subcomplexes called apartments.  If $G$ is reductive then we can form the spherical building $\Delta_G$ as follows: the simplices of $\Delta_G$ are the parabolic subgroups ordered by reverse inclusion, and an apartment of $\Delta_G$ is the subcomplex consisting of all parabolic subgroups containing a fixed maximal split torus $T$ of $G$.  Two simplices are opposite if the corresponding parabolic subgroups are opposite.  There is a notion of convexity for subcomplexes  \cite[Prop.\ 3.1]{serre2}.  An automorphism of a spherical building $\Delta$ is an automorphism of simplicial complexes that maps apartments bijectively to apartments.  We write $\Aut (\Delta)$ for the group of automorphisms of $\Delta$.  For instance, any algebraic group automorphism of $G$ --- e.g., an inner automorphism --- gives rise to an automorphism of $\Delta_G$.  There are also automorphisms of $\Delta_G$ arising from automorphisms of the ground field $k$.  Both these kinds of automorphism have counterparts for vector edifices: see Section~\ref{subsec:linmapsII} and Section~\ref{sec:field_auts}.

\section{Edifices}
\label{sec:edifices}

In order to define vector edifices, we need to understand how to glue together the $\KK$-vector spaces $Y_T(\KK)$ for the maximal split tori of a $k$-group $G$ in a suitable way, where $\KK= \RR$ or $\QQ$.
This first requires us to attach an R-parabolic subgroup to an arbitrary element of $Y_T(\KK)$ (above, we have defined $P_\lambda$ just for $\lambda\in Y_T$).
This work is achieved in the next few subsections.

First we clarify some notation.  When we write, say, $\lambda\in Y_T(\KK)$ then it is understood that we are working with a fixed choice of $T$.  If $\mu$ is an actual cocharacter --- that is, if $\mu\in Y_G$ --- then there can exist many different maximal split tori $T$ such that $\mu\in Y_T$, so there is potential for misunderstanding.  This does not cause problems, however, for we show below that our constructions when applied to such a $\mu$ don't depend on the choice of $T$: see the paragraph following Definition~\ref{defn:V_G(K)}.

\subsection{Linear maps induced by homomorphisms of tori}\label{sec:linmapstori}
Suppose $f\colon T\to T'$ is a homomorphism of split $k$-tori. 
Then we get an induced map $Y_T\to Y_{T'}$ via $\lambda\mapsto f\circ\lambda$,
and this extends to a $\KK$-linear map $f_*\colon Y_{T}(\KK)\to Y_{T'}(\KK)$.
This map is injective if $f$ has finite kernel.
In particular, if $T\subseteq G$ for some $k$-group $G$ and $g\in G(k)$ then the conjugation map $\Inn_g$ induces a linear isomorphism from $Y_{T}(\KK)$ to $Y_{gTg^{-1}}(\KK)$; we denote the image of $\lambda\in Y_{T}(\KK)$ by $g\cdot \lambda$.
If in fact $\lambda\in Y_T$, then $g\cdot\lambda\in Y_{gTg^{-1}}$ as just defined agrees with the usual definition of $g\cdot \lambda$ given in Section~\ref{sec:prelims}.
We also have functoriality: if $f'\colon T'\to T''$ is another homomorphism of split $k$-tori then $(f'\circ f)_*= (f')_*\circ f_*$.  
In particular, if $f\colon G\to H$ is a homomorphism of $k$-groups then, given split $k$-tori $T\subseteq G$ and $S\subseteq H$ such that $f(T)\subseteq S$, we have a map $h_*\colon Y_{T}(\KK)\to Y_{S}(\KK)$, where $h\colon T\to S$ is the restriction of $f$.  
Often we abuse notation slightly and write $f_*\colon Y_{T}(\KK)\to Y_{S}(\KK)$ for this map.

We return to the subject of linear maps induced by group homomorphisms in Section~\ref{subsec:linmapsII} below.

\subsection{More on R-parabolic subgroups} 
Now we can define $P_\lambda$, $L_\lambda$ and $U_\lambda$ for $\lambda\in Y_{T}(\KK)$, where $T\subseteq G$ is a split torus.
We may as well assume that $T$ is a maximal split torus of $G$ and $\lambda\in Y_{T}(\KK)$.
Then we may choose a $G$-equivariant embedding $\rho$ of $G$ into a rational $G$-module $V$. Using the notation from Remark~\ref{rem:weightspaces},  
we define a subspace $W$ of $Y_T(\KK)$ by
$$
W= \{\mu\in Y_{T}(\KK)\mid\langle \mu, \chi\rangle= 0\textrm{ for all } \chi\in \Phi_{\lambda, 0} \}.
$$  
Clearly,
$\lambda \in W$, by the definition of $\Phi_{\lambda, 0}$.
Since $\Phi_{\lambda, 0}\subseteq X_T$, $W$ is defined by equations with coefficients in $\ZZ$ and hence is defined over $\QQ$.
Therefore, we get that $\dim_\QQ (W \cap Y_T(\QQ)) = \dim_\KK W$, which
implies that $W\cap Y_{T}(\QQ)$ is dense in $W$. 
Thus there exists a sequence $(\lambda_n)$ in
$W \cap Y_T(\QQ)$ converging to $\lambda$. For all 
$n \in \NN$ and for $\lambda' := \lambda_n$, it follows from the definition
of $W$ that 
\begin{equation}
\label{eq:chi}
 \sgn(\langle \lambda', \chi\rangle)= \sgn(\langle \lambda, \chi\rangle) 
\end{equation}
holds for all $\chi\in \Phi_{\lambda, 0}$, and for $n$ large enough \eqref{eq:chi} even holds for all $\chi\in \Phi_{\lambda, \pm}$,
because the linear functional $\langle  - , \chi\rangle$ is continuous. 
An element $\lambda'\in Y_{T}(\QQ)$ 
satisfying \eqref{eq:chi} is called 
a \emph{rational cocharacter approximation to $\lambda$} (\emph{with respect to $V$}) (\emph{in $Y_T(\KK)$}).
Multiplying a rational cocharacter approximation by a suitable positive integer, we may obtain an element $\lambda'\in Y_{T}$ satisfying \eqref{eq:chi},
which we call a \emph{cocharacter approximation to $\lambda$} (\emph{with respect to $V$}).  If $\lambda'\in Y_T(\QQ)$ then we set $P_{\lambda'}= P_{n\lambda'}$, $L_{\lambda'}= L_{n\lambda'}$ and $U_{\lambda'}= U_{n\lambda'}$ for any $n\in \NN$ such that $n\lambda'\in Y_G$: clearly this is well-defined.
We call a sequence $(\lambda_n)$ of rational cocharacter approximations to $\lambda$ such that $\lambda_n\to \lambda$ a \emph{rational approximating sequence to $\lambda$} (\emph{with respect to $V$}) (\emph{in $Y_T(\KK)$}).
   
\begin{defn}\label{def:Plambda}
Given $\lambda\in Y_T(\KK)$ and a rational cocharacter approximation $\lambda'$ to $\lambda$, we define $P_\lambda= P_{\lambda'}$, $L_\lambda= L_{\lambda'}$, and $U_\lambda = U_{\lambda'}$.
\end{defn}

\begin{lem}
\label{lem:par_levi_prop}
With the notation as above, we have the following.
 \begin{itemize}
  \item[(i)] The subgroups $P_\lambda$, $L_\lambda$ and $U_\lambda$ do not depend on the choice of $\lambda'$ or the choice of $\rho$.
  \item[(ii)] For any $g\in G(k)$, 
  if $\lambda'\in Y_T$ is a cocharacter approximation to $\lambda$ then $g\cdot \lambda'\in Y_{gTg^{-1}}$
   is a cocharacter approximation to $g\cdot \lambda$. Consequently, for any $g\in G(k)$, we have
  $P_{g\cdot \lambda}= gP_\lambda g^{-1}$,  $L_{g\cdot \lambda}= gL_\lambda g^{-1}$ and $U_{g\cdot \lambda}= gU_\lambda g^{-1}$.
 \end{itemize}
\end{lem}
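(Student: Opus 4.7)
My plan is to handle the two independence claims in (i) in sequence, then treat (ii) separately. First I would fix $\rho\colon G\hookrightarrow V$ and show that $P_\lambda$, $L_\lambda$, $U_\lambda$ are independent of the choice of rational cocharacter approximation. Given two such approximations $\lambda'_1,\lambda'_2\in Y_T(\QQ)$, pick $n_i\in\NN$ with $n_i\lambda'_i\in Y_T$, so $P_{\lambda'_i}=P_{n_i\lambda'_i}$ by definition. By Remark~\ref{rem:weightspaces}, $P_{n_i\lambda'_i}=\rho^{-1}(V_{n_i\lambda'_i,\geq 0})$, and the weight set determining $V_{n_i\lambda'_i,\geq 0}$ is $\{\chi\in\Phi:\sgn\langle \lambda'_i,\chi\rangle\geq 0\}$ (since $n_i>0$), which by~\eqref{eq:chi} equals $\{\chi\in\Phi:\sgn\langle \lambda,\chi\rangle\geq 0\}$ independently of $i$. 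Thus $P_{\lambda'_1}=P_{\lambda'_2}$; the same argument applied to the strata $\Phi_{\cdot,0}$ and $\Phi_{\cdot,+}$ handles $L$ and $U$.

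Next, to remove dependence on $\rho$, I would take two embeddings $\rho_i\colon G\hookrightarrow V_i$ and form the product embedding $\rho\colon G\hookrightarrow V_1\oplus V_2$, which is again a $G$-equivariant closed embedding and whose $T$-weight set is $\Phi_V=\Phi_{V_1}\cup\Phi_{V_2}$. By the density argument preceding Definition~\ref{def:Plambda}, there exists a rational cocharacter approximation $\lambda'$ to $\lambda$ with respect to $V$; the inclusions $\Phi_{V_i}\subseteq\Phi_V$ make $\lambda'$ automatically an approximation with respect to each $V_i$ as well. By the first paragraph, the subgroup defined via $\rho_i$ and any approximation equals the one defined via $\rho_i$ using $\lambda'$; but for the actual cocharacter $n\lambda'\in Y_T$ the subgroup $P_{n\lambda'}\subseteq G$ is intrinsic (its definition in Section~\ref{sec:prelims} makes no reference to $\rho$), so this common value is the same whether one uses $\rho_1$, $\rho_2$, or $\rho$. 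The arguments for $L$ and $U$ are identical.

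For (ii), the key step is the claim about $g\cdot\lambda'$. The weights of $gTg^{-1}$ on $V$ are precisely the characters $g\cdot\chi$ for $\chi\in\Phi$, and the $G(k)$-invariance $\langle g\cdot\mu,g\cdot\chi\rangle=\langle \mu,\chi\rangle$ recorded in Section~\ref{sec:prelims} for $\mu\in Y_T$ extends $\KK$-linearly to all $\mu\in Y_T(\KK)$. Applying this to $\mu=\lambda'$ and to $\mu=\lambda$ yields
\[
\sgn\langle g\cdot\lambda',g\cdot\chi\rangle=\sgn\langle \lambda',\chi\rangle=\sgn\langle \lambda,\chi\rangle=\sgn\langle g\cdot\lambda,g\cdot\chi\rangle
\]
for every $\chi\in\Phi$, which is exactly the defining condition~\eqref{eq:chi} for $g\cdot\lambda'$ to be a cocharacter approximation to $g\cdot\lambda$ with respect to $V$ (now viewed as a rational module embedding for the conjugate maximal split torus $gTg^{-1}$). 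The conjugation identities $P_{g\cdot\lambda}=gP_\lambda g^{-1}$, $L_{g\cdot\lambda}=gL_\lambda g^{-1}$, $U_{g\cdot\lambda}=gU_\lambda g^{-1}$ then follow by combining Definition~\ref{def:Plambda} with the analogous already-known identities for honest cocharacters (Section~\ref{sec:prelims}).

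I do not foresee any serious obstacle: once one observes that every construction is driven solely by the sign data $\sgn\langle \lambda,\cdot\rangle$ on $\Phi$, everything becomes bookkeeping. The only mildly delicate point is the product-embedding trick, which simultaneously neutralises the dependence on $V$ in both factors and could conceivably be replaced by a direct argument comparing $P_{\lambda'_1}$ and $P_{\lambda'_2}$ across different embeddings, but the product construction is cleaner.
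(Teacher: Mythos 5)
Your proposal is correct and follows essentially the same route as the paper: for (i) you fix $\rho$, invoke the weight-space description from Remark~\ref{rem:weightspaces} to see that only the sign data $\sgn\langle\lambda,\cdot\rangle$ on $\Phi$ matters, and then handle the dependence on $\rho$ by forming the diagonal embedding into $V_1\oplus V_2$; for (ii) you use $G(k)$-invariance of the pairing together with the identification of the $gTg^{-1}$-weights with $g\cdot\Phi$. The one small thing you do more explicitly than the paper is the observation that $P_{n\lambda'}$ for $n\lambda'\in Y_T$ is intrinsic to $G$ (defined without reference to $\rho$), which is the step that closes the loop in the $\rho$-independence argument; the paper leaves this implicit, so your version is arguably slightly more transparent.
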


\begin{proof}
 (i). We have $P_{\lambda'}= \rho^{-1}(V_{\lambda', \geq 0})$, $L_{\lambda'}= \rho^{-1}(V_{\lambda', 0})$ and $U_{\lambda'}= \rho^{-1}(V_{\lambda', >0})$ (Remark~\ref{rem:weightspaces}), so for a given $\rho$ the fact that these subgroups do not depend on the choice of $\lambda'$ is clear.  Now let $\rho_i\colon G\to V_i$ be $G$-equivariant embeddings with $\Phi_i$ the sets of weights of $T$ on $V_i$ for $i = 1,2$.  
 Let $\rho\colon G\to V_1\oplus V_2$ be the diagonal embedding, and let $\Phi = \Phi_1 \cup \Phi_2$ be the associated set of weights.  
 Choose $\lambda'\in Y_{T}$ such that $\lambda'$ is a cocharacter  approximation to $\lambda$ with respect to $V_1\oplus V_2$.  Then $\sgn(\langle \lambda', \chi\rangle)= \sgn(\langle \lambda, \chi\rangle)$ for all $\chi\in \Phi_1$ and for all $\chi\in \Phi_2$, so $\lambda'$ is a cocharacter approximation to $\lambda$ with respect to $V_1$ and with respect to $V_2$.  This proves part (i).
 
 (ii). The first statement follows from the fact that the pairing $\langle -,- \rangle$ is $G(k)$-invariant, and the fact that the set of weights of $gTg\inverse$ on $V$ is
 $g \cdot \Phi$, where $\Phi$ is the set of weights of $T$.  The second statement now follows.  
\end{proof}

\begin{rems}
\label{rem:L_robust}
 (i). Let $S$ be any split torus of $G$, let $\lambda\in Y_{S}(\KK)$, and let $T$ be any maximal split torus of $G$ containing $S$.
 We claim that there exists a cocharacter approximation $\lambda'$ to $\lambda$ contained in $Y_S\subseteq Y_T$.
 To see this, note that in the construction above, the $S$-weights on $V$ are given by the restrictions of the $T$-weights on $V$ to $S$, because $S\subseteq T$. 
 We may therefore perform the construction of $\lambda'$ above entirely within $Y_S(\KK)$ just using these restrictions, and we obtain a cocharacter approximation $\lambda'\in Y_S$ satisfying \eqref{eq:chi}.
 Analogous conclusions hold for rational cocharacter approximations and rational approximating sequences in $Y_S(\KK)$.
 
(ii). Now suppose $T_1$ and $T_2$ are two maximal split tori of $G$ that contain $S$ and let $i_1, i_2$ be the inclusions of $S$ in $T_1, T_2$ respectively.
Let $\lambda'$ be the cocharacter approximation contained
in $Y_S \subseteq Y_{T_1} \cap Y_{T_2}$ from part (i).
It is immediate from the construction that $L_{(i_j)_*(\lambda)}= L_{i_j\circ \lambda'}$ and $P_{(i_j)_*(\lambda)}= P_{i_j\circ \lambda'}$ for $j= 1,2$, so $L_{(i_1)_*(\lambda)}= L_{(i_2)_*(\lambda)}= L_\lambda$ and $P_{(i_1)_*(\lambda)}= P_{(i_2)_*(\lambda)}= P_\lambda$.  In particular, both $T_1$ and $T_2$ are contained in $L_\lambda$, and there is no dependence of our construction on the choice of a maximal split torus containing $S$.

(iii). Let $T$ be a maximal split torus of $G$.  It is clear from the construction that if $\lambda\in Y_T(\KK)$ then $T\subseteq L_\lambda\subseteq P_\lambda$.  It is also clear that $P_\lambda= \rho^{-1}(V_{\lambda, \geq 0})$, $L_\lambda= \rho^{-1}(V_{\lambda, 0})$ and $U_{\lambda}= \rho^{-1}(V_{\lambda, >0})$ for any $\lambda\in Y_T(\KK)$.
\end{rems}

The next result is an extension to all of $Y_T(\KK)$ of the obvious fact that for a cocharacter $\lambda\in Y_T$ we have $L_\lambda = G$ 
if and only if $\lambda$ evaluates in the centre of $G$.

\begin{lem}
\label{lem:lives_in_centre}
 Let $T$ be a maximal split torus of $G$ and let $\lambda\in Y_{T}(\KK)$.  Let $Z_\lambda$ be the unique maximal split torus of $Z(L_\lambda)^0$ and let $Z$ be the unique maximal split torus of $Z(G)^0$.  Then:
 \begin{itemize}
  \item [(i)] $\lambda\in Y_{Z_\lambda}(\KK)$.
  \item [(ii)] $L_\lambda= G$ if and only if $\lambda\in Y_Z(\KK)$.
 \end{itemize}
 \end{lem}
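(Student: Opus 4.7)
The plan is to use a rational approximating sequence to $\lambda$ together with a closedness argument to establish (i), and then deduce both implications of (ii) from (i) and Remark~\ref{rem:L_robust}.

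For (i), fix a $G$-equivariant embedding $\rho\colon G\hookrightarrow V$ into a rational $G$-module, and choose a rational approximating sequence $(\lambda_n)$ to $\lambda$ in $Y_T(\KK)$. For each $n$, pick a positive integer $m_n$ so that $\mu_n:=m_n\lambda_n\in Y_T$; then $\mu_n$ is a genuine cocharacter approximation to $\lambda$. By the defining sign condition \eqref{eq:chi}, $\Phi_{\mu_n,0}=\Phi_{\lambda,0}$, hence $V_{\mu_n,0}=V_{\lambda,0}$, and Remark~\ref{rem:L_robust}(iii) gives
\[
L_{\mu_n}=\rho^{-1}(V_{\mu_n,0})=\rho^{-1}(V_{\lambda,0})=L_\lambda.
\]
Since $L_{\mu_n}=C_G(\IM(\mu_n))$, we have $\IM(\mu_n)\subseteq Z(L_\lambda)$; being connected, it lies in $Z(L_\lambda)^0$, and being a split subtorus (as the image of a cocharacter into $T$), it must lie in the unique maximal split torus $Z_\lambda$. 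Thus $\mu_n\in Y_{Z_\lambda}$ and $\lambda_n\in Y_{Z_\lambda}(\QQ)$. Now $T/Z_\lambda$ is a torus, so $Y_T/Y_{Z_\lambda}\cong Y_{T/Z_\lambda}$ is torsion-free; hence $Y_{Z_\lambda}$ is a direct summand of $Y_T$, and $Y_{Z_\lambda}(\KK)$ sits as a closed $\KK$-linear subspace of $Y_T(\KK)$. Passing to the limit $\lambda_n\to\lambda$ then yields $\lambda\in Y_{Z_\lambda}(\KK)$.

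For (ii), the forward direction is immediate from (i): if $L_\lambda=G$ then $Z(L_\lambda)^0=Z(G)^0$, so by uniqueness $Z_\lambda=Z$, whence $\lambda\in Y_Z(\KK)$. For the converse, note that $Z\subseteq T$ since $ZT$ is a split torus and $T$ is a maximal split torus of $G$. Assuming $\lambda\in Y_Z(\KK)$, Remark~\ref{rem:L_robust}(i) applied with $S=Z$ furnishes a cocharacter approximation $\lambda'$ to $\lambda$ lying in $Y_Z$. Since $\IM(\lambda')\subseteq Z\subseteq Z(G)^0$ centralises all of $G$, we obtain $L_{\lambda'}=C_G(\IM(\lambda'))=G$, and therefore $L_\lambda=L_{\lambda'}=G$ by Definition~\ref{def:Plambda}. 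The only slightly technical ingredients are the equality $L_{\mu_n}=L_\lambda$ along the approximating sequence and the closedness of $Y_{Z_\lambda}(\KK)$ in $Y_T(\KK)$; both are routine.
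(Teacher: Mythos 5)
Your proof is correct and follows essentially the same route as the paper's: use a rational approximating sequence to reduce part~(i) to the case of genuine cocharacters, deduce $\lambda_n\in Y_{Z_\lambda}(\KK)$ and pass to the limit using closedness, then obtain part~(ii) from part~(i) (forward) and from Remark~\ref{rem:L_robust}(i) applied to $Z$ (converse). The only differences are cosmetic — you spell out why $Y_{Z_\lambda}(\KK)$ is a closed subspace (via the torsion-free quotient argument) and you make the chain $\IM(\mu_n)\subseteq Z(L_\lambda)^0\subseteq Z_\lambda$ more explicit — but the underlying argument is the one in the paper.
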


\begin{proof}
 (i). Let $(\lambda_n)$ be a rational approximating sequence to $\lambda$ in $Y_T(\KK)$.  Choose $a_n\in \NN$ such that $a_n\lambda_n\in Y_{T}$ for each $n\in \NN$.  Then for all $n\in \NN$, $L_{\lambda_n}= L_{a_n\lambda_n}= C_G(a_n\lambda_n)$, so $a_n\lambda_n\in Y_{Z_{\lambda_n}}$, and so $\lambda_n\in Y_{Z_{\lambda_n}}(\KK)$.  But $L_{\lambda_n}= L_\lambda$ for $n$ sufficiently large, and $Y_{Z_\lambda}(\KK)$ is closed in $Y_{T}(\KK)$, so $\lambda\in Y_{Z_\lambda}(\KK)$, as required.
 
 (ii). The forward implication follows from part (i).  The reverse implication follows from Remark~\ref{rem:L_robust}(i) applied to the split torus $Z$ --- we may find a cocharacter approximation $\lambda'\in Y_Z$ and then we have $L_\lambda = L_{\lambda'} = G$.
\end{proof}

\subsection{The vector edifice}
\label{subsec:vector_edifice}
Now we can define $Y_{G}(\KK)$ and $V_{G}(\KK)$.   We work with the disjoint union $\bigsqcup_T Y_{T}(\KK)$, where $T$ ranges over the maximal split tori of $G$.  It is convenient to write elements of $\bigsqcup_T Y_{T}(\KK)$ explicitly as pairs $(T,\lambda)$, where $T$ is a maximal split torus of $G$ and $\lambda\in Y_{T}(\KK)$.
The group $G(k)$ acts on $\bigsqcup_T Y_{T}(\KK)$ by $g\cdot (T,\lambda)= (gTg^{-1},g\cdot \lambda)$.

We define two relations on $\bigsqcup_T Y_{T}(\KK)$ by
$$ (T_1,\lambda_1)\sim (T_2,\lambda_2)\ \mbox{if there exists $l\in L_{\lambda_1}(k)$ such that $T_2= lT_1l^{-1}$ and $\lambda_2= l\cdot \lambda_1$} $$
and
$$ (T_1,\lambda_1)\approx (T_2,\lambda_2)\ \mbox{if there exists $g\in P_{\lambda_1}(k)$ such that $T_2= gT_1g^{-1}$ and $\lambda_2= g\cdot \lambda_1$}. $$

\begin{lem}
\label{lem:PLindpndce}
With the notation as above, we have the following.
 \begin{itemize}
  \item[(i)] If $(T_1,\lambda_1) \sim (T_2,\lambda_2)$ then $L_{\lambda_1} = L_{\lambda_2}$.
  \item[(ii)] If $(T_1,\lambda_1)\approx (T_2,\lambda_2)$ then $P_{\lambda_1} = P_{\lambda_2}$.
  \item[(iii)] The relations $\sim$ and $\approx$ are equivalence relations.
 \end{itemize}
\end{lem}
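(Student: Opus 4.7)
The plan is to deduce parts (i) and (ii) directly from Lemma~\ref{lem:par_levi_prop}(ii), and then bootstrap those into the verification of reflexivity, symmetry, and transitivity in part (iii). The point is that Lemma~\ref{lem:par_levi_prop}(ii) already supplies the equivariance formulas $P_{g\cdot\lambda}=gP_\lambda g^{-1}$ and $L_{g\cdot\lambda}=gL_\lambda g^{-1}$ for any $g\in G(k)$ and $\lambda\in Y_T(\KK)$, so the content of (i) and (ii) is simply that when the conjugating element already lies in $L_{\lambda_1}(k)$, respectively $P_{\lambda_1}(k)$, it normalises the relevant subgroup.

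For (i), I would start from a witness $l\in L_{\lambda_1}(k)$ with $T_2=lT_1l^{-1}$ and $\lambda_2=l\cdot\lambda_1$, and apply Lemma~\ref{lem:par_levi_prop}(ii) to get $L_{\lambda_2}=L_{l\cdot\lambda_1}=lL_{\lambda_1}l^{-1}=L_{\lambda_1}$, the last equality because $l\in L_{\lambda_1}(k)$. Part (ii) is identical with $P$ in place of $L$ and a witness $g\in P_{\lambda_1}(k)$.

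For (iii), reflexivity is immediate using the identity element. For symmetry of $\sim$, given $(T_1,\lambda_1)\sim(T_2,\lambda_2)$ via $l\in L_{\lambda_1}(k)$, part (i) gives $L_{\lambda_2}=L_{\lambda_1}$, so $l^{-1}\in L_{\lambda_2}(k)$; combined with $l^{-1}T_2l=T_1$ and $l^{-1}\cdot\lambda_2=\lambda_1$ this shows $(T_2,\lambda_2)\sim(T_1,\lambda_1)$. For transitivity, if $(T_1,\lambda_1)\sim(T_2,\lambda_2)$ via $l_1$ and $(T_2,\lambda_2)\sim(T_3,\lambda_3)$ via $l_2\in L_{\lambda_2}(k)$, then (i) gives $l_2\in L_{\lambda_1}(k)$, so $l_2l_1\in L_{\lambda_1}(k)$ witnesses $(T_1,\lambda_1)\sim(T_3,\lambda_3)$. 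The argument for $\approx$ is verbatim the same, using (ii) to replace $L_{\lambda_2}(k)$ by $P_{\lambda_2}(k)=P_{\lambda_1}(k)$ at the relevant step.

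There is no real obstacle here: the whole lemma is a formal consequence of Lemma~\ref{lem:par_levi_prop}(ii) together with the observation that an element of a subgroup normalises that subgroup. The only point to be a little careful about is that the symmetry and transitivity steps for $\sim$ and $\approx$ are circular unless (i) and (ii) are proved first, so I would make sure to establish the two subgroup-invariance statements before turning to the equivalence-relation axioms.
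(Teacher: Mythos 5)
Your argument is correct and is precisely the approach the paper takes: the paper reduces (i) and (ii) to Lemma~\ref{lem:par_levi_prop}(ii) and then derives (iii) from (i) and (ii), exactly as you do. You have simply written out the details that the paper leaves implicit, including the correct observation that (i) and (ii) must precede the symmetry and transitivity checks.
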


\begin{proof}
 Parts (i) and (ii) are a consequence of Lemma~\ref{lem:par_levi_prop}(ii), and (iii) follows from (i) and (ii).
\end{proof}

\begin{defn}\label{defn:V_G(K)}
We define
\begin{equation*}
Y_{G}(\KK) := \bigsqcup_T Y_{T}(\KK)/\sim
\end{equation*}
and
\begin{equation*}
V_{G}(\KK) := \bigsqcup_T Y_{T}(\KK)/\approx,
\end{equation*}
and we call $V_G(\KK)$ the \emph{vector edifice of $G$ (over $\KK$)}.
When we refer to ``a vector edifice $V$'' below, it is with the understanding that $V = V_G(\KK)$ for some $k$-group $G$ and for $\KK= \QQ$ or~$\RR$.
\end{defn}

We can regard $V_{G}(\KK)$ as a quotient of $Y_{G}(\KK)$, since $\approx$ is coarser than $\sim$,
and we denote the corresponding projection by 
$$
\phi_G\colon Y_G(\KK)\to V_G(\KK).
$$ 
We write $\varpi_G\colon \bigsqcup_T Y_T(\KK)\to Y_G(\KK)$ for the canonical projection and we define 
$$\omega_G := \phi_G\circ \varpi_G \colon \bigsqcup_T Y_T(\KK)\to V_G(\KK).$$

Henceforth, we often abuse notation and denote elements of $Y_G(\KK)$ simply by $\lambda$.  Note that we may regard $Y_G$ as a subset of $Y_G(\KK)$: for given $\lambda\in Y_G$,
the equivalence relation $\sim$ does the job of identifying all pairs $(T,\lambda)$ where $T$ is a maximal split torus of $G$  
containing the image of $\lambda$, because all such tori are conjugate by elements of $L_\lambda(k)$.
More generally, if $S$ is a subtorus of two maximal split tori $T_1$ and $T_2$, then $\sim$ formally matches up the ``common subspace'' $Y_S(\KK)$ inside $Y_{T_1}(\KK)$ and $Y_{T_2}(\KK)$.  

\begin{rem}
\label{rem:common_tor}
	We observe that Lemma~\ref{lem:common_tor}
	holds not only for elements of $Y_G$ but also for elements of $Y_G(\KK)$:
	If $\lambda\in Y_G(\KK)$ and $\mu\in Y_{P_\lambda}(\KK)$ then there exist a maximal split torus $T$ of $G$ and $u\in U_\lambda(k)$ such that $\lambda, u\cdot \mu$ belong to $Y_T(\KK)$. To see this, repeat the proof of  Lemma~\ref{lem:common_tor} with 
	$Y_{T_0}$, $Y_{T_1}$, and $Y_{T}$ replaced by 
	$Y_{T_0}(\KK)$, $Y_{T_1}(\KK)$, and $Y_{T}(\KK)$. 
\end{rem}

\begin{rem}
\label{rem:obvious_gen}
  Lemmas~\ref{lem:rigidity}, \ref{lem:basicpropertiesRpars} and \ref{lem:Levi_factor} and Corollary~\ref{cor:Levi_conj} extend to $Y_G(\KK)$: just apply each original result to suitable cocharacter approximations.
\end{rem}

\begin{rems}
\label{rem:eqvce_props}
(i). 
One checks easily that the action of $G(k)$ on $\bigsqcup_T Y_{T}(\KK)$ descends to give actions of $G(k)$ on $Y_G(\KK)$ and $V_G(\KK)$.
We sketch the argument for $V_G(\KK)$.
Suppose we have a pair $(T,\lambda)$ consisting of a maximal split torus $T$ and $\lambda\in Y_T(\KK)$, and elements $p\in P_\lambda(k)$ and  $g\in G(k)$. 
We wish to show that 
$$
(gTg^{-1},g\cdot \lambda)\approx (g(pTp^{-1})g^{-1},g\cdot (p\cdot\lambda)) = (gpT(gp)^{-1},(gp)\cdot\lambda).
$$
Since both $gTg^{-1}$ and $gpT(gp)^{-1}$ are split tori of $gP_\lambda g^{-1} = P_{g \cdot \lambda}$, there is a $p' \in P_{g\cdot\lambda}(k)$ such that $p'$ conjugates $gTg^{-1}$ to $gpT(gp)^{-1}$. 
Writing $p' = gqg^{-1}$ for some $q\in P_\lambda(k)$, we see that $q^{-1}p$ belongs to $N_{P_\lambda(k)}(T)$, and this group coincides with $N_{L_\lambda(k)}(T)$, by the extension of Lemma~\ref{lem:rigidity} to $Y_G(\KK)$ (Remark \ref{rem:obvious_gen}).
Consequently, $q^{-1}p$ fixes $\lambda$, so 
$$
p'\cdot(g\cdot\lambda) = (gq)\cdot\lambda = (gp)\cdot\lambda, 
$$
and we are done.

(ii).
In a similar vein, if $(T, \lambda)\approx (T,\mu)$ then $\lambda= \mu$.  
For suppose $g\in P_\lambda(k)$ is such that $gTg^{-1}= T$ and $g\cdot \lambda= \mu$. 
Then, by appealing to the extension of Lemma~\ref{lem:rigidity} 
to $Y_G(\KK)$ as in (i), we see that  $g\in L_\lambda(k)$, and hence $\lambda = \mu$.
\end{rems}

\begin{defn}
 Given $x\in V_G(\KK)$, we set $P_x:=P_\lambda$ 
 for any $\lambda\in Y_G(\KK)$ such that $\phi_G(\lambda) = x$.  Since the relation $\approx$ only identifies pairs $(T_1,\lambda_1)$ and $(T_2,\lambda_2)$ for which $P_{\lambda_1} = P_{\lambda_2}$ (Lemma~\ref{lem:PLindpndce}(ii)), this is well-defined.
\end{defn}

\begin{rem}
 For now we regard $V_G(\KK)$ just as a set.  In Section~\ref{sec:admmetric} we topologise it by endowing it with a metric.
\end{rem}

\begin{exmp}
	\label{ex:red_virtual_cochar}
	The definition of $P_\lambda$, $L_\lambda$ and $U_\lambda$ for  $\lambda\in Y_G(\KK)$ agree with those of \cite[Sec.\ 2.2]{BMR:strong} when $G$ is reductive (where the definitions are given in terms of pairings with roots).  To see this, we can assume by Remark~\ref{rem:L_robust}(iii) that $k$ is separably closed and choose a maximal torus $T$ of $G$.  Choose an equivariant embedding $i$ of $G$ in a rational $G$-module $V$.  Let $\lambda\in Y_T(\KK)$ and let $\lambda'\in Y_T$ be a cocharacter approximation to $\lambda$.  The derivative of $i$ gives a $G$-equivariant embedding of $\fg$ in $T_0(V)$, and 
	$T_0(V)$  is isomorphic to $V$ as a $G$-module.  Hence all the roots of $G$ with respect to $T$ appear in the set of weights of $T$ on $V$.  
	It follows that $\sgn\langle \lambda', \alpha\rangle= \sgn\langle \lambda, \alpha\rangle$ for every root $\alpha$.  Now $P_\lambda$ is equal to $P_{\lambda'}$, which is generated by $T$ together with the root groups $U_\alpha$ for the roots $\alpha$ such that $\langle \lambda', \alpha\rangle\geq 0$.  This shows that $P_\lambda$ coincides with the subgroup defined in \cite[Sec.\ 2.2]{BMR:strong}, since $\langle \lambda, \alpha\rangle\geq 0$ if and only if $\langle \lambda', \alpha\rangle\geq 0$ for each root $\alpha$.  The argument for $L_\lambda$ and $U_\lambda$ is similar: here $L_{\lambda'}$ is generated by $T$
	together with the root groups $U_\alpha$ for the roots $\alpha$ such that $\langle \lambda', \alpha\rangle = 0$, and 
	$U_{\lambda'}$ is generated by $T$
	together with the root groups $U_\alpha$ for the roots $\alpha$ such that $\langle \lambda', \alpha\rangle > 0$.
\end{exmp}

\begin{lem}
	\label{lem:homapprox}
	Let $f\colon G\to H$ be a homomorphism of connected $k$-groups.  Let $T$ be a maximal split torus of $G$ and let $S$ be a maximal split torus of $H$ such that $f(T)\subseteq S$.  Let $\lambda\in Y_T(\KK)$.  Then there exist a $G$-equivariant embedding $\nu\colon G\to V$, an $H$-equivariant embedding $\psi\colon H\to W$, and $\lambda'\in Y_T$ such that $\lambda'$ is a cocharacter approximation to $\lambda$ with respect to $V$ and $f_*(\lambda')$ is a cocharacter approximation to $f_*(\lambda)$ with respect to $W$.
\end{lem}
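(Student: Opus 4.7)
The plan is to choose the two embeddings in a coordinated way so that a \emph{single} cocharacter approximation $\lambda'$ controls both the $T$-weights relevant to $V$ and the $S$-weights relevant to $W$ simultaneously.  The trick is to let $W$ do double duty: it plays the role of an $H$-module for the $\psi$-embedding, and, via $f$, it also contributes $T$-weights inside the $G$-module that we use to construct $\lambda'$.

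First, I would invoke \cite[Lem.~1.1(a)]{kempf} (as in Remark~\ref{rem:weightspaces}) to obtain a $G$-equivariant closed embedding $\nu_0\colon G \to V_0$ into some rational $G$-module $V_0$, and likewise an $H$-equivariant closed embedding $\psi\colon H\to W$ into a rational $H$-module $W$.  Pulling back the $H$-action on $W$ along $f$ makes $W$ into a rational $G$-module; let $V := V_0\oplus W$ with the diagonal $G$-action, and define $\nu\colon G\to V$ by $\nu(g)=(\nu_0(g),\psi(f(g)))$.  Since projection onto the first factor recovers the closed embedding $\nu_0$, the map $\nu$ is itself a $G$-equivariant closed embedding into the rational $G$-module $V$.

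Next, note that the set $\Phi_V$ of $T$-weights on $V$ is the disjoint union (with multiplicity) of the $T$-weights $\Phi_{V_0}$ on $V_0$ and the $T$-weights on $W$.  Because $T$ acts on $W$ through $f|_T\colon T\to S$, every $T$-weight on $W$ has the form $\chi\circ f|_T$ for some $\chi\in\Phi_W$, the set of $S$-weights on $W$.  Functoriality of the character--cocharacter pairing (see Section~\ref{sec:prelims}) gives the key identity
\[
\langle \mu,\,\chi\circ f|_T\rangle = \langle f_*(\mu),\,\chi\rangle
\qquad\text{for all } \mu\in Y_T(\KK),\ \chi\in X_S.
\]

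Finally, I would apply the density/continuity construction preceding Definition~\ref{def:Plambda} to the $G$-module $V$ to produce a cocharacter approximation $\lambda'\in Y_T$ to $\lambda$ with respect to $V$; by definition $\sgn\langle \lambda',\chi\rangle = \sgn\langle \lambda,\chi\rangle$ for every $\chi\in\Phi_V$.  Restricting to $\chi\in\Phi_{V_0}$ shows $\lambda'$ is a cocharacter approximation to $\lambda$ with respect to $V_0$, and restricting to characters of the form $\chi\circ f|_T$ with $\chi\in\Phi_W$, together with the displayed identity, shows that $\sgn\langle f_*(\lambda'),\chi\rangle=\sgn\langle f_*(\lambda),\chi\rangle$ for all $\chi\in\Phi_W$, i.e.\ $f_*(\lambda')$ is a cocharacter approximation to $f_*(\lambda)$ with respect to $W$.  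The argument is not hard; the only point requiring any care is the bookkeeping that pullback of $S$-weights along $f|_T$ correctly identifies the $T$-weight set on $W$, so that a single approximation in $Y_T$ transports to one in $Y_S$ under $f_*$.
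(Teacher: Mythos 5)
Your proof is correct and is essentially the same as the paper's: both form $V = V_0\oplus W$ with $\nu(g)=(\nu_0(g),\psi(f(g)))$, observe that $W$ acquires a rational $G$-module structure by pulling back along $f$, and then use the adjunction $\langle\mu,\chi\circ f|_T\rangle=\langle f_*(\mu),\chi\rangle$ to see that any cocharacter approximation to $\lambda$ with respect to $V$ pushes forward under $f_*$ to a cocharacter approximation to $f_*(\lambda)$ with respect to $W$. You spell out a couple of bookkeeping points (closedness of $\nu$ via the first projection, the identification of $T$-weights on $W$ as pullbacks of $S$-weights) that the paper leaves implicit, but the argument is the same.
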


\begin{proof}
	Pick a $G$-equivariant embedding $\phi\colon G\to V_1$ for some rational $G$-module $V_1$, and an $H$-equivariant embedding $\psi\colon H\to W$ for a rational $H$-module $W$.  Define $\nu\colon G\to V:= V_1\oplus W$ by $\nu(g)= (\phi(g), \psi(f(g)))$ for $g\in G(A)$ for any $k$-algebra $A$.  Then $\nu$ is a $G$-equivariant embedding of $G$ in $V$, and it follows from the construction that if $\lambda'$ is a cocharacter approximation to $\lambda$ with respect to $V$ then $f_*(\lambda')$ is a cocharacter approximation to $f_*(\lambda)$ with respect to $W$. Here we used the fact that $\langle \lambda, \chi \circ f\rangle = \langle f \circ \lambda, \chi \rangle$, where $\chi$ is any weight of $S$ on $W$ and $\lambda \in Y_T$, and that this formula extends to all $\lambda \in Y_T(\KK)$ (replacing $f \circ \lambda$ with $f_*(\lambda)$).
\end{proof}

\begin{rem}
	The proof of Lemma \ref{lem:homapprox} actually gives a stronger conclusion: in the notation of the lemma 
	there exist equivariant embeddings such that for
	any $\lambda \in Y_T(\KK)$ and
	 \emph{any}
	 cocharacter approximation $\lambda'$ to $\lambda$ with respect to $V$, $f_*(\lambda')$ is also a cocharacter	approximation to
	 $f_*(\lambda)$ with respect to $W$.   It follows that if $(\lambda_n)$ is a rational approximating sequence in $Y_T(\KK)$ to $\lambda$ then $(f(\lambda_n))$ is a rational approximating sequence in $Y_S(\KK)$ to $f_*(\lambda)$.
\end{rem}

\begin{cor}
	\label{cor:par_intersect}
	Suppose $G$ is a subgroup of a connected $k$-group $H$.  Then for any $\lambda\in Y_G(\KK)$ we have $P_\lambda= P_\lambda(H)\cap G$, $L_\lambda= L_\lambda(H)\cap G$ and $U_\lambda= U_\lambda(H)\cap G$.
\end{cor}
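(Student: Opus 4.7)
The plan is to reduce the assertion to the corresponding identity for honest cocharacters, where the subgroup compatibility $P_\mu(G) = P_\mu(H) \cap G$ together with its analogues for $L_\mu$ and $U_\mu$ has already been recorded in the paragraph just after Lemma~\ref{lem:big_cell}. The bridge from $Y_G(\KK)$ back to $Y_G$ is exactly Lemma~\ref{lem:homapprox}, applied to the inclusion $f\colon G\hookrightarrow H$.

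Concretely, I would first represent $\lambda\in Y_G(\KK)$ by a pair $(T,\lambda_0)$ with $T$ a maximal split torus of $G$ and $\lambda_0\in Y_T(\KK)$, and then choose a maximal split torus $S$ of $H$ with $T\subseteq S$. By Remark~\ref{rem:L_robust}(ii), the particular choice of $S$ is irrelevant when computing the R-parabolic, R-Levi and unipotent-radical-type subgroups of $H$ attached to $\lambda_0$ (now viewed in $Y_S(\KK)\subseteq Y_H(\KK)$). Applying Lemma~\ref{lem:homapprox} to $f$, $T$ and $S$ produces $G$- and $H$-equivariant embeddings $\nu\colon G\to V$ and $\psi\colon H\to W$, together with a single element $\lambda_0'\in Y_T$ which is simultaneously a cocharacter approximation to $\lambda_0$ with respect to $V$ (for the $G$-picture) and a cocharacter approximation to the same element viewed in $Y_S(\KK)$ with respect to $W$ (for the $H$-picture).

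The remainder is just unwinding Definition~\ref{def:Plambda}: from $\lambda_0'$ we read off $P_\lambda = P_{\lambda_0'}(G)$ and $P_\lambda(H) = P_{\lambda_0'}(H)$, and likewise for $L_\lambda$ and $U_\lambda$. Since $\lambda_0'$ is an honest cocharacter of $G\subseteq H$, the displayed subgroup intersection formulas for honest cocharacters immediately give $P_{\lambda_0'}(G) = P_{\lambda_0'}(H)\cap G$, $L_{\lambda_0'}(G) = L_{\lambda_0'}(H)\cap G$ and $U_{\lambda_0'}(G) = U_{\lambda_0'}(H)\cap G$, finishing the three identities.

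I do not expect a serious obstacle here: Lemma~\ref{lem:homapprox} was formulated precisely to permit simultaneous cocharacter approximation along a group homomorphism, so once the representative $(T,\lambda_0)$ and the containing torus $S\subseteq H$ are fixed, the argument is essentially notational. The only care needed is in interpreting $P_\lambda(H)$ for $\lambda\in Y_G(\KK)$, namely as $P_{\lambda_0}(H)$ computed inside $Y_S(\KK)$; Remark~\ref{rem:L_robust}(ii) guarantees this is independent of the chosen $S$, and Remark~\ref{rem:L_robust}(iii) (or rather its set-up) identifies the resulting subgroups with $\rho^{-1}$ of the appropriate weight-space unions, which is where the honest-cocharacter case plugs in cleanly.
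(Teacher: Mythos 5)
Your proposal is correct and follows essentially the same route as the paper's proof: apply Lemma~\ref{lem:homapprox} to the inclusion $i\colon G\hookrightarrow H$ to obtain a single cocharacter approximation $\lambda'$ that works simultaneously for the $G$-picture and the $H$-picture, then invoke the honest-cocharacter identities $P_{\lambda'}(G)=P_{\lambda'}(H)\cap G$ (and analogues) from the paragraph after Lemma~\ref{lem:big_cell}. The extra discussion you add about independence of the chosen $S$ via Remark~\ref{rem:L_robust}(ii) is harmless but not strictly needed, since Definition~\ref{def:Plambda} and Lemma~\ref{lem:par_levi_prop}(i) already guarantee the subgroups attached to $\lambda$ are well-defined.
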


\begin{proof}
	Let $i\colon G\to H$ be the inclusion.  By Lemma~\ref{lem:homapprox}, there exists $\lambda'\in Y_G$ such that we have the following equalities: 
	$P_\lambda= P_{\lambda'}$; $L_\lambda= L_{\lambda'}$; $U_\lambda= U_{\lambda'}$; $P_{i^*(\lambda)}(H)= P_{i^*(\lambda')}(H)$; $L_{i^*(\lambda)}(H)= L_{i^*(\lambda')}(H)$; and $U_{i^*(\lambda)}(H)= U_{i^*(\lambda')}(H)$.
	But $P_{i^*(\lambda')}(H)=P_{\lambda'}(H)$, because $i^*(\lambda')$ is just $\lambda'$ viewed as an element of $Y_H$ via the inclusion $Y_G\subseteq Y_H$,
	and similarly $L_{i^*(\lambda')}(H) = L_{\lambda'}(H)$ and $U_{i^*(\lambda')}(H) = U_{\lambda'}(H)$.
	The result follows since $P_{\lambda'}= P_{\lambda'}(H)\cap G$, $L_{\lambda'}= L_{\lambda'}(H)\cap G$ and $U_{\lambda'}= U_{\lambda'}(H)\cap G$.
\end{proof}

\begin{lem}
	\label{lem:par_levi_f_prop}
	Let $f\colon G\to H$ be a homomorphism of $k$-groups, let $T$ be a maximal split torus of $G$ and let $\lambda,\mu\in Y_T(\KK)$.  Let $S$ be a maximal split torus of $H$ such that $f(T)\subseteq S$.
	\begin{itemize}
		\item[(i)] We have $f(P_\lambda)\subseteq P_{f_*(\lambda)}$ and $f(L_\lambda)\subseteq L_{f_*(\lambda)}$, with equality if $f$ is surjective.
		\item[(ii)] If $f$ has finite kernel, then $P_{f_*(\lambda)} = P_{f_*(\mu)}$ implies $P_\lambda = P_\mu$, and similarly $L_{f_*(\lambda)} = L_{f_*(\mu)}$  implies $L_\lambda = L_\mu$.  Hence, if $f$ is surjective and has finite kernel, then for $\lambda,\mu\in Y_T(\KK)$ we have $P_\lambda= P_\mu$ if and only if $P_{f_*(\lambda)} = P_{f_*(\mu)}$, and $L_\lambda = L_\mu$ if and only if $L_{f_*(\lambda)} = L_{f_*(\mu)}$.
	\end{itemize}
\end{lem}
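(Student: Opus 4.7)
The plan for (i) is to reduce to the case of actual cocharacters via Lemma~\ref{lem:homapprox}: pick simultaneous $G$- and $H$-equivariant embeddings together with a cocharacter approximation $\lambda'\in Y_T$ to $\lambda$ such that $f\circ\lambda'\in Y_S$ is also a cocharacter approximation to $f_*(\lambda)$. Then Definition~\ref{def:Plambda} gives $P_\lambda=P_{\lambda'}$, $L_\lambda=L_{\lambda'}$, $P_{f_*(\lambda)}=P_{f\circ\lambda'}$ and $L_{f_*(\lambda)}=L_{f\circ\lambda'}$, so proving the containments reduces to showing $f(P_{\lambda'})\subseteq P_{f\circ\lambda'}$ and $f(L_{\lambda'})\subseteq L_{f\circ\lambda'}$. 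These follow by functoriality of limits: given $g\in P_{\lambda'}(A)$, applying $f_A$ to the extension of $a\mapsto\lambda'(a)g\lambda'(a)^{-1}$ to $\mathbb{A}^1_A$ produces an extension of the corresponding orbit map for $f(g)$ and $f\circ\lambda'$; the $L$-case is even easier, since $g\in L_{\lambda'}$ commutes with $\lambda'(\Gm)$. For the equality when $f$ is surjective, I would invoke \cite[Prop.~2.1.8]{CGP}, which gives $f(P_{\lambda'})=P_{f\circ\lambda'}$ and $f(L_{\lambda'})=L_{f\circ\lambda'}$ for the honest cocharacter~$\lambda'$.

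For (ii), I would first reduce to the case where $f$ is an isogeny by replacing the codomain with $f(G)$. Applying Corollary~\ref{cor:par_intersect} to $f(G)\subseteq H$ and $f_*(\lambda)\in Y_{f(G)}(\KK)$ shows $P_{f_*(\lambda)}(H)\cap f(G)=P_{f_*(\lambda)}(f(G))$, and similarly for $\mu$, so the hypothesis $P_{f_*(\lambda)}=P_{f_*(\mu)}$ descends to $f(G)$. We may therefore assume $f\colon G\to f(G)$ is an isogeny, in which case part~(i) yields $f(P_\lambda)=P_{f_*(\lambda)}=P_{f_*(\mu)}=f(P_\mu)$. Because $\ker(f)$ is finite, $\dim f^{-1}(f(P_\lambda))=\dim f(P_\lambda)=\dim P_\lambda$, so the smooth connected subgroup $P_\lambda$ has full dimension in $f^{-1}(f(P_\lambda))$ and coincides with $(f^{-1}(f(P_\lambda)))^{0}_{\red}$. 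The identical argument shows $P_\mu$ equals the same subgroup, so $P_\lambda=P_\mu$. The argument for $L$ is the same, and the ``hence'' clause then follows immediately: the forward direction in the surjective case is just the equality $f(P_\lambda)=P_{f_*(\lambda)}$ from part~(i), and the reverse is what has just been proved.

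The main obstacle is the dimension/identity-component argument at the end of part~(ii). One has to deal with the scheme-theoretic preimage $f^{-1}(f(P_\lambda))$, whose structure is opaque because $\ker(f)$ may fail to be smooth, so one cannot naively argue at the level of $\bar{k}$-points. The saving observation is that one does not actually need to analyse $\ker(f)$ at all: smoothness and connectedness of $P_\lambda$, together with the dimension equality forced by finiteness of $\ker(f)$, are enough to identify $P_\lambda$ intrinsically as the reduced identity component of $f^{-1}(f(P_\lambda))$.
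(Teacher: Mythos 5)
Your proof is correct and follows essentially the same route as the paper's: reduce to genuine cocharacters via Lemma~\ref{lem:homapprox}, use functoriality of limits for the containments in (i), and in (ii) exploit finiteness of $\ker f$ to identify $P_\lambda$ (over $\bar{k}$) with the reduced identity component of $f^{-1}(f(P_\lambda))$; your explicit reduction to the isogeny $G\to f(G)$ via Corollary~\ref{cor:par_intersect} makes precise what the paper compresses into the phrase ``finite extension.'' Two small points: the CGP reference for $f(P_{\lambda'})=P_{f\circ\lambda'}$ under surjectivity should be Cor.~2.1.9 (whose flatness hypothesis is supplied by surjectivity plus smoothness), not Prop.~2.1.8; and since $(\cdot)_{\red}$ need not be a subgroup scheme over an imperfect $k$, the reduced-identity-component step should be carried out after base change to $\bar{k}$ and then descended, as the paper does.
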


\begin{proof}
	(i). For any $\lambda'\in Y_T$, it is immediate from the definitions that $f(P_{\lambda'})\subseteq P_{f_*(\lambda')}$ and $f(L_{\lambda'})\subseteq L_{f_*(\lambda')}$, and we have equality by \cite[Cor.~2.1.9]{CGP} if $f$ is surjective (note that the hypotheses of \emph{loc.\ cit.}\ hold because a surjective map of smooth affine group schemes is faithfully flat by \cite[Prop.~1.70]{milne}).  By Lemma~\ref{lem:homapprox} there exists $\lambda'\in Y_T$ such that $\lambda'$ is a cocharacter approximation to $\lambda$ and
	$f_*(\lambda')$ is a cocharacter approximation to $f_*(\lambda)$.  So
	$f(P_\lambda) = f(P_{\lambda'}) \subseteq P_{f_*(\lambda')}= P_{f_*(\lambda)}$, with equality if $f$ is surjective, and similarly for $L_\lambda$.
	
	(ii). As in part (i), it is enough by Lemma~\ref{lem:homapprox} to prove the result when $\lambda\in Y_G$. 
	Let $P = P_{f_*(\lambda)}$.
	Since $f$ has finite kernel, the preimage $f^{-1}(P)$ is a finite extension of the smooth connected group $P_\lambda$.
	Hence, we may conclude that $((f_{\bar{k}})^{-1}(P_{\bar{k}}))_{\red}$ and $(P_\lambda)_{\bar{k}}$ have the same identity component, which is $(P_\lambda)_{\bar{k}}$ itself.
	Setting $Q=P_{f_*(\mu)}$, we deduce that
	\begin{align*}
	P = Q &\implies f^{-1}(P) = f^{-1}(Q) \implies ((f_{\bar{k}})^{-1}(P_{\bar{k}}))_{\red} = ((f_{\bar{k}})^{-1}(Q_{\bar{k}}))_{\red} \\
	&\implies (P_\lambda)_{\bar{k}} = (P_\mu)_{\bar{k}} \implies P_\lambda = P_\mu.
	\end{align*}
	A similar argument works for $L_\lambda$ and $L_\mu$.  The final assertion now follows from part (i).
\end{proof}

\subsection{Apartments}
 It follows from Remark~\ref{rem:eqvce_props} that the restriction of $\phi_G$ gives rise to a bijection from $Y_{T}(\KK)$ onto its image in 
$V_{G}(\KK)$; 
we denote this image by $V_{T}(\KK)$ and we call this an \emph{apartment} of $V_{G}(\KK)$.  The set $V_T(\KK)$ inherits the structure of a $\KK$-vector space from $Y_T(\KK)$. 
We denote the common zero of all the apartments by $0$. 

\begin{rem}\label{rem:inconsistent}
Our new notation has introduced a potential inconsistency --- given a split torus $T$ in $G$, we have two objects labelled $V_T(\KK)$: the apartment of $V_G(\KK)$ corresponding to $T$, and the vector edifice of $T$ as a $k$-group in its own right.
It is not hard to see that these two objects are essentially the same, but we leave a proper discussion of this until
we have introduced linear maps below; see Remark~\ref{rem:notinconsistent}. 
\end{rem}

\begin{lem}\label{lem:toritoapts}
The map $T\mapsto V_T(\KK)$ gives a bijection between the set of maximal split tori in $G$ and the set of apartments in $V_G(\KK)$.
\end{lem}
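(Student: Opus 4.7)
The plan is to handle well-definedness and surjectivity by unwinding definitions, and to focus the real work on injectivity. Well-definedness is automatic since each $V_T(\KK)$ is an apartment by construction, and surjectivity is built into the definition: apartments are precisely the sets of the form $V_T(\KK)$ for $T$ a maximal split torus. So everything reduces to proving that if $V_{T_1}(\KK) = V_{T_2}(\KK)$ for two maximal split tori $T_1,T_2$ of $G$, then $T_1 = T_2$. (If $G$ has no nontrivial split torus there is nothing to do, so I assume $T_1,T_2 \neq 1$.)

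To prove injectivity, the key idea is to pick a sufficiently generic cocharacter of $T_1$ and then exploit the equality of apartments at both $\lambda$ and $-\lambda$. Concretely, I will first produce a cocharacter $\lambda \in Y_{T_1}$ with the property that $L_\lambda = C_G(T_1)$. Choosing a $G$-equivariant embedding $\rho\colon G \to V$ into a rational $G$-module as in Remark~\ref{rem:weightspaces}, this amounts to requiring that $\langle \lambda, \chi\rangle \neq 0$ for every nonzero weight $\chi$ of $T_1$ on $V$: since there are only finitely many such weights and each cuts out a proper hyperplane in $Y_{T_1}(\QQ)$, a rational $\lambda$ outside their union exists, and we may scale to obtain an element of $Y_{T_1}$; then $L_\lambda = \rho^{-1}(V_{\lambda,0}) = \rho^{-1}(V_0) = C_G(T_1)$.

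Now $[\lambda]$ and $[-\lambda]$ both lie in $V_{T_1}(\KK) = V_{T_2}(\KK)$, so by the definition of the relation $\approx$ there exist $g \in P_\lambda(k)$ and $g' \in P_{-\lambda}(k)$ with $g T_1 g^{-1} = g' T_1 (g')^{-1} = T_2$. Since $T_1 \subseteq P_\lambda$ and $g \in P_\lambda(k)$, we get $T_2 = gT_1g^{-1} \subseteq P_\lambda$; analogously $T_2 \subseteq P_{-\lambda}$. Hence
\[
T_2 \;\subseteq\; P_\lambda \cap P_{-\lambda} \;=\; L_\lambda \;=\; C_G(T_1).
\]

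The final step is then a purely torus-theoretic observation: $T_2 \subseteq C_G(T_1)$ means $T_1$ and $T_2$ commute, so the multiplication map $T_1 \times T_2 \to G$ is a homomorphism of algebraic groups whose image is a closed subgroup which is a quotient of $T_1 \times T_2$, hence a split torus containing both $T_1$ and $T_2$. Maximality of $T_1$ (resp.\ $T_2$) as a split torus of $G$ forces $T_1T_2 = T_1 = T_2$, and we are done. The only part of the argument that requires any real care is the production of the generic $\lambda$ with $L_\lambda = C_G(T_1)$, but this is just a standard weight-avoidance argument inside $Y_{T_1}(\QQ)$; once that is in hand, the remaining steps are essentially formal consequences of $L_\lambda = P_\lambda \cap P_{-\lambda}$ and the definition of a maximal split torus.
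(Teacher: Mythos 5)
Your proof is correct and shares the same structural core as the paper's: reduce to injectivity, show $T_2 \subseteq C_G(T_1)$ using the fact that $V_{T_1}(\KK)=V_{T_2}(\KK)$ forces $T_2$ into both $P_\lambda$ and $P_{-\lambda}$ for cocharacters $\lambda$ of $T_1$, and conclude by maximality. The one place you diverge is in how you obtain $T_2\subseteq C_G(T_1)$: you construct a single ``generic'' $\lambda \in Y_{T_1}$ with $L_\lambda = C_G(T_1)$ via a weight-avoidance argument inside $Y_{T_1}(\QQ)$, whereas the paper simply observes $T_2\subseteq L_\lambda = C_G(\mathrm{Im}(\lambda))$ for \emph{every} $\lambda\in Y_{T_1}$, so $T_2$ commutes with every cocharacter of $T_1$ and hence with $T_1$ itself (a split torus being generated by the images of its cocharacters). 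The paper's route dispenses with the genericity construction entirely and is therefore a bit shorter; your version is also fine and has the minor pedagogical merit of isolating a single cocharacter witnessing the centraliser, but it buys you nothing extra here. One small caveat you flag correctly: when $T_1$ is trivial the weight-avoidance step is vacuous, so that case must be handled separately (as you do), whereas the paper's intersection argument works uniformly since the empty intersection convention gives $C_G(1)=G\supseteq T_2$ trivially (and then both tori are trivial).
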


\begin{proof}
The given map is surjective by definition, so we just need to show that it is injective.  Suppose $T_1$ and $T_2$ are maximal split tori in $G$,
and suppose that $V_{T_1}(\KK) = V_{T_2}(\KK)$.
Then for all $\lambda\in Y_{T_1}$ there exists $\mu\in Y_{T_2}$ such that $(T_1,\lambda)\approx(T_2,\mu)$.
This means that $T_2\subseteq P_\lambda$ for every $\lambda\in Y_{T_1}$,
and hence $T_2\subseteq P_\lambda\cap P_{-\lambda} = L_\lambda$ for every $\lambda\in Y_{T_1}$.
Hence $T_2$ commutes with every cocharacter of $T_1$, which implies that $T_2$ commutes with $T_1$,
and we conclude that $T_2 = T_1$, by maximality of $T_1$ and $T_2$.
\end{proof}

\subsection{Common apartments, addition, opposites}\label{sec:common_apt}
As detailed in the introduction, the structures we have just defined are generalisations to arbitrary $k$-groups
of constructions which already appear in the literature for reductive groups, where they give rise to (vector and spherical) buildings \cite{curtislehrertits}, \cite[Ch.\ 2, Sec.\ 2]{mumford}, \cite[Sec.\ 2.4]{BMR:strong}. 
However, we cannot import all of the structure/properties we see in the reductive case to the case of arbitrary $G$.  
One fundamental problem is that two points of $V_{G}(\KK)$ need not lie in a common apartment, as Example \ref{ex:no_common_apt} below shows.\footnote{For another example of building-like structures where the same phenomenon occurs, see the theory of hovels \cite{rousseau:hovels}, \cite{GR}.}
Before giving that example, we give a characterisation of when two points \emph{do} lie in a common apartment.

\begin{lem}
\label{lem:common_apt}
Given $x,y\in V_G(\KK)$, the apartments containing both $x$ and $y$ are precisely those of the form $V_T(\KK)$ where $T$ is a maximal split torus of $G$ contained in $P_x\cap P_y$.
In particular, $x$ and $y$ are contained in a common apartment if and only if $P_x\cap P_y$ contains a maximal split torus of $G$.
\end{lem}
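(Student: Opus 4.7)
My plan is to prove the two inclusions in the characterisation and then read off the ``in particular'' clause as an immediate corollary. The core tools are Remark~\ref{rem:L_robust}(iii) (which tells us $T\subseteq L_\lambda\subseteq P_\lambda$ for any $\lambda\in Y_T(\KK)$) together with the $G(k)$-conjugacy of maximal split tori in a smooth connected $k$-group, applied to the R-parabolic subgroups $P_x$ and $P_y$. The definition of the equivalence relation $\approx$ then lets me push representatives of $x$ and $y$ onto a preferred torus.

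For the forward inclusion, suppose an apartment $V_T(\KK)$ contains both $x$ and $y$. Then I can choose $\lambda,\mu\in Y_T(\KK)$ with $\phi_G(\lambda)=x$ and $\phi_G(\mu)=y$. By Remark~\ref{rem:L_robust}(iii), $T\subseteq L_\lambda\subseteq P_\lambda=P_x$, and symmetrically $T\subseteq P_y$, so $T\subseteq P_x\cap P_y$. This direction is essentially bookkeeping.

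For the reverse inclusion, suppose $T$ is a maximal split torus of $G$ with $T\subseteq P_x\cap P_y$. Pick any representative $(T_1,\lambda_1)$ of $x$; then $P_x=P_{\lambda_1}$ and $T_1\subseteq L_{\lambda_1}\subseteq P_x$. Both $T$ and $T_1$ are maximal split tori of the smooth connected $k$-group $P_x$: indeed, any split torus of $P_x$ containing $T$ is a split torus of $G$ containing the maximal split torus $T$, hence equals $T$, and the same applies to $T_1$. By conjugacy of maximal split tori (Section~\ref{sec:prelims}), there exists $g_1\in P_x(k)$ with $g_1T_1g_1^{-1}=T$. Then $g_1\cdot\lambda_1\in Y_T(\KK)$ and $(T_1,\lambda_1)\approx(T,g_1\cdot\lambda_1)$ by the definition of $\approx$, so $x=\phi_G(g_1\cdot\lambda_1)\in V_T(\KK)$. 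Running the same argument with $y$ in place of $x$ puts $y\in V_T(\KK)$ as well.

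The ``in particular'' assertion is then immediate: $x$ and $y$ lie in a common apartment if and only if some apartment $V_T(\KK)$ contains both, which by what has just been shown is equivalent to the existence of a maximal split torus $T$ of $G$ inside $P_x\cap P_y$. I do not anticipate a serious obstacle; the only subtlety worth flagging is keeping the two uses of Remark~\ref{rem:L_robust}(iii) (first to force $T$ into $P_x$, then to guarantee the representing torus $T_1$ also sits in $P_x$) distinct, so that conjugacy inside $P_x$ really applies.
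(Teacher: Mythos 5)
Your proof is correct and follows essentially the same route as the paper's: the forward inclusion uses Remark~\ref{rem:L_robust}(iii), and the reverse inclusion moves the representing pairs onto the given torus $T$ via $P_x(k)$- and $P_y(k)$-conjugacy and then invokes the definition of $\approx$. The only cosmetic difference is that you use the conjugacy of maximal split tori in $P_x$ directly, whereas the paper routes this through Lemma~\ref{lem:basicpropertiesRpars}(i) and Remark~\ref{rem:obvious_gen}, which package the same conjugacy fact (and further normalise the conjugating element to lie in $U_\lambda$, a refinement not needed here).
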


\begin{proof}
If $x$ and $y$ are contained in $V_T(\KK)$ for some maximal split torus $T$, then we may write $x = \phi_G(\lambda)$, $y=\phi_G(\mu)$ for
$\lambda,\mu\in Y_T(\KK)$.
But then $T\subseteq P_\lambda\cap P_\mu = P_x\cap P_y$ by Remark~\ref{rem:L_robust}(iii).
Conversely, let $T$ be a maximal split torus of $G$ contained in $P_x\cap P_y$.  Choose $\lambda, \mu\in Y_G(\KK)$ such that $x= \phi_G(\lambda)$ and $y= \phi_G(\mu)$.  By Lemma~\ref{lem:basicpropertiesRpars}(i) and Remark~\ref{rem:obvious_gen}, there exist $u\in U_\lambda(k)$ and $v\in U_\mu(k)$ such that $u\cdot \lambda, v\cdot \mu\in Y_T(\KK)$.  Then $x= \phi_G(u\cdot \lambda)\in V_T(\KK)$ and $y= \phi_G(v\cdot \mu)\in V_T(\KK)$.
\end{proof}

\begin{rem}\label{rem:conjugateapartments}
 In particular, taking $x= y$ in Lemma~\ref{lem:common_apt} we see that for any $x\in V_G(\KK)$, $x$ belongs to $V_T(\KK)$ if and only if $T$ is contained in $P_x$.
 This also shows that if $T'$ is another maximal split torus of $G$ such that $x\in V_{T'}(\KK)$, 
 then there is $g\in P_x(k)$ such that $T' = gTg^{-1}$. 
\end{rem}

\begin{exmp}
\label{ex:no_common_apt}
 Let $B$ be a Borel subgroup of a reductive group $G$ with $G$ not a torus, and write $B = P_\lambda$ for some $\lambda\in Y_G$.
Then $\phi_B(-\lambda)$ and $\phi_B(-u\cdot \lambda)$ do not lie in a common apartment of $V_B(\KK)$ for any $1\neq u\in R_u(B)(k)$, because
$P_{-\lambda}(B)=T$ and $P_{-u\cdot\lambda}(B)= uTu^{-1}$ are distinct maximal tori of $B$, 
and $V_T(\KK)$ (resp., $V_{uTu^{-1}}(\KK)$) is the unique apartment containing $\phi_B(-\lambda)$ (resp., $\phi_B(-u\cdot\lambda)$), by Lemma \ref{lem:common_apt}.
\end{exmp}

The preceding discussion motivates the following definition.

\begin{defn}
\label{defn:common_apt}
 We say that $V_G(\KK)$ has the \emph{common apartment property} if every $x,y\in V_G(\KK)$ lie in a common apartment of $V_G(\KK)$.
\end{defn}

It follows from Lemma \ref{lem:common_apt} that the common apartment property holds for $G$ if and only if the intersection of any two R-parabolic subgroups contains a maximal split torus of $G$ --- this is the case for reductive groups, and more generally for pseudo-reductive groups, where the R-parabolic subgroups are the same thing as the \emph{pseudo-parabolic subgroups}, see \cite[Def.~2.2.1, Ex.~2.2.2]{CGP}. 
In fact, \cite[Prop.~3.5.12(1)]{CGP} shows that the intersection of two pseudo-parabolic subgroups in any $k$-group $G$
contains a maximal split torus, but working with pseudo-parabolic subgroups rather than R-parabolic subgroups
does not give the structure we need for applications below. 
Returning to the example of the Borel subgroup $B$ in Example \ref{ex:no_common_apt},
the only pseudo-parabolic subgroup of $B$ is $ B$ itself; 
in contrast, we show below (Example~\ref{ex:bijectivenotiso}) that there is a naturally arising \emph{bijection} between the vector edifice $V_B(\KK)$ and the vector edifice $V_G(\KK)$.

An immediate consequence of this lack of common apartment is that there is no obvious addition law on $V_{G}(\KK)$ in general, and the notions of convexity and opposition become more complicated. 
One can, however, define addition when the two points one considers live in a common apartment.  

\begin{lem}
\label{lem:add}
\begin{itemize}
	\item[(i)]  Let $T$ be a maximal split torus of $G$. Suppose $x,y\in V_G(\KK)$ are contained in the common apartment $V_T(\KK)$.
	Since $V_T(\KK)$ is a vector space, there is an element $x+y\in V_T(\KK)$. 
	The element $x+y$ is independent of the choice of apartment containing $x$ and $y$.
	\item[(ii)]  Let $x\in V_G(\KK)$ and let $a\in \KK^+$.  The element $ax$ is independent of the choice of apartment containing $x$.
\end{itemize}
\end{lem}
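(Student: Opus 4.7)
The plan is as follows. For (i), suppose $V_T(\KK)$ and $V_{T'}(\KK)$ both contain $x$ and $y$. The heart of the matter is to produce a single element $g\in (P_x\cap P_y)(k)$ conjugating $T$ to $T'$: the sum in $V_{T'}(\KK)$ is then obtained from the sum in $V_T(\KK)$ by applying the linear isomorphism $Y_T(\KK)\to Y_{T'}(\KK)$ induced by $\Inn_g$, and one checks that the two sums descend to the same element of $V_G(\KK)$. For (ii) the same strategy applies with a single point.

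For (i), by Lemma~\ref{lem:common_apt} both $T$ and $T'$ are maximal split tori of $G$ contained in $P_x\cap P_y$. The proof of Lemma~\ref{lem:smoothint} shows that $P_x\cap P_y$ is smooth and connected, and since $T$ and $T'$ are maximal split in $G$ they remain maximal split in $P_x\cap P_y$. Hence by conjugacy of maximal split tori there exists $g\in (P_x\cap P_y)(k)$ with $T'=gTg^{-1}$. Let $\lambda_1,\lambda_2\in Y_T(\KK)$ be the unique representatives of $x,y$, cf.\ Remark~\ref{rem:eqvce_props}(ii); then $g\cdot\lambda_1$ and $g\cdot\lambda_2$ are the unique representatives of $x,y$ in $Y_{T'}(\KK)$. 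The sum in $V_T(\KK)$ is $\phi_G(\lambda_1+\lambda_2)$, while the sum in $V_{T'}(\KK)$ equals $\phi_G(g\cdot\lambda_1+g\cdot\lambda_2)=\phi_G(g\cdot(\lambda_1+\lambda_2))$ by $\KK$-linearity of the conjugation map discussed in Section~\ref{sec:linmapstori}. The two sums agree in $V_G(\KK)$ provided $g\in P_{\lambda_1+\lambda_2}(k)$, so it remains to verify the inclusion $P_{\lambda_1}\cap P_{\lambda_2}\subseteq P_{\lambda_1+\lambda_2}$. To this end, choose a $G$-equivariant embedding $\rho\colon G\hookrightarrow V$ into a rational $G$-module as in Remark~\ref{rem:weightspaces}; by Remark~\ref{rem:L_robust}(iii) we have $P_\nu=\rho^{-1}(V_{\nu,\geq 0})$ for every $\nu\in Y_T(\KK)$. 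If a weight $\chi$ satisfies $\langle\lambda_1,\chi\rangle\geq 0$ and $\langle\lambda_2,\chi\rangle\geq 0$ then $\langle\lambda_1+\lambda_2,\chi\rangle\geq 0$, so summing over $\chi$ gives $V_{\lambda_1,\geq 0}\cap V_{\lambda_2,\geq 0}\subseteq V_{\lambda_1+\lambda_2,\geq 0}$, which yields the desired inclusion.

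For (ii), Remark~\ref{rem:conjugateapartments} supplies $g\in P_x(k)$ with $T'=gTg^{-1}$. If $\lambda\in Y_T(\KK)$ represents $x$ then $g\cdot\lambda$ is the representative of $x$ in $Y_{T'}(\KK)$, and the element $ax$ computed in $V_{T'}(\KK)$ equals $\phi_G(a(g\cdot\lambda))=\phi_G(g\cdot(a\lambda))$, while in $V_T(\KK)$ it is $\phi_G(a\lambda)$. Because $\sgn\langle a\lambda,\chi\rangle=\sgn\langle\lambda,\chi\rangle$ for $a>0$, we have $V_{a\lambda,\geq 0}=V_{\lambda,\geq 0}$, hence $P_{a\lambda}=P_\lambda=P_x$; thus $g\in P_{a\lambda}(k)$ and the two expressions represent the same point of $V_G(\KK)$. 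The main obstacle is the simultaneous production of $g$: finding a conjugator of $T$ to $T'$ that lies in \emph{both} $P_x$ and $P_y$ rather than separate conjugators for each point. Once this is in hand, the rest is routine linearity together with the weight-space inclusion $P_{\lambda_1}\cap P_{\lambda_2}\subseteq P_{\lambda_1+\lambda_2}$.
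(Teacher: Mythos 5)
Your proof is correct and follows essentially the same path as the paper's: invoke Lemma~\ref{lem:common_apt} and Lemma~\ref{lem:smoothint} to produce a single $g\in(P_x\cap P_y)(k)$ conjugating $T$ to $T'$, then verify $g\in P_{\lambda_1+\lambda_2}(k)$. The one difference is in how you establish that last fact: you use the characterisation $P_\nu=\rho^{-1}(V_{\nu,\geq 0})$ from Remark~\ref{rem:L_robust}(iii) directly to get the weight-set inclusion $\Phi_{\lambda_1,\geq 0}\cap\Phi_{\lambda_2,\geq 0}\subseteq\Phi_{\lambda_1+\lambda_2,\geq 0}$, whereas the paper passes through cocharacter approximations $\lambda',\mu'$ to $\lambda,\mu$ and asserts that $\lambda'+\mu'$ is a cocharacter approximation to $\lambda+\mu$; your direct route is slightly cleaner because the paper's intermediate assertion needs (and does not spell out) a ``close enough'' choice of $\lambda',\mu'$, and even then only the containment $P_{\lambda'+\mu'}\subseteq P_{\lambda+\mu}$, not equality, is needed or automatic.
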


\begin{proof}
 (i). The common apartments containing $x$ and $y$ correspond to the maximal split tori of $G$ contained in the intersection $P_x\cap P_y$ (Lemma~\ref{lem:common_apt}).  By hypothesis, there is at least one such $T$.  Let $T'$ be another.  By Lemma~\ref{lem:smoothint}, $P_x\cap P_y$ is smooth, so there exists $g\in P_x(k)\cap P_y(k)$ such that $gTg^{-1}= T'$.  Choose $\lambda, \mu\in Y_T(\KK)$ such that $x= \phi_G(\lambda)$ and $y= \phi_G(\mu)$.  Then $x= \phi_G(g\cdot \lambda)$ and $y= \phi_G(g\cdot \mu)$.  
 Fix a $G$-equivariant embedding of $G$ in a rational $G$-module $V$, and let $\lambda'$ and $\mu'$ be cocharacter approximations (with respect to $V$) in $Y_T$ to $\lambda$ and $\mu$, respectively.
 Then  $\lambda'+\mu' \in Y_T$ is a cocharacter approximation to $\lambda+\mu$, and so $P_{\lambda+ \mu} = P_{\lambda'+ \mu'}$.
 Since $P_x = P_\lambda = P_{\lambda'}$ and 
 $P_y = P_\mu = P_{\mu'}$, we see that $\lim_{a\to 0} (\lambda'+ \mu')(a)\cdot g$ exists, so $g\in P_{\lambda'+ \mu'}(k) = P_{\lambda+ \mu}(k)$.  Hence $\phi_G(g\cdot \lambda+ g\cdot \mu)= \phi_G(g\cdot (\lambda+ \mu))= \phi_G(\lambda+ \mu)$.  The result follows.
 
 (ii). Let $T, T'$ be maximal split tori of $G$ such that $x\in V_T(\KK)\cap V_{T'}(\KK)$.  There exists $g\in P_x$ such that $gTg^{-1}= T'$.  Pick $\lambda\in Y_T(\KK)$ such that $x= \phi_G(\lambda)$.  Then $\phi_G(a(g\cdot \lambda))= \phi_G(g\cdot (a\lambda))= \phi_G(a\lambda)$ since $P_{a\lambda}= P_\lambda$.  The result follows.
\end{proof}

\begin{defn}\label{def:opp}
We say that $x$ and $y$ are \emph{opposite} in $V_G(\KK)$ if there is an apartment containing $x$ and $y$ and such that $x+y=0$ in that apartment.
It follows from Lemma~\ref{lem:add} that if $x$ and $y$ are opposite, then $x+y=0$ in every apartment containing $x$ and $y$.
\end{defn}

\begin{rem}
 Note that part (ii) of Lemma~\ref{lem:add} is false if $a= -1$ (or, more generally, if $a< 0$): for $x$ has many different opposites, one for each apartment containing $x$, and each is of the form $-x$ inside the corresponding apartment.  See Remark~\ref{rem:many_opposites}.
\end{rem}

Here is a criterion for points to be opposite in terms of cocharacters.

\begin{lem}\label{lem:uniqueopp}
Suppose $x$ and $y$ are opposite points of $V_G(\KK)$.
Then $P_x$ and $P_y$ are opposite R-parabolic subgroups of $G$,
and there is a unique $\lambda \in Y_G(\KK)$ such that $x = \phi_G(\lambda)$ and $y = \phi_G(-\lambda)$.
\end{lem}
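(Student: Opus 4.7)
The plan is to dispatch existence and the opposition of $P_x, P_y$ quickly from the definition of opposition, then work harder for uniqueness.

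First, by Definition~\ref{def:opp} there is a maximal split torus $T$ of $G$ with $x,y\in V_T(\KK)$ and $x+y=0$ in $V_T(\KK)$. Using the fact that $\omega_G$ restricts to a bijection $Y_T(\KK)\to V_T(\KK)$, I choose the unique $\tilde\lambda\in Y_T(\KK)$ mapping to $x$; then $-\tilde\lambda$ maps to $y$ by the vector space structure on the apartment. Setting $\lambda:=\varpi_G(\tilde\lambda)\in Y_G(\KK)$ gives the existence. For the opposition statement, take a cocharacter approximation $\lambda'\in Y_T$ to $\tilde\lambda$. Then $-\lambda'\in Y_T$ satisfies \eqref{eq:chi} with respect to $-\tilde\lambda$, so it is a cocharacter approximation to $-\tilde\lambda$, giving $P_x=P_{\tilde\lambda}=P_{\lambda'}$ and $P_y=P_{-\tilde\lambda}=P_{-\lambda'}$ as opposite R-parabolic subgroups in the sense of Section~\ref{sec:prelims}.

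For uniqueness, let $\lambda_1,\lambda_2\in Y_G(\KK)$ both satisfy $\phi_G(\lambda_i)=x$ and $\phi_G(-\lambda_i)=y$, and pick representatives $(T_i,\tilde\lambda_i)$ with $\tilde\lambda_i\in Y_{T_i}(\KK)$. The key observation is that each $T_i$ is forced to lie in the common Levi $L:=P_x\cap P_y$: by Remark~\ref{rem:conjugateapartments}, $x\in V_{T_i}(\KK)$ forces $T_i\subseteq P_x$, and likewise $T_i\subseteq P_y$. Since $P_x\cap P_y=P_{\tilde\lambda_i}\cap P_{-\tilde\lambda_i}=L_{\tilde\lambda_i}$ (by the extension of the identity $L_\lambda=P_\lambda\cap P_{-\lambda}$ to $Y_G(\KK)$), the groups $L_{\tilde\lambda_1}$ and $L_{\tilde\lambda_2}$ coincide with $L$, and each $T_i$ is a maximal split torus of the smooth connected group $L$.

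Conjugacy of maximal split tori in $L$ then produces $l\in L(k)$ with $lT_1l^{-1}=T_2$. Since $l\in L(k)=L_{\tilde\lambda_1}(k)$, we have $(T_1,\tilde\lambda_1)\sim(T_2,l\cdot\tilde\lambda_1)$, so $(T_2,l\cdot\tilde\lambda_1)$ and $(T_2,\tilde\lambda_2)$ represent the same point $x$ under $\omega_G$. Injectivity of $\omega_G|_{Y_{T_2}(\KK)}$ forces $l\cdot\tilde\lambda_1=\tilde\lambda_2$ in $Y_{T_2}(\KK)$, and hence $(T_1,\tilde\lambda_1)\sim(T_2,\tilde\lambda_2)$, giving $\lambda_1=\lambda_2$ in $Y_G(\KK)$. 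I expect the main subtlety to be that first step of the uniqueness argument: recognising that any torus representing both $x$ and $y$ must live inside $P_x\cap P_y$, and that this intersection is precisely the Levi $L_{\tilde\lambda_i}$ attached to any representative. Once that is in place, the rest is driven by conjugacy of maximal split tori in $L$ together with injectivity of $\omega_G$ on apartments.
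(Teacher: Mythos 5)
Your proof is correct and follows essentially the same route as the paper's: pick a common apartment $V_T(\KK)$, use the bijection $Y_T(\KK)\cong V_T(\KK)$ to get $\lambda$, observe that tori giving common apartments for $x$ and $y$ must lie in $P_x\cap P_y=L_\lambda$ (via Lemma~\ref{lem:common_apt}/Remark~\ref{rem:conjugateapartments}), then use $L_\lambda(k)$-conjugacy of maximal split tori together with the fact that $L_\lambda(k)$ fixes $\lambda$. Your verification that $P_x$ and $P_y$ are opposite via cocharacter approximations is slightly more explicit than the paper's, but it is the same underlying argument.
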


\begin{proof}
First choose an apartment $V_T(\KK)$ such that $x,y\in V_T(\KK)$.
Then since $\phi_G$ restricts to a linear isomorphism $Y_T(\KK)\to V_T(\KK)$,
we have a unique $\lambda \in Y_T(\KK)$ such that $x= \phi_G(\lambda)$ and $y = \phi_G(-\lambda)$.
But then $P_x = P_\lambda$ and $P_y = P_{-\lambda}$, hence $P_x$ and $P_y$ are opposite.
Any maximal split torus $T'$ of $G$ such that $x, y\in V_{T'}(\KK)$ must lie in $P_x\cap P_y = L_\lambda$ by Lemma~\ref{lem:common_apt}.  
Since $L_\lambda$ is smooth, any two such tori are  conjugate by an element of $L_\lambda(k)$.
But elements of $L_\lambda(k)$ fix $\lambda$ and $-\lambda$, which shows that $\lambda$ is indeed unique as an element of $Y_G(\KK)$.
\end{proof}

\begin{rem}
\label{rem:many_opposites}
Note that for a given $\lambda\in Y_G(\KK)$, there are in general many opposite points to $\phi_G(\lambda)$ in $V_G(\KK)$. 
To see this, note that any of the cocharacters $\mu = u\cdot(-\lambda)$ 
has this property, where $u\in U_\lambda(k)$ --- the point here is that for such a $u$, $\phi_G(u\cdot\lambda) = \phi_G(\lambda)$,
but $\phi_G(u\cdot(-\lambda))\neq \phi_G(-\lambda)$ in general. 
\end{rem}

\begin{defn}\label{def:convex}
	Let $x,y \in V_{G}(\KK)$.
If there is an apartment in $V_{G}(\KK)$ containing $x$ and $y$, then we let $$[x,y] := \{ax+(1-a)y\mid a\in [0,1]\cap \KK\}$$ 
denote the \emph{geodesic} between $x$ and $y$ in $V_{G}(\KK)$ ---
again, Lemma~\ref{lem:add} implies that this geodesic (when it exists at all) is independent of the apartment we choose to draw it in.
We say a subset $S\subseteq V_G(\KK)$ is \emph{convex} if for all $x,y\in S$ such that $x$ and $y$ lie in a common apartment, $[x,y]\subseteq S$.
\end{defn}

The failure of the common apartment property makes it harder to metrise $V_{G}(\KK)$.  
We return to this in Section~\ref{sec:admmetric}.

\begin{defn}
\label{defn:type}
 We say that $x,y\in V_G(\KK)$ \emph{have the same type} if $y= g\cdot x$ for some $g\in G(k)$.
\end{defn}

\begin{rem}
\label{rem:W_type}
 Suppose $x,y\in V_G(\KK)$ have the same type and $x$ and $y$ belong to a common apartment $V_T(\KK)$.  Choose $\lambda, \mu\in Y_T(\KK)$ such that $x= \phi_G(\lambda)$ and $y= \phi_G(\mu)$.  Since $x$ and $y$ have the same type, there exists $g\in G(k)$ such that $\mu= g\cdot \lambda$.  Then $\mu\in Y_{T'}(\KK)$, where $T':= gTg^{-1}$.  So $T,T'$ are maximal split tori of $L_\mu$, so they are $L_\mu(k)$-conjugate.  Now $L_\mu(k)$ fixes $\mu$, so after multiplying $g$ on the left by an element of $L_\mu(k)$, we can assume that $g$ normalises $T$.  This shows that $x$ and $y$ are $W_k$-conjugate,  where $W_k$ is the relative Weyl group.  We conclude that for a fixed maximal split torus $T$ of $G$ and fixed $x\in V_T(\KK)$, the set $\{z\in V_T(\KK)\mid \mbox{$x$ and $z$ have the same type}\}$ is finite.
 
 For example, suppose $G= \SL_2$.  If $x,y\in V_G(\KK)$ are distinct elements having the same type then $x$ and $y$ are opposite, because each apartment $V_T(\KK)$ is 1-dimensional and the non-trivial element of the relative Weyl group $W_k$ acts by $z\mapsto -z$.
\end{rem}

\subsection{The spherical edifice}
Recall from Lemma \ref{lem:add}(ii) that we have a well-defined operation of scalar multiplication by elements of $\KK^+$ on the vector edifice $V_{G}(\KK)$,
allowing the following definition.

\begin{defn}\label{defn:spherical}
The \emph{spherical edifice} $\Delta_{G}(\KK)$ is defined to be the set of $\KK^+$-orbits on $V_{G}(\KK)\setminus \{0\}$.
We denote the natural map from $V_{G}(\KK)\setminus \{0\}$ to $\Delta_{G}(\KK)$ by $$\zeta_G : V_{G}(\KK)\setminus \{0\} \to \Delta_{G}(\KK).$$
\end{defn}

\begin{rem}
The obvious inclusion of $V_G(\QQ)$ in $V_G(\RR)$ gives an inclusion of $\Delta_G(\QQ)$ in $\Delta_G(\RR)$.
\end{rem}

\begin{rem}
 The notions of apartment, opposite points and convexity make sense for $\Delta_G(\KK)$ as well.  We define an \emph{apartment} of $\Delta_G(\KK)$ to be a subset of the form $\zeta_G(V_T(\KK)\backslash\{0\})$ for some maximal split torus $T$ of $G$.  If $y_1, y_2\in \Delta_G(\KK)$ then we say that $y_1$ and $y_2$ are \emph{opposite} if there exists a maximal split torus $T$ of $G$ and a point $x\in V_T(\KK)$ such that $y_1= \zeta_G(x)$ and $y_2= \zeta_G(-x)$.  If $y_1= \zeta_G(x_1)$ and $y_2= \zeta_G(x_2)$ are not opposite then we define the \emph{geodesic} $[y_1, y_2]$ in $\Delta_G(\KK)$ to be $\zeta_G([x_1,x_2])$; this does not depend on the choice of $x_1$ and $x_2$.  We say a subset $\Sigma$ of $\Delta_G(\KK)$ is \emph{convex} if for all $y_1, y_2\in \Sigma$ such that $[y_1, y_2]$ exists, $[y_1, y_2]\subseteq \Sigma$.
\end{rem}

\subsection{The combinatorial edifice}
\begin{defn}\label{defn:combinatorial}
We define the \emph{combinatorial edifice} $\Delta_{G}$ to be the poset formed by the R-parabolic subgroups of $G$
under reverse inclusion.
Given $\lambda\in Y_G(\KK)$, $P = P_\lambda$ and $x=\phi_G(\lambda)\in V_G(\KK)$, we denote the corresponding element of $\Delta_G$ by 
$\sigma_P$, $\sigma_\lambda$ or $\sigma_x$,
and we write $\leq$ for the partial order on $\Delta_G$: i.e., $\sigma_Q\leq \sigma_P$ if and only if $P\subseteq Q$.
We let $\varnothing$ denote the element in this poset corresponding to $G$ itself.
\end{defn}

The elements of the combinatorial edifice can be realised geometrically inside $V_G(\KK)$: 
given an R-parabolic subgroup $P$ of $G$, consider the subset
$$
\widetilde{\sigma}_P = \{x\in V_G(\KK) \mid P_x= P\}.
$$
(We also define $\widetilde{\sigma}_\lambda:= \widetilde{\sigma}_{P_\lambda}$ and $\widetilde{\sigma}_x:= \widetilde{\sigma}_{P_x}$.)  By choosing a maximal split torus $T$ of $P$, we may realise 
$\widetilde{\sigma}_P$ as a subset of $V_T(\KK)$ (Lemma~\ref{lem:all_tor}), so it makes sense to take linear combinations of elements in $\widetilde{\sigma}_P$.

\begin{lem}
\label{lem:sx_cvx_cone}
 Let $x,y\in \widetilde{\sigma}_P$.  Then $ax+ by\in \widetilde{\sigma}_P$ for all $a,b\in \KK^+$.
\end{lem}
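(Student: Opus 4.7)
The plan is to reduce to Lemma~\ref{lem:sum} via Lie algebras. First, since $P_x=P_y=P$, the intersection $P_x\cap P_y = P$ contains any maximal split torus $T$ of $P$, so by Lemma~\ref{lem:common_apt} the points $x$ and $y$ lie in the common apartment $V_T(\KK)$. Write $x=\phi_G(\lambda)$ and $y=\phi_G(\mu)$ for some $\lambda,\mu\in Y_T(\KK)$. By Lemma~\ref{lem:add}(i)--(ii), $ax+by = \phi_G(a\lambda+b\mu)$, so it suffices to show $P_{a\lambda+b\mu} = P$.

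Choose a $G$-equivariant embedding $\rho\colon G\to V$ into a rational $G$-module, arranged so that $\fg$ sits as a $G$-submodule of $V$ (as in Example~\ref{ex:red_virtual_cochar}, using the differential of a Kempf-type embedding); then every root of $G$ with respect to $T$ appears among the weights of $T$ on $V$, and by Remark~\ref{rem:L_robust}(iii) we have $P_\nu=\rho^{-1}(V_{\nu,\geq 0})$ for every $\nu\in Y_T(\KK)$. For $g\in P(\ovl k)$, $\rho(g)\in V_{\lambda,\geq 0}\cap V_{\mu,\geq 0}$, so every weight $\chi\in\supp(\rho(g))$ satisfies $\langle\lambda,\chi\rangle\geq 0$ and $\langle\mu,\chi\rangle\geq 0$; positivity of $a,b$ then forces $\langle a\lambda+b\mu,\chi\rangle\geq 0$, so $g\in P_{a\lambda+b\mu}(\ovl k)$. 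Smoothness of $P$ yields the inclusion $P\subseteq P_{a\lambda+b\mu}$.

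To upgrade this to an equality, I would compare Lie algebras, following the template of the proof of Lemma~\ref{lem:sum}. First one extends $\Lie(P_\nu)=\fg_{\nu,\geq 0}$ from $\nu\in Y_T$ (\cite[Prop.~2.1.8]{CGP}) to all $\nu\in Y_T(\KK)$ using a cocharacter approximation --- the containment of $\fg$ in $V$ ensures the approximation preserves $\sgn\langle\,\cdot\,,\alpha\rangle$ on every root $\alpha$, so $\fg_{\nu',\geq 0} = \fg_{\nu,\geq 0}$. From $\Lie(P_\lambda)=\Lie(P_\mu)$ we get $\fg_{\lambda,\geq 0}=\fg_{\mu,\geq 0}$, and hence sign-compatibility on each weight $\chi$ of $T$ on $\fg$: if $\langle\lambda,\chi\rangle>0$ or $\langle\lambda,\chi\rangle=0$ then $\langle\mu,\chi\rangle\geq 0$, while if $\langle\lambda,\chi\rangle<0$ then $\langle\mu,\chi\rangle<0$. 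Using $a,b\in\KK^+$, a three-case split on the sign of $\langle\lambda,\chi\rangle$ then yields $\fg_{a\lambda+b\mu,\geq 0}=\fg_{\lambda,\geq 0}$, so $\Lie(P_{a\lambda+b\mu})=\Lie(P)$. Since both $P$ and $P_{a\lambda+b\mu}$ are smooth, connected, and of equal dimension, the inclusion from the previous paragraph is an equality.

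The main technical subtlety is the extension of $\Lie(P_\nu)=\fg_{\nu,\geq 0}$ to $\nu\in Y_T(\KK)$ together with the need to pair against \emph{roots} (rather than arbitrary weights on $V$) --- the sign-compatibility between $\lambda$ and $\mu$ cannot be read off from arbitrary weight spaces on $V$, only from those corresponding to roots, because $P_\lambda=P_\mu$ only guarantees $V_{\lambda,\geq 0}\cap\rho(G)=V_{\mu,\geq 0}\cap\rho(G)$ and not $V_{\lambda,\geq 0}=V_{\mu,\geq 0}$. Arranging $\fg\subseteq V$ at the outset is the device that lets one transfer between these two levels.
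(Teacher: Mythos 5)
Your proof is correct and follows essentially the same route as the paper's: fix a maximal split torus $T$ of $P$, arrange an equivariant embedding in which $\fg$ appears as a $G$-submodule, show $P\subseteq P_{a\lambda+b\mu}$ by pairing against all weights of $V$, and upgrade to equality by comparing Lie algebras via cocharacter approximations and the $T$-weights on $\fg$. The only differences are cosmetic: you handle general $a,b\in\KK^+$ directly rather than first scaling to $a=b=1$, you spell out the three-case sign analysis that the paper delegates to "the argument of the second part of Lemma~\ref{lem:sum}", and you realise $\fg\subseteq V$ via the differential of a Kempf embedding where the paper simply takes $V=V'\oplus\fg$.
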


\begin{proof}
 Fix a maximal split torus $T$ of $P$.   We can choose $\lambda, \mu\in Y_T(\KK)$ such that $x= \phi_G(\lambda)$ and $y= \phi_G(\mu)$; then $P_\lambda= P_\mu= P$.  Recall that $P_\nu= \rho^{-1}(V_{\nu, \geq 0})$ for any $\nu\in Y_T(\KK)$ (Remark~\ref{rem:L_robust}(iii)).  It follows that $P_{a\lambda}= P_\lambda$ and $P_{b\mu}= P_\mu$ for any $a,b> 0$.  So it is enough to show that $P_{\lambda+ \mu}= P$.
 
 Fix a $G$-equivariant embedding $\rho'$ of $G$ in a rational $G$-module $V'$.  Let $V$ be the rational $G$-module $V'\oplus \fg$ and let $\rho$ be $\rho'$ followed by the obvious inclusion of $V'$ in $V$.  Since $P_\nu= \rho^{-1}(V_{\nu, \geq 0})$ for any $\nu\in Y_T(\KK)$, the argument of the first part of the proof of Lemma~\ref{lem:sum} shows that $P_{\lambda+ \mu}\supseteq P$.
 
 Let $\nu\in Y_T(\KK)$ and let $\nu'\in Y_T$ be a cocharacter approximation to $\nu$.  By our choice of $\rho$ we have $\fg_{\nu, \geq 0}= \fg_{\nu', \geq 0}$, and we deduce that
 $$ \Lie(P_\nu)= \Lie(P_{\nu'})= \fg_{\nu', \geq 0}= \fg_{\nu, \geq 0}, $$
 where the middle equality is from \cite[Prop.\ 2.1.8]{CGP}.  The argument of the second part of the proof of Lemma~\ref{lem:sum} shows that $\Lie(P_{\lambda+ \mu})= \Lie(P)$, and we deduce that $P_{\lambda+ \mu}= P$, as required.
\end{proof}

It is well known that if $G$ is semisimple then $\Delta_G$ is a spherical building; in particular, it is a simplicial complex.  The same is true for reductive $G$ because the parabolic subgroups of $G$ are in bijective correspondence with the parabolic subgroups of $[G,G]$.  The following example shows that the combinatorial edifice need not be a simplicial complex for arbitrary $G$, and the partial order on $\Delta_G$ need not be realised geometrically by relationships between closures of the corresponding subsets in $V_G(\KK)$.

\begin{exmp}\label{exmp:notsimplicial}
Let $k$ be algebraically closed, and set $G = \GL_2\ltimes V$, where $V$ is the natural two-dimensional module for $\GL_2$.
Let $e_1$ and $e_2$ be the usual standard basis vectors in $V$, and let $V_1$ and $V_2$ be the corresponding one-dimensional subspaces of $V$.
Let $B^+$ denote the upper triangular Borel subgroup in $\GL_2$ and $B^-$ the lower triangular Borel subgroup.
Let $T$ be the diagonal maximal torus in $\GL_2$.
Then an element of $Y_T(\KK)$ can be identified with a pair $(a,b)\in \KK^2$, and the 
corresponding R-parabolic subgroup has the form $P\ltimes W$ for some parabolic subgroup $P$ of $\GL_2$ and some subspace $W$ of $V$ as in the following table:\\
 
\begin{center}
\begin{tabular}{c|c||c|c}
Conditions on $(a,b)$ & $P\ltimes W$ & Conditions on $(a,b)$ & $P\ltimes W$ \\ \hline
$a=b\geq0$ & $G$ & $0>a=b$ & $\GL_2$\\
$a>b\geq 0$ & $B^+\ltimes V$ & $0>b>a$ & $B^-$\\
$a\geq 0>b$ & $B^+\ltimes V_1$ & $b\geq0>a$ & $B^-\ltimes V_2$\\
$0>a>b$ & $B^+$ & $b>a\geq 0$ & $B^-\ltimes V$
\end{tabular}
\end{center}
\bigskip

In an (abstract) simplicial complex, we can recognise any element of the complex by the vertices (minimal non-empty elements) it contains. 
In this case, the minimal elements in the poset $\Delta_G$ correspond to the maximal proper R-parabolic subgroups: 
thus, here we get (the conjugates in $G$ of) $\GL_2$, $B^+\ltimes V$ and $B^-\ltimes V$.
Consider $\sigma = \sigma_{B^+\ltimes V_1}\in \Delta_G$.
Any $\varnothing\neq\tau\in \Delta_G$ with $\tau\lneq \sigma$ corresponds to an R-parabolic subgroup $P$ of $G$ with $P\supsetneq B^+\ltimes V_1$.
Such a $P$ must contain $T$, so $P = B^+\ltimes V$ is the only option.
Thus $\tau = \sigma_{B^+\ltimes V}$ and $\sigma$ contain the same minimal elements, and $\Delta_G$ is not a simplicial complex.

We leave it as an exercise for the reader to sketch the regions of the plane $\RR^2$ that correspond to each R-parabolic subgroup and show that (for example) the region $a\geq 0>b$ is not in the closure of the region $a>b\geq 0$, even though we have a containment $B^+\ltimes V_1\subseteq B^+\ltimes V$.
\end{exmp}

\section{Linear maps of vector edifices}
\label{sec:linmapsI}

In this section we define the notion of a linear map between vector edifices, and show how homomorphisms between $k$-groups give rise to such maps.
For some of the later work (especially in Section \ref{sec:fieldexts}) it is important to consider maps between edifices for algebraic groups defined over possibly different fields,
so that is how our definitions are framed in Section \ref{sec:linmapdef}.

\subsection{Definition and first properties}\label{sec:linmapdef}
\begin{defn}\label{defn:linearmap}
Suppose $V_1 = V_{G}(\KK)$ and $V_2 = V_{H}(\KK)$ are two vector edifices,
where $G$ is a $k$-group and $H$ is a $k'$-group for two (possibly different) fields $k$ and $k'$.
A function $\kappa\colon V_1\to V_2$ is called a \emph{linear map of vector edifices}
if for every apartment $A_1$ of $V_1$, there exists an apartment $A_2$ of $V_2$ such that $\kappa(A_1)\subseteq A_2$ and $\kappa|_{A_1}\colon A_1\to A_2$ is a $\KK$-linear map.  
\end{defn}

It is immediate that the composition of linear maps is a linear map.  

\begin{rem}
The reader is warned that the inverse of a bijective linear map of vector edifices is \emph{not} necessarily itself a linear map.
The reason for this is that even if $V_1$ and $V_2$ are in bijection with each other, it might not be the case that their systems of apartments are in bijection with each other.
See Example~\ref{ex:bijectivenotiso} below.
\end{rem}

\begin{defn}\label{defn:iso}
Suppose $V_1 = V_{G}(\KK)$ and $V_2 = V_{H}(\KK)$ are two vector edifices as above.
A linear map $\kappa\colon V_1\to V_2$ is an \emph{isomorphism of vector edifices}
if it is bijective and the inverse map is also a linear map of vector edifices.  If $V_1= V_2$ then we call $\kappa$ an \emph{automorphism of vector edifices}.  We write $\Aut (V)$ for the group of automorphisms of a vector edifice $V$.
\end{defn}

In practice, we often wish to construct linear maps of vector edifices from corresponding maps on cocharacters.
The following result shows that this process can be reversed.

\begin{lem}
\label{lem:lin_map_lift}
Let $G$ be a $k$-group and $H$ a $k'$-group for two fields $k$ and $k'$.
Let $\kappa\colon V_{G}(\KK)\to V_{H}(\KK)$ be a linear map. 
Then $\kappa$ lifts uniquely to a map $$\widehat{\kappa}\colon Y_{G}(\KK)\to Y_{H}(\KK)$$ with the following property: for any maximal split torus $T$ of $G$ and for any maximal split torus $S$ of $H$ such that $\kappa(V_{T}(\KK))\subseteq V_{S}(\KK)$, we have $\widehat{\kappa}(Y_T(\KK))\subseteq Y_S(\KK)$, and the following diagram commutes:
 $$
 \xymatrixcolsep{5pc}
 \xymatrix{
 	Y_{T}(\KK)\ar[d]_{\phi_{G}}\ar[r]^{\widehat{\kappa}}& Y_{S}(\KK)\ar[d]^{\phi_{H}}\\
 	V_{T}(\KK)\ar[r]^{\kappa}& V_{S}(\KK)\\}
 $$
\end{lem}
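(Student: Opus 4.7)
My plan is to build $\widehat{\kappa}$ apartment by apartment and then show the pieces glue to a well-defined map on the quotient $Y_G(\KK)=\bigsqcup_T Y_T(\KK)/\sim$.

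Fix a maximal split torus $T$ of $G$ and choose a maximal split torus $S$ of $H$ with $\kappa(V_T(\KK))\subseteq V_S(\KK)$; such an $S$ exists by the definition of a linear map. Since $\phi_G$ and $\phi_H$ restrict to $\KK$-linear bijections $Y_T(\KK)\to V_T(\KK)$ and $Y_S(\KK)\to V_S(\KK)$ (by the discussion following Definition~\ref{defn:V_G(K)} and Remark~\ref{rem:eqvce_props}(ii)), the displayed commutative square forces the restriction to equal
\[
\widehat{\kappa}|_{Y_T(\KK)} \;=\; (\phi_H|_{Y_S(\KK)})^{-1}\circ \kappa|_{V_T(\KK)}\circ \phi_G|_{Y_T(\KK)},
\]
which is automatically $\KK$-linear into $Y_S(\KK)$. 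Since this is the only candidate on each apartment, uniqueness of $\widehat{\kappa}$ is immediate from the very requirement that the diagram commute.

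For existence I must check two compatibility statements: (a) the restriction is independent of the choice of $S$ when several $S$'s are valid for a fixed $T$, and (b) the assembled map $\bigsqcup_T Y_T(\KK)\to Y_H(\KK)$ descends to the quotient $Y_G(\KK)$ along $\sim$. Both reduce to the same question: given two pairs $(S_1,\mu_1)$ and $(S_2,\mu_2)$ in $\bigsqcup_S Y_S(\KK)$ whose images under $\phi_H$ both equal the common point $\kappa(\phi_G(y))\in V_H(\KK)$, one automatically has $(S_1,\mu_1)\approx(S_2,\mu_2)$, and one must upgrade this to the finer $(S_1,\mu_1)\sim(S_2,\mu_2)$.

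The main obstacle is precisely this upgrade from $\approx$ to $\sim$. Combining Lemma~\ref{lem:rigidity} (extended to $Y_G(\KK)$ in Remark~\ref{rem:obvious_gen}) with the transformation rule for $L_\lambda$ in Lemma~\ref{lem:par_levi_prop}(ii), the upgrade is equivalent to proving $L_{\mu_1}=L_{\mu_2}$: if $g\in P_{\mu_1}(k')$ witnesses the $\approx$-equivalence, then $L_{\mu_2}=gL_{\mu_1}g^{-1}$, so $L_{\mu_1}=L_{\mu_2}$ forces $g\in N_{P_{\mu_1}(k')}(L_{\mu_1})=L_{\mu_1}(k')$, which realises the $\sim$-equivalence. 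To establish $L_{\mu_1}=L_{\mu_2}$ I plan to pass to rational cocharacter approximations (Definition~\ref{def:Plambda} and Lemma~\ref{lem:homapprox}) so that the $\KK$-linearity of $\kappa|_{V_T(\KK)}$ interacts cleanly with the integral structure, then exploit that in the descent case the witnessing element $l\in L_{\lambda_1}(k)$ with $lT_1l^{-1}=T_2$ places both $T_1$ and $T_2$ inside the common subgroup $L_y$, allowing the comparison of $L_{\mu_1}$ and $L_{\mu_2}$ to be carried out inside the more tractable $L_y$ using Lemmas~\ref{lem:basicpropertiesRpars}--\ref{cor:Levi_conj}. Once well-definedness is in hand, the ``for any $(T,S)$'' clause of the lemma follows from (a), and commutativity of the diagram is by construction.
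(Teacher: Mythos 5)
Your overall strategy --- build $\widehat{\kappa}$ apartment by apartment and check that the pieces descend along $\sim$ --- is a viable alternative to the paper's argument, and your reduction of well-definedness to the statement $L_{\mu_1}=L_{\mu_2}$ (via Lemma~\ref{lem:rigidity} and Lemma~\ref{lem:par_levi_prop}(ii)) is correct and is the right thing to isolate. However, the method you propose for actually establishing $L_{\mu_1}=L_{\mu_2}$ is misdirected and would not go through. Lemma~\ref{lem:homapprox} concerns a group homomorphism $f\colon G\to H$ and its induced maps on cocharacters; your $\kappa$ is an arbitrary linear map of vector edifices and need not arise from any homomorphism, so there is nothing to apply Lemma~\ref{lem:homapprox} to, and rational cocharacter approximations of $\lambda$ give you no purchase on $\mu_1,\mu_2$. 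Likewise, the observation that $T_1,T_2\subseteq L_{\lambda_1}$ is a statement inside $G$, whereas $L_{\mu_1},L_{\mu_2}$ live inside $H$; Lemmas~\ref{lem:basicpropertiesRpars}--\ref{cor:Levi_conj} operate inside a single group and cannot transport the one fact into the other through a bare linear map.

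The missing idea is to apply $\kappa$ to the \emph{opposite} point as well. Since $\kappa$ restricted to $V_{T_i}(\KK)$ is $\KK$-linear into $V_{S_i}(\KK)$, we get $\kappa(\phi_G(-\lambda_i))=\phi_H(-\mu_i)$. Moreover $(T_1,-\lambda_1)\sim(T_2,-\lambda_2)$ via the same $l$, because $L_{-\lambda_1}=L_{\lambda_1}$; hence $\phi_G(-\lambda_1)=\phi_G(-\lambda_2)$ and therefore $\phi_H(-\mu_1)=\phi_H(-\mu_2)$. This gives $P_{-\mu_1}=P_{-\mu_2}$ in addition to $P_{\mu_1}=P_{\mu_2}$, whence $L_{\mu_1}=P_{\mu_1}\cap P_{-\mu_1}=P_{\mu_2}\cap P_{-\mu_2}=L_{\mu_2}$, and the case (a) compatibility runs the same way using that $\kappa(V_T(\KK))$ sits in $V_{S_1}(\KK)\cap V_{S_2}(\KK)$. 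This is exactly what the paper packages into its appeal to Lemma~\ref{lem:uniqueopp}: it defines $\widehat{\kappa}(\lambda)$ directly as the unique $\mu\in Y_H(\KK)$ with $\phi_H(\mu)=\kappa(\phi_G(\lambda))$ \emph{and} $\phi_H(-\mu)=\kappa(\phi_G(-\lambda))$, sidestepping the gluing issue altogether. Without bringing the opposite point into play, your sketch has a genuine gap at its central step.
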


\begin{proof}
First off, it follows from Remark~\ref{rem:eqvce_props}(ii) that $\widehat{\kappa}$ is unique if it exists.  

Let $\lambda\in Y_{G}(\KK)$, and set $x= \phi_G(\lambda)$ and $y= \phi_G(-\lambda)$.  
Since $\kappa$ is linear on apartments, $\kappa(x)+ \kappa(y)= \kappa(x+y)= \kappa(0)= 0$, so $\kappa(x)$ and $\kappa(y)$ are opposite points in (the corresponding apartment of) $V_H(\KK)$.
By Lemma~\ref{lem:uniqueopp} we can find a unique $\mu\in Y_{H}(\KK)$ such that $\phi_H(\mu)= \kappa(x)$, $\phi_H(-\mu) = \kappa(y)$.  
We set $\widehat{\kappa}(\lambda) := \mu$.

Now let $T$ be a maximal split torus of $G$ such that $\lambda\in Y_{T}(\KK)$ and let $S$ be any maximal split torus of $H$ such that $\kappa(V_{T}(\KK))\subseteq V_{S}(\KK)$.  Set $x= \phi_G(\lambda)$ and $y= \phi_G(-\lambda)$.  Since $\kappa(x), \kappa(y)\in V_{S}(\KK)$ by construction and $\kappa(x)+ \kappa(y)= 0$, there exists a unique $\nu\in Y_{S}(\KK)$ such that $\phi_H(\nu)= \kappa(x)$ and $\phi_H(-\nu)= \kappa(y)$.  
We see that $\widehat{\kappa}(\lambda) = \mu = \nu \in Y_{S}(\KK)$, so that $\widehat{\kappa}(Y_T(\KK))\subseteq Y_S(\KK)$, and the commutativity of the given diagram follows.
\end{proof}

\begin{rem}\label{rem:linearfunctorial}
 It is immediate from the construction that $\widehat{\kappa}$ restricts to a linear map from each $Y_T(\KK)$ to the corresponding $Y_S(\KK)$.  
 We have obvious functoriality: $\widehat{\rm id}= {\rm id}$ and $\widehat{\tau\circ \kappa}= \widehat{\tau}\circ \widehat{\kappa}$.
\end{rem}

\subsection{Linear maps induced by group homomorphisms}\label{subsec:linmapsII}
Let $f\colon G\to H$ be a homomorphism of connected $k$-groups.  We show that $f$ gives rise to a linear map 
$\kappa_f \colon V_{G}(\KK)\to V_{H}(\KK)$.

\begin{lem}
\label{lem:max_inpndce}
 Let $T$ be a maximal split torus of $G$ and let $S_1$ and $S_2$ be maximal split tori of $H$ such that $f(T)\subseteq S_1\cap S_2$.  
 Let $f_1\colon T\to S_1$ and $f_2\colon T\to S_2$ be the maps induced by $f$.  
 Let  $\lambda\in Y_{T}(\KK)$ and set $\lambda_1= (f_1)_*(\lambda)$, $\lambda_2= (f_2)_*(\lambda)$.  
 Then $(S_1,\lambda_1)\sim (S_2,\lambda_2)$.
\end{lem}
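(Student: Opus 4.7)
The plan is to produce a single element $l \in C_H(f(T))(k)$ that witnesses the three conditions required for $(S_1,\lambda_1)\sim(S_2,\lambda_2)$: namely $l\in L_{\lambda_1}(k)$, $lS_1l^{-1}=S_2$, and $l\cdot\lambda_1=\lambda_2$. First I would observe that $C_H(f(T))$ is a smooth $k$-subgroup of $H$ (since $f(T)$ is a torus, as noted in Section~\ref{sec:prelims}), and that $S_1$ and $S_2$ are both maximal split tori of $C_H(f(T))$: each $S_i$ contains $f(T)$ and so lies in $C_H(f(T))$, while any split torus of $C_H(f(T))$ strictly containing $S_i$ would contradict maximality of $S_i$ in $H$. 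Conjugacy of maximal split tori in $C_H(f(T))$ then furnishes $l \in C_H(f(T))(k)$ with $lS_1l^{-1}=S_2$.

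Next I would show that $l$ lies in $L_{\lambda_1}(k)$. Let $\tilde\lambda:=f_*(\lambda)\in Y_{f(T)}(\KK)$, viewing the corestriction of $f|_T$ as having target $f(T)$. Then $\lambda_j=(i_j)_*(\tilde\lambda)$, where $i_j\colon f(T)\hookrightarrow S_j$ is the inclusion, so Remark~\ref{rem:L_robust}(ii) applied inside $H$ with $S=f(T)$ and maximal split tori $S_1,S_2$ yields $L_{\lambda_1}=L_{\lambda_2}=L_{\tilde\lambda}$. By Remark~\ref{rem:L_robust}(i) there is a cocharacter approximation $\mu\in Y_{f(T)}$ to $\tilde\lambda$; then $L_{\tilde\lambda}=L_\mu=C_H(\IM(\mu))$, and since $\IM(\mu)\subseteq f(T)$ we deduce $C_H(f(T))\subseteq C_H(\IM(\mu))=L_{\lambda_1}$. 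In particular $l\in L_{\lambda_1}(k)$.

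Finally, to verify $l\cdot\lambda_1=\lambda_2$ in $Y_{S_2}(\KK)$, I would write $\lambda=\sum_i a_i\otimes\lambda^{(i)}$ with $\lambda^{(i)}\in Y_T$ and $a_i\in\KK$, so that $\lambda_1=\sum_i a_i\otimes (f\circ\lambda^{(i)})$ in $Y_{S_1}(\KK)$ and $\lambda_2=\sum_i a_i\otimes(f\circ\lambda^{(i)})$ in $Y_{S_2}(\KK)$; the underlying cocharacters $f\circ\lambda^{(i)}$ all factor through $f(T)\subseteq S_1\cap S_2$. Because $l$ centralises $f(T)\supseteq\IM(f\circ\lambda^{(i)})$, the morphism $a\mapsto l(f\circ\lambda^{(i)})(a)l^{-1}$ coincides with $f\circ\lambda^{(i)}$ itself, now viewed as a cocharacter of $lS_1l^{-1}=S_2$; $\KK$-linearity of the action then gives $l\cdot\lambda_1=\lambda_2$, and hence $(S_1,\lambda_1)\sim(S_2,\lambda_2)$.

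The main obstacle is the middle paragraph, where Remark~\ref{rem:L_robust}(ii) is used to replace the a priori $S_1$-dependent description of $L_{\lambda_1}$ by the intrinsic centraliser $C_H(\IM(\mu))$, which visibly contains $C_H(f(T))$; once this identification is in hand, the construction of $l$ and the verification that it transports $\lambda_1$ to $\lambda_2$ reduce to routine bookkeeping.
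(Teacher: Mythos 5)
Your proof is correct and follows the same overall strategy as the paper's: exhibit $l\in L_{\lambda_1}(k)$ with $lS_1l^{-1}=S_2$ and $l\cdot\lambda_1=\lambda_2$, using Remark~\ref{rem:L_robust}(ii) to put both $S_1$ and $S_2$ inside $L_{\lambda_1}=L_{\lambda_2}$. The genuine difference is in how the conjugating element $l$ is obtained. The paper chooses an arbitrary $l\in L_{\lambda_1}(k)$ with $lS_1l^{-1}=S_2$ and then asserts $\omega\circ(f_1)_*=(f_2)_*$ ``by functoriality''; but since $f_1$ and $f_2$ are both corestrictions of $f|_T$, the identity $\Inn_l\circ f_1=f_2$ required to make that functoriality bite amounts to $l$ centralising $f(T)$, which does not hold for an arbitrary such $l$ (e.g.\ already when $S_1=S_2$ one could take $l$ a non-central element of $N_{L_{\lambda_1}}(S_1)(k)$). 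You sidestep this by choosing $l\in C_H(f(T))(k)$ from the outset (legitimate since $C_H(f(T))$ is smooth and contains both $S_i$ as maximal split tori), and then establishing $C_H(f(T))\subseteq L_{\lambda_1}$ via a cocharacter approximation $\mu\in Y_{f(T)}$ to $\tilde\lambda$ and the identity $L_{\lambda_1}=L_\mu=C_H(\IM(\mu))\supseteq C_H(f(T))$. This makes explicit the step that the paper's proof leaves implicit (one could equivalently repair the paper's argument by choosing $l$ inside $C_{L_{\lambda_1}}(f(T))(k)$). The final verification $l\cdot\lambda_1=\lambda_2$ is then immediate precisely because $l$ centralises $f(T)$, as you note.
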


\begin{proof}
By Remark~\ref{rem:L_robust}(ii), $S_1$ and $S_2$ are maximal split tori of $L_{\lambda_1}$, 
and hence $lS_1l^{-1}= S_2$ for some $l\in L_{\lambda_1}(k)$.  The conjugation map $\Inn_l$ gives rise to a linear map $\omega$ from $Y_{S_1}(\KK)$ to $Y_{S_2}(\KK)$, and we have $\omega\circ (f_1)_*= (f_2)_*$ by functoriality (Section~\ref{sec:linmapstori}).  The result follows.
\end{proof}

For each maximal split torus $T$ of $G$, choose a maximal split torus $S_T$ of $H$ such that $f(T)\subseteq S_T$.  
The maps $f_*\colon Y_{T}(\KK)\to Y_{S_T}(\KK)$ for each $T$ give rise to a map from $\bigsqcup_T Y_{T}(\KK)$ to $\bigsqcup_S Y_{S}(\KK)$.  Consider the compositions $\bigsqcup_T Y_{T}(\KK)\to \bigsqcup_S Y_{S}(\KK)\stackrel{\varpi_H}{\to} Y_{H}(\KK)$ and $\bigsqcup_T V_{T}(\KK)\to \bigsqcup_S V_{S}(\KK)\stackrel{\omega_H}{\to} V_{H}(\KK)$.  By Lemma~\ref{lem:max_inpndce}, these maps do not depend on the choices of the maximal split tori $S_T$ for each~$T$.

\begin{prop}
\label{prop:gphom}
 The maps above descend to give well-defined maps $Y_{G}(\KK)\to Y_{H}(\KK)$ and 
 $$\kappa_f\colon V_{G}(\KK)\to V_{H}(\KK).$$
 Moreover, $\kappa_f$ is a linear map of vector edifices.
\end{prop}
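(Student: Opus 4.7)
The plan is to split the verification into three steps: (1) the composite $\bigsqcup_T Y_T(\KK)\to Y_H(\KK)$ descends to a map $Y_G(\KK)\to Y_H(\KK)$, i.e.\ is compatible with $\sim$; (2) the further composition with $\phi_H$ descends to $\kappa_f\colon V_G(\KK)\to V_H(\KK)$, i.e.\ is compatible with $\approx$; and (3) $\kappa_f$ is $\KK$-linear on each apartment. The two main technical inputs will be Lemma~\ref{lem:par_levi_f_prop}(i), which gives $f(L_\lambda)\subseteq L_{f_*(\lambda)}$ and $f(P_\lambda)\subseteq P_{f_*(\lambda)}$, and Lemma~\ref{lem:max_inpndce}, which allows us to replace the auxiliary choice of maximal split torus $S_T$ of $H$ containing $f(T)$ without changing the image in $Y_H(\KK)$.

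For step (1), suppose $(T_1,\lambda_1)\sim(T_2,\lambda_2)$ via $l\in L_{\lambda_1}(k)$, so $T_2=lT_1l^{-1}$ and $\lambda_2=l\cdot\lambda_1$. Setting $\mu_1:=(f|_{T_1\to S_{T_1}})_*(\lambda_1)$, I would introduce the auxiliary torus $S':=f(l)S_{T_1}f(l)^{-1}$, which is a maximal split torus of $H$ containing $f(T_2)$. Functoriality of the construction in Section~\ref{sec:linmapstori} yields $(f|_{T_2\to S'})_*(\lambda_2)=f(l)\cdot\mu_1$, while Lemma~\ref{lem:par_levi_f_prop}(i) places $f(l)$ in $L_{\mu_1}(k)$, so $(S',f(l)\cdot\mu_1)\sim(S_{T_1},\mu_1)$ in $\bigsqcup_S Y_S(\KK)$ and they have the same image in $Y_H(\KK)$. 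Finally Lemma~\ref{lem:max_inpndce} reconciles this with the image computed using the pre-chosen $S_{T_2}$ rather than $S'$. Step (2) is essentially identical: replace $l$ by a witness $g\in P_{\lambda_1}(k)$ of $(T_1,\lambda_1)\approx(T_2,\lambda_2)$, replace $L$-containments by $P$-containments, and interpret the resulting equivalence in $V_H(\KK)$ via $\approx$.

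For step (3), fix an apartment $V_T(\KK)\subseteq V_G(\KK)$. By construction $\kappa_f(V_T(\KK))\subseteq V_{S_T}(\KK)$, and $\kappa_f|_{V_T(\KK)}$ coincides with the composition
$$V_T(\KK)\xrightarrow{\phi_G^{-1}} Y_T(\KK)\xrightarrow{(f|_{T\to S_T})_*} Y_{S_T}(\KK)\xrightarrow{\phi_H} V_{S_T}(\KK).$$
The outer two maps are the $\KK$-linear isomorphisms identifying each apartment with its associated $Y$-space (as noted after Remark~\ref{rem:eqvce_props}), and the middle map is $\KK$-linear by the discussion in Section~\ref{sec:linmapstori}, so $\kappa_f|_{V_T(\KK)}$ is $\KK$-linear. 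The only real obstacle throughout the proof is keeping the bookkeeping with the auxiliary tori $S_T$ under control; Lemma~\ref{lem:max_inpndce} is exactly what makes those choices harmless, after which each step is routine.
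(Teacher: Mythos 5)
Your proof is correct and follows essentially the same route as the paper's: the two key inputs are Lemma~\ref{lem:max_inpndce} (to render the auxiliary choices of $S_T$ harmless) and Lemma~\ref{lem:par_levi_f_prop}(i) (to place $f(l)$ in $L_{\mu_1}(k)$, resp.\ $f(g)$ in $P_{\mu_1}(k)$), combined with functoriality of $(\cdot)_*$. The paper invokes Lemma~\ref{lem:max_inpndce} at the start to replace $S_{T_2}$ by $f(g)S_{T_1}f(g)^{-1}$, whereas you run the argument with the auxiliary torus $S'$ and reconcile with $S_{T_2}$ at the end, and the paper spells out the $\approx$ case and remarks that $\sim$ is analogous while you do the reverse; these are cosmetic reorderings of the same argument.
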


\begin{proof}
 We give the proof for $\kappa_f$.  
 Suppose $(T_1,\lambda_1)\approx (T_2,\lambda_2)$ for $(T_1,\lambda_1), (T_2,\lambda_2)\in \bigsqcup_T Y_{T}(\KK)$.  
 Then $T_2= gT_1g^{-1}$ and $g\cdot \lambda_1= \lambda_2$ for some $g\in P_{\lambda_1}(k)$.  Let $f_i\colon T_i\to S_{T_i}$ be the map induced by $f$.  It is enough to prove that $(S_{T_1}, (f_1)_*(\lambda_1))\approx (S_{T_2}, (f_2)_*(\lambda_2))$.  
 By Lemma~\ref{lem:max_inpndce}, there is no harm in taking $S_{T_2}$ to be $f(g)S_{T_1}f(g)^{-1}$.  
 Observe
 that this means that $f_2\circ \Inn_g =\Inn_{f(g)} \circ f_1$. 
 It follows from functoriality
 that 
 $$
 f(g)\cdot (f_1)_*(\lambda_1)= (f_2)_*(g\cdot \lambda_1) = (f_2)_*(\lambda_2).
 $$  
 Since $f(g)$ belongs to $P_{(f_1)_*(\lambda_1)(H)}(k)$ by Lemma~\ref{lem:par_levi_f_prop}(i), we obtain the desired result.
 It is also clear that $\kappa_f$ is linear --- for each maximal split torus $T$ of $G$, the corresponding map $f_*\colon Y_{T}(\KK)\to Y_{S_T}(\KK)$
 is $\KK$-linear.

 The proof for the map $Y_{G}(\KK)\to Y_{H}(\KK)$ is very similar: it just uses a conjugating element $l\in L_{\lambda_1}(k)$ instead.
\end{proof}

\begin{rems}
\label{rems:funct}
 (i). If $\lambda\in Y_G$ then $\widehat{\kappa_f}(\lambda)= f\circ \lambda$.  It is clear that $\kappa_{{\rm id}_G}= {\rm id}_{V_G(\KK)}$ and if $f\colon G\to H$ and $l\colon H\to K$ are homomorphisms of connected $k$-groups then $\kappa_{l\circ f}= \kappa_l\circ \kappa_f$.  In particular, if $f\in \Aut (G)$ then $\kappa_f$ belongs to $\Aut (V_G(\KK))$ and has inverse $\kappa_{f^{-1}}$.
  
 (ii). Let $f\colon G\to H$ be a surjective homomorphism of connected $k$-groups and let $x\in V_G(\KK)$.  It follows from Lemma~\ref{lem:par_levi_f_prop}(i) that $P_{\kappa_f(x)}= f(P_x)$.  
\end{rems}

\begin{exmp}
\label{ex:parconj}
 Let $g\in G(k)$.  Untangling the definitions, we see that $\kappa_{\Inn_g}(x)$ is just $g\cdot x$ in the sense of Section~\ref{subsec:vector_edifice}.  By Remark~\ref{rems:funct}(i), $\kappa_{\Inn_g}$ is invertible with inverse $\kappa_{\Inn_{g^{-1}}}$.  Remark~\ref{rems:funct}(ii) implies that $P_{g\cdot x}= P_{\kappa_{\Inn_g}(x)}= \Inn_g(P_x)= gP_xg^{-1}$.
\end{exmp}
 
\begin{rem}
\label{rem:type-preserving}
 Let $f\colon G\to H$ be a homomorphism.  Then $\kappa_f$ is equivariant in the following sense: if $x\in V_G(\KK)$ and $g\in G(k)$ then $\kappa_f(g\cdot x)= f(g)\cdot \kappa_f(x)$.  To see this, note that if $\lambda\in Y_G$ and $g\in G(k)$ then $f\circ (g\cdot \lambda)= f(g)\cdot (f\circ \lambda)$; the result follows from linearity.  This shows that $\kappa_f$ is type-preserving (see Definition~\ref{defn:type}).
\end{rem}

\subsection{Homomorphisms which induce injective linear maps}\label{sec:homs}
We can now show that inclusions of $k$-groups give inclusions of vector edifices. In fact, we can do better.

\begin{prop}
\label{prop:inclusion}
 Let $f\colon G\to H$ be a homomorphism of connected $k$-groups with finite kernel. 
 Then the maps $Y_{G}(\KK)\to Y_{H}(\KK)$ and $\kappa_f\colon V_{G}(\KK)\to V_{H}(\KK)$ are injective.
\end{prop}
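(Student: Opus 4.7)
The plan is to reduce injectivity to the single-apartment case, where $f_*\colon Y_T(\KK)\to Y_S(\KK)$ (for any maximal split tori $T\subseteq G$ and $S\subseteq H$ with $f(T)\subseteq S$) is a $\KK$-linear injection. Injectivity at the apartment level holds because $\ker(f)$ being finite forces the lattice map $Y_T\to Y_S$ to be injective --- any nonzero cocharacter has $1$-dimensional image, which cannot sit inside a finite subgroup --- and tensoring with $\KK$ over $\ZZ$ preserves injectivity since $Y_T$ is a free $\ZZ$-module.

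For $\kappa_f$, I would start with $\lambda_i\in Y_{T_i}(\KK)$ satisfying $\phi_H(f_*(\lambda_1)) = \phi_H(f_*(\lambda_2))$. Because $\approx$ preserves R-parabolic subgroups (Lemma~\ref{lem:PLindpndce}(ii), applied to $H$ and extended to $Y_H(\KK)$), we have $P_{f_*(\lambda_1)} = P_{f_*(\lambda_2)}$; the argument of Lemma~\ref{lem:par_levi_f_prop}(ii), which goes through whether or not $\lambda_1,\lambda_2$ share an apartment (the preimage-by-$f$ manipulation never uses this), then yields $P_{\lambda_1} = P_{\lambda_2} =: P$. Since $T_1$ and $T_2$ are both maximal split tori of $P$, Lemma~\ref{lem:basicpropertiesRpars}(i), extended to $Y_G(\KK)$ via Remark~\ref{rem:obvious_gen}, supplies $u\in U_{\lambda_1}(k)$ with $u\cdot\lambda_1\in Y_{T_2}(\KK)$. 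Since $u\in P_{\lambda_1}(k)$, we have $\phi_G(u\cdot\lambda_1) = \phi_G(\lambda_1)$, so after replacing $\lambda_1$ by $u\cdot\lambda_1$ we may assume both representatives live in the common apartment $Y_{T_2}(\KK)$. Choosing a maximal split torus $S$ of $H$ containing $f(T_2)$, both $f_*(\lambda_i)$ lie in $Y_S(\KK)$; Remark~\ref{rem:eqvce_props}(ii) forces $f_*(\lambda_1) = f_*(\lambda_2)$, and injectivity of $f_*$ on $Y_{T_2}(\KK)$ then gives $\lambda_1 = \lambda_2$.

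The map $Y_G(\KK)\to Y_H(\KK)$ is handled by the same template, with $\sim$ in place of $\approx$ and R-Levi subgroups in place of R-parabolic subgroups. From $L_{f_*(\lambda_1)} = L_{f_*(\lambda_2)}$ one deduces $L_{\lambda_1} = L_{\lambda_2} =: L$, and the $L(k)$-conjugacy of the maximal split tori $T_1, T_2$ of $L$ furnishes $l\in L(k) = L_{\lambda_1}(k)$ with $lT_1l^{-1} = T_2$. Since $l$ centralises $\IM(\lambda_1)$, we have $l\cdot\lambda_1 = \lambda_1$ as a cocharacter, now viewed as an element of $Y_{T_2}(\KK)$, and $(T_1,\lambda_1)\sim(T_2,\lambda_1)$. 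The analog of Remark~\ref{rem:eqvce_props}(ii) for $\sim$ --- immediate, since any $m\in L_\mu(k)$ fixes $\mu$ --- together with injectivity of $f_*$ on $Y_{T_2}(\KK)$ then close the argument.

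I expect the main subtlety to lie not in any single step but in the careful bookkeeping across different apartments and choices of $S\supseteq f(T)$ in $H$; Lemma~\ref{lem:max_inpndce} ensures the image in $V_H(\KK)$ and $Y_H(\KK)$ is independent of this choice, and Remark~\ref{rem:obvious_gen} covers the needed passage from $Y_G$ to $Y_G(\KK)$ in the cited lemmas. Once these are invoked, the proof collapses to injectivity of a $\KK$-linear map on a single vector space.
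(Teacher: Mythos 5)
Your proposal is correct and follows essentially the same strategy as the paper's proof: deduce $P_{\lambda_1}=P_{\lambda_2}$ from $P_{f_*(\lambda_1)}(H)=P_{f_*(\lambda_2)}(H)$ using the finite-kernel hypothesis, conjugate one representative into the other's apartment, reduce the $\approx$-relation to an equality of cocharacters, and finish with injectivity of $f_*$ on a single apartment. The minor routing differences --- you apply the argument of Lemma~\ref{lem:par_levi_f_prop}(ii) directly and conjugate $\lambda_1$ by $u\in U_{\lambda_1}(k)$, while the paper detours explicitly through $f(G)$ via Corollary~\ref{cor:par_intersect} and conjugates $\lambda_2$ by a general $g\in P_{\lambda_1}(k)$, checking the $\approx$-class via Remark~\ref{rem:type-preserving} --- do not amount to a genuinely different argument.
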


\begin{proof}
 Again, we give the proof for $\kappa_f$; the other proof is very similar.  
 Let $(T_1,\lambda_1), (T_2,\lambda_2)\in \bigsqcup_T Y_{T}(\KK)$ be such that 
 $(S_{T_1}, f_*(\lambda_1))\approx (S_{T_2}, f_*(\lambda_2))$.  Let $f_i\colon T_i\to S_{T_i}$ be the map induced by $f$ for each $i$.
 Then $P_{(f_1)_*(\lambda_1)}(H) = P_{(f_2)_*(\lambda_2)}(H)$, so $P_{(f_1)_*(\lambda_1)}(f(G)) = P_{(f_2)_*(\lambda_2)}(f(G))$ by Corollary~\ref{cor:par_intersect}, and hence by Lemma~\ref{lem:par_levi_f_prop}(i) we have $f(P_{\lambda_1}) = f(P_{\lambda_2})$.  It follows from Lemma~\ref{lem:par_levi_f_prop}(ii) (applied to the surjective homomorphism $G\to f(G)$) that 
$P_{\lambda_1} = P_{\lambda_2}$.  Thus $T_1$ and $T_2$ are maximal split tori of $P_{\lambda_1}$, which means we can find 
 $g\in P_{\lambda_1}(k)$ such that $gT_2g^{-1} = T_1$. 
 Since $f(g) \in P_{(f_1)_*(\lambda_1)}(H)(k) = P_{(f_2)_*(\lambda_2)}(H)(k)$, we have $P_{(f_2)_*(g\cdot \lambda_2)}(H)= P_{f(g)\cdot (f_2)_*(\lambda_2)}(H)= f(g)P_{(f_2)_*(\lambda_2)}(H) f(g)^{-1}= P_{(f_2)_*(\lambda_2)}(H)$ by Remark~\ref{rem:type-preserving} and Lemma~\ref{lem:par_levi_prop}(ii).  Hence, replacing $\lambda_2$ with $g\cdot\lambda_2$, we may assume that $T_1 = T_2$. 
 Now, since $f$ has finite kernel, the map $(f_1)_*$ is injective on $Y_{T_1}(\KK)$, as observed in Section~\ref{sec:linmapstori}. 
 Since we have reduced to the case that $\lambda_1,\lambda_2\in Y_{T_1}(\KK)$ with 
 $(f_1)_*(\lambda_1) = (f_1)_*(\lambda_2)$, we conclude that $\lambda_1 = \lambda_2$. 
 \end{proof}

\begin{rem}\label{rem:notinconsistent}
We can now resolve the potential ambiguity noted in Remark~\ref{rem:inconsistent} above.
 If $T$ is a maximal split torus of $G$ then the notation $V_{T}(\KK)$ could mean the corresponding apartment of $V_{G}(\KK)$ or it could mean the vector edifice associated to $T$ in its own right.  But we see that if $i\colon T\to G$ is the inclusion then $\kappa_i$ gives a linear isomorphism from the latter object to the former, so the abuse of notation is harmless.
 Indeed, this observation works for any subgroup $H$ of $G$ --- we may identify the vector edifice $V_H(\KK)$ as a subobject of the vector edifice $V_G(\KK)$ via the injective linear map $\kappa_i\colon V_H(\KK)\to V_G(\KK)$ arising from the inclusion $i\colon H\to G$.
 We often do this without further comment in the rest of the paper.
\end{rem}

\begin{rem}
\label{rem:xinVTK}
 If $\lambda\in Y_G(\KK)$ then we may regard $\lambda$ as an element of $Y_{L_\lambda}(\KK)$ and $x:= \phi_G(\lambda)$ as an element of $V_{L_\lambda}(\KK)$ and $V_{P_\lambda}(\KK)$ in the obvious way.  
Since all maximal split tori in $L_\lambda$ are $L_\lambda(k)$-conjugate, it follows that if $T$ is a maximal split torus of $L_\lambda$ then $x\in V_T(\KK)$.
\end{rem}

\begin{exmp}\label{ex:bijectivenotiso}
Suppose that $G$ is such that $V_G(\KK)$ has the common apartment property, and $P$ is an R-parabolic subgroup of $G$.
Then the inclusion $i\colon P\to G$ actually induces a \emph{bijection} $\kappa_i\colon V_P(\KK)\to V_G(\KK)$.
To see this, note that for every maximal split torus $T$ of $G$ and every $\lambda\in Y_T(\KK)$, there is a maximal split torus $S_{T,\lambda}$ contained in $P_\lambda\cap P$, because of the common apartment property, 
and hence $(T,\lambda)\approx (S_{T,\lambda},\mu)$, where $\mu = g\cdot\lambda$ for $g\in P_\lambda(k)$ conjugating $T$ to $S_{T,\lambda}$.
Therefore, if we let $x = \phi_P(\mu)\in V_P(\KK)$ and $y = \phi_G(\lambda)\in V_G(\KK)$, we have $\kappa_i(x) = y$.
Thus $\kappa_i$ is surjective, and we already know it is injective by Proposition~\ref{prop:inclusion}.  Note that, even though $\kappa_i$ is a bijective linear map of vector edifices, it is \emph{not} an isomorphism in general: e.g., in the setting of Example~\ref{ex:no_common_apt} we have points $x_1, x_2\in V_P(\KK)$ such that $x_1$ and $x_2$ do not lie in a common apartment of $V_P(\KK)$, but $\kappa_i(x_1)$ and $\kappa_i(x_2)$ do lie in a common apartment of $V_G(\KK)$, because $G$ is reductive in that case (and hence $V_G(\KK)$ has the common apartment property).
\end{exmp}

\subsection{Homomorphisms which induce surjective linear maps}
Next we  
investigate the conditions under which a homomorphism $f\colon G\to H$ gives rise to a surjective linear map.  It turns out that the obvious condition (that $f$ is surjective) is not sufficient; this situation is made more complicated by, for example, the existence over imperfect fields of inseparable isogenies.  This motivates the following definition. See also Remark~\ref{rem:isogenydiscussion}(i) below.

\begin{defn}
Suppose $f\colon G\to H$ is a homomorphism of connected $k$-groups.
We say $f$ is \emph{apte} if every maximal split torus $S$ of $H$ has the form $S = f(T)$ for a maximal split torus $T$ of $G$.
\end{defn}

With the key definition in hand, the following result is easy.

\begin{lem}
\label{lem:surj}
Let $f\colon G\to H$ be an apte homomorphism of connected $k$-groups.  Then $\kappa_f$ is surjective.
\end{lem}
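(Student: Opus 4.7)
The plan is to lift a given point $y \in V_H(\KK)$ to an apartment that comes from a maximal split torus of $G$ via $f$, then use the apte hypothesis together with surjectivity at the level of cocharacter lattices (after tensoring) to realise $y$ as an image under $\kappa_f$.

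First I would fix $y\in V_H(\KK)$ and choose a representative pair $(S,\mu)$, i.e.\ a maximal split torus $S$ of $H$ and $\mu\in Y_S(\KK)$ with $\phi_H(\mu)=y$. By the apte hypothesis, there is a maximal split torus $T$ of $G$ with $f(T)=S$. Let $h:=f|_T\colon T\to S$, so that $h$ is a surjective homomorphism of split $k$-tori. The construction of $\kappa_f$ in Proposition~\ref{prop:gphom} allows the choice $S_T=S$, and hence it suffices to find $\lambda\in Y_T(\KK)$ with $h_*(\lambda)=\mu$; then $\kappa_f(\phi_G(\lambda))=\phi_H(h_*(\lambda))=\phi_H(\mu)=y$.

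The main point is therefore to show that $h_*\colon Y_T(\KK)\to Y_S(\KK)$ is surjective. Since $h$ is surjective, the dual map $h^*\colon X_S\hookrightarrow X_T$ on characters is injective, and the cokernel $A:=X_T/h^*(X_S)$ is a finitely generated abelian group. Applying $\Hom(-,\ZZ)$ to the short exact sequence $0\to X_S\to X_T\to A\to 0$ yields the exact sequence
\begin{equation*}
0\to \Hom(A,\ZZ)\to Y_T\to Y_S\to \mathrm{Ext}^1(A,\ZZ)\to 0.
\end{equation*}
Because $\mathrm{Ext}^1(A,\ZZ)$ equals the torsion subgroup of $A$, it is finite, so after tensoring with the torsion-free ring $\KK$ the cokernel vanishes, i.e.\ $h_*\otimes\KK\colon Y_T(\KK)\to Y_S(\KK)$ is surjective. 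Picking any preimage $\lambda$ of $\mu$ completes the argument.

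The only place that needs a bit of care is the surjectivity at the $\KK$-linear level; once that is in hand the rest is formal. I do not expect any serious obstacle, since the apte condition is tailored precisely to ensure that every apartment of $V_H(\KK)$ is hit by the image of an apartment of $V_G(\KK)$.
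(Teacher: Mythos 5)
Your proof is correct and follows the same strategy as the paper's: use the apte hypothesis to lift a maximal split torus $S$ of $H$ to a maximal split torus $T$ of $G$ with $f(T)=S$, then show the induced map $Y_T(\KK)\to Y_S(\KK)$ is surjective because $h^*\colon X_S\to X_T$ is injective, forcing the cokernel of $h_*\colon Y_T\to Y_S$ to be finite. The only difference is cosmetic: the paper states this last implication in one sentence, whereas you spell it out via the $\Hom$--$\mathrm{Ext}$ long exact sequence, which is a perfectly good way to justify the finiteness of the cokernel.
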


\begin{proof}
 Let $S$ be a maximal split torus of $H$.  The assumption that $f$ is apte means we can find a maximal split torus $T$ of $G$ with $S=f(T)$.  Let $h\colon T\to S$ be restriction of $f$.  Since the map of character groups $h^*\colon X_S\to X_T$ is injective, the dual map $h_*\colon Y_T\to Y_S$ has finite cokernel.   This implies that $f_*\colon Y_{T}(\KK)\to Y_{S}(\KK)$ is surjective, and the result follows.
\end{proof}

\begin{rems}\label{rem:isogenydiscussion}
(i). The lemma does not hold without the assumption that $f$ is apte. For a concrete example, let $G = H = \SL_2$ over an imperfect field $k$ in characteristic $2$, and let $f\colon G\to G$ be the standard Frobenius map which squares the entries of $2\times 2$ matrices.
That $k$ is imperfect means that there is some $a\in \ovl{k}\setminus k$ such that $a^2\in k$.
Now let 
$$
u = \left(\begin{array}{cc} 1&a\\0&1\end{array}\right) \in G(\ovl{k}),
$$
and let $T$ be the standard (split) diagonal torus in $G$.
The torus $uT_{\ovl{k}}u^{-1}$ is not a $k$-subgroup, hence in particular is not $k$-split.
However, its image $f_{\ovl{k}}(uT_{\ovl{k}}u^{-1}) = f(u)T_{\ovl{k}}f(u)^{-1}$ descends to a split $k$-torus $S= f(u)Tf(u)^{-1}$, since $f(u)\in G(k)$.
This shows that $f$ is not apte, and it is also clear that not every element of  $V_S(\KK)$ belongs to $\kappa_f(V_G(\KK))$.

(ii). Following \cite[22.11]{Bo}, we call a surjective homomorphism $f\colon G\to H$ a \emph{central isogeny} if it has finite kernel which
is contained in the \emph{scheme-theoretic} centre of $G$ --- that is, for every $k$-algebra $A$ and every $A$-algebra $A'$, the kernel of $f(A)\colon G(A)\to H(A)$ centralises $G(A')$.\footnote{For $G$ reductive, Borel \cite[22.3]{Bo} gives another definition of central isogeny, but this is shown to be equivalent to the scheme-theoretic one just given in \cite[22.15 Prop.]{Bo}.}  If (a) $f$ is surjective and $k$ is perfect or (b) $f$ is a central isogeny, then $f$ is apte: this follows from \cite[22.6 Thm.(ii)]{Bo}.  Neither of these hypotheses holds for the example in (i).

(iii). Let $f$ be a surjective homomorphism.  Suppose $f$ is smooth (this is equivalent to requiring $\ker f$ to be smooth, by \cite[Prop.~1.63]{milne}).  Then $f$ is apte.  To see this, let $S$ be a maximal split torus of $H$ and let $S_1$ be a maximal torus of $H$ containing $S$. 
 Then the subgroup scheme $f^{-1}(S_1)$ of $G$ is smooth.  Let $T_1$ be a maximal torus of $f^{-1}(S_1)$ and let $T$ be the unique maximal split torus of $T_1$.  
 By \cite[A.2.8]{CGP}, $f(T_1)= S_1$, so (the restriction of) $f$ is a surjective map of $k$-tori.
 It follows from \cite[8.15 Prop.(3)]{Bo} and surjectivity that $f(T) = S$.

(iv). Example~\ref{ex:bijectivenotiso} shows that $\kappa_f$ can be surjective even when $f$ is not apte.
\end{rems}

We can now prove the key result of this subsection:

\begin{prop}\label{prop:isogiso}
Suppose $f\colon G\to H$ is an apte homomorphism with finite kernel.  Then $\kappa_f$ is an isomorphism of vector edifices.
\end{prop}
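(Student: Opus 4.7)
The plan is to combine the two injectivity/surjectivity results already proved with the apte hypothesis to check both halves of Definition~\ref{defn:iso}. Injectivity of $\kappa_f$ is immediate from Proposition~\ref{prop:inclusion} (since $\ker f$ is finite), and surjectivity of $\kappa_f$ is Lemma~\ref{lem:surj}. So $\kappa_f$ is a bijective linear map of vector edifices, and the remaining issue is to show that the set-theoretic inverse $\kappa_f^{-1}\colon V_H(\KK)\to V_G(\KK)$ is itself linear in the sense of Definition~\ref{defn:linearmap}.

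The crucial observation is that the apte hypothesis matches up apartments on the nose: given any maximal split torus $S$ of $H$, pick a maximal split torus $T$ of $G$ with $f(T)=S$, so that, taking $S_T=S$ in the construction of $\kappa_f$, the restriction $\kappa_f|_{V_T(\KK)}$ is induced by the torus map $h:=f|_T\colon T\to S$. This $h$ has finite kernel (as a restriction of $f$) and is surjective, so the induced map $h_*\colon Y_T(\KK)\to Y_S(\KK)$ of finite-dimensional $\KK$-vector spaces is both injective (by Section~\ref{sec:linmapstori}) and surjective (as in the proof of Lemma~\ref{lem:surj}), hence a linear isomorphism. Therefore $\kappa_f|_{V_T(\KK)}\colon V_T(\KK)\to V_S(\KK)$ is a linear isomorphism of apartments.

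From this it follows that $\kappa_f^{-1}(V_S(\KK))\subseteq V_T(\KK)$: any $y\in V_S(\KK)$ equals $\kappa_f(x')$ for some $x'\in V_T(\KK)$ by surjectivity on apartments, and by global injectivity of $\kappa_f$ this $x'$ is the unique preimage. Hence $\kappa_f^{-1}$ maps the apartment $V_S(\KK)$ bijectively onto the apartment $V_T(\KK)$, and the restriction is just the inverse of $\kappa_f|_{V_T(\KK)}$, which is $\KK$-linear. Running this over every maximal split torus $S$ of $H$ covers every apartment of $V_H(\KK)$, so $\kappa_f^{-1}$ is a linear map of vector edifices, completing the proof.

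The main (minor) subtlety to get right is the choice of tori in the construction of $\kappa_f$: we must use the freedom, granted by Lemma~\ref{lem:max_inpndce}, to select $S_T=f(T)$ on the nose whenever $f(T)$ is itself a maximal split torus of $H$, so that the restrictions $\kappa_f|_{V_T(\KK)}$ land in the specific apartments $V_{f(T)}(\KK)$ and not merely in apartments containing them. Beyond this bookkeeping the argument is essentially formal, chaining together Proposition~\ref{prop:inclusion}, Lemma~\ref{lem:surj}, and the linear-algebra fact that a linear bijection of finite-dimensional vector spaces has a linear inverse.
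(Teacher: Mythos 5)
Your proof is correct and follows essentially the same route as the paper's: bijectivity from Proposition~\ref{prop:inclusion} and Lemma~\ref{lem:surj}, then using the apte hypothesis to pair apartments $V_T(\KK)\leftrightarrow V_{f(T)}(\KK)$ and the finite-kernel hypothesis to see the restriction of $\kappa_f$ to each apartment is a linear isomorphism, so that $\kappa_f^{-1}$ is linear apartment-by-apartment. The only cosmetic difference is that the paper emphasizes uniqueness of the torus $T$ with $f(T)=S$ to establish the pairing, whereas you derive the containment $\kappa_f^{-1}(V_S(\KK))\subseteq V_T(\KK)$ from global injectivity of $\kappa_f$; these amount to the same thing.
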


\begin{proof}
It follows from Proposition~\ref{prop:inclusion} and Lemma~\ref{lem:surj} that $\kappa_f$ is a bijective linear map.
Since $f$ is apte, for each maximal split torus $S$ of $H$ there is a maximal split torus $T$ of $G$ such that $f(T) = S$, and since $f$ has finite kernel, this $T$ is unique.
Therefore, $\kappa_f$ pairs up the apartments of $V_G(\KK)$ and $V_H(\KK)$, and hence $\kappa_f^{-1}$ is also a linear map of vector edifices.  
\end{proof}

We finish with some results related to the common apartment property.

\begin{lem}
\label{lem:unipt_quot}
  Let $N$ be a connected normal unipotent subgroup of $G$ and let $\pi\colon G\to G/N$ be the canonical projection.  Let $x_1,x_2\in V_G(\KK)$.  Then $\kappa_\pi(x_1)= \kappa_\pi(x_2)$ if and only if $x_2= n\cdot x_1$ for some $n\in N(k)$.
\end{lem}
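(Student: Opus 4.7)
The forward direction is immediate from the equivariance of $\kappa_\pi$ (Remark~\ref{rem:type-preserving}): since $\pi(n)=1$ for $n\in N(k)$, we obtain $\kappa_\pi(n\cdot x_1)=\pi(n)\cdot\kappa_\pi(x_1)=\kappa_\pi(x_1)$.

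For the backward direction, the plan is to split the argument into two parts. First I would establish \emph{injectivity within a common apartment}: if $x_1,x_2$ both lie in some apartment $V_T(\KK)$, then $\kappa_\pi(x_1)=\kappa_\pi(x_2)$ forces $x_1=x_2$. The key observation is that $T\cap N$ is trivial (a unipotent subgroup of a torus), so $\pi|_T$ is an isomorphism onto a split torus, and $\pi(T)$ is in fact a max split torus of $G/N$ --- this is verified by a conjugacy argument inside $\pi^{-1}(\bar S)^0$ for any split torus $\bar S\supseteq\pi(T)$, using that $\pi$ is apte by Remark~\ref{rem:isogenydiscussion}(iii). Writing $x_i=\phi_G(\lambda_i)$ for $\lambda_i\in Y_T(\KK)$, the equality $\kappa_\pi(x_1)=\kappa_\pi(x_2)$ translates to $(\pi(T),\pi_*\lambda_1)\approx(\pi(T),\pi_*\lambda_2)$ in $V_{G/N}(\KK)$, which forces $\pi_*\lambda_1=\pi_*\lambda_2$ by Remark~\ref{rem:eqvce_props}(ii); injectivity of $\pi_*$ on $Y_T(\KK)$ then gives $\lambda_1=\lambda_2$.

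The second, harder step is the \emph{reduction to a common apartment via $N(k)$}: produce $n\in N(k)$ such that $n\cdot x_1$ and $x_2$ lie in a common apartment. Combined with the first step and the forward direction (which guarantees $\kappa_\pi(n\cdot x_1)=\kappa_\pi(x_2)$), this yields $n\cdot x_1=x_2$. The crucial sub-claim is that any two max split tori of $G$ with the same image under $\pi$ are $N(k)$-conjugate: both are max split tori of the smooth connected solvable $k$-group $\pi^{-1}(\bar T)\cong T\ltimes N$ (the semi-direct product structure coming from the section $\pi|_T^{-1}$), hence $(T\ltimes N)(k)$-conjugate; the abelianness of $T$ and normality of $N$ then allow the conjugating element to be replaced by one in $N(k)$.

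The main obstacle will be to produce a max split torus $T\subseteq P_{x_1}$ with $\pi(T)=\pi(T_2)$, where $\lambda_i\in Y_{T_i}(\KK)$ are chosen with $\phi_G(\lambda_i)=x_i$. Given such a $T$, the sub-claim supplies $n\in N(k)$ with $T=nT_2n^{-1}$; Lemma~\ref{lem:basicpropertiesRpars}(i) (in its $Y_G(\KK)$-form, Remark~\ref{rem:obvious_gen}) supplies $\lambda_1'\in Y_T(\KK)$ with $\phi_G(\lambda_1')=x_1$; and then $n^{-1}\cdot\lambda_1'\in Y_{T_2}(\KK)$ places $n^{-1}\cdot x_1$ and $x_2$ in the common apartment $V_{T_2}(\KK)$, so the first step finishes the argument. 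To produce the desired $T$, I would exploit the $\approx$-relation hidden in $\kappa_\pi(x_1)=\kappa_\pi(x_2)$, which furnishes an element $\bar g\in\pi(P_{x_1})(k)\subseteq P_y(k)$ (here $y=\kappa_\pi(x_i)$) conjugating $\pi(T_1)$ to $\pi(T_2)$ and satisfying $\bar g\cdot\pi_*\lambda_1=\pi_*\lambda_2$; lifting $\bar g$ through the smooth surjection $\pi\colon P_{x_1}\twoheadrightarrow P_y$ (and absorbing any $N$-ambiguity into the choice of $n$) gives the required $T$.
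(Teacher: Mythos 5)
The forward direction and the two-step decomposition of the backward direction (reduce to a common apartment by an $N(k)$-conjugation, then use injectivity of $\kappa_\pi$ on a fixed apartment) match the structure of the paper's argument, and your Step 1 and the $N(k)$-conjugacy sub-claim are both correct. The gap is in how you propose to produce the maximal split torus $T\subseteq P_{x_1}$ with $\pi(T)=\pi(T_2)$.

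You propose lifting the element $\bar g\in P_y(G/N)(k)$ furnished by the $\approx$-relation through $\pi\colon P_{x_1}\twoheadrightarrow P_y(G/N)$. There are two problems. First, this morphism is smooth only after one checks that its kernel $P_{x_1}\cap N$ is a smooth group scheme --- which is true, since $N$ is normal so $P_{x_1}\cap N=P_{\lambda'}(N)$ for a cocharacter approximation $\lambda'$, and \cite[Lem.~2.1.5]{CGP} applies --- but you do not verify this. Second, and more seriously, even a smooth surjective morphism of $k$-groups need not be surjective on $k$-points: the cokernel of $P_{x_1}(k)\to P_y(G/N)(k)$ is controlled by $H^1(k,P_{x_1}\cap N)$, and a smooth connected unipotent group over an imperfect field can be $k$-wound with nontrivial $H^1$. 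So ``lifting $\bar g$'' and ``absorbing the $N$-ambiguity into $n$'' does not go through as stated, and this is exactly the kind of subtlety the paper's hypotheses (arbitrary $k$) are designed to confront.

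The fix is to lift the \emph{torus} rather than the element, which is what the notion of an apte map is for. Once you know $\pi\colon P_{x_1}\to P_y(G/N)$ is smooth and surjective, Remark~\ref{rem:isogenydiscussion}(iii) says it is apte, so there is a maximal split torus $T$ of $P_{x_1}$ with $\pi(T)$ equal to the prescribed maximal split torus $\pi(T_2)$ of $P_y(G/N)$; no lifting of $k$-points is required. (The paper itself sidesteps the smoothness check on $P_{x_i}\cap N$ by working with $G_i:=P_{x_i}N$ and the projection $\pi_i\colon G_i\to Q_i$, whose kernel is $N$ and hence visibly smooth; one still needs to arrange $T_i\subseteq P_{x_i}$ there.) With that change your argument closes up and is essentially the paper's proof.
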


\begin{proof}
 If $n\in N(k)$ and $y= n\cdot x$ then $\kappa_\pi(y)= \kappa_\pi(n\cdot x)= \pi(n)\cdot \kappa_\pi(x)= \kappa_\pi(x)$ where the second equality is from Remark~\ref{rem:type-preserving}.  Conversely, suppose $\kappa_\pi(x_1)= \kappa_\pi(x_2)$; call this common value $y$.  For each $i$, let $G_i= P_{x_i}N$, let $Q_i= \pi(G_i)$ and let $\pi_i\colon G_i\to Q_i$ be the restriction of $\pi$.  Then $\pi_i$ is surjective, so $Q_i= \pi(P_{x_i})= P_y(G/N)$ by Remark~\ref{rems:funct}(ii).
 
 Choose a maximal split torus $T$ of $G/N$ such that $y\in V_T(\KK)$.  Then $T$ is a maximal split torus of $P_y(G/N)$ by Remark~\ref{rem:conjugateapartments}.  For each $i$, since $\pi_i$ is smooth, $\pi_i$ is apte (Remark~\ref{rem:isogenydiscussion}(iii)), so there is a maximal split torus $T_i$ of $G_i$ such that $\pi(T_i)= T$.  So $T_1$ and $T_2$ are maximal split tori of $H:= T_1N\cong T_1\ltimes N$.  Hence there exists $n\in N(k)$ such that $T_2= nT_1n^{-1}$.  Now $x_i$ belongs to $V_{T_i}(\KK)$ for each $i$, so $n\cdot x_1$ and $x_2$ belong to $V_{T_2}(\KK)$.  We have $\kappa_\pi(n\cdot x_1)= \kappa_\pi(x_2)= y$ by Remark~\ref{rem:type-preserving}.  But $\pi$ induces an isomorphism from $T_2$ onto $T$ since $N$ is unipotent, so $\kappa_\pi$ induces an isomorphism from $V_{T_2}(\KK)$ to $V_T(\KK)$.  We conclude that $x_2= n\cdot x_1$, as required.
\end{proof}

\begin{lem}
\label{lem:almost_common_apt}
Let $x,y\in V_G(\KK)$.  Then there exists $u\in R_{u,k}(G)(k)$ such that $u\cdot x$ and $y$ lie in a common apartment.
\end{lem}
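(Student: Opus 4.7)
My plan is to reduce to the pseudo-reductive quotient, where the common apartment property is known to hold, and then lift the resulting common apartment back to $V_G(\KK)$ through the action of $R_{u,k}(G)(k)$.

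Let $N := R_{u,k}(G)$ and let $\pi \colon G \to \bar{G} := G/N$ be the canonical projection.  By definition, $\bar{G}$ is pseudo-reductive, so $V_{\bar G}(\KK)$ enjoys the common apartment property (cf.\ the discussion after Definition~\ref{defn:common_apt}).  Moreover, $\pi$ is a smooth surjection (since $N$ is smooth), hence apte by Remark~\ref{rem:isogenydiscussion}(iii).  I would first apply the common apartment property to $\kappa_\pi(x)$ and $\kappa_\pi(y)$ in $V_{\bar G}(\KK)$: pick a maximal split torus $\bar T$ of $\bar G$ with $\kappa_\pi(x), \kappa_\pi(y) \in V_{\bar T}(\KK)$, and then use apteness of $\pi$ to lift $\bar T$ to a maximal split torus $T$ of $G$ with $\pi(T) = \bar T$.

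Next, the restricted map $\pi|_T \colon T \to \bar T$ is a surjective isogeny (the kernel $T \cap N$ is a subgroup scheme of both a torus and a smooth unipotent group, hence finite), so the corresponding $\KK$-linear map of apartments $V_T(\KK) \to V_{\bar T}(\KK)$ is surjective.  I can therefore find $x', y' \in V_T(\KK) \subseteq V_G(\KK)$ with $\kappa_\pi(x') = \kappa_\pi(x)$ and $\kappa_\pi(y') = \kappa_\pi(y)$.  Now Lemma~\ref{lem:unipt_quot} supplies elements $u_x, u_y \in N(k)$ with $x' = u_x \cdot x$ and $y' = u_y \cdot y$.  Since $x'$ and $y'$ both lie in the apartment $V_T(\KK)$, translating by $u_y^{-1}$ places both $u_y^{-1} u_x \cdot x$ and $y$ in the apartment $V_{u_y^{-1} T u_y}(\KK)$; setting $u := u_y^{-1} u_x \in N(k) = R_{u,k}(G)(k)$ completes the proof.

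The key step---and, in my view, the only mildly subtle one---is combining apteness of $\pi$ (to lift the apartment $\bar T$ back to $V_G(\KK)$) with Lemma~\ref{lem:unipt_quot} (to recognise that the discrepancy between the given points $x, y$ and their preimages $x', y'$ in $V_T(\KK)$ is exactly an $R_{u,k}(G)(k)$-translation).  Once these two facts are in hand, the rest is mere bookkeeping, using equivariance of $\kappa_\pi$ and the $G(k)$-action on apartments.
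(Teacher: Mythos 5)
Your argument follows the paper's proof step for step: project to the pseudo-reductive quotient $G/R_{u,k}(G)$, invoke the common apartment property there, lift the common apartment via apteness (Remark~\ref{rem:isogenydiscussion}(iii)), pull $x$ and $y$ back into $V_T(\KK)$ via Lemma~\ref{lem:unipt_quot}, and conjugate. The only cosmetic difference is that you call the kernel of $\pi|_T$ ``finite'' where the paper observes it is actually trivial (a torus meets a unipotent group trivially), making $V_T(\KK)\to V_{\bar T}(\KK)$ an isomorphism rather than merely surjective; this does not affect the argument.
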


\begin{proof}
 Let $H= G/R_{u,k}(G)$ and let $\pi\colon G\to H$ be the canonical projection.  Since $H$ is pseudo-reductive, there is a maximal split torus $S$ of $H$ such that $\kappa_\pi(x), \kappa_\pi(y)\in V_S(\KK)$ (see the discussion following Definition \ref{defn:common_apt}).  By
 Remark~\ref{rem:isogenydiscussion}(iii), as $\pi$ is smooth there is a maximal split torus $T$ of $G$ such that $\pi(T)= S$.  The map $\kappa_\pi$ gives an isomorphism from $V_T(\KK)$ to $V_S(\KK)$, so there exists $x', y'\in V_T(\KK)$ such that $\kappa_\pi(x')= \kappa_\pi(x)$ and $\kappa_\pi(y')= \kappa_\pi(y)$.  By Lemma~\ref{lem:unipt_quot}, there exist $v,w\in R_{u,k}(G)(k)$ such that $v\cdot x= x'$ and $w\cdot y= y'$.  Set $u:= w^{-1}v$.  Then $u\cdot x= w^{-1}\cdot x'$ and $y= w^{-1}\cdot y'$ lie in the common apartment $V_{w^{-1}Tw}(\KK)$, as required. 
\end{proof}

\begin{rem}
 Even though the common apartment property may fail to hold, we can show the following: if $x,y\in V_G(\KK)$ then there exists $z\in V_G(\KK)$ such that $x$ and $z$ lie in a common apartment, and $y$ and $z$ lie in a common apartment.  For instance, we can take $z= 0$, although this is slightly unsatisfactory as 0 does not yield a point in $\Delta_G(\KK)$.  More generally, we can take $z$ to be any element of $V_{Z(G)^0}(\KK)$.  Here is a construction which does not involve $Z(G)^0$.
  
 Let $U= R_{u,k}(G)$.  By Lemma~\ref{lem:almost_common_apt}, there exist a maximal split torus $T$ of $G$ and $u\in U(k)$ such that $u\cdot x, y\in V_T(\KK)$.  We can pick $\lambda\in Y_{u^{-1}Tu}(\KK)$ such that $x= \phi_G(\lambda)$ and $u\cdot x= \phi_G(u\cdot \lambda)$.  Note that $x$ and $\phi_G(-\lambda)$ lie in the common apartment $V_{u^{-1}Tu}(\KK)$, and $u\cdot \lambda$ and $-u\cdot \lambda$ both belong to $Y_T(\KK)$.  By \cite[13.4.4 Cor.]{spr2}, there exist $v\in P_\lambda(U)(k)$ and $w\in U_{-\lambda}(U)(k)$ such that $u= vw$.  Let $z= v\cdot \phi_G(-\lambda)$.  Then $x$ and $z$ lie in the common apartment $V_{vu^{-1}Tuv^{-1}}(\KK)$ since $v\cdot x= x$.  Now $w\cdot \phi_G(-\lambda)= \phi_G(-\lambda)$, so $z= vw\cdot \phi_G(-\lambda)= u\cdot \phi_G(-\lambda)= \phi_G(-u\cdot \lambda)$ belongs to $V_T(\KK)$.  Hence $z$ and $y$ lie in a common apartment, as required.
\end{rem}

\subsection{Isomorphisms of vector edifices induce isomorphisms of combinatorial edifices}
Our next aim is to prove that isomorphisms between vector edifices behave nicely with respect to the structure coming from the poset of R-parabolic subgroups under reverse inclusion.
Now that we have shown that for a subgroup $M$ of $G$ we can identify the sets $Y_M(\KK)$ and $V_M(\KK)$ inside $Y_G(\KK)$ and $V_G(\KK)$, we can extend Lemma~\ref{lem:samecocharssameparabolic}, which is the equivalence (i) $\iff$ (ii) in the following.

\begin{lem}\label{lem:recognise}
Let $P, Q$ be R-parabolic subgroups of $G$.
Then the following are equivalent:
\begin{itemize}
\item[(i)] $P\subseteq Q$;
\item[(ii)] $Y_P\subseteq Y_Q$;
\item[(iii)] $Y_P(\KK)\subseteq Y_Q(\KK)$.
\end{itemize}
\end{lem}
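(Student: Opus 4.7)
The equivalence of (i) and (ii) is Lemma~\ref{lem:samecocharssameparabolic}, so only the equivalence with (iii) needs to be established. For (i) $\implies$ (iii), functoriality does the job: the inclusions $j\colon P\hookrightarrow Q$ and $i_Q\colon Q\hookrightarrow G$ satisfy $i_P = i_Q\circ j$, and Proposition~\ref{prop:inclusion} together with Remark~\ref{rem:linearfunctorial} provides injective linear maps $Y_P(\KK)\hookrightarrow Y_Q(\KK)\hookrightarrow Y_G(\KK)$ whose composite is the inclusion of $Y_P(\KK)$ into $Y_G(\KK)$. Under the identifications of Remark~\ref{rem:notinconsistent} this yields $Y_P(\KK)\subseteq Y_Q(\KK)$.

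The substance is in (iii) $\implies$ (ii). The key step will be an auxiliary identity, valid for any subgroup $M\subseteq G$:
\[
Y_M = Y_M(\KK)\cap Y_G \qquad \text{as subsets of } Y_G(\KK).
\]
Granting this identity and assuming $Y_P(\KK)\subseteq Y_Q(\KK)$, one immediately gets
$$
Y_P = Y_P(\KK)\cap Y_G \subseteq Y_Q(\KK)\cap Y_G = Y_Q,
$$
which is (ii).

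To prove the auxiliary identity, take $\lambda\in Y_M(\KK)\cap Y_G$ and choose a maximal split torus $T$ of $G$ with $\lambda\in Y_T$. Unravelling the construction of the inclusion $Y_M(\KK)\hookrightarrow Y_G(\KK)$ from Section~\ref{subsec:linmapsII}, membership $\lambda\in Y_M(\KK)$ means there exist a maximal split torus $S$ of $M$, a maximal split torus $T_S$ of $G$ containing $S$, and $\tilde\mu\in Y_S(\KK)$ with $(T,\lambda)\sim (T_S,\tilde\mu)$ in $\bigsqcup_{T'}Y_{T'}(\KK)$. Unpacking $\sim$ provides an $l\in L_\lambda(k)$ with $T_S = lTl^{-1}$ and $\tilde\mu = l\cdot\lambda$; but $l$ centralises $\IM(\lambda)$, so $l\cdot\lambda = \lambda$, whence $\lambda = \tilde\mu \in Y_S(\KK)\cap Y_{T_S}$. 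To finish, I appeal to the standard fact that for a closed embedding $S\hookrightarrow T_S$ of split $k$-tori the character-restriction map $X_{T_S}\to X_S$ is surjective; if $K$ denotes its kernel, then $Y_S$ and $Y_S(\KK)$ are cut out of $Y_{T_S}$ and $Y_{T_S}(\KK)$ respectively by the same perpendicularity condition against $K$, so $Y_S(\KK)\cap Y_{T_S} = Y_S$, and hence $\lambda\in Y_S\subseteq Y_M$.

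The main obstacle is the bookkeeping around the identification $Y_M(\KK)\hookrightarrow Y_G(\KK)$: one has to carefully unwind what ``$\lambda\in Y_M(\KK)$ viewed inside $Y_G(\KK)$'' means in order to apply the relation $\sim$ to a pair in which one component is an honest cocharacter. The remaining ingredient --- saturatedness of $Y_S$ inside $Y_{T_S}$ for a closed embedding of split tori --- is an elementary fact about cocharacter lattices.
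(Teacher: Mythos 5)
Your proof is correct and follows essentially the same structure as the paper's: the equivalence of (i) and (ii) is quoted from Lemma~\ref{lem:samecocharssameparabolic}, and the implication (i)$\implies$(iii) goes through observing that every maximal split torus of $P$ lies in $Q$, just as the paper argues for (ii)$\implies$(iii). The one place you add genuine content is (iii)$\implies$(ii), which the paper dismisses as ``easy to see, basically by construction''; your auxiliary identity $Y_M = Y_M(\KK)\cap Y_G$, together with the saturatedness of $Y_S$ inside $Y_{T_S}$ for a split subtorus $S\subseteq T_S$, makes that assertion rigorous, so this is a useful expansion of the same route rather than a different one.
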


\begin{proof}
(i)$\iff$(ii) follows from Lemma~\ref{lem:samecocharssameparabolic} and that (iii)$\implies$(ii)
is easy to see, basically by construction of these objects.  
Suppose (ii) holds. Let $T$ be a maximal split torus of $P$. Because of the equivalence of (i) and (ii), we have
$T\subseteq Q$, so $Y_T(\KK)\subseteq Y_Q(\KK)$.  This shows that (ii)$\implies$(iii).
\end{proof}

\begin{lem}\label{lem:lin_map_lift2}
Let $\kappa\colon V_{G}(\KK)\to V_{H}(\KK)$ be a linear map, and let $\widehat{\kappa}\colon  Y_G(\KK)\to Y_H(\KK)$ be the corresponding lift from Lemma \ref{lem:lin_map_lift}.
Then for any $\lambda\in Y_{G}(\KK)$, we have 
$$
\widehat{\kappa}(Y_{L_\lambda}(\KK))\subseteq Y_{L_{\widehat{\kappa}(\lambda)}}(\KK)\quad  \textrm{ and } \quad  \widehat{\kappa}(Y_{P_\lambda}(\KK))\subseteq Y_{P_{\widehat{\kappa}(\lambda)}}(\KK).
$$
Hence also $\kappa(V_{L_\lambda}(\KK)) \subseteq V_{L_{\widehat{\kappa}(\lambda)}}(\KK)$ and $\kappa(V_{P_\lambda}(\KK))\subseteq V_{P_{\widehat{\kappa}(\lambda)}}(\KK)$. 
Thus, if we set $x=\phi_G(\lambda)$, we have $\kappa(V_{P_x}(\KK))\subseteq V_{P_{\kappa(x)}}(\KK)$.
\end{lem}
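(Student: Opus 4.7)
The plan is to establish the two cocharacter-level inclusions $\widehat{\kappa}(Y_{L_\lambda}(\KK)) \subseteq Y_{L_{\widehat{\kappa}(\lambda)}}(\KK)$ and $\widehat{\kappa}(Y_{P_\lambda}(\KK)) \subseteq Y_{P_{\widehat{\kappa}(\lambda)}}(\KK)$ by placing, in each case, both $\lambda$ and the test element $\mu$ inside a single apartment of $V_G(\KK)$.  Lemma~\ref{lem:lin_map_lift} then transports this to a common apartment of $V_H(\KK)$, where Remark~\ref{rem:L_robust}(iii) lets us recognise $L$- and $P$-subgroups.  The statements for $\kappa$ on apartments and the final assertion $\kappa(V_{P_x}(\KK)) \subseteq V_{P_{\kappa(x)}}(\KK)$ follow at once, since $\phi_G \colon Y_T(\KK) \to V_T(\KK)$ is a bijection for every maximal split torus $T$ and $P_{\kappa(x)} = P_{\widehat{\kappa}(\lambda)}$ whenever $x = \phi_G(\lambda)$.

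For the $L$-part, given $\mu \in Y_{L_\lambda}(\KK)$ I would choose a representative in $Y_T(\KK)$ for some maximal split torus $T$ of $L_\lambda$ (automatically a maximal split torus of $G$).  The key point is that Lemma~\ref{lem:lives_in_centre}(i) places $\lambda$ in $Y_{Z_\lambda}(\KK)$, and since the maximal split central torus $Z_\lambda$ of $L_\lambda$ is contained in every maximal split torus of $L_\lambda$, we also get $\lambda \in Y_T(\KK)$.  Lemma~\ref{lem:lin_map_lift} then yields a maximal split torus $S$ of $H$ with $\widehat{\kappa}(Y_T(\KK)) \subseteq Y_S(\KK)$; from $\widehat{\kappa}(\lambda) \in Y_S(\KK)$ Remark~\ref{rem:L_robust}(iii) gives $S \subseteq L_{\widehat{\kappa}(\lambda)}$, whence $\widehat{\kappa}(\mu) \in Y_S(\KK) \subseteq Y_{L_{\widehat{\kappa}(\lambda)}}(\KK)$.

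For the $P$-part, the natural apartment is dictated by $\mu$: pick a maximal split torus $T$ of $P_\lambda$ with $\mu \in Y_T(\KK)$.  The obstacle is that $\lambda$ itself generally does not lie in $Y_T(\KK)$, because $T$ need not contain the image of approximations to $\lambda$.  I would get around this using Remark~\ref{rem:conjugateapartments}: since $T \subseteq P_\lambda = P_x$ with $x = \phi_G(\lambda)$, the point $x$ already belongs to $V_T(\KK)$, so the bijection $\phi_G \colon Y_T(\KK) \to V_T(\KK)$ provides a (generally different) element $\lambda' \in Y_T(\KK)$ with $\phi_G(\lambda') = x$.  Both $\lambda'$ and $\mu$ now live in $Y_T(\KK)$, so $\widehat{\kappa}(\lambda'), \widehat{\kappa}(\mu) \in Y_S(\KK)$ for some $S$.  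Then $\phi_H(\widehat{\kappa}(\lambda')) = \kappa(x) = \phi_H(\widehat{\kappa}(\lambda))$ forces $P_{\widehat{\kappa}(\lambda')} = P_{\widehat{\kappa}(\lambda)}$, so $S \subseteq L_{\widehat{\kappa}(\lambda')} \subseteq P_{\widehat{\kappa}(\lambda)}$ and $\widehat{\kappa}(\mu) \in Y_{P_{\widehat{\kappa}(\lambda)}}(\KK)$.

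The hard part is precisely this replacement of $\lambda$ by the torus-adapted partner $\lambda'$ in the $P$-argument.  A naive attempt using Remark~\ref{rem:common_tor} to conjugate $\mu$ by some $u \in U_\lambda(k)$ into a torus containing $\lambda$ breaks down because $\widehat{\kappa}$ is in general not $G(k)$-equivariant, so $\widehat{\kappa}(u\cdot\mu)$ tells us nothing about $\widehat{\kappa}(\mu)$; working inside the apartment containing $\mu$ (rather than one containing $\lambda$) is exactly what sidesteps this issue.
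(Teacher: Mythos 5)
Your proof is correct and follows essentially the same strategy as the paper's: in each case, work inside an apartment $Y_T(\KK)$ that contains the test element $\mu$, show that (a representative of) $\lambda$ also lives in that apartment, transport via $\widehat{\kappa}$ to an apartment $Y_S(\KK)$ of $H$, and use $\widehat{\kappa}(\lambda)\in Y_S(\KK)$ (respectively $\widehat{\kappa}(\lambda')\in Y_S(\KK)$ with $\phi_H(\widehat{\kappa}(\lambda'))=\kappa(x)$) to locate $S$ inside the target $L$- or $P$-subgroup.

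There is one genuine streamlining in your $P$-argument worth noting. The paper proceeds by applying Lemma~\ref{lem:basicpropertiesRpars}(i) to produce a $u\in U_\lambda(k)$ with $uTu^{-1}\subseteq L_\lambda$, then recognises $u^{-1}\cdot\lambda$ as a representative of $x$ in $Y_T(\KK)$, and finally invokes the already-proved $L$-inclusion for $u^{-1}\cdot\lambda$. You bypass both the explicit construction of $u$ and the appeal to the $L$-part: Remark~\ref{rem:conjugateapartments} gives $x\in V_T(\KK)$ directly from $T\subseteq P_x$, the bijectivity of $\phi_G|_{Y_T(\KK)}$ produces the unique $T$-adapted representative $\lambda'$ of $x$, and then the argument closes off with $S\subseteq L_{\widehat{\kappa}(\lambda')}\subseteq P_{\widehat{\kappa}(\lambda')}=P_{\widehat{\kappa}(\lambda)}$ without any detour. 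Of course $\lambda'$ is precisely the paper's $u^{-1}\cdot\lambda$ (uniqueness from Remark~\ref{rem:eqvce_props}(ii)), so the underlying object is the same; but your route makes the $P$-part logically independent of the $L$-part, which is a modest gain in clarity. For the $L$-part, you cite Lemma~\ref{lem:lives_in_centre}(i) and the containment $Z_\lambda\subseteq T$ where the paper cites Remark~\ref{rem:xinVTK}; both deliver $\lambda\in Y_T(\KK)$ for any maximal split torus $T$ of $L_\lambda$, so that difference is cosmetic. One small caveat on your closing remark: the ``naive'' conjugation by $u\in U_\lambda(k)$ is in fact exactly what the paper does — the point is that it conjugates $\lambda$, not $\mu$, so the problem you identify (loss of information from $\widehat{\kappa}(u\cdot\mu)$) never arises. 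Your version and the paper's version exploit the same trick of fixing $\mu$'s apartment and adapting $\lambda$ to it.
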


\begin{proof}
Choose any $z\in V_{L_\lambda}(\KK)$: say, $z= \phi_{L_\lambda}(\sigma)$ for some $\sigma\in Y_{L_\lambda}(\KK)$. 
Choose any maximal split torus $T$ of $L_\lambda$ such that $\sigma\in Y_{T}(\KK)$.  Then $\lambda\in Y_{T}(\KK)$, by Remark \ref{rem:xinVTK}, and $T$ is a maximal split torus of $G$.  
Let $S$ be any maximal split torus of $H$ such that $\kappa(V_{T}(\KK))\subseteq V_{S}(\KK)$.  Then $\widehat{\kappa}(\lambda)$ and $\widehat{\kappa}(\sigma)$ belong to $Y_{S}(\KK)$.
Thus $S \subseteq L_{\widehat{\kappa}(\lambda)}$ and so 
	$Y_{S}(\KK)\subseteq  Y_{L_{\widehat{\kappa}(\lambda)}}(\KK)$.
This shows that $\widehat{\kappa}(Y_{L_\lambda}(\KK))\subseteq Y_{L_{\widehat{\kappa}(\lambda)}}(\KK)$, by Lemma \ref{lem:recognise}.
 
 For the second statement, let $w\in V_{P_\lambda}(\KK)$: say, $w= \phi_{P_\lambda}(\tau)$ for some $\tau\in Y_{P_\lambda}(\KK)$.  Set $x= \phi_G(\lambda)$.  Choose a maximal split torus $T$ of $P_\lambda$ such that $\tau\in Y_{T}(\KK)$.  By Lemma~\ref{lem:basicpropertiesRpars}(i) and Remark~\ref{rem:obvious_gen}, there exists $u\in U_\lambda(k)$ such that $uTu^{-1}\subseteq L_\lambda$.  Then $\lambda\in Y_{uTu^{-1}}(\KK)$, again by Remark \ref{rem:xinVTK},
 so $u^{-1}\cdot \lambda\in Y_{T}(\KK)$, so $T\subseteq L_{u^{-1}\cdot \lambda}$ by Remark~\ref{rem:L_robust}(iii), so $\tau\in Y_{L_{u^{-1}\cdot \lambda}}(\KK)$.
 By the previous paragraph, we can conclude that $\widehat{\kappa}(\tau)\in Y_{L_{\widehat{\kappa}(u^{-1}\cdot \lambda)}}(\KK)\subseteq Y_{P_{\widehat{\kappa}(u^{-1}\cdot \lambda)}}(\KK)$.  
 But $P_{\widehat{\kappa}(u^{-1}\cdot \lambda)}= P_{\kappa(u^{-1}\cdot x)}= P_{\kappa(x)} = P_{\widehat{\kappa}(\lambda)}$ by construction, so $\widehat{\kappa}(\tau)\in Y_{P_{\widehat{\kappa}(\lambda)}}(\KK)$, as required. 

Now the corresponding statements on the level of vector edifices follow immediately.
\end{proof}

\begin{lem}\label{lem:autsarecombinatorial}
Let $G$ and $H$ be connected $k$-groups and let 
$\kappa\colon V_{G}(\KK)\to V_{H}(\KK)$ be an isomorphism of vector edifices. 
Then $\kappa$ induces a poset isomorphism $\sigma_x\mapsto \sigma_{\kappa(x)}$ between the corresponding combinatorial edifices.
\end{lem}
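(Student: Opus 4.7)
The plan is to define the map on combinatorial edifices via representatives at the cocharacter level, exploit the bijection on $Y_G(\KK)$ lifted from $\kappa$, and use Lemma~\ref{lem:recognise} to translate containment of R-parabolic subgroups into containment of cocharacter sets. First, by Lemma~\ref{lem:lin_map_lift} and the functoriality of Remark~\ref{rem:linearfunctorial}, $\kappa$ lifts to a map $\widehat{\kappa}\colon Y_G(\KK)\to Y_H(\KK)$ whose inverse is $\widehat{\kappa^{-1}}$; in particular $\widehat{\kappa}$ is a bijection.

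The key technical step is to upgrade Lemma~\ref{lem:lin_map_lift2} from a one-sided containment to an equality. For any $\lambda\in Y_G(\KK)$, that lemma applied to $\kappa$ yields $\widehat{\kappa}(Y_{P_\lambda}(\KK))\subseteq Y_{P_{\widehat{\kappa}(\lambda)}}(\KK)$. Applying the same lemma to $\kappa^{-1}$ at the cocharacter $\widehat{\kappa}(\lambda)\in Y_H(\KK)$ gives $\widehat{\kappa^{-1}}(Y_{P_{\widehat{\kappa}(\lambda)}}(\KK))\subseteq Y_{P_\lambda}(\KK)$, using $\widehat{\kappa^{-1}}(\widehat{\kappa}(\lambda)) = \lambda$. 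Applying the bijection $\widehat{\kappa}$ to this second inclusion produces the reverse containment, so in fact
$$\widehat{\kappa}(Y_{P_\lambda}(\KK)) = Y_{P_{\widehat{\kappa}(\lambda)}}(\KK).$$

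With this equality in hand, the rest is routine via Lemma~\ref{lem:recognise}. For well-definedness of $\sigma_x\mapsto \sigma_{\kappa(x)}$: if $x = \phi_G(\lambda)$ and $y = \phi_G(\lambda')$ with $P_x = P_y$, then $Y_{P_\lambda}(\KK) = Y_{P_{\lambda'}}(\KK)$, and applying $\widehat{\kappa}$ gives $Y_{P_{\kappa(x)}}(\KK) = Y_{P_{\kappa(y)}}(\KK)$, whence Lemma~\ref{lem:recognise} forces $P_{\kappa(x)} = P_{\kappa(y)}$. Injectivity follows by the same argument applied to $\kappa^{-1}$; surjectivity follows from pulling back an arbitrary R-parabolic $Q = P_\mu$ in $H$ via $\widehat{\kappa^{-1}}(\mu)$. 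For order-preservation, $P_x\subseteq P_y$ iff $Y_{P_x}(\KK) \subseteq Y_{P_y}(\KK)$ by Lemma~\ref{lem:recognise}; applying the bijection $\widehat{\kappa}$ together with the established equality, this is equivalent to $Y_{P_{\kappa(x)}}(\KK) \subseteq Y_{P_{\kappa(y)}}(\KK)$, which by Lemma~\ref{lem:recognise} again is equivalent to $P_{\kappa(x)}\subseteq P_{\kappa(y)}$.

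The main obstacle will be the sharpening of Lemma~\ref{lem:lin_map_lift2} to an equality: the one-sided containment coming from a mere linear map of vector edifices is not enough to make $\sigma_x \mapsto \sigma_{\kappa(x)}$ well-defined on combinatorial edifices, and it is the isomorphism hypothesis (deployed symmetrically via $\kappa^{-1}$ and functoriality) that rescues the construction. Once this is achieved, the rest reduces to bouncing back and forth between R-parabolic subgroups and their cocharacter subsets via Lemma~\ref{lem:recognise}.
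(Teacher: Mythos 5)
Your proof is correct and follows essentially the same route as the paper: you promote Lemma~\ref{lem:lin_map_lift2} to an equality $\widehat{\kappa}(Y_{P_\lambda}(\KK)) = Y_{P_{\widehat{\kappa}(\lambda)}}(\KK)$ by applying it to both $\kappa$ and $\kappa^{-1}$, then translate containments of R-parabolic subgroups into containments of cocharacter sets via Lemma~\ref{lem:recognise}. The paper is slightly more economical --- it proves order-preservation directly and obtains well-definedness as the special case $P_\lambda = P_\mu$, and disposes of bijectivity by exhibiting the inverse as $\sigma_w \mapsto \sigma_{\kappa^{-1}(w)}$ rather than arguing injectivity and surjectivity separately --- but the mathematical content is the same.
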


\begin{proof}
 Let $\tau\in Y_G(\KK)$.  Then $\widehat{\kappa}(Y_{P_\tau}(\KK))\subseteq Y_{P_{\widehat{\kappa}(\tau)}}(\KK)$ by Lemma~\ref{lem:lin_map_lift2}.  On the other hand, $\widehat{\kappa}^{-1}(Y_{P_{\widehat{\kappa}(\tau)}}(\KK))\subseteq Y_{P_{\widehat{\kappa}^{-1}(\widehat{\kappa}(\tau))}}(\KK)= Y_{P_\tau}(\KK)$ using Lemma~\ref{lem:lin_map_lift2} again, so we deduce that
 \begin{equation}
 \label{eqn:kappa}
  \widehat{\kappa}(Y_{P_\tau}(\KK))= Y_{P_{\widehat{\kappa}(\tau)}}(\KK)
 \end{equation}
 for all $\tau\in Y_G(\KK)$.
 
 Now let $\lambda, \mu\in Y_G(\KK)$ such that $P_\lambda\subseteq P_\mu$.  Then $Y_{P_\lambda}(\KK)\subseteq Y_{P_\mu}(\KK)$ by Lemma~\ref{lem:recognise}, so $Y_{P_{\widehat{\kappa}(\lambda)}}(\KK)= \widehat{\kappa}(Y_{P_\lambda}(\KK))\subseteq \widehat{\kappa}(Y_{P_\mu}(\KK))= Y_{P_{\widehat{\kappa}(\mu)}}(\KK)$ using \eqref{eqn:kappa}.  We deduce from Lemma~\ref{lem:recognise} that
 \begin{equation}
 \label{eqn:kappa2}
  P_{\widehat{\kappa}(\lambda)}\subseteq P_{\widehat{\kappa}(\mu)}.
 \end{equation}
 In particular, if $P_\lambda= P_\mu$ then $P_{\widehat{\kappa}(\lambda)}= P_{\widehat{\kappa}(\mu)}$, so the map $\sigma_x\mapsto \sigma_{\kappa(x)}$ is well-defined.  Clearly, this map is invertible (with inverse $\sigma_w\mapsto \sigma_{\kappa^{-1}(w)}$), and \eqref{eqn:kappa2} implies that it is a poset isomorphism.
\end{proof}

\begin{rem}
\label{rem:not_combnl}
 If $\kappa\colon V_G(\KK)\to V_H(\KK)$ is an arbitrary linear map of vector edifices then the prescription $\sigma_x\mapsto \sigma_{\kappa(x)}$ need not give a well-defined map, even when $G$ and $H$ are reductive.  For instance, if $f$ is the inclusion of a torus $G$ in a non-commutative reductive group $H$ then $P_x(G)= G$ for every $x\in V_G(\KK)$ but there is more than one possible value for $P_{\kappa_f(x)}(H)$.
 So there are linear maps of vector buildings/vector edifices that do not correspond to any maps at the combinatorial level.  We will exploit the extra flexibility yielded by this geometric approach in our forthcoming work \cite{BMR:typeA}.
 
 Even if $\kappa$ is bijective, the conclusion of Lemma~\ref{lem:autsarecombinatorial} can fail.
 Let $B$ be a Borel subgroup in a non-commutative split reductive group $G$ and let $f\colon B\to G$ be inclusion.  Then Example~\ref{ex:bijectivenotiso} shows that $\kappa_f\colon V_B(\KK)\to V_G(\KK)$ is a bijection.  We can choose $x=0\in V_B(\KK)$, $y\in V_B(\KK)$ such that $G= P_{\kappa_f(x)}(G)\neq P_{\kappa_f(y)}(G)= B$; but $P_x(B)= P_y(B)= B$.
\end{rem}

\subsection{Projection maps}\label{sec:projection}
We can now, as promised in the introduction, relate our constructions to the standard notion of \emph{projection} for a spherical building.
First, recall that given a simplicial spherical building $\Delta$ and any pair $\sigma,\tau$ of simplices in $\Delta$ there is a uniquely defined simplex 
$\sigma\tau$ which is the projection of $\tau$ onto $\sigma$, \cite[2.30, 3.19]{tits1}, \cite[Def.~3.109]{abro}.
If $\Delta = \Delta_G$ is the combinatorial edifice for a reductive $k$-group $G$ (which is a simplicial spherical building), then
this projection has a natural realisation in terms of the corresponding parabolic subgroups of $G$: 
if $\sigma = \sigma_P$ and $\tau = \sigma_Q$, then $\sigma\tau$ is the simplex corresponding to $(P\cap Q)R_u(P)$, which is also a parabolic subgroup of $G$ \cite[Prop.~4.4]{boreltits} and is contained in $P$.
Further, if we choose a maximal split torus $T$ of $G$ contained in $P\cap Q$ and write $P = P_\lambda$, $Q=P_\mu$ for $\lambda,\mu\in Y_T$, 
then $(P\cap Q)R_u(P) = P_{n\lambda+\mu}$ for sufficiently large $n\in \NN$ \cite[Cor.~6.9]{BMR}.

Now let $L= L_\lambda$ be the Levi subgroup of $P$ corresponding to the choice of $\lambda$ above. 
Then $L$ is a reductive group and the simplicial building $\Delta_L$ of $L$ can be realised inside $\Delta_G$.
Again, this can be seen in terms of parabolic subgroups: for each parabolic subgroup $Q$ of $G$ contained in $P$, the subgroup
$Q\cap L$ is a parabolic subgroup of $L$, all the parabolic subgroups of $L$ arise in this way, and there is an inverse map $X\mapsto XR_u(P)$ mapping parabolic subgroups of $L$ to parabolic subgroups of $G$ contained in $P$ \cite[Prop.~4.4]{boreltits} --- in this way we may identify $\Delta_L$ with the \emph{link} or \emph{star} of $\sigma_P$ \cite[1.1]{tits1}, \cite[Prop.~4.9]{abro}; 
in more modern terminology, $\Delta_L$ arises as a \emph{residue}, see \cite[Sec.~5.3, Cor.~5.30]{abro}.
Combining this with the observations from the previous paragraph, we obtain a (surjective) map $\proj_{P,L}:\Delta_G\to \Delta_L$; 
this is the same as the \emph{projection onto a residue} of \cite[Def.~5.35]{abro}.

The key observation of this section is that these constructions can be realised in the setting of a vector edifice $V_G(\KK)$ with the common apartment property (Definition \ref{defn:common_apt}).
Since a reductive group $G$ gives rise to such a vector edifice, what follows can be viewed as a generalisation of the previously described constructions.
So now suppose $G$ is any $k$-group such that $V_G(\KK)$ has the common apartment property.
We have seen in Example \ref{ex:bijectivenotiso} above that for such a $G$ and an R-parabolic subgroup $P=P_\lambda$ of $G$, the inclusion $i:P\to G$ gives a bijective linear map 
$\kappa_i:V_{P}(\KK)\to V_G(\KK)$.
It follows from Lemma \ref{lem:surj} that if we let $L = L_\lambda$, the projection $f:P\to L\cong P/U_\lambda$ gives rise to a surjective linear map $V_{P}(\KK) \to V_{L}(\KK)$.
Hence the composition $$F_{P,L}: =\kappa_f\circ\kappa_i^{-1}:V_G(\KK) \to V_{L}(\KK)$$ is a surjective map.\footnote{The notation $F_{P,L}$ makes sense because $U_\lambda$ does not depend on the choice of $\lambda$ for fixed $P$ and $L$: see Lemma~\ref{lem:basicpropertiesRpars}(iii).}
Note that $f$ is apte by Remark \ref{rem:isogenydiscussion}(iii), because the kernel $U_\lambda$ of
$f$ is smooth.

\begin{lem}\label{lem:projectionisprojection}
With the notation just set up, $F_{P,L}$ induces a well-defined map of combinatorial edifices $\Delta_G\to \Delta_L$, which we denote by $\proj_{P,L}$.
If $G$ is reductive, then $\proj_{P,L}$ is the projection map on the spherical building of $G$ described above.
\end{lem}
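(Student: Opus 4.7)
The plan is to derive the clean subgroup-level formula
\begin{equation*}
P_{F_{P,L}(x)} = f(P_x \cap P),
\end{equation*}
which depends only on $P_x$, and hence forces $\proj_{P,L}(\sigma_{P_x}) := \sigma_{f(P_x\cap P)}$ to be well-defined as a prescription on $\Delta_G$.

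To obtain this formula I would first recall from Example~\ref{ex:bijectivenotiso} that the common apartment property makes $\kappa_i\colon V_P(\KK)\to V_G(\KK)$ a bijection, so $y := \kappa_i^{-1}(x)\in V_P(\KK)$ is well-defined. Next, using the construction of $\kappa_i$ via cocharacter-lifts (Proposition~\ref{prop:gphom}) together with Corollary~\ref{cor:par_intersect}, I would identify the R-parabolic $P_y$ of $P$ as $P_{\kappa_i(y)}\cap P = P_x\cap P$. Since $f\colon P\to L$ is surjective, Remarks~\ref{rems:funct}(ii) applied to $y$ then gives $P_{F_{P,L}(x)} = P_{\kappa_f(y)} = f(P_y) = f(P_x\cap P)$. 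This manifestly depends only on $P_x$, so $\proj_{P,L}$ is well-defined.

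For the reductive case, I would recall that the classical $\proj_{P,L}(\sigma_Q)$ is the simplex of $\Delta_L$ corresponding to $(P\cap Q)R_u(P)\cap L$, obtained by first forming the parabolic $(P\cap Q)R_u(P)$ of $G$ (contained in $P$) and then identifying the residue at $\sigma_P$ with $\Delta_L$ via $X\mapsto X\cap L$. Since $G$ is reductive, $\ker f = U_\lambda = R_u(P)$ and $P = L\ltimes R_u(P)$; a short direct calculation then establishes $f(P\cap Q) = (P\cap Q)R_u(P)\cap L$. Writing $h = lu\in P\cap Q$ with $l\in L$ and $u\in R_u(P)$ gives $f(h) = l = hu^{-1}\in (P\cap Q)R_u(P)\cap L$, and conversely any $g = hv\in (P\cap Q)R_u(P)\cap L$ with $v\in R_u(P) = \ker f$ satisfies $g = f(g) = f(h)\in f(P\cap Q)$ because $g\in L$.

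I do not anticipate any serious obstacle: the argument is essentially definition-chasing. The main care needed is bookkeeping about which ambient group each R-parabolic lives in, so that Corollary~\ref{cor:par_intersect} (which handles the passage from $G$ to $P$ through $\kappa_i^{-1}$) and Remarks~\ref{rems:funct}(ii) (which handles the passage from $P$ to $L$ through $\kappa_f$) are invoked in the correct directions.
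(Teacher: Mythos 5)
Your proof is correct. The well-definedness half is essentially the paper's argument in more explicit form: both hinge on Corollary~\ref{cor:par_intersect}, and your formula $P_{F_{P,L}(x)} = f(P_x\cap P)$ makes the dependence on $P_x$ alone manifest. For the reductive comparison, though, your route is genuinely different. The paper works at the cocharacter level: it uses the identity $(P\cap Q)R_u(P) = P_{n\lambda+\mu}$ for $n$ sufficiently large, quoted from \cite[Cor.\ 6.9]{BMR} in the set-up of Section~\ref{sec:projection}, intersects with $L$ via Corollary~\ref{cor:par_intersect}, and is then left to verify $P_\mu(L_\lambda) = P_{n\lambda+\mu}(L_\lambda)$, which is immediate because $\lambda$ has central image in $L_\lambda$. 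You instead establish the group-theoretic identity $f(P\cap Q) = (P\cap Q)R_u(P)\cap L$ directly by decomposing elements of $P$ as $lu$ with $l\in L$, $u\in R_u(P)$. Your computation is self-contained and avoids invoking \cite[Cor.\ 6.9]{BMR}, at the cost of leaving cocharacters behind; the paper's is shorter given the earlier set-up. One small care point worth making explicit in your write-up: $f(P\cap Q) = (P\cap Q)R_u(P)\cap L$ should be read as an equality of smooth subgroups --- legitimate because $G$ reductive forces $P\cap Q$ to contain a maximal split torus and hence to be smooth by Lemma~\ref{lem:smoothint} --- so checking on $\ovl{k}$-points as you do is enough.
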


\begin{proof}
To check that the map on combinatorial edifices induced by $F_{P,L}$ is well-defined, we need to check that if $\sigma_\mu = \sigma_\nu$ for $\mu,\nu\in Y_G(\KK)$, 
then $\sigma_{F_{P,L}(\mu)} = \sigma_{F_{P,L}(\nu)}$.
Unwinding the definitions, this amounts to checking that if $\mu,\nu \in Y_L(\KK)$ give rise to the same R-parabolic subgroup of $G$ then they give rise to the same R-parabolic subgroup of $L$, which is true by Corollary~\ref{cor:par_intersect}.

Now to check that the map induced by $F_{P,L}$ corresponds to $\proj_{P,L}$ in the case that $G$ is reductive, we again translate into a condition on 
R-parabolic subgroups.
This time, it suffices to show that given any $\mu\in Y_{L_\lambda}(\KK)$, we have $P_\mu(L_\lambda) = P_{n\lambda+\mu}(L_\lambda)$ for all $n\in \NN$. 
To see this, first note that for any cocharacter approximation $\lambda'$ to $\lambda$ and any $\mu'\in Y_{L_\lambda}$ we have 
$P_{\mu'}(L_{\lambda'}) = P_{n\lambda'+\mu'}(L_{\lambda'})$ since the image of $\lambda'$ is central in $L_{\lambda'} = L_\lambda$. 
Hence, the result follows after choosing any cocharacter approximation $\mu'\in Y_{L_\lambda}$ to $\mu$.
\end{proof}

\begin{rems}
\label{rem:proj_not_linear}
(i). The map $F_{P,L}$ is not linear in general.  For example, let $G= \SL_2$, let $P$ be the standard upper triangular Borel subgroup $B$ and let $L$ be the standard diagonal maximal torus $T$.  Choose $\lambda\in Y_T$ such that $P_\lambda= B$ and set $x= \phi_G(-\lambda)$.  Let $y= u\cdot x$, where $1\neq u\in U_\lambda(k)$.    Now $x$ and $y$ are opposite by the example in Remark~\ref{rem:W_type}, so if $F_{P,L}$ is linear then $F_{P,L}(x)$ and $F_{P,L}(y)$ must be opposite.  But this is not the case as $F_{P,L}(x)= F_{P,L}(y)= x$ by Lemma~\ref{lem:unipt_quot}.

(ii). Still in the setting of the previous example, let $Q= P_{-\lambda}$ be the opposite Borel to $B$ with respect to $T$. We can form the map $F_{P,L}$ using the projection $P\to L$ and the map $F_{Q,L}$ using the projection $Q\to L$.  Then $F_{P,L}(y)= F_{P,L}(x)$; but Lemma~\ref{lem:unipt_quot} implies that $F_{Q,L}(y)\neq F_{Q,L}(x)$, as $y$ is not $U_{-\lambda}(k)$-conjugate to $x$.  This shows that $F_{P,L}$ depends on the choice of $P$ for given $L$.
\end{rems}

\subsection{Linear maps of spherical edifices}
\label{sec:linearmaps}
Let $\kappa\colon V_{G_1}(\KK)\to V_{G_2}(\KK)$ be a linear map of vector edifices.  If $\kappa$ is injective then we get an induced map $$\kappa^\flat\colon \Delta_{G_1}(\KK)\to \Delta_{G_2}(\KK)$$ between the corresponding spherical edifices given by $\kappa^\flat(\KK^+\cdot x)= \KK^+\cdot \kappa(x)$. 
Clearly this map of spherical edifices is also injective.  We call a map of the form $\kappa^\flat$ a \emph{linear map}.
(To make this definition we need $\kappa$ to be injective, for otherwise there are nonzero elements of $V_{G_1}(\KK)$ mapped to $0\in V_{G_2}(\KK)$,
and then $\kappa^\flat$ cannot be defined on all of $\Delta_{G_1}(\KK)$.)

\begin{rem}
\label{rem:spherical_deficient}
 Note that we get a linear map $\kappa_f\colon V_G(\KK)\to V_H(\KK)$ for any homomorphism from $G$ to a connected $k$-group $H$, but we only get a corresponding map of spherical buildings $\Delta_G(\KK)$ to $\Delta_H(\KK)$ if $\kappa_f$ is injective.  This is one of the main advantages of working with vector edifices rather than with spherical edifices.
\end{rem}

\begin{defn}
	\label{def:delta-auto}
We define an \emph{automorphism} of $\Delta_G(\KK)$ to be a bijective linear map whose inverse is also a linear map.  We write $\Aut (\Delta_G(\KK))$ for the group of automorphisms of $\Delta_G(\KK)$.	
\end{defn}

  Clearly, if $\kappa\in \Aut (V_G(\KK))$ then $\kappa^\flat\in \Aut (\Delta_G(\KK))$.  Conversely, suppose $\kappa, \rho\colon V_G(\KK)\to V_G(\KK)$ are injective linear maps and $\rho^\flat= (\kappa^\flat)^{-1}$.  Then $(\rho\circ \kappa)^\flat= \rho^\flat\circ \kappa^\flat= {\rm id_{\Delta_G(\KK)}}$, so $\rho\circ \kappa$ maps each ray in $V_G(\KK)$ to itself and each apartment in $V_G(\KK)$ to itself.  Hence $\rho$ is a bijection, and it follows that $\kappa$ is a bijection.  We see that if $T$, $T'$ are maximal split tori of $G$ and $\rho$ maps $V_T(\KK)$ to $V_{T'}(\KK)$ then $\kappa^{-1}$ also maps $V_T(\KK)$ to $V_{T'}(\KK)$, so $\kappa$ belongs to $\Aut (V_G(\KK))$.  Hence $\Aut (\Delta_G(\KK))= \{\kappa^\flat\mid \kappa\in \Aut (V_G(\KK))\}$.

\section{Linear maps arising from base change}
\label{sec:fieldexts}

Let $\alpha\colon k\to k'$ be a homomorphism of fields --- for example, $\alpha$ could be a field isomorphism, or the inclusion arising from a field extension $k'/k$ --- and let $G$ be a $k$-group.  
Base change along $\alpha$ gives rise to a $k'$-group ${^\alpha}G$:
from the functorial point of view, given a $k'$-algebra $A'$, we have ${^\alpha}G(A') = G(A'\otimes_\alpha k)$, which makes sense 
as $A'\otimes_\alpha k$ is a $k$-algebra via scalar multiplication in the second factor.
Note that when $\alpha\colon k\to k'$ is the inclusion arising from a field extension $k'/k$, then ${^\alpha}G = G_{k'}$,
but this construction works more generally, for example when $\alpha\colon k\to k$ is a field automorphism.

Identifying a point $x\in G(k)$ with a morphism $\mathrm{Spec}(k)\to G$,
we obtain by base change along $\alpha$ a morphism $\mathrm{Spec}(k')\to {^\alpha}G$,
which is to say a point ${^\alpha}x \in {^\alpha}G(k')$.
The corresponding map $$\psi_\alpha\colon  G(k)\to {^\alpha}G(k'), \quad x\mapsto {^\alpha}x$$ is a
homomorphism of abstract groups.

\subsection{Base change induces an injection of vector edifices}
\label{sec:basechange}
We first wish to show how base change gives rise to a linear map of the corresponding vector edifices.

Suppose $f\colon H\to G$ is a homomorphism of $k$-groups.
Then base change gives rise to a homomorphism ${^\alpha}f\colon {^\alpha}H\to {^\alpha}G$
of $k'$-groups.
Suppose $G = (\Gm)_{k}$ is the multiplicative group over $k$.
Then ${^\alpha G}\cong (\Gm)_{k'}$ is the multiplicative group over $k'$ 
(the ring $A'\otimes_\alpha k$ has the same units as $A'$).
Combining this observation with the previous one, we 
see that for each cocharacter $\lambda\in Y_G$, we obtain a cocharacter 
${^\alpha}\lambda\in Y_{{^\alpha}G}$.
We also see that a split torus $T$ of $G$ gives rise to a split torus ${^\alpha}T$ of ${^\alpha}G$,
and the $k$-rank of $T$ is the same as the $k'$-rank of ${^\alpha}T$.

Now suppose $T$ is a maximal split torus of $G$.
Then the map $\lambda\mapsto {^\alpha}\lambda$ is an isomomorphism of abelian groups 
$Y_T\to Y_{{^\alpha}T}$,
and hence induces an isomorphism $Y_T(\KK)\to Y_{{^\alpha}T}(\KK)$,
which we also denote by $\lambda\mapsto {^\alpha}\lambda$.
The map $\chi\mapsto {^\alpha}\chi$ is an isomomorphism of abelian groups $X_T\to X_{{^\alpha}T}$, and we have $\langle {^\alpha}\lambda, {^\alpha}\chi\rangle= \langle \lambda, \chi\rangle$ for all $\lambda\in Y_T$ and all $\chi\in X_T$.

Let $\lambda\in Y_T(\KK)$ and choose a $G$-equivariant embedding of $G$ in a rational $G$-module $V$.   Base change gives an ${^\alpha}G$-equivariant embedding of ${^\alpha}G$ in the rational ${^\alpha}G$-module ${^\alpha}V$.  The weights of ${^\alpha}V$ with respect to ${^\alpha}T$ are the characters of the form ${^\alpha}\chi$, where $\chi$ runs over the set of weights of $V$ with respect to $T$.  We see that if $\lambda'$ is a cocharacter approximation to $\lambda$ with respect to $V$ then ${^\alpha}\lambda'$ is a cocharacter approximation to ${^\alpha}\lambda$ with respect to $^\alpha V$.  We have $P_{{^\alpha}\lambda}= {^\alpha}P_\lambda$, $L_{{^\alpha}\lambda}= {^\alpha}L_\lambda$ and $U_{{^\alpha}\lambda}= {^\alpha}U_\lambda$ for any $\lambda\in Y_G(\KK)$;
this holds if $\lambda\in Y_G$ by \cite[Lem.\ 2.1.5]{CGP} and the discussion that precedes it, and for arbitrary $\lambda$ we get the result by taking cocharacter approximations.  It follows that $\psi_\alpha(P_\lambda(k))\subseteq P_{{^\alpha}\lambda}(k')$, $\psi_\alpha(L_\lambda(k))\subseteq L_{{^\alpha}\lambda}(k')$ and $\psi_\alpha(U_\lambda(k))\subseteq U_{{^\alpha}\lambda}(k')$ for all $\lambda\in Y_G(\KK)$.

\begin{lem}\label{lem:basechangelinear}
Suppose $\alpha\colon k\to k'$ is a homomorphism of fields.
Then the map $\lambda\mapsto {^\alpha}\lambda$ gives rise to well-defined injective maps 
$Y_G(\KK) \to Y_{{^\alpha}G}(\KK)$ and $V_G(\KK) \to V_{{^\alpha}G}(\KK)$, and the latter map is a linear map of vector edifices.
\end{lem}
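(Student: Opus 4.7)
My plan is to first construct the maps at the level of $\bigsqcup_T Y_T(\KK)$ by sending $(T,\lambda)$ to $({^\alpha}T, {^\alpha}\lambda)$; since ${^\alpha}T$ is a maximal $k'$-split torus of ${^\alpha}G$ of the same rank as $T$, this lands in $\bigsqcup_S Y_S(\KK)$ as required. The task is then threefold: show that this assignment descends through the equivalence relations $\sim$ and $\approx$, show that the induced map on vector edifices is linear, and (the main substance) show that both induced maps are injective.

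Descent and linearity are easy. For descent, recall from the paragraph preceding the lemma that $\psi_\alpha(L_\lambda(k)) \subseteq L_{{^\alpha}\lambda}(k')$ and $\psi_\alpha(P_\lambda(k)) \subseteq P_{{^\alpha}\lambda}(k')$. So any $\sim$-witness $l \in L_{\lambda_1}(k)$ (respectively, $\approx$-witness $g \in P_{\lambda_1}(k)$) between $(T_1,\lambda_1)$ and $(T_2,\lambda_2)$ yields a witness $\psi_\alpha(l)$ (respectively $\psi_\alpha(g)$) between the base-changed pairs, using that base change commutes with conjugation. Linearity on an apartment $V_T(\KK) \to V_{{^\alpha}T}(\KK)$ follows because the assignment $\lambda \mapsto {^\alpha}\lambda$ is the canonical isomorphism of free $\ZZ$-modules $Y_T \cong Y_{{^\alpha}T}$ (since $T$ is split, both naturally identify with $\ZZ^{\dim T}$), and tensoring with $\KK$ gives a $\KK$-linear isomorphism on $Y_T(\KK)$.

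The main obstacle is injectivity. I will give the $\approx$ case; the $\sim$ case is parallel. Suppose $({^\alpha}T_1, {^\alpha}\lambda_1) \approx ({^\alpha}T_2, {^\alpha}\lambda_2)$. By Lemma~\ref{lem:PLindpndce}(ii) we have $P_{{^\alpha}\lambda_1} = P_{{^\alpha}\lambda_2}$, i.e., ${^\alpha}P_{\lambda_1} = {^\alpha}P_{\lambda_2}$; and since $k'$ is a faithfully flat $k$-module via $\alpha$, this descends to $P_{\lambda_1} = P_{\lambda_2}$ as $k$-subgroups of $G$. Both $T_1$ and $T_2$ are maximal split tori of this common R-parabolic, so by conjugacy of maximal split tori there exists $g \in P_{\lambda_1}(k)$ with $gT_1g^{-1} = T_2$. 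Then $({^\alpha}T_2, {^\alpha}(g\cdot\lambda_1)) = ({^\alpha}T_2, \psi_\alpha(g)\cdot {^\alpha}\lambda_1)$ is $\approx$-equivalent to $({^\alpha}T_1, {^\alpha}\lambda_1)$, and hence to $({^\alpha}T_2, {^\alpha}\lambda_2)$ by hypothesis. Remark~\ref{rem:eqvce_props}(ii) now forces ${^\alpha}(g\cdot\lambda_1) = {^\alpha}\lambda_2$ inside $Y_{{^\alpha}T_2}(\KK)$, and injectivity of the $\KK$-linear isomorphism $Y_{T_2}(\KK) \to Y_{{^\alpha}T_2}(\KK)$ gives $g\cdot\lambda_1 = \lambda_2$; thus $(T_1,\lambda_1) \approx (T_2,\lambda_2)$. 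For the $\sim$ case, one replaces $P$ by $L$ throughout, and in place of Remark~\ref{rem:eqvce_props}(ii) one uses the analogous observation that if $(T,\mu)\sim(T,\nu)$ then the witness $l \in L_\mu(k)$ fixes $\mu$ pointwise (because $\mu \in Y_{Z_\mu}(\KK)$ by Lemma~\ref{lem:lives_in_centre}(i) and $L_\mu$ centralises $Z_\mu$), so $\nu = \mu$.
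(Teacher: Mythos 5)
Your proof is correct and follows the same overall strategy as the paper's (check that the assignment descends through $\sim$ and $\approx$, and observe linearity apartment-by-apartment via the canonical isomorphism $Y_T(\KK)\cong Y_{{^\alpha}T}(\KK)$). The meaningful difference is in how injectivity is handled. The paper's proof says only that ``bijectivity and linearity at the level of apartments'' have been observed and that ``the essential point is well-definedness,'' then checks that the map preserves $\sim$ and $\approx$; it does not spell out why the induced map on $V_G(\KK)$ is injective, which requires the converse: that the map \emph{reflects} $\approx$, i.e.\ that $({^\alpha}T_1,{^\alpha}\lambda_1)\approx({^\alpha}T_2,{^\alpha}\lambda_2)$ forces $(T_1,\lambda_1)\approx(T_2,\lambda_2)$. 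You supply this missing direction cleanly: you use faithfully flat descent along $k\to k'$ to pull the equality $P_{{^\alpha}\lambda_1}=P_{{^\alpha}\lambda_2}$ (equivalently ${^\alpha}P_{\lambda_1}={^\alpha}P_{\lambda_2}$) back to $P_{\lambda_1}=P_{\lambda_2}$ over $k$, find a conjugating element $g\in P_{\lambda_1}(k)$ between $T_1$ and $T_2$, and then apply Remark~\ref{rem:eqvce_props}(ii) together with the injectivity of $Y_{T_2}(\KK)\to Y_{{^\alpha}T_2}(\KK)$. Your analogous observation for $\sim$ over a fixed torus, via Lemma~\ref{lem:lives_in_centre}(i) and the fact that $L_\mu$ centralises $Z_\mu$, is also correct and is not explicitly recorded in the paper. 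In short, same route, but yours is more complete on the injectivity claim.
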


\begin{proof}
Since we have already observed bijectivity and linearity at the level of apartments above, the essential point is well-definedness.
What we need follows from functoriality of base change: 
if $T_1,T_2$ are maximal split tori of $G$ and $g\in G(k)$ is such that $gT_1g^{-1} = T_2$, then we have 
$\psi_\alpha(g)({^\alpha}T_1)\psi_\alpha(g)^{-1} = {^\alpha}T_2$.
Similarly, if $g\cdot\lambda_1 = \lambda_2$ for $\lambda_1\in Y_{T_1}(\KK)$ and $\lambda_2\in Y_{T_2}(\KK)$,
then $\psi_\alpha(g)\cdot {^\alpha}\lambda_1 = {^\alpha}\lambda_2$.
We can now check that the map $\lambda\mapsto{^\alpha}\lambda$ behaves well with respect to the relations $\sim$ and $\approx$
before and after base change.
\end{proof}

\begin{rem}
Since it is not necessarily true that every maximal split torus of ${^\alpha}G$ is of the form ${^\alpha}T$ for some maximal split torus $T$ of $G$
(for example, take $\alpha$ a Frobenius endomorphism of a non-perfect field $k$), the maps in Lemma \ref{lem:basechangelinear} are not necessarily surjective.
\end{rem}

\begin{defn}\label{def:nu_alpha}
Suppose $\alpha\colon k\to k'$ is a homomorphism of fields, and $G$ is a $k$-group.
We denote the corresponding injective linear map of vector edifices by $$\nu_\alpha := \nu_{G,\alpha}\colon V_G(\KK)\to V_{{^\alpha}G}(\KK).$$ 
\end{defn}

We note in particular that this formalism allows us to identify the vector edifice $V_G(\KK)$ as a subset of the vector edifices $V_{G_{k_s}}(\KK)$ and $V_{G_{\ovl{k}}}(\KK)$,
which is helpful for later arguments on metrics.

\begin{lem}\label{lem:lin_map_fieldext}
	Suppose $G$ and $H$ are $k$-groups, and let $\alpha\colon k\to k'$ be a field homomorphism.
	For any homomorphism $f\colon G\to H$, we have $\kappa_{({^\alpha}f)}\circ \nu_{G,\alpha} = \nu_{H,\alpha}\circ \kappa_f$.
\end{lem}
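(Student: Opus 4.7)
The plan is to evaluate both compositions on a representative $(T, \lambda)$ of an arbitrary point $x \in V_G(\KK)$, where $T$ is a maximal split torus of $G$ and $\lambda \in Y_T(\KK)$, and to show both outputs coincide in $V_{{^\alpha}H}(\KK)$. First I would choose a maximal split torus $S$ of $H$ with $f(T) \subseteq S$ and a maximal split torus $T^*$ of ${^\alpha}G$ containing ${^\alpha}T$. Unwinding the recipes, $\nu_{H,\alpha}(\kappa_f(x))$ is represented by the pair $(S', {^\alpha}(f_*(\lambda)))$ for any maximal split torus $S'$ of ${^\alpha}H$ containing ${^\alpha}S$, while $\kappa_{({^\alpha}f)}(\nu_{G,\alpha}(x))$ is represented by $(S^*, ({^\alpha}f)_*({^\alpha}\lambda))$ for any maximal split torus $S^*$ of ${^\alpha}H$ containing $({^\alpha}f)(T^*)$. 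Independence of these auxiliary choices is built into Proposition~\ref{prop:gphom} (via Lemma~\ref{lem:max_inpndce}) and into the proof of Lemma~\ref{lem:basechangelinear}.

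The computational heart of the argument is the identity ${^\alpha}(f_*(\lambda)) = ({^\alpha}f)_*({^\alpha}\lambda)$ as elements of $Y_{{^\alpha}S}(\KK)$. This is immediate for integer cocharacters $\nu \in Y_T$: both sides reduce to ${^\alpha}(f \circ \nu) = ({^\alpha}f) \circ ({^\alpha}\nu)$, which is simply functoriality of base change along $\alpha$ applied to a composition. Since both $f_*$ and $\lambda \mapsto {^\alpha}\lambda$ are $\KK$-linear on apartments (Section~\ref{sec:linmapstori} and Section~\ref{sec:basechange}), and $Y_T$ spans $Y_T(\KK)$ as a $\KK$-vector space, the identity extends to all $\lambda \in Y_T(\KK)$.

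The main obstacle I anticipate is that the two representatives naturally live in different apartments of $\bigsqcup_{S''} Y_{S''}(\KK)$: $\nu_{H,\alpha}\circ \kappa_f$ places the answer in a maximal split torus containing ${^\alpha}S$, while $\kappa_{({^\alpha}f)}\circ \nu_{G,\alpha}$ places it in one containing $({^\alpha}f)(T^*)$. To reconcile them, set $\mu := ({^\alpha}f)_*({^\alpha}\lambda)$ and observe, via Lemma~\ref{lem:homapprox}, that cocharacter approximations to $\mu$ can be chosen inside the smaller split subtorus $({^\alpha}f)({^\alpha}T)$, which is contained in both ${^\alpha}S \subseteq S'$ and $({^\alpha}f)(T^*) \subseteq S^*$. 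Consequently, both $S'$ and $S^*$ are maximal split tori of $L_\mu$, hence $L_\mu(k')$-conjugate; any such conjugator lies in $P_\mu(k')$ and fixes $\mu$, so $(S', \mu) \approx (S^*, \mu)$ (applying Remark~\ref{rem:L_robust}(ii) and Remark~\ref{rem:obvious_gen} to $\mu$), which gives the desired equality in $V_{{^\alpha}H}(\KK)$.
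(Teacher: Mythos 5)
Your proof is correct and follows the same route as the paper's: the heart of the matter is the functoriality identity ${^\alpha}(f\circ\lambda) = ({^\alpha}f)\circ({^\alpha}\lambda)$ for $\lambda\in Y_T$, extended by $\KK$-linearity to $Y_T(\KK)$. The paper compresses the rest ("the result follows by chasing through the definitions") into a single phrase; you carry out that chase explicitly, in particular reconciling the two a priori different ambient apartments via Remark~\ref{rem:L_robust}(ii), which is exactly the bookkeeping the paper leaves implicit.
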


\begin{proof}
	This is basically just functoriality of base change: it is clear that for $\lambda\in Y_G$ we have ${^\alpha}(f\circ\lambda) = {^\alpha}f\circ{^\alpha}\lambda$,
	and the result follows by chasing through the definitions.
\end{proof}

\subsection{Field automorphisms and vector edifice automorphisms}
\label{sec:field_auts}
First observe that if $\alpha\colon k\to k'$ is an isomorphism of fields and $G$ is a $k$-group, then the linear maps $\nu_{G,\alpha}$ and $\nu_{{^\alpha}G,\alpha^{-1}}$
are inverses of each other, and hence $V_G(\KK)$ and $V_{{^\alpha}G}(\KK)$ are isomorphic.

We also wish to consider the possibility that ${^\alpha}G$ and $G$ are isomorphic as $k$-groups.
For ease of exposition, suppose $\alpha\in \Aut(k)$ --- i.e., $\alpha$ is given to us as a field \emph{automorphism} --- so that
$G$ and ${^\alpha}G$ are both $k$-groups.
Then if there exists an isomorphism $f\colon {^\alpha}G\to G$,
we obtain a linear automorphism $$\kappa_{\alpha,f} :=\kappa_f\circ\nu_\alpha$$ of $V_G(\KK)$.

We discuss this further in \cite{BMR:typeA}.  Here we give two examples of this construction, showing how it captures naturally occurring phenomena.

\begin{exmp}\label{ex:Galois_action}
Suppose $\alpha\in \Aut(k)$ and let $k^\alpha$ be the fixed field of $\alpha$.
Suppose that $G$ has a $k^\alpha$-descent --- that is, that $G = H_k$ for some $k^\alpha$-group $H$.
Then we claim that ${^\alpha}G$ and $G$ are \emph{naturally} isomorphic as $k$-groups via a homomorphism $f_\alpha$ (say).
To see this, note that for a $k$-algebra $A$, when we base change and form the algebra $A\otimes_\alpha k$ (recall, the $k$-multiplication happens in the second factor), 
we have an isomorphism of these two as $k^\alpha$-algebras $A\otimes_\alpha k \mapsto A$ given by $a\otimes 1\to a$.
This gives rise to the corresponding natural isomorphisms 
$$
f_\alpha(A)\colon {^\alpha}G(A) = G(A\otimes_\alpha k) = H(A\otimes_\alpha k) \to H(A) = G(A)
$$ 
for each $k$-algebra $A$, defining the isomorphism $f_\alpha$.

A particular instance of this set-up is if $k/k_0$ is a Galois extension,
and $G$ is a $k$-group for which we can fix a $k_0$-descent $G_0$.
Then we obtain from the various elements $\kappa_{\alpha,f_\alpha}$
an action of the Galois group $\Gamma = \Gal(k/k_0)$ on $V_G(\KK)$.
Moreover, the set of fixed points $V_G(\KK)^\Gamma$ naturally identifies
with the vector edifice $V_{G_0}(\KK)$.
\end{exmp}

\begin{exmp}\label{ex:frobenius}
Suppose $k$ is a perfect field of characteristic $p$, so that the Frobenius map $\alpha_p\colon k\to k, x\mapsto x^p$ is a field automorphism.
Let $G = \GL_n$ be the general linear group over $k$.
Then this is a situation as in the previous example, where the group ${^\alpha}G$ is naturally isomorphic to $G$, say via a map $f_p$,
so we obtain an automorphism of the vector building $\kappa_p:=\kappa_{\alpha_p,f_p}$.
We also have the apte homomorphism $F_p\colon G\to G$ which is the standard Frobenius morphism raising matrix entries to the $p^{\rm th}$ power
(i.e., acting by $\alpha_p$ in each coordinate), which gives rise to a linear automorphism $\kappa_{F_p}$ of $V_G(\KK)$.
We can compare the effects of $\kappa_p$ and $\kappa_{F_p}$ by looking at the standard diagonal maximal torus $T$.

First, it is clear $\kappa_{F_p}(\lambda) = p\lambda$ for all $\lambda\in Y_T(\KK)$.
Second, because the natural isomorphism $f_p\colon {^\alpha}G\to G$ is induced by the maps on $A$-points $G(A\otimes_{\alpha_p} k) \to G(A)$ given by $a\otimes 1\mapsto a$ in each coordinate (as described in the previous example), the net effect on a diagonal cocharacter after precomposing with base change along $\alpha_p$ is to do nothing!  Note (and this is important) that we are \emph{not} claiming that $\kappa_p$ is the identity map on all of $V_G(\KK)$.
For a general split maximal torus $T'$, there exists $g\in G(k)$ such that $T' = gTg^{-1}$.
Then for $x\in V_T(\KK)$, $\kappa_{F_p}$ sends $g\cdot x \in V_{T'}(\KK)$ to $F_p(g)\cdot(px) \in V_{F_p(g)TF_p(g)^{-1}}(\KK)$, by Remark~\ref{rem:type-preserving}.
On the other hand, it can be shown that $\kappa_{p}$ sends $g\cdot x$ to $F_p(g)\cdot x$ --- the key point is that for $g\in \GL_n(k)$, applying 
$f_p\circ \psi_{\alpha_p}$ has the same effect as applying $F_p$. 
Thus we see that both $\kappa_{F_p}$ and $\kappa_p$ permute the apartments of $V_G(\KK)$,
in the same way, but $\kappa_{F_p}$ also scales by $p$.

Finally, we note that if $H$ is a subgroup of $\GL_n$ such that $F_p(H) = H$, then this conclusion naturally extends to the
corresponding linear maps on $V_H(\KK)$ given by restriction from $V_G(\KK)$.
\end{exmp}

We finish this section by recalling that in the case where $G$ is a simple $k$-group of rank at least $2$, Tits has shown \cite[Cor.~5.9, Cor.~5.10]{tits1} that 
all automorphisms of the spherical building $\Delta_G$ can be constructed by considering those induced by automorphisms of the group $G$ together
with suitable automorphisms of the field $k$ and certain exceptional Frobenius isogenies in characteristics $2$ and $3$. 
Our constructions in the previous two sections show how to realise all these automorphisms on the level of the edifice $V_G(\KK)$ as well.

\section{Admissible metrics}
\label{sec:admmetric}

In this section we show how to put a metric on $V_G(\KK)$.  Recall that two metrics $d_1$ and $d_2$ on a set $X$ are said to be \emph{bi-Lipschitz equivalent} if there exist $c,C> 0$ such that $d_1(x,y)\leq c\cdot d_2(x,y)\leq C\cdot d_1(x,y)$ for all $x,y\in X$; in this case, $d_1$ and $d_2$ induce the same topology on $X$.  When $G$ is reductive then (following \cite[Sec.\ 2]{kempf}, \cite{BMR:strong}) we can equip $V_G(\KK)$ with a metric, as follows.
Fix a maximal split torus $T$ of $G$.
The space $V_T(\RR)$ can be equipped with a $W_k$-invariant metric (this is standard: just take any positive-definite bilinear form on $V_T(\RR)$, average over the finite group $W_k$, and take the metric defined by the associated norm).
Now for any other maximal split torus $T'$ in $G$, there exists $g\in G(k)$ such that $gT'g^{-1} = T$ --- this choice of $g$ is not unique, but any two such differ by an element of $W_k$, and so we can translate the $W_k$-invariant metric on $V_T(\KK)$ to $V_{T'}(\KK)$.  This allows us to define $d(x,y)$ for any $x,y\in V_G(\KK)$, since the common apartment property holds.  We thus obtain a metric $d$ on $V_G(\KK)$ with the property that $d(x,y) = d(g\cdot x,g\cdot y)$ for all $x,y\in V_G(\KK)$, $g\in G(k)$;
we call such a metric arising from a $W_k$-invariant bilinear form on $V_T(\KK)$ \emph{admissible}.

As explained in Section~\ref{sec:common_apt}, for general $G$ the vector edifice $V_G(\KK)$ may fail to have the
common apartment property, so
we cannot immediately metrise $V_G(\KK)$ using this construction.
However, with Proposition~\ref{prop:inclusion} in hand, we can make the following definition.

\begin{defn}\label{defn:metric}
 Let $i\colon G\to G'$ be an embedding of $G$ in a reductive $k$-group $G'$.  
 Choose an admissible metric $d'$ on $V_{G'}(\KK)$.  
 Recall that the map $\kappa_i\colon V_{G}(\KK)\to V_{G'}(\KK)$ is injective (Proposition~\ref{prop:inclusion}).  
 We obtain a pullback metric $d$ on $V_{G}(\KK)$ defined by 
 $$
 d(x,y):=d'(\kappa_i(x),\kappa_i(y))
 $$  
 for all $x,y\in V_G(\KK)$.
 We also denote this metric
 by $i^*(d')$, and call a metric obtained in this way an \emph{admissible metric} on $V_{G}(\KK)$.  
 Note that the equivariance of $\kappa_i$ observed in Remark~\ref{rem:type-preserving}, together with the $G'(k)$-invariance of $d'$, implies that $d$ is $G(k)$-invariant and the restriction of $d$ to any apartment of $V_{G}(\KK)$ is given by a positive-definite bilinear form.  An admissible metric $d$ gives rise to a topology on $V_G(\KK)$.  Below we write ``open'' instead of ``$d$-open'' if $d$ is understood.
 
 Given an admissible metric $d$ on $V_G(\KK)$ we let $\|\cdot\| = \|\cdot\|_d$ denote the corresponding norm: $\|x\|:= d(x,0)$ for $x\in V_G(\KK)$.
 \end{defn}

\begin{rems}
\label{rem:admprops}
(i). Since any $G$ can be embedded into some $\GL_n$, admissible metrics always exist.

(ii). It is clear that if $G$ is reductive then the two notions of admissible metric coincide.

(iii). Let $H$ be a connected subgroup of $G$ and let $d$ be an admissible metric on $V_G(\KK)$, arising from an embedding of $G$ in a reductive group $G'$ and an admissible metric $d'$ on $V_{G'}(\KK)$.  Then the restriction of $d$ to $V_H(\KK)$ (regarded as a subset of $V_G(\KK)$) is admissible: for it is the metric obtained from $d'$ via the inclusion of $H$ in $G'$.

(iv). Let $d$ be an admissible metric on $V_G(\QQ)$.  It is easily checked that there is a unique extension of $d$ to an admissible metric on $V_G(\RR)$.

(v). If $d$ is an admissible metric on $V_G(\KK)$ arising from an embedding $i:G\to G'$ and an admissible metric $d'$ on $V_{G'}(\KK)$, 
then we have 
$$
d(ax,ay) = d'(\kappa_i(ax),\kappa_i(ay)) = d'(a\kappa_i(x),a\kappa_i(y)) = ad'(\kappa_i(x),\kappa_i(y)) = ad(x,y)
$$ 
for all $a\in \KK^+$ and $x,y\in V_G(\KK)$. 
This uses the linearity of $\kappa_i$ and the fact that the edifice $V_{G'}(\KK)$ has the common apartment property (because $G'$ is reductive). 

(vi). If $d$ is an admissible metric on $V_G(\KK)$, then the subspace topology on any apartment $V_T(\KK)$ is just the Euclidean topology induced by the 
positive-definite bilinear form obtained by restricting the metric to $V_T(\KK)$.
\end{rems}

\begin{rem}\label{rem:boundedconjugates}
Note that since an admissible metric is $G(k)$-invariant, so is the norm arising from it: in particular, if $(x_n)$ is a bounded sequence in $V_G(\KK)$ and $(g_n)$ 
 is a sequence of elements of $G(k)$, then the sequence $(g_n\cdot x_n)$ is also bounded. We use this fact often in what follows.
\end{rem}

\begin{rem}\label{rem:reductive_quasi_isom}
 If $G$ has the common apartment property --- e.g., if $G$ is pseudo-reductive --- then any two admissible metrics on $V_{G}(\KK)$ are bi-Lipschitz equivalent --- this follows from the common apartment property, the conjugation-invariance of the metrics and the standard fact that any two positive-definite bilinear forms on a finite-dimensional vector space over $\KK$ induce bi-Lipschitz equivalent metrics.  In particular, the induced topology on $V_{G}(\KK)$ does not depend on the choice of metric.  Below we extend these results to arbitrary connected $k$-groups (Proposition~\ref{prop:bdd_linear}).
\end{rem}

\begin{defn}
 Given an admissible metric $d$ on $V_G(\KK)$, we define $\Isom _d(V_G(\KK))$ to be the subgroup of $\Aut (V_G(\KK))$ consisting of the isometries with respect to the 
 metric $d$.
\end{defn}

\begin{exmp}
\label{ex:not_isom}
 Not every automorphism is an isometry: for a simple example, choose $a\in \KK^+$ with $a\neq 1$ and take $f_a\in \Aut (V_G(\KK))$ to be the map that sends $x$ to $ax$.  See also the paragraph following Conjecture~\ref{conj:sTCC_spherical}.
\end{exmp}

\begin{lem}
\label{lem:Cauchy_type}
 Fix an admissible metric $d$ on $V_G(\KK)$.  Let $(x_n)$ be a Cauchy sequence in $V_G(\KK)$ such that all the $x_n$ have the same type.  Then $(x_n)$ is eventually constant.
\end{lem}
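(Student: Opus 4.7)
The plan is to first reduce to the case of a reductive group. By Definition~\ref{defn:metric}, the admissible metric $d$ on $V_G(\KK)$ is the pullback via an injective linear map $\kappa_i\colon V_G(\KK)\to V_{G'}(\KK)$ (Proposition~\ref{prop:inclusion}) of an admissible metric $d'$ on the vector edifice of some reductive group $G'$ in which $G$ embeds via $i$. Setting $y_n := \kappa_i(x_n)$, the sequence $(y_n)$ is Cauchy in $(V_{G'}(\KK), d')$. If $x_n = g_n\cdot x_1$ with $g_n\in G(k)$, then equivariance (Remark~\ref{rem:type-preserving}) gives $y_n = i(g_n)\cdot y_1$, so all $y_n$ share a common type $\tau$ as elements of $V_{G'}(\KK)$. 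Since $\kappa_i$ is injective, it suffices to establish eventual constancy of $(y_n)$.

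Next I would show that the distances $d'(y_m,y_n)$ take only finitely many values. Fix any maximal split torus $T$ of $G'$, and using conjugacy of maximal split tori pick $g\in G'(k)$ with $g\cdot y_1\in V_T(\KK)$. Let
$$F := \{z\in V_T(\KK)\mid z\text{ has type }\tau\},$$
which is finite by Remark~\ref{rem:W_type} (it is the orbit of $g\cdot y_1$ under the relative Weyl group of $G'$ with respect to $T$). Because $G'$ is reductive, $V_{G'}(\KK)$ enjoys the common apartment property, so for each pair $(m,n)$ Lemma~\ref{lem:common_apt} produces a maximal split torus $T_{mn}$ of $G'$ with $y_m,y_n\in V_{T_{mn}}(\KK)$. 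Choose $g_{mn}\in G'(k)$ with $g_{mn}T_{mn}g_{mn}^{-1}=T$; then $g_{mn}\cdot y_m$ and $g_{mn}\cdot y_n$ both lie in $V_T(\KK)$ and still have type $\tau$, hence lie in $F$. By the $G'(k)$-invariance of the admissible metric $d'$,
$$d'(y_m,y_n) = d'(g_{mn}\cdot y_m,\ g_{mn}\cdot y_n) \in D := \{d'(z,z')\mid z,z'\in F\},$$
which is a finite set of non-negative reals.

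Finally, set $\delta := \min(D\setminus\{0\})$ (or $\delta:=\infty$ if $D=\{0\}$). The Cauchy condition yields some $N$ with $d'(y_m,y_n)<\delta$ for all $m,n\geq N$; since $d'(y_m,y_n)\in D$, this forces $d'(y_m,y_n)=0$, i.e.\ $y_m=y_n$, for all $m,n\geq N$. Hence $(y_n)$, and therefore $(x_n)$, is eventually constant. The main obstacle, and the step I would be most careful about, is the passage to the reductive setting and the extraction of the finite list of possible distances: although the $G'(k)$-orbit of $y_1$ can be infinite and far from discrete in $V_{G'}(\KK)$, the combination of $G'(k)$-invariance of $d'$ and the common apartment property collapses every pairwise distance computation into a single fixed apartment, where the elementary finiteness in Remark~\ref{rem:W_type} delivers the conclusion.
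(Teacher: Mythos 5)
Your proof is correct and follows essentially the same strategy as the paper's: reduce to the reductive case via an embedding, then combine the common apartment property, $G'(k)$-invariance of $d'$, and the finiteness statement from Remark~\ref{rem:W_type} to bound nonzero distances between same-type points away from zero. The only cosmetic difference is that you phrase the key step as ``the set $D$ of possible distances is finite'' whereas the paper packages the same reasoning into a function $\epsilon(x)$ and shows it is positive and constant along the sequence.
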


\begin{proof}
 Clearly we can assume without loss that $G$ is reductive.  Let $x\in V_G(\KK)$,
 and define 
 $$\epsilon(x)= \inf\{d(x,y)\mid \mbox{$x\neq y$ and $x$ and $y$ have the same type}\}.$$  
  Remark~\ref{rem:W_type} implies that if we choose an apartment $V_T(\KK)$ with $x\in V_T(\KK)$,
  then 
  $$\inf\{d(x,y)\mid \mbox{$x\neq y\in V_T(\KK)$ and $x$ and $y$ have the same type}\}>0.$$
  But given any other apartment $V_{T'}(\KK)$ containing $x$, $T'$ is conjugate to $T$ by an element fixing $x$ by Remark \ref{rem:conjugateapartments},
  and so this lower bound does not vary from apartment to apartment containing $x$.
Since $G$ has the common apartment property we conclude that $\epsilon(x)>0$. 
Further, $\epsilon(x')= \epsilon(x)$ if $x$ and $x'$ have the same type, as $d$ is conjugation-invariant, so $\epsilon(x_n)$ does not depend on $n$.  
The result follows.
\end{proof}

\begin{prop}
\label{prop:complete}
 Let $G$ be a $k$-group and let $d$ be an admissible metric on $V_{G}(\RR)$.  
 Then $(V_{G}(\RR), d)$ is a complete metric space.
\end{prop}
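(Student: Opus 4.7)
\emph{Plan:} Given any Cauchy sequence $(x_n)$ in $V_G(\RR)$, the plan is to pull it into a single fixed apartment by conjugation, extract a convergent subsequence there via Bolzano--Weierstrass, and then use Lemma~\ref{lem:Cauchy_type} to upgrade this to convergence of $(x_n)$ in $V_G(\RR)$ itself.

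Concretely, I would fix a maximal split torus $T$ of $G$. For each $n$, $x_n\in V_{T_n}(\RR)$ for some maximal split torus $T_n$, and the $G(k)$-conjugacy of maximal split tori supplies an element $g_n\in G(k)$ with $g_nT_ng_n^{-1}=T$, so that $y_n := g_n\cdot x_n\in V_T(\RR)$. Because the admissible metric $d$ is $G(k)$-invariant (Definition~\ref{defn:metric}) and restricts to a norm coming from a positive-definite bilinear form on the finite-dimensional apartment $V_T(\RR)$, boundedness of $(x_n)$ transfers to boundedness of $(y_n)$, and Bolzano--Weierstrass produces a subsequence $y_{n_k}\to y_\infty\in V_T(\RR)$. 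Setting $z_k := g_{n_k}^{-1}\cdot y_\infty$, the $G(k)$-invariance of $d$ gives $d(z_k, x_{n_k}) = d(y_\infty, y_{n_k})\to 0$, and combining this with the Cauchy condition on $(x_{n_k})$ via the triangle inequality shows that $(z_k)$ is itself Cauchy. The decisive observation is then that every $z_k$ lies in the single $G(k)$-orbit of $y_\infty$, so all of the $z_k$ share a common type in the sense of Definition~\ref{defn:type}. Lemma~\ref{lem:Cauchy_type} forces $(z_k)$ to be eventually constant with some value $z_\infty$; a final triangle inequality gives $x_{n_k}\to z_\infty$, and since a Cauchy sequence converges as soon as some subsequence does, $x_n\to z_\infty$ in $V_G(\RR)$.

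The conceptual hurdle I would expect to sidestep is that the conjugating elements $g_n$ are highly non-unique and $G(k)$ carries no natural topology in which one could hope to take a limit of them; the naive strategy of producing a limiting $g_\infty$ and then declaring $x_n\to g_\infty^{-1}\cdot y_\infty$ is simply unavailable. The elegant feature of the plan is that Lemma~\ref{lem:Cauchy_type} renders the $g_n$ invisible: type-rigidity within the single $G(k)$-orbit of $y_\infty$ extracts the limit $z_\infty$ without any need to control the individual $g_n$ at all.
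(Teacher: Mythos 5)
Your proposal is correct and is essentially the paper's own argument: conjugate the Cauchy sequence into a fixed apartment $V_T(\RR)$, use the Euclidean structure there to extract a convergent subsequence $y_{n_k}\to y_\infty$, translate $y_\infty$ back by the conjugating elements to get a Cauchy sequence all in one $G(k)$-orbit, and invoke Lemma~\ref{lem:Cauchy_type} to force eventual constancy. The only (cosmetic) difference is that the paper runs this inside the ambient reductive group $G'$, working with the metric $d'$ on $V_{G'}(\RR)$ and a maximal split torus $S\supseteq T$ of $G'$ (and also passes to a subsequence so that $P_{z_n}(G')$ is constant, a step that is not actually used later), whereas you work directly with $d$ on $V_G(\RR)$, using that Definition~\ref{defn:metric} already guarantees $G(k)$-invariance of $d$ and that $d$ restricts to a Euclidean norm on each apartment; this is a mild streamlining, since the reduction to $G'$ is in any case built into Lemma~\ref{lem:Cauchy_type}.
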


\begin{proof}
 The argument of \cite[Prop.\ 12.10]{abro} establishes this result when $G$ is semisimple; our proof is a slight variation on that theme.  
By definition, there exist an embedding $i$ of $G$ in some reductive $k$-group $G'$ and an admissible metric $d'$ on $V_{G'}(\RR)$ such that $d= i^*(d')$.  
To ease notation, we regard $V_{G}(\RR)$ as a subset of $V_{G'}(\RR)$ via $\kappa_i$, so that $d$ is just the restriction of $d'$.  
Let $(x_n)$ be a Cauchy sequence in $V_{G}(\RR)$.  
Fix a maximal split torus $T$ of $G$.  
By conjugacy of maximal split tori in $G$, for each $n$ we can choose $h_n\in G(k)$ such that $z_n:= h_n\cdot x_n$ belongs to the apartment $V_{T}(\RR)$ of $V_{G}(\RR)$.  Note that the sequence $(z_n)$ is bounded, because $(x_n)$ is (see Remark \ref{rem:boundedconjugates}).  Fix a maximal split torus $S$ of $G'$ such that $T\subseteq S$.  Then each $z_n$ belongs to the apartment $V_{S}(\RR)$ of $V_{G'}(\RR)$.  
There is no harm in passing to a subsequence of $(x_n)$, so without loss we can assume that the parabolic subgroups $P_{z_n}(G')$ are all equal by Remark \ref{rem:finitepar}.
Since the restriction of $d'$ to $V_{S}(\RR)$ yields a complete metric and any closed ball in $V_S(\RR)$ is compact, we can assume after passing to a subsequence again that $(z_n)$ converges to some limit $z\in V_{S}(\RR)$.  Each $z_n$ belongs to $V_T(\RR)$ and $V_T(\RR)$ is a subspace of the vector space $V_S(\RR)$, so $z\in V_T(\RR)$.
 
 Define a sequence $(y_n)$ in $V_G(\KK)$ by $y_n= h_n^{-1}\cdot z$.  Then $d'(x_n,y_n)= d'(h_n\cdot x_n, h_n\cdot y_n)= d'(z_n, z)\to 0$; hence $(y_n)$ is Cauchy, since $(x_n)$ is.  But the $y_n$ all have the same type as each other as elements of $V_{G'}(\KK)$, so $(y_n)$ must eventually become constant by Lemma~\ref{lem:Cauchy_type}: say, $y_n = y$ for sufficiently large $n$.  We see that $x_n\to y$, so we are done.
\end{proof}

We now consider admissible metrics on the edifice for a product of groups.  Let $G_1$ and $G_2$ be $k$-groups.  The maximal split tori of $G_1\times G_2$ are precisely the subgroups of the form $T_1\times T_2$, where $T_i$ is a maximal split torus of $G_i$.  There is an obvious linear isomorphism from $V_{T_1}(\KK)\oplus V_{T_2}(\KK)$ to $V_{T_1\times T_2}(\KK)$.  For $\lambda= (\lambda_1, \lambda_2)\in V_{T_1\times T_2}(\KK)$, we claim that $P_\lambda(G_1\times G_2)= P_{\lambda_1}(G_1)\times P_{\lambda_2}(G_2)$, $L_\lambda(G_1\times G_2)= L_{\lambda_1}(G_1)\times L_{\lambda_2}(G_2)$ and $U_\lambda(G_1\times G_2)= U_{\lambda_1}(G_1)\times U_{\lambda_2}(G_2)$.  This is clear if $\lambda_1\in Y_{T_1}$ and $\lambda_2\in Y_{T_2}$.  To see this in general, choose a $G_i$-equivariant embedding of $G_i$ in a rational $G_i$-module $V_i$ for $i= 1,2$, then embed $G_1\times G_2$ in $V_1\oplus V_2$ in the obvious way.  Then $(\lambda_1', \lambda_2')$ is a cocharacter approximation to $(\lambda_1, \lambda_2)$ if and only if $\lambda_i'$ is a cocharacter approximation to $\lambda_i$ for $i= 1,2$, and the claim follows.  It is now easily checked that the maps $V_{T_1}(\KK)\oplus V_{T_2}(\KK)\to V_{T_1\times T_2}(\KK)$ paste together to give a bijection from $V_{G_1}(\KK)\times V_{G_2}(\KK)$ to $V_{G_1\times G_2}(\KK)$.

Now let $d_1= i_1^*(d_1')$ and $d_2= i_2^*(d_2')$, where $i_j$ is an embedding of $G_j$ in a reductive $k$-group $G_j'$ and $d_j'$ is an admissible metric on $V_{G_j'}(\KK)$.  We get an admissible metric $d'$ on $V_{G_1'\times G_2'}(\KK)= V_{G_1'}(\KK)\times V_{G_2'}(\KK)$ given by
$$
d'((x_1',x_2'), (y_1',y_2'))= \sqrt{d_1'(x_1',y_1')^2+ d_2'(x_2',y_2')^2}:
$$ 
on each apartment $V_{T_1'\times T_2'}(\KK)= V_{T_1'}(\KK)\oplus V_{T_2'}(\KK)$, the positive-definite bilinear form corresponding to $d'$ is the orthogonal direct sum of the positive-definite bilinear forms on $V_{T_j'}(\KK)$ corresponding to $d_j'$ for $j= 1,2$.  We have an embedding $i_1\times i_2$ of $G_1\times G_2$ in $G_1'\times G_2'$ and we define an admissible metric $d$ on $V_{G_1\times G_2}(\KK)$ by $d= (i_1\times i_2)^*(d')$.  We write $d= d_1\times d_2$ and we call a metric of this form a \emph{product metric}.  We have
$$
d((x_1,x_2), (y_1,y_2))= \sqrt{d_1(x_1,y_1)^2+ d_2(x_2,y_2)^2}
$$
for all $(x_1, x_2), (y_1, y_2)\in V_{G_1}(\KK)\times V_{G_2}(\KK)= V_{G_1\times G_2}(\KK)$.

\subsection{Central tori}
\label{subsec:cent_tor}
Suppose $G$ has a non-trivial split central torus $Z$.  We need an argument for reducing from $G$ to $G/Z$.  Let $d$ be an admissible metric on $V_G(\KK)$.  Then there exist an embedding $i$ of $G$ in a reductive group $G'$, and an admissible metric $d'$ on $V_{G'}(\KK)$ such that $d= i^*(d')$.  Set $Z'= i(Z)$.  We can replace $G'$ with the reductive group $C_{G'}(Z')$ if necessary, so without loss we can assume that $Z'$ is central in $G'$.

Now let $T'$ be any maximal split torus of $G'$.  Since $T'$ contains $Z'$, we may regard $V_{Z'}(\KK)$ as a subspace of $V_{T'}(\KK)$.  Set $W_{T'}= V_{Z'}(\KK)^\perp$, where $^\perp$ denotes the orthogonal complement corresponding to the bilinear form that defines $d'$.  Given $x'\in V_{T'}(\KK)$, we have a unique decomposition $x'= x'_{Z'}+ x'_\perp$, where $x'_{Z'}\in V_{Z'}(\KK)$ and $x'_\perp\in W_{T'}$.  
Note that since any other maximal split torus $S'$ of $G'$ with $x'\in V_{S'}(\KK)$ is conjugate to $T'$ by an element of $G'(k)$ fixing $x'$ (essentially by definition of $V_{G'}(\KK)$), and $Z'$ is central in $G'$, the elements $x'_{Z'}$ and $x'_\perp$ do not depend on the choice of $T'$ with $x'\in V_{T'}(\KK)$.

Given $x',y'\in V_{T'}(\KK)$, we define $d'_{Z'}(x', y')= d'(x'_{Z'}, y'_{Z'})$ and $d'_\perp(x', y')= d'(x'_\perp, y'_\perp)$.  For any $g'\in G'(k)$, $g'$ fixes $V_{Z'}(\KK)$ pointwise (since it fixes $Y_{Z'}$ pointwise), so $W_{g'T'(g')^{-1}}= g'\cdot W_{T'}$ and for any $x'\in V_{T'}(\KK)$,
\begin{equation}
\label{eqn:perp_eqvce}
 (g'\cdot x')_{Z'}= g'\cdot x'_{Z'}= x'_{Z'} \ \mbox{\ and\ }\ (g'\cdot x')_\perp= g'\cdot x'_\perp.
\end{equation}
It follows easily that $d'_{Z'}$ and $d'_\perp$ give well-defined $G'(k)$-invariant functions from $V_{G'}(\KK)\times V_{G'}(\KK)$ to $\RR$.  We define $d_{Z}(x, y)= d'_{Z'}(\kappa_i(x), \kappa_i(y))$ and $d_\perp(x, y)= d'_\perp(\kappa_i(x), \kappa_i(y))$ for $x,y\in V_G(\KK)$.  We also have $d(x,y) = d'(\kappa_i(x),\kappa_i(y))$, by definition, so we can conclude that
\begin{equation}
\label{eqn:pythagoras}
 d(x, y)= \sqrt{d_{Z}(x, y)^2+ d_\perp(x, y)^2}\ \mbox{for all $x,y\in V_G(\KK)$},
\end{equation}
because the corresponding equation holds for $d'$, $d'_{Z'}$, $d'_\perp$ and for all $\kappa_i(x),\kappa_i(y)\in V_{G'}(\KK)$.
Note that $d$ and $d_Z$ agree on $V_Z(\KK)\times V_Z(\KK)$.

Now let $G_1= G/Z$ and let $G_1'= G'/Z'$.  Let $\pi\colon G\to G_1$ and $\pi'\colon G'\to G_1'$ denote the canonical projections.  Then $i$ gives rise to an embedding $i_1$ of $G_1$ in $G_1'$.  Choose an admissible metric $d_1'$ on $V_{G_1'}(\KK)$ and let $d_1= i_1^*(d_1')$.

\begin{lem}
 Let the notation be as above.  Then there exist $c, C> 0$ such that
 \begin{equation}
 \label{eqn:perpqisom}
 d_1(\kappa_\pi(x), \kappa_\pi(y))\leq c \cdot d_\perp(x, y)\leq C \cdot d_1(\kappa_\pi(x),\kappa_\pi(y))
 \end{equation}
 for all $x,y\in V_G(\KK)$.
\end{lem}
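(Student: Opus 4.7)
The plan is to first reduce the inequality to the reductive case, then to a single apartment where everything becomes finite-dimensional linear algebra, and finally to use conjugation invariance to make the constants uniform. By functoriality of the linear maps (Remarks~\ref{rems:funct}(i)) we have $\kappa_{\pi'}\circ\kappa_i = \kappa_{i_1}\circ\kappa_\pi$, so by definition of the admissible metrics,
$$
d_1(\kappa_\pi(x),\kappa_\pi(y)) = d_1'(\kappa_{\pi'}(\kappa_i(x)),\kappa_{\pi'}(\kappa_i(y))), \quad d_\perp(x,y) = d'_\perp(\kappa_i(x),\kappa_i(y)).
$$
Thus it suffices to prove the analogous inequality for $x',y'\in V_{G'}(\KK)$ with $d_{\pi'}'(\kappa_{\pi'}(x'),\kappa_{\pi'}(y'))$ and $d'_\perp(x',y')$, i.e., to prove the statement under the additional assumption that $G$ is reductive.

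Next, since $G'$ is reductive, $V_{G'}(\KK)$ has the common apartment property, so we may choose a maximal split torus $T'$ of $G'$ containing $Z'$ (automatic, as $Z'$ is central) such that $x',y'\in V_{T'}(\KK)$. Setting $z' := x'-y' \in V_{T'}(\KK)$, we have $(z')_\perp = x'_\perp - y'_\perp$ and $\kappa_{\pi'}(z') = \kappa_{\pi'}(x')-\kappa_{\pi'}(y')$ by linearity on apartments; moreover $\kappa_{\pi'}$ kills $V_{Z'}(\KK)$ (because $\pi'(Z')=1$) and so $\kappa_{\pi'}(z') = \kappa_{\pi'}(z'_\perp)$. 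Hence we only need: there exist $c,C>0$, independent of $T'$ and $z'\in V_{T'}(\KK)$, with
$$
\|\kappa_{\pi'}(z'_\perp)\|_{d_1'} \leq c\,\|z'_\perp\|_{d'} \leq C\,\|\kappa_{\pi'}(z'_\perp)\|_{d_1'}.
$$

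Now fix one ``reference'' maximal split torus $T'_0$ of $G'$ containing $Z'$. The restriction $\kappa_{\pi'}|_{W_{T'_0}}\colon W_{T'_0}\to V_{\pi'(T'_0)}(\KK)$ is linear, injective (its kernel in $V_{T'_0}(\KK)$ is $V_{Z'}(\KK)$, which meets $W_{T'_0}$ trivially) and surjective (since $\pi'(T'_0)$ is a maximal split torus of $G_1'$ and dimensions match). As both sides are finite-dimensional $\KK$-vector spaces carrying the norms induced by the admissible metrics $d'$ and $d_1'$, the standard equivalence of norms gives constants $c_0,C_0>0$ such that $\|\kappa_{\pi'}(w)\|_{d_1'}\leq c_0\|w\|_{d'}\leq C_0\|\kappa_{\pi'}(w)\|_{d_1'}$ for all $w\in W_{T'_0}$.

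Finally, to spread these constants to an arbitrary $T'\supseteq Z'$, pick $g'\in G'(k)$ with $g'T'_0(g')^{-1}=T'$ and write $z'_\perp = g'\cdot u$ for some $u\in W_{T'_0}$; this is legal by \eqref{eqn:perp_eqvce}. Then $\|z'_\perp\|_{d'} = \|u\|_{d'}$ by $G'(k)$-invariance of $d'$, and $\|\kappa_{\pi'}(z'_\perp)\|_{d_1'} = \|\pi'(g')\cdot \kappa_{\pi'}(u)\|_{d_1'} = \|\kappa_{\pi'}(u)\|_{d_1'}$ by equivariance of $\kappa_{\pi'}$ (Remark~\ref{rem:type-preserving}) and $G_1'(k)$-invariance of $d_1'$. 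Applying the reference estimate to $u$ yields the uniform bound, completing the proof.

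The main obstacle is the apparent torus-dependence of the constants in the finite-dimensional comparison; this is resolved precisely by the equivariance identity \eqref{eqn:perp_eqvce} together with the conjugation-invariance of both admissible metrics, which together ensure that moving between apartments costs nothing.
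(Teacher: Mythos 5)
Your proof is correct and takes essentially the same route as the paper's: reduce to the reductive group $G'$ via the functoriality identity $\kappa_{\pi'}\circ\kappa_i=\kappa_{i_1}\circ\kappa_\pi$, use the common apartment property to work within a single apartment $V_{T'}(\KK)$ with $Z'\subseteq T'$, observe that $\kappa_{\pi'}$ restricts to a linear isomorphism $W_{T'}\to V_{\pi'(T')}(\KK)$, invoke equivalence of norms from positive-definite forms on a finite-dimensional space, and then propagate the constants using $G'(k)$-invariance together with \eqref{eqn:perp_eqvce} and equivariance of $\kappa_{\pi'}$. The only cosmetic difference is that you rephrase the distance comparison as a norm comparison for $z'_\perp=(x'-y')_\perp$, which the paper leaves implicit.
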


\begin{proof}
 It is enough to show that there exist $c, C> 0$ such that
 \begin{equation}
 \label{eqn:perpqisom2}
 d'_1(\kappa_{\pi'}(x'), \kappa_{\pi'}(y'))\leq c \cdot d'_\perp(x', y')\leq C \cdot d'_1(\kappa_{\pi'}(x'),\kappa_{\pi'}(y'))
 \end{equation}
 for all $x',y'\in V_{G'}(\KK)$.  So fix a maximal split torus $T_1'$ of $G_1'$.  Since $\pi'$ is smooth and surjective, $\pi'$ is apte by Remark \ref{rem:isogenydiscussion}(iii), so there is a maximal split torus $T'$ of $G'$ such that $\pi'(T')= T_1'$.  It is straightforward to check that $\kappa_{\pi'}$ gives an isomorphism of $\KK$-vector spaces from $V_{T'}(\KK)/V_{Z'}(\KK)$ to $V_{T_1'}(\KK)$, so $\kappa_{\pi'}$ gives an isomorphism of $\KK$-vector spaces from $W_{T_1'}$ to $V_{T_1'}(\KK)$.  Since any two positive-definite bilinear forms on a vector space yield equivalent norms, there exist $c, C> 0$ such that \eqref{eqn:perpqisom2} holds for all $x', y'\in V_{T'}(\KK)$.  It follows from the $G'(k)$-invariance of $d'_1$ and $d'_\perp$ that the same $c$ and $C$ work for any maximal split torus of $G'$.  This gives the result.
\end{proof}

Consider the product group $G_1\times Z$.  The maximal split tori of $G_1\times Z$ are precisely the subgroups of the form $T_1\times Z$, where $T_1$ is a maximal split torus of $G_1$.  Keeping the notation above, we endow $V_{G_1\times Z}(\KK)$ with the product metric $\widetilde{d}:= d_1\times d_Z$, where $d_Z$ is the restriction of $d$ to $V_Z(\KK)$.  Given a maximal split torus $T_1$ of $G_1$, pick a maximal split torus $T$ of $G$ such that $\pi(T)= T_1$.  Since $T$ contains $\ker(\pi)= Z$, $T$ is in fact unique.  Define $$\beta_{Z,T}\colon V_T(\KK)\to V_{T_1}(\KK)\times V_Z(\KK)= V_{T_1\times Z}(\KK)$$ by $\beta_{Z,T}(x)= (\kappa_\pi(x), x_Z)$.

\begin{lem}
\label{lem:Z_qisom}
 The maps $\beta_{Z,T}$ paste together to give a well-defined isomorphism $\beta_Z\colon V_G(\KK)\to V_{G_1\times Z}(\KK)$.  Moreover, $\beta_Z$ is a bi-Lipschitz map from $(V_G(\KK), d)$ to $(V_{G_1\times Z}(\KK), \widetilde{d})$.
\end{lem}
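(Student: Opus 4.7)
I plan to verify in turn: (a) each $\beta_{Z,T}$ is a linear isomorphism of $\KK$-vector spaces; (b) these paste to a well-defined map $\beta_Z$; (c) $\beta_Z$ is bijective; (d) $\beta_Z$ is a linear isomorphism of vector edifices; (e) $\beta_Z$ is bi-Lipschitz. For (a), since $Z$ is central and split, $Z\subseteq T$ for every maximal split torus $T$ of $G$, so $V_Z(\KK)\subseteq V_T(\KK)$ and $\kappa_\pi|_{V_T(\KK)}$ is a $\KK$-linear map with kernel $V_Z(\KK)$.  The orthogonal complement $W_T$ is therefore carried isomorphically onto $V_{T_1}(\KK)$, and writing $x = x_Z + x_\perp$ shows that $\beta_{Z,T}(x) = (\kappa_\pi(x_\perp), x_Z)$ is a linear isomorphism onto $V_{T_1}(\KK)\oplus V_Z(\KK) = V_{T_1\times Z}(\KK)$.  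For (b), if $x$ lies in two apartments $V_T(\KK)$ and $V_{T'}(\KK)$, then $\kappa_\pi(x)$ is the same element of $V_{G_1}(\KK)$ either way, and $x_Z$ is independent of the apartment by the same argument that gives \eqref{eqn:perp_eqvce}, since $G(k)$-conjugation fixes $V_Z(\KK)$ pointwise.

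The main step is (c).  For any maximal split torus $T_1$ of $G_1$, the scheme-theoretic preimage $T := \pi^{-1}(T_1)$ sits in an exact sequence $1\to Z\to T\to T_1\to 1$ of split tori, so $T$ is itself a smooth split torus, and it must be maximal in $G$ since any enlargement would map to an enlargement of $T_1$.  This gives a bijective correspondence $T_1\leftrightarrow T$ between maximal split tori of $G_1$ and those of $G$.  Surjectivity of $\beta_Z$ is then immediate: given $(y_1,z)$, choose $T_1\ni y_1$, let $T = \pi^{-1}(T_1)$, and take $x = \beta_{Z,T}^{-1}(y_1,z)\in V_T(\KK)\subseteq V_G(\KK)$.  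For injectivity --- the main obstacle --- suppose $\beta_Z(x) = \beta_Z(y) = (y_1,z)$ and pick any $T_1$ with $y_1\in V_{T_1}(\KK)$.  The key observation is that the lift $T := \pi^{-1}(T_1)$ automatically lies in $P_x$: Remark~\ref{rems:funct}(ii) gives $\pi(P_x) = P_{y_1}(G_1)\supseteq T_1$, so $T\subseteq \pi^{-1}(\pi(P_x)) = P_x\cdot Z = P_x$, the last equality holding because $Z$, being split and central, lies in every maximal split torus of $G$ and hence in every R-parabolic subgroup.  By Remark~\ref{rem:conjugateapartments} we get $x\in V_T(\KK)$, and likewise $y\in V_T(\KK)$, so $x = y$ by injectivity of $\beta_{Z,T}$.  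Part (d) then follows at once: since the maximal split tori of $G_1\times Z$ are exactly the $T_1\times Z$, the map $\beta_Z$ pairs up apartments and restricts to a linear isomorphism on each, so $\beta_Z$ and $\beta_Z^{-1}$ are both linear maps of vector edifices.

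For (e), the product metric gives
$$\widetilde{d}(\beta_Z(x),\beta_Z(y))^2 = d_1(\kappa_\pi(x),\kappa_\pi(y))^2 + d_Z(x,y)^2,$$
while \eqref{eqn:pythagoras} gives $d(x,y)^2 = d_\perp(x,y)^2 + d_Z(x,y)^2$.  The bi-Lipschitz equivalence \eqref{eqn:perpqisom} of $d_\perp$ with $d_1\circ(\kappa_\pi\times\kappa_\pi)$ then immediately yields the bi-Lipschitz equivalence of $d$ with $\widetilde{d}\circ(\beta_Z\times\beta_Z)$, completing the proof.
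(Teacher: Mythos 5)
Your proposal is correct and takes essentially the same route as the paper, whose own proof is quite terse (``Since each $\beta_{Z,T}$ is bijective, $\beta_Z$ is bijective'' and appeals to \eqref{eqn:perp_eqvce}, Remark~\ref{rem:type-preserving} and \eqref{eqn:perpqisom}). The main value you add is to make the injectivity argument explicit --- using Remark~\ref{rems:funct}(ii), the centrality of $Z$, and Remark~\ref{rem:conjugateapartments} to show that $x$ and $y$ must lie in the common apartment $V_{\pi^{-1}(T_1)}(\KK)$ --- which the paper leaves to the reader.
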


\begin{proof}
 That $\beta_Z$ is well-defined follows from \eqref{eqn:perp_eqvce} and Remark~\ref{rem:type-preserving}. Since each $\beta_{Z,T}$ is bijective, $\beta_Z$ is bijective and the linearity of $\beta_Z$ and $\beta_Z^{-1}$ is a consequence of the definitions.  The bi-Lipschitz property follows from \eqref{eqn:perpqisom}.
\end{proof}

\subsection{Bi-Lipschitz equivalence of norms}
We have seen that the common apartment property can fail for $V_G(\KK)$.  We can, however, prove a weaker local version.

\begin{lem}
\label{lem:1/2_cts}
 Let $d$ be an admissible metric on $V_{G}(\KK)$.  Let $x\in V_{G}(\KK)$.
  Then there exists a $d$-open neighbourhood $O$ of $x$ in $V_G(\KK)$ such that 
  \begin{itemize}
  \item[(i)] $P_y\subseteq P_x$ for all $y\in O$;
  \item[(ii)] $O\subseteq V_{L_\lambda}(\KK)$ for any $\lambda\in Y_G(\KK)$ such that $\phi_G(\lambda)= x$.
  \end{itemize}
\end{lem}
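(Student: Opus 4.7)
The plan is to reduce to the reductive case. Let $i\colon G\hookrightarrow G'$ be an embedding into a reductive $k$-group and $d'$ an admissible metric on $V_{G'}(\KK)$ with $d=i^*(d')$, as in Definition~\ref{defn:metric}. Fix $\lambda\in Y_G(\KK)$ with $\phi_G(\lambda)=x$, set $x':=\kappa_i(x)$ and $\lambda':=\widehat{\kappa_i}(\lambda)$; Corollary~\ref{cor:par_intersect} then gives $P_\lambda=P_{\lambda'}(G')\cap G$ and $L_\lambda=L_{\lambda'}(G')\cap G$. First I would prove the reductive analogue of~(i): with $T'$ a maximal split torus of $G'$ containing $\IM(\lambda')$ (necessarily $T'\subseteq L_{\lambda'}(G')$), Example~\ref{ex:red_virtual_cochar} shows that for $\mu'\in Y_{T'}(\KK)$ the condition $P_{\mu'}\subseteq P_{\lambda'}$ is cut out by the finitely many strict implications $\langle\lambda',\alpha\rangle<0\Rightarrow\langle\mu',\alpha\rangle<0$ over the roots $\alpha$, an open condition on some Euclidean ball of radius $\epsilon$ around $\lambda'$ in $V_{T'}(\KK)$. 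I would propagate this off the apartment using the common apartment property for $G'$ and the $G'(k)$-invariance of $d'$: any $y'$ with $d'(x',y')<\epsilon$ shares a common apartment $V_{T''}(\KK)$ with $x'$ satisfying $T''\subseteq P_{\lambda'}(G')$ by Lemma~\ref{lem:common_apt}, and conjugation by some $g\in P_{\lambda'}(G')(k)$ sending $T''$ to $T'$ fixes $x'$ and preserves distances, reducing to the in-apartment bound.

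I would then set $O':=\{y'\in V_{G'}(\KK):d'(x',y')<\epsilon\}$ and $O:=\kappa_i^{-1}(O')$; since $\kappa_i$ is an isometric embedding by construction, $O$ is $d$-open and contains $x$. Part~(i) is immediate: for $y\in O$ the reductive step yields $P_{\kappa_i(y)}(G')\subseteq P_{\lambda'}(G')$, and intersecting with $G$ gives $P_y\subseteq P_\lambda$ by Corollary~\ref{cor:par_intersect}.

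Part~(ii) will be the main obstacle, since $U_\lambda(k)$ need not preserve $V_{L_\lambda}(\KK)$ inside $V_G(\KK)$. The clean workaround exploits the ambient reductive $G'$ together with the injectivity of $\kappa_i$. Given $y=\phi_G(\mu)\in O$, part~(i) and Lemma~\ref{lem:recognise} place $\mu$ in $Y_{P_\lambda}(\KK)$, so Remark~\ref{rem:common_tor} supplies $u\in U_\lambda(k)$ and a maximal split torus $T^*$ of $G$ with $\lambda, u\cdot\mu\in Y_{T^*}(\KK)$; the containment $T^*\subseteq L_\lambda$ follows from $\lambda\in Y_{T^*}(\KK)$ (for any cocharacter approximation $\lambda_0\in Y_{T^*}$, $T^*\subseteq C_G(\IM(\lambda_0))=L_{\lambda_0}=L_\lambda$). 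Now $u\in U_\lambda(k)\subseteq U_{\lambda'}(G')(k)$, and in the reductive group $G'$ the parabolic $P_\mu(G')=P_{\kappa_i(y)}(G')$ is contained in $P_{\lambda'}(G')$, hence contains $R_u(P_{\lambda'}(G'))=U_{\lambda'}(G')$ by Borel--Tits. Therefore $u\in P_\mu(G')(k)$, which forces $\phi_{G'}(u\cdot\mu)=\phi_{G'}(\mu)$ and so $\kappa_i(\phi_G(u\cdot\mu))=\kappa_i(y)$. Injectivity of $\kappa_i$ (Proposition~\ref{prop:inclusion}) then yields $y=\phi_G(u\cdot\mu)\in V_{T^*}(\KK)\subseteq V_{L_\lambda}(\KK)$. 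Finally, since $O$ is $P_x(k)$-invariant (as $P_x(k)$ fixes $x$ and preserves the admissible metric), this derivation transfers to any other representative of $x$ by $P_x(k)$-conjugation, giving~(ii) uniformly in $\lambda$.
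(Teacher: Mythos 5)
Your proof is correct and follows essentially the same strategy as the paper's: first establish the reductive case of part~(i) by cutting out an open cone with finitely many root inequalities and propagating it off the apartment using the common apartment property of $G'$ and $G'(k)$-invariance of $d'$; then set $O=\kappa_i^{-1}(O')$ and use Corollary~\ref{cor:par_intersect} for part~(i); and for part~(ii) invoke Remark~\ref{rem:common_tor} to find $u\in U_\lambda(k)$ and a torus $T^*$, then exploit the reductive structure of $G'$ to conclude $y\in V_{T^*}(\KK)\subseteq V_{L_\lambda}(\KK)$. The one genuine reorganization is in part~(ii): you show $T^*\subseteq L_\lambda$ directly from $\lambda\in Y_{T^*}(\KK)$ and then establish $y=\phi_G(u\cdot\mu)$ via $u\in U_{\lambda'}(G')\subseteq P_{\widehat{\kappa}_i(\mu)}(G')$ together with injectivity of $\kappa_i$; the paper instead proves $L_{u\cdot\mu}\subseteq L_\lambda$ (placing $u\cdot y$ in $V_{L_\lambda}(\KK)$) and separately $U_\lambda\subseteq U_\mu$ (giving $u\cdot y=y$), pulling both back to $G$ via Corollary~\ref{cor:par_intersect}. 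Both arguments turn on the same Borel--Tits fact $U_{\lambda'}(G')\subseteq U_{\widehat{\kappa}_i(\mu)}(G')\subseteq P_{\widehat{\kappa}_i(\mu)}(G')$, so the difference is cosmetic; your version is arguably a shade cleaner. Two small remarks: the paper passes to $k_s$ in Example~\ref{ex:red_virtual_cochar} to describe $P_{\mu'}\subseteq P_{\lambda'}$ by absolute roots, whereas you use roots of $T'$ over $k$ --- this is fine (the relative-root characterization is equivalent), but worth flagging since the cited example is phrased over $k_s$; and your concluding $P_x(k)$-invariance observation to handle all representatives of $x$ is valid but unnecessary, since the argument you give already works verbatim for any $\lambda$ with $\phi_G(\lambda)=x$, as the paper's version makes explicit by choosing $\lambda$ arbitrarily at the start of~(ii).
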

\begin{proof}
 First suppose $G$ is reductive.  Recall (see Lemma \ref{lem:basechangelinear}, Definition \ref{def:nu_alpha}) that we have an embedding of $Y_G(\KK)$ in $Y_{G_{k_s}}(\KK)$ and of $V_G(\KK)$ in $V_{G_{k_s}}(\KK)$; we denote the image of $\mu\in Y_G(\KK)$ in $Y_{G_{k_s}}(\KK)$ by $\widetilde{\mu}$, and of $y\in V_G(\KK)$ in $V_{G_{k_s}}(\KK)$ by $\widetilde{y}$.  Choose a maximal split torus $T$ of $G$ such that $x\in V_T(\KK)$, and choose a maximal torus $\widetilde{T}$ of $G_{k_s}$ such that $T_{k_s}\subseteq \widetilde{T}$: then $\widetilde{x}\in V_{\widetilde{T}}(\KK)$.  Choose $\lambda\in Y_T(\KK)$ such that $x= \phi_G(\lambda)$, and recall that in this case we can determine the R-parabolic subgroup $P_{\widetilde{\lambda}}$ by considering the pairing of $\widetilde{\lambda}$ with roots of $G_{k_s}$ (see Example \ref{ex:red_virtual_cochar}).  
 Let $\alpha_1,\ldots, \alpha_s$ be the roots of $U_{\widetilde{\lambda}}$ with respect to $\widetilde{T}$.  The set $\{\mu\in Y_T(\KK)\mid \langle \widetilde{\mu}, \alpha_i\rangle> 0\ \mbox{for $1\leq i\leq s$}\}$ is open and contains $\lambda$, so its image $\widetilde{O}$ in $V_T(\KK)$ is an open neighbourhood of $x$.  Hence there exists $\epsilon> 0$ such that $y\in \widetilde{O}$ for all $y\in V_T(\KK)$ such that $d(x, y)< \epsilon$.
 
 Let $O$ be the open ball $B(x, \epsilon)$ in $V_G(\KK)$.  Let $y\in O$.  Since $G$ is reductive, the common apartment property holds, so there is a maximal split torus $T_0$ of $G$ such that $x, y\in V_{T_0}(\KK)$.  Now $T_0$ is a maximal split torus of $P_x$, so there exists $g\in P_x(k)$ such that $gT_0g^{-1}= T$.  We have $g\cdot x= x$, so $d(x, g\cdot y)= d(g\cdot x, g\cdot y)= d(x, y)< \epsilon$.  It follows from Example~\ref{ex:red_virtual_cochar} and by construction that $U_{g\cdot y}\supseteq U_x$, so $P_{g\cdot y}\subseteq P_x$
 (this conclusion holds because $G$ is reductive).  Hence $P_y= g^{-1}P_{g\cdot y}g\subseteq g^{-1}P_xg= P_{g^{-1}\cdot x}= P_x$ by Example~\ref{ex:parconj}.  This proves part (i) when $G$ is reductive.
 
 Now let $G$ be arbitrary and let $x\in V_G(\KK)$.  There exist an embedding $i$ of $G$ in a reductive $k$-group $G'$ and an admissible metric $d'$ on $V_{G'}(\KK)$ such that $d= i^*(d')$.  By the reductive case, there is a $d'$-open neighbourhood $O'$ of $\kappa_i(x)$ such that $P_w(G')\subseteq P_{\kappa_i(x)}(G')$ for all $w\in O'$.  By construction, $O:= \kappa_i^{-1}(O')$ is a $d$-open neighbourhood of $x$.  Let $y\in O$.  Then $P_{\kappa_i(y)}(G')\subseteq P_{\kappa_i(x)}(G')$, so $P_y\subseteq P_x$  by Corollary~\ref{cor:par_intersect}.  This finishes the proof of (i).
 
 To prove (ii), let $y\in O$.  Choose $\lambda, \mu\in Y_G(\KK)$ such that $\phi_G(\lambda)= x$ and $\phi_G(\mu)= y$.  Since $P_y\subseteq P_x$, $\mu$ belongs to $Y_{P_x}(\KK)$, so by Remark~\ref{rem:common_tor} there exist a maximal split torus $T$ of $P_x$ and $u\in U_\lambda(k)$ such that $\lambda, u\cdot \mu\in Y_T(\KK)$.
 Since $P_{\kappa_i(u\cdot y)}(G')= P_{\widehat{\kappa}_i(u\cdot \mu)}(G')$ is contained in $P_{\kappa_i(x)}(G')= P_{\widehat{\kappa}_i(\lambda)}(G')$, it follows from the description of R-parabolic subgroups and R-Levi subgroups of reductive groups given in Example~\ref{ex:red_virtual_cochar} that $L_{\widehat{\kappa}_i(u\cdot \mu)}(G')$ is contained in $L_{\widehat{\kappa}_i(\lambda)}(G')$.  We deduce from Corollary~\ref{cor:par_intersect} that $L_{u\cdot \mu}$ is contained in $L_\lambda$, so $u\cdot y\in V_{L_{u\cdot \mu}}(\KK)\subseteq V_{L_\lambda}(\KK)$.  Now $U_{\widehat{\kappa}_i(\lambda)}(G')\subseteq U_{\widehat{\kappa}_i(\mu)}(G')$ because $P_{\widehat{\kappa}_i(\mu)}(G')\subseteq P_{\widehat{\kappa}_i(\lambda)}(G')$ and $G'$ is reductive.  It follows that $U_\lambda= U_{\widehat{\kappa}_i(\lambda)}(G')\cap G\subseteq U_{\widehat{\kappa}_i(\mu)}(G')\cap G= U_\mu$
 by Corollary~\ref{cor:par_intersect}, so $u\in U_\mu(k)$, so $u\cdot y= y$.  Hence $y\in V_{L_\lambda}(\KK)$, and we are done.
\end{proof}

\begin{lem}
\label{lem:local}
 Let $d$ be an admissible metric on $V_G(\KK)$.  Let $x\in V_G(\KK)$.  Then there is a $d$-open neighbourhood $O$ of $x$ such that for all $y\in O$, $x$ and $y$ lie in a common apartment.
\end{lem}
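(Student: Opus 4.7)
The plan is to deduce this lemma essentially as a corollary of Lemma~\ref{lem:1/2_cts} combined with the common-apartment criterion of Lemma~\ref{lem:common_apt}. All of the technical work has already been packaged into Lemma~\ref{lem:1/2_cts}, so only a short argument should be required.

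Concretely, I would take $O$ to be the $d$-open neighbourhood of $x$ produced by Lemma~\ref{lem:1/2_cts}. For any $y \in O$, part (i) of that lemma gives $P_y \subseteq P_x$, and hence $P_x \cap P_y = P_y$. Since $P_y$ is an R-parabolic subgroup of $G$, it contains a maximal split torus of $G$ by the general observation recorded after Lemma~\ref{lem:big_cell} (every R-parabolic of $G$ contains a maximal split torus of $G$). Thus $P_x \cap P_y$ contains a maximal split torus of $G$, and Lemma~\ref{lem:common_apt} then immediately furnishes a maximal split torus $T$ of $G$ with $x, y \in V_T(\KK)$, which is the required common apartment.

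As a sanity check, one could equally well exploit part (ii) of Lemma~\ref{lem:1/2_cts}: fixing $\lambda \in Y_G(\KK)$ with $\phi_G(\lambda) = x$, we have $y \in V_{L_\lambda}(\KK)$, so there is a maximal split torus $T$ of $L_\lambda$ with $y \in V_T(\KK)$. Since $L_\lambda$ contains a maximal split torus of $G$ and maximal split tori of $L_\lambda$ are all $L_\lambda(k)$-conjugate, $T$ is a maximal split torus of $G$; Remark~\ref{rem:xinVTK} then places $x$ in $V_T(\KK)$ as well. This route makes the common apartment explicit.

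There is no real obstacle: all the substance has been absorbed into the proof of Lemma~\ref{lem:1/2_cts}, and the present lemma is a direct bookkeeping consequence.
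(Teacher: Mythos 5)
Your proof is correct, and it is genuinely simpler than the paper's. The paper proceeds by induction on $\dim(G)$, using only part~(ii) of Lemma~\ref{lem:1/2_cts} to reduce to $V_{L_\lambda}(\KK)$, then invoking the induction hypothesis on the proper subgroup $L_\lambda$ (proper by Lemma~\ref{lem:lives_in_centre}(ii), after first disposing of the case $x\in V_{Z(G)^0}(\KK)$). You bypass the induction entirely: part~(i) of Lemma~\ref{lem:1/2_cts} gives $P_y\subseteq P_x$, hence $P_x\cap P_y = P_y$, which, being an R-parabolic subgroup of $G$, contains a maximal split torus of $G$; Lemma~\ref{lem:common_apt} then immediately produces the common apartment. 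Your alternative argument via part~(ii) and Remark~\ref{rem:xinVTK} is also valid and likewise avoids induction, since $y\in V_{L_\lambda}(\KK)$ forces $y$ into an apartment $V_T(\KK)$ for a maximal split torus $T$ of $L_\lambda$, and $x$ lies in every such $V_T(\KK)$. What the paper's route buys is independence from part~(i) of Lemma~\ref{lem:1/2_cts}; what yours buys is brevity and a sharper picture of why the lemma is essentially a restatement of the common-apartment criterion in the presence of nested R-parabolics. One trivial slip: the observation that every R-parabolic subgroup of $G$ contains a maximal split torus is recorded \emph{before} Lemma~\ref{lem:big_cell} in the text, not after.
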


\begin{proof}
 We use induction on $\dim(G)$.  The statement is vacuous if $\dim(G)= 0$.  If $x\in V_{Z(G)^0}(\KK)$ then every $y\in V_G(\KK)$ lies in a common apartment with $x$, so $O= V_G(\KK)$ will do.  Suppose $x\not\in V_{Z(G)^0}(\KK)$.  Pick $\lambda\in Y_G(\KK)$ such that $\phi_G(\lambda)= x$.  We identify $V_{L_\lambda}(\KK)$ with its image in $V_G(\KK)$;
the restriction $d_1$ of $d$ to $V_{L_\lambda}(\KK)$ is an admissible metric.  By Lemma~\ref{lem:1/2_cts}, there is an open neighbourhood $O_1$ of $x$ such that $O_1\subseteq V_{L_\lambda}(\KK)$.  Since $\dim(L_\lambda)< \dim(G)$ by Lemma~\ref{lem:lives_in_centre}(ii), our induction hypothesis implies that there is an open neighbourhood $O_2$ of $x$ in $V_{L_\lambda}(\KK)$ such that for all $y\in O_2$, $x$ and $y$ lie in a common apartment of $V_{L_\lambda}(\KK)$.  But an apartment of $V_{L_\lambda}(\KK)$ is also an apartment of $V_G(\KK)$, so we can take $O= O_1\cap O_2$.
\end{proof}

\begin{lem}
\label{lem:cty}
 Let $T, T'$ be maximal split tori of $G$.  Let $(\lambda_n)$ and $(\lambda'_n)$ be sequences in $Y_T(\KK)$ and $Y_{T'}(\KK)$, respectively, and let $\lambda\in Y_T(\KK)$, $\lambda'\in Y_{T'}(\KK)$ such that $\lambda_n\to \lambda$ and $\lambda'_n\to \lambda'$.  Suppose $(T, \lambda_n)\approx (T', \lambda'_n)$ for all $n\in \NN$.  Then $(T, \lambda)\approx (T', \lambda')$.
\end{lem}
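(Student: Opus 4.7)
The plan is to exploit the fact that $\approx$ is encoded by elements $g_n \in P_{\lambda_n}(k)$ that conjugate $T$ to $T'$ and send $\lambda_n$ to $\lambda'_n$, and then to use the finiteness of the relative Weyl group to ``straighten out'' these $g_n$ into a single element $g$ whose action recovers the limit data.

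First I would pick, for each $n$, some $g_n \in P_{\lambda_n}(k)$ with $g_n T g_n^{-1} = T'$ and $g_n \cdot \lambda_n = \lambda'_n$. Fixing $g_0 := g_1$, every $g_n$ can be written $g_n = g_0 h_n$ with $h_n \in N_G(T)(k)$. Since the relative Weyl group $W_k = N_G(T)(k)/C_G(T)(k)$ is finite, I can pass to a subsequence on which all $h_n$ have a common image in $W_k$; writing $h_n = h_0 c_n$ with $c_n \in C_G(T)(k)$ and setting $g := g_0 h_0$, I get $g_n = g c_n$ with $c_n$ centralising $T$. In particular $gTg^{-1} = T'$, and since $c_n$ fixes every element of $Y_T(\KK)$, we have $\lambda'_n = g_n \cdot \lambda_n = g \cdot \lambda_n$ for all $n$ in the subsequence.

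Next I would take $n \to \infty$ on the subsequence. The map $\lambda\mapsto g\cdot\lambda$ is a $\KK$-linear isomorphism $Y_T(\KK) \to Y_{T'}(\KK)$, hence continuous, so $g\cdot \lambda_n \to g\cdot \lambda$; combined with $\lambda'_n \to \lambda'$ this gives $\lambda' = g\cdot \lambda$. It remains to show $g \in P_\lambda(k)$. Because $c_n \in C_G(T)(k)$, and because for each $\lambda_n \in Y_T(\KK)$ any cocharacter approximation $\lambda_n'\in Y_T$ satisfies $C_G(T) \subseteq C_G(\IM(\lambda_n')) = L_{\lambda_n}$ (so in particular $C_G(T) \subseteq P_{\lambda_n}$), we have $g = g_n c_n^{-1} \in P_{\lambda_n}(k)$ for every $n$ in the subsequence.

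The last step is to promote $g \in \bigcap_n P_{\lambda_n}(k)$ to $g \in P_\lambda(k)$, and this is where I would lean on the weight-space description from Remark~\ref{rem:weightspaces} and Remark~\ref{rem:L_robust}(iii): fix a $G$-equivariant embedding $\rho\colon G\hookrightarrow V$, so that for any $\mu\in Y_T(\KK)$ and any $h\in G(k)$, $h\in P_\mu(k)$ iff $\langle \mu,\chi\rangle \geq 0$ for all $\chi\in \supp(\rho(h))$. Applied to $h = g$ and $\mu = \lambda_n$, the hypothesis $g\in P_{\lambda_n}(k)$ gives $\langle \lambda_n,\chi\rangle \geq 0$ for every $\chi$ in the (finite) set $\supp(\rho(g))$; passing to the limit in $\KK$ on each such $\chi$ yields $\langle \lambda,\chi\rangle \geq 0$, hence $g\in P_\lambda(k)$. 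Thus $(T,\lambda)\approx (T',\lambda')$. The main obstacle is the middle step: without the Weyl-group finiteness one cannot extract a limit of the $g_n$ (the $C_G(T)(k)$-coset is non-compact), so the reduction to a single $g = g_0 h_0$ via a subsequence argument is the crucial manoeuvre, after which continuity of the linear action and the weight-space closedness of $P_\lambda$ handle the rest.
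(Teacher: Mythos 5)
Your proof is correct, and it shares the first half of its structure with the paper's: both arguments pick $g_n\in P_{\lambda_n}(k)$ with $g_nTg_n^{-1}=T'$, $g_n\cdot\lambda_n=\lambda'_n$, and then exploit the finiteness of the relative Weyl group to pass to a subsequence on which the induced linear map $Y_T(\KK)\to Y_{T'}(\KK)$ is constant. (The paper phrases this as ``the set $\{h_n\}$ of linear maps is finite''; you decompose $g_n = g\,c_n$ with $c_n\in C_G(T)(k)$, which is the same observation presented group-theoretically. A cosmetic point: the paper defines $W_k := (N_G(T)/C_G(T))(k)$, not $N_G(T)(k)/C_G(T)(k)$; the latter injects into the former, so your finiteness claim still holds, but the identification is not an equality in general.)

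Where you genuinely diverge from the paper is in the final step. The paper invokes Lemma~\ref{lem:1/2_cts} to conclude that $P_{\lambda_n}\subseteq P_\lambda$ for $n$ large, and hence $g_n\in P_\lambda(k)$; this makes the argument depend on the semicontinuity machinery developed around admissible metrics. You instead show directly that the single element $g$ lies in $P_{\lambda_n}(k)$ for \emph{every} $n$ in the subsequence (using $C_G(T)\subseteq L_{\lambda_n}$), and then pass to the limit via the weight-space description $h\in P_\mu(k)\iff \langle\mu,\chi\rangle\geq 0$ for all $\chi\in\supp(\rho(h))$, which is closed under limits because each $\langle-,\chi\rangle$ is linear hence continuous and $\supp(\rho(g))$ is finite. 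This is more elementary and self-contained: it avoids Lemma~\ref{lem:1/2_cts} entirely and makes the proof of Lemma~\ref{lem:cty} independent of the choice of admissible metric, at the cost of re-opening the embedding $\rho\colon G\hookrightarrow V$ that was already folded away into the definition of $P_\lambda$. Both routes are sound; yours trades one reference to an earlier lemma for a short direct computation.
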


\begin{proof}
 For each $n$, there exists $g_n\in P_{\lambda_n}(k)$ such that $g_nTg_n^{-1}= T'$ and $g_n\cdot \lambda_n= \lambda'_n$.  The conjugation map $\Inn_{g_n}$ gives rise to a linear map $h_n\colon V_T(\KK)\to V_{T'}(\KK)$.  Since $W_k$ is finite, the set $\{h_n\mid n \in \NN\}$ is finite.  By passing to subsequences, we can assume that the $h_n$ are all equal --- say, to $h$.  So $h(\lambda)= \lambda'$ by the continuity of $h$.  We have $h(\lambda)= g_n\cdot \lambda$ for each $n\in \NN$.  Lemma~\ref{lem:1/2_cts} implies that $g_n\in P_\lambda(k)$ for large $n$.  The result follows.
\end{proof}

\begin{lem}
\label{lem:linmapextn}
 Let $\kappa$ be a linear map from $V_G(\QQ)$ to $V_H(\QQ)$.  Then there is a unique extension of $\kappa$ to a linear map $\ovl{\kappa}:V_G(\RR)\to V_H(\RR)$.
\end{lem}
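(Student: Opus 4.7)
The plan is to build $\ovl{\kappa}$ apartment by apartment, exploiting the density of $V_T(\QQ)$ inside $V_T(\RR)$. For each maximal split torus $T$ of $G$, I will choose an apartment $V_{S_T}(\QQ)$ of $V_H(\QQ)$ containing $\kappa(V_T(\QQ))$, and extend the $\QQ$-linear restriction $\kappa|_{V_T(\QQ)}$ uniquely to an $\RR$-linear map $\ovl{\kappa}_T \colon V_T(\RR) \to V_{S_T}(\RR)$. I then attempt to define $\ovl{\kappa}(x) := \ovl{\kappa}_T(x)$ for any $T$ with $x \in V_T(\RR)$. Uniqueness is immediate: any linear extension of $\kappa$ restricts on each apartment to an $\RR$-linear map extending $\kappa|_{V_T(\QQ)}$, and such extensions are unique because $V_T(\QQ)$ contains a $\QQ$-basis of $V_T(\RR)$. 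Existence of the resulting $\ovl{\kappa}$ as a linear map of vector edifices is then automatic from the construction, provided I can verify well-definedness.

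The main obstacle is the well-definedness check on the overlap $V_{T_1}(\RR) \cap V_{T_2}(\RR)$, where the targets $V_{S_{T_1}}(\RR)$ and $V_{S_{T_2}}(\RR)$ are in principle distinct apartments of $V_H(\RR)$. My strategy is to produce a rational sequence in the overlap, pass it through $\kappa$, and compare the limits of its images in the two candidate target apartments. Given $x$ in the intersection, I write $x = \phi_G(\lambda_1) = \phi_G(\lambda_2)$ with $\lambda_i \in Y_{T_i}(\RR)$, so there exists $g \in P_{\lambda_1}(k)$ such that $gT_1g^{-1} = T_2$ and $g\cdot\lambda_1 = \lambda_2$.

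The key trick is to approximate $\lambda_1$ by a rational approximating sequence $\lambda_{1,n} \in Y_{T_1}(\QQ)$ in the sense of equation~\eqref{eq:chi}. Because each $\lambda_{1,n}$ is a rational cocharacter approximation, we have $P_{\lambda_{1,n}} = P_{\lambda_1}$, so $g \in P_{\lambda_{1,n}}(k)$ for every $n$. Setting $\lambda_{2,n} := g \cdot \lambda_{1,n} \in Y_{T_2}(\QQ)$ therefore gives $(T_1, \lambda_{1,n}) \approx (T_2, \lambda_{2,n})$, so the points $x_n := \phi_G(\lambda_{1,n}) = \phi_G(\lambda_{2,n}) \in V_G(\QQ)$ define a single sequence in the overlap. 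Applying $\kappa$ produces $\kappa(x_n) \in V_{S_{T_1}}(\QQ) \cap V_{S_{T_2}}(\QQ)$, converging to $\ovl{\kappa}_{T_1}(x)$ when viewed in $V_{S_{T_1}}(\RR)$ and to $\ovl{\kappa}_{T_2}(x)$ when viewed in $V_{S_{T_2}}(\RR)$. Lifting these convergences through the apartment bijections $\phi_H$ yields sequences $\mu_n \to \mu \in Y_{S_{T_1}}(\RR)$ and $\mu'_n \to \mu' \in Y_{S_{T_2}}(\RR)$ with $(S_{T_1}, \mu_n) \approx (S_{T_2}, \mu'_n)$; Lemma~\ref{lem:cty} then yields $(S_{T_1}, \mu) \approx (S_{T_2}, \mu')$, forcing $\ovl{\kappa}_{T_1}(x) = \phi_H(\mu) = \phi_H(\mu') = \ovl{\kappa}_{T_2}(x)$ in $V_H(\RR)$. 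This completes the verification and hence the construction of $\ovl{\kappa}$.
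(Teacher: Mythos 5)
Your proposal is correct and follows essentially the same route as the paper's proof: extend $\RR$-linearly on each apartment, verify well-definedness by picking a conjugator $g$ across the two representatives of $x$, taking a rational approximating sequence (whose terms all give the same R-parabolic, hence the same $g$ works for every term), pushing through $\kappa$, and invoking Lemma~\ref{lem:cty} to identify the two limits. The only cosmetic difference is that the paper works through the explicit lift $\widehat{\kappa}$ from Lemma~\ref{lem:lin_map_lift} rather than informally lifting through $\phi_H$; the underlying argument is identical.
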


\begin{proof}
 We can extend $\kappa$ uniquely on each apartment in the obvious way.  We just need to check that these maps paste together to give a well-defined map $\ovl{\kappa}$ from $V_G(\KK)$ to $V_H(\KK)$ (it is clear that the map is linear if it is well-defined, and uniqueness is also clear).  Recall from Lemma~\ref{lem:lin_map_lift} that $\kappa$ lifts to a map $\widehat{\kappa}\colon Y_G(\QQ)\to Y_H(\QQ)$.  Let $T,T'$ be maximal split tori of $G$ and choose maximal split tori $S$ and $S'$ of $H$ such that $\widehat{\kappa}$ maps $Y_T(\QQ)$ into $Y_S(\QQ)$ and $Y_{T'}(\QQ)$ into $Y_{S'}(\QQ)$.  Let $\widehat{\kappa}_{T,S}\colon V_T(\RR)\to V_S(\RR)$ (resp., $\widehat{\kappa}_{T',S'}\colon V_{T'}(\RR)\to V_{S'}(\RR)$) be the extension of $\widehat{\kappa}$.  Pick $\lambda\in Y_T(\RR)$ and $\lambda'\in Y_{T'}(\RR)$ such that $(\lambda, T)\approx (\lambda', T')$.  It is enough to show that $(S, \widehat{\kappa}_{T,S}(\lambda))\approx (S', \widehat{\kappa}_{T',S'}(\lambda'))$.
 
 Pick a rational approximating sequence $(\lambda_n)$ to $\lambda$ in $Y_T(\RR)$.  By hypothesis, there exists $g\in P_\lambda(k)$ such that $g\cdot \lambda= \lambda'$ and $gTg^{-1}= T'$.  Set $\lambda_n'= g\cdot \lambda_n$ for each $n\in \NN$.  Then $\lambda_n'\to \lambda'$, since conjugation by $g$ induces an isomorphism from $V_T(\RR)$ to $V_{T'}(\RR)$.  Further, $g\in P_\lambda(k)= P_{\lambda_n}(k)$, so $(T', \lambda_n')\approx (T, \lambda_n)$ and $\phi_G(\lambda_n')= \phi_G(\lambda_n)$ for each $n\in \NN$.  It follows that $\kappa(\phi_G(\lambda_n'))= \kappa(\phi_G(\lambda_n))$ and hence $(S', \widehat{\kappa}(\lambda_n'))\approx (S, \widehat{\kappa}(\lambda_n))$ for each $n\in \NN$.  But $\widehat{\kappa}(\lambda_n')\to \widehat{\kappa}_{T',S'}(\lambda')$ and $\widehat{\kappa}(\lambda_n)\to \widehat{\kappa}_{T,S}(\lambda)$ as $\widehat{\kappa}_{T',S'}$ and $\widehat{\kappa}_{T,S}$ are isomorphisms.  It follows from Lemma~\ref{lem:cty} that $(S', \widehat{\kappa}_{T',S'}(\lambda'))\approx (S, \widehat{\kappa}_{T,S}(\lambda))$, as required.
\end{proof}

\begin{defn}
\label{defn:strongly_type-preserving}
 Let $G$ and $G'$ be connected $k$-groups and let $\kappa\colon V_G(\KK)\to V_{G'}(\KK)$ be a linear map.  We say that $\kappa$ is \emph{strongly type-preserving} if for all $g\in G(k)$, there exists $g'\in G'(k)$ such that $\kappa(g\cdot x)= g'\cdot \kappa(x)$ for all $x\in V_G(\KK)$.
\end{defn}

\begin{rems}
\label{rems:strongly_type-preserving}
 (i). Let $G$ and $G'$ be connected $k$-groups and let $d'$ be an admissible metric on $V_{G'}(\KK)$.  Let $\kappa\colon V_G(\KK)\to V_{G'}(\KK)$ be a strongly type-preserving linear map.  Because $d'$ is $G'(k)$-invariant, we have
 $$ d'(\kappa(g\cdot x), \kappa(g\cdot y))= d'(\kappa(x), \kappa(y)) $$
for all $x,y\in V_G(\KK)$.

 (ii). It follows from Remark~\ref{rem:type-preserving} that if $f\colon G\to G'$ is a homomorphism of connected $k$-groups then $\kappa_f$ is strongly type-preserving.  Now let $G$ be a connected $k$-group and let $\alpha\colon k\to k'$ be a field homomorphism.  It follows from Lemma~\ref{lem:lin_map_fieldext} (applied to the homomorphism $f= \Inn_g$ for $g\in G(k)$) that $\nu_\alpha$ is a strongly type-preserving linear map; note that $^\alpha(\Inn_g)= \Inn_{^\alpha g}$, where we regard $g\in G(k)$ as a map ${\rm Spec}(k)\to G$ and obtain $^\alpha g\in\,\!^\alpha G(k')$ by base extension.
\end{rems}

We can now prove:

\begin{prop}
\label{prop:bdd_linear}
Let $G$ be a connected $k$-group and let $G'$ be a connected $k'$-group, where $k'$ is a field.  Let $\kappa\colon V_{G}(\KK)\to V_{G'}(\KK)$ be a strongly type-preserving linear map.  
Let $d$, $d'$ be admissible metrics on $V_{G}(\KK)$, $V_{G'}(\KK)$, respectively.  
Then there exists $C> 0$ such that for all $x_1,x_2\in V_{G}(\KK)$, we have $d'(\kappa(x_1), \kappa(x_2))\leq C \cdot d(x_1,x_2)$. 
\end{prop}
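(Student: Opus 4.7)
The plan is to proceed by induction on $\dim G$; the base case $\dim G=0$ is trivial since $V_G(\KK)=\{0\}$.  For the inductive step, I first show that $\kappa$ has a uniform Lipschitz constant on each apartment.  Fix a maximal split torus $T$ of $G$; the restriction $\kappa|_{V_T(\KK)}$ is a linear map into some apartment $V_S(\KK)$ of $V_{G'}(\KK)$, and since $d|_{V_T(\KK)}$ and $d'|_{V_S(\KK)}$ come from positive-definite bilinear forms (Remark~\ref{rem:admprops}(vi)), finite-dimensional linear algebra gives a Lipschitz constant $C_0$ for $\kappa|_{V_T(\KK)}$.  By strong type-preservation and the $G(k)$- and $G'(k)$-invariance of the metrics, the same $C_0$ serves on every apartment of $V_G(\KK)$; in particular $\|\kappa(x)\|_{d'}\le C_0\|x\|_d$ for all $x\in V_G(\KK)$.

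Suppose for contradiction that $\kappa$ is not Lipschitz: there exist sequences $(x_n),(y_n)$ in $V_G(\KK)$ with $r_n:=d'(\kappa(x_n),\kappa(y_n))/d(x_n,y_n)\to\infty$.  The ratio $r_n$ is invariant under positive scaling of both arguments (linearity of $\kappa$ on apartments together with Remark~\ref{rem:admprops}(v)) and under the diagonal $G(k)$-action (strong type-preservation plus metric invariance).  Rescale so that $\max(\|x_n\|_d,\|y_n\|_d)=1$ and, swapping $x_n\leftrightarrow y_n$ along a subsequence if necessary, arrange $\|x_n\|_d=1$.  Conjugating by $G(k)$, put $x_n\in V_T(\KK)$.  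The apartment bound yields $d'(\kappa(x_n),\kappa(y_n))\le 2C_0$, so $r_n\to\infty$ forces $d(x_n,y_n)\to 0$.  The unit sphere in the finite-dimensional space $V_T(\KK)$ is compact, so after a further subsequence $x_n\to x_\infty$ with $\|x_\infty\|_d=1$, and then $y_n\to x_\infty$ as well.

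Lift $x_\infty$ to $\lambda_\infty\in Y_G(\KK)$.  By Lemma~\ref{lem:1/2_cts}(ii) some neighbourhood of $x_\infty$ is contained in $V_{L_{\lambda_\infty}}(\KK)$, so $x_n,y_n\in V_{L_{\lambda_\infty}}(\KK)$ for large $n$.  In Case A ($L_{\lambda_\infty}\ne G$) we have $\dim L_{\lambda_\infty}<\dim G$, and $\kappa|_{V_{L_{\lambda_\infty}}(\KK)}$ is a strongly type-preserving linear map with respect to the admissible restricted metric (Remark~\ref{rem:admprops}(iii)); the inductive hypothesis makes it Lipschitz, contradicting $r_n\to\infty$.

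The hard part is Case B, where $L_{\lambda_\infty}=G$; then Lemma~\ref{lem:lives_in_centre}(ii) forces $x_\infty\in V_Z(\KK)$ with $Z:=Z(G)^0\ne 1$.  Here I would invoke the central decomposition $\beta_Z\colon V_G(\KK)\to V_{G/Z}(\KK)\times V_Z(\KK)$, which is bi-Lipschitz from $d$ to the product metric (Lemma~\ref{lem:Z_qisom}), and pass to $\tilde\kappa:=\kappa\circ\beta_Z^{-1}$.  One checks that $\tilde\kappa$ is strongly type-preserving, and that the restrictions $\kappa_H(h):=\tilde\kappa(h,0)$ and $\kappa_Z(z):=\tilde\kappa(0,z)$ are strongly type-preserving linear maps on $V_{G/Z}(\KK)$ and the vector space $V_Z(\KK)$ respectively, with $\tilde\kappa(h,z)=\kappa_H(h)+\kappa_Z(z)$ inside any chosen apartment.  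Since $\dim(G/Z)<\dim G$, the inductive hypothesis produces a Lipschitz constant $C_H$ for $\kappa_H$, while $\kappa_Z$ is Lipschitz directly as a linear map on a vector space with image in a single apartment of $V_{G'}(\KK)$.  The delicate remaining step is to assemble these into a bound of the form $d'(\tilde\kappa(h_1,z_1),\tilde\kappa(h_2,z_2))\le C_H\,d_{G/Z}(h_1,h_2)+C_Z\,d_Z(z_1,z_2)$: the additive decomposition of $\tilde\kappa$ is apartment-local, and the $V_{G/Z}(\KK)$-components of $x_n$ and $y_n$ need not share an apartment.  I plan to resolve this by using that $d'=j^*(d''')$ for some embedding $j\colon G'\to G'''$ with $G'''$ reductive, so that $V_{G'''}(\KK)$ has the common apartment property, and carrying out an intermediate-point triangle argument inside a common apartment of $V_{G'''}(\KK)$; combined with the bi-Lipschitz constant of $\beta_Z$, this produces a uniform Lipschitz bound for $\kappa$, contradicting $r_n\to\infty$ and completing the induction.
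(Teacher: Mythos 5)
Your proposal takes essentially the same route as the paper: uniform apartment-wise Lipschitz constant via strong type-preservation, induction on $\dim G$, a contradiction argument with sequences rescaled to unit norm and conjugated into a fixed apartment, extraction of a common limit $x_\infty$, and descent to a proper R-Levi subgroup using Lemma~\ref{lem:1/2_cts}(ii). Two things need repair.

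You do not reduce from $\KK=\QQ$ to $\KK=\RR$ before running the sequence argument. Your step ``the unit sphere in the finite-dimensional space $V_T(\KK)$ is compact'' is false for $\KK=\QQ$, and the extraction of a convergent subsequence would fail. The paper handles this at the outset by noting that the $\QQ$-case follows from the $\RR$-case via Lemma~\ref{lem:linmapextn} and Remark~\ref{rem:admprops}(iv); you should do the same.

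Your split into Case A ($L_{\lambda_\infty}\ne G$) versus Case B ($x_\infty$ central) puts the central-torus reduction in the wrong place. Case B is not a property of the particular sequences but of $G$ alone (namely $Y_{Z(G)^0}\ne 0$), and, as you yourself observe, carrying Case B through would yield a \emph{global} Lipschitz bound, making the preceding sequence construction superfluous. The paper therefore dispatches the central torus once, at the start of the inductive step: if $G$ has a non-trivial central split torus $Z$, apply $\beta_Z$ (Lemma~\ref{lem:Z_qisom}) and the induction hypothesis to the factors $V_{G_1}(\KK)$ and $V_Z(\KK)$; if not, Lemma~\ref{lem:lives_in_centre}(ii) guarantees $L_\lambda\ne G$ for every nonzero $\lambda$, so the sequence argument always lands in your Case A and closes cleanly. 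Reorganising this way removes the entanglement between the assembly bound you flag as ``delicate'' and the sequence argument. That assembly (combining the bounds for the two factors with the uniform apartment-wise constant via a triangle inequality through an intermediate point such as $\tilde\kappa(h_1,z_2)$) is the step you still need to write out in detail; the route you sketch via an embedding into a reductive $G'''$ is not developed far enough as it stands.
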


\begin{proof}
We may assume that $\KK=\RR$, for if the result holds in that case then it quickly follows for $\KK=\QQ$ by Lemma~\ref{lem:linmapextn} and Remark~\ref{rem:admprops}(iv).
Let $T$ be any maximal split torus of $G$ and let $x_1, x_2\in V_{T}(\KK)$.  
Then $\kappa(V_{T}(\KK))$ is contained in $V_{T'}(\KK)$ for some maximal split torus $T'$ of $G'$.  
The restriction of $d$ (resp., $d'$) to $V_{T}(\KK)$ (resp., $V_{T'}(\KK)$) is a metric arising from a symmetric positive-definite bilinear form, so there exists $D>0$ depending on $d,d'$ but independent of $x_1,x_2$ such that $d'(\kappa(x_1), \kappa(x_2))\leq D\cdot d(x_1,x_2)$.
Note that $D$ is
independent of the choice of $T$: for if $\widetilde{T}= gTg^{-1}$ is another maximal split torus of $G$ for some $g\in G(k)$ then, taking $g'$ as in Definition~\ref{defn:strongly_type-preserving}, we have
$$ d'(\kappa(g\cdot x_1), \kappa(g\cdot x_2))= d'(\kappa(x_1), \kappa(x_2))\leq D \cdot d(x_1, x_2)= D \cdot d(g\cdot x_1, g\cdot x_2) $$
for all $x_1, x_2\in V_T(\KK)$ by Remark~\ref{rems:strongly_type-preserving}(i).  Hence we are done when $x_1$ and $x_2$ belong to a common apartment of $V_{G}(\KK)$; in particular, this is the case when one of $x_1, x_2$ belongs to $V_{Z(G)^0}(\KK)$.  
Taking $x_1= x$ and $x_2= 0$ yields 
\begin{equation}
\label{eqn:bdd_norm}
 \|\kappa(x)\|\leq D\|x\|
\end{equation}
for all $x\in V_{G}(\KK)$.

Now we use induction on $\dim(G)$.  The result is trivial if $\dim(G)= 0$ or $G$ is itself a split torus.  
Suppose that $G$ is not a split torus and that $G$ contains a non-trivial central split torus $Z$; let $G_1 = G/Z$ as above. 
Then $\dim(G_1), \dim(Z)< \dim(G)$.  Let $d_1$ be as in Section~\ref{subsec:cent_tor} and let $d_Z$ be the restriction of $d$ to $V_Z(\KK)$.
By Lemma~\ref{lem:Z_qisom} we have a linear map $\kappa_1:= \kappa\circ \beta_Z^{-1}$ from $V_{G_1\times Z}(\KK)$ to $V_{G'}(\KK)$.  By Lemma~\ref{lem:Z_qisom},
we can replace $V_G(\KK)$ and $\kappa$ with $V_{G_1\times Z}(\KK)$ and $\kappa_1$.  But the result follows in this case by applying the induction hypothesis to $(V_{G_1}(\KK), d_1)$ and $(V_Z(\KK),d_Z)$ and the linear maps $V_{G_1}(\KK)\stackrel{\kappa_{j_1}}{\to} V_{G_1\times Z}(\KK)\stackrel{\kappa_1}{\to} V_{G'}(\KK)$ and $V_Z(\KK)\stackrel{\kappa_{j_Z}}{\to} V_{G_1\times Z}(\KK)\stackrel{\kappa_1}{\to} V_{G'}(\KK)$, where $j_1$ and $j_Z$ are the inclusions of $G_1$ and $Z$ in $G_1\times Z$, respectively.

So we may assume that $G$ has no non-central split torus and hence, by Lemma~\ref{lem:lives_in_centre}(ii), that $L_\lambda$ is a 
proper subgroup of $G$ for every nonzero $\lambda\in Y_G(\KK)$.
Suppose for a contradiction that no constant $C$ as in the statement of the lemma exists.  
Then there are sequences $(x_n)$ and $(y_n)$ in $V_{G}(\KK)$ such that $d'(\kappa(x_n),\kappa(y_n))> n \cdot d(x_n,y_n)$ for all $n\in \NN$.
We claim that by scaling and then successively replacing $(x_n)$ and $(y_n)$ with suitable subsequences and conjugates, we may 
assume that $(x_n)$ and $(y_n)$ are bounded sequences converging to a common limit $x$. 
First, we may assume (interchanging $x_n$ and $y_n$ at any given step if necessary), that $\|x_n\|\geq \|y_n\|$ for all $n\in\NN$.
By scaling, using Remark \ref{rem:admprops}(v),
we can further assume that $\|x_n\|=1$ and $\|y_n\|\leq 1$ for all $n\in \NN$.  
Now fix a maximal split torus $T$ of $G$.  
For each $n$, after conjugating both $x_n$ and $y_n$ by an appropriate element of $G(k)$, we can assume each $x_n$ belongs to $V_{T}(\KK)$; by Remark~\ref{rems:strongly_type-preserving}(i) we can do this without changing the value of $d'(\kappa(x_n), \kappa(y_n))$.  Now $(x_n)$ is a bounded sequence in the $d$-complete space $V_T(\KK)$, and hence some subsequence of $(x_n)$ $d$-converges to an element $x\in V_{T}(\KK)$ (recall that $\KK= \RR$);
replace $(x_n)$ by this subsequence and $(y_n)$ by the corresponding subsequence,
and note that $x\neq 0$ since $\|x_n\| = 1$ for all $n\in \NN$. 
Now $(\|\kappa(x_n)\|)$ and $(\|\kappa(y_n)\|)$ are bounded by \eqref{eqn:bdd_norm}, so $(d'(\kappa(x_n), \kappa(y_n)))$ is bounded.  
Since we have $d'(\kappa(x_n),\kappa(y_n))> n \cdot d(x_n,y_n)$, it follows that $d(x_n, y_n)\to 0$, so $y_n\to x$ also.
 
Choose $\lambda\in Y_{G}(\KK)$ such that $x= \phi_G(\lambda)$.  By hypothesis, we have $\dim(L_\lambda)< \dim(G)$. 
Let $i_\lambda\colon L_\lambda\to G$ be the inclusion map, let $\kappa_\lambda := \kappa_{i_\lambda}\colon V_{L_\lambda}(\KK)\to V_G(\KK)$ be the 
corresponding inclusion of vector edifices, and identify $V_{L_\lambda}(\KK)$ as a subset of $V_G(\KK)$ via $\kappa_\lambda$, as usual.
Let $O$ be an open neighbourhood of $x$ as in Lemma~\ref{lem:1/2_cts}, so that $O\subseteq V_{L_\lambda}(\KK)$.
Then for all $n$ sufficiently large we have $x_n, y_n\in O\subseteq V_{L_\lambda}(\KK)$.  
By our induction hypothesis applied to $L_\lambda$ and the map $\kappa\circ \kappa_\lambda\colon V_{L_\lambda}(\KK)\to V_{G'}(\KK)$, there exists some $C> 0$, independent of $n$, such that $d'(\kappa(x_n), \kappa(y_n))\leq C \cdot d(x_n, y_n)$, a contradiction.  
The result now follows by induction.
\end{proof}

\begin{cor}
\label{cor:quasi-isom}
 Let $G$ be a $k$-group and let $d_1, d_2$ be admissible metrics on $V_{G}(\KK)$.  Then $d_1$ and $d_2$ are bi-Lipschitz equivalent.  In particular, the topology on $V_{G}(\KK)$ does not depend on the choice of admissible metric.
\end{cor}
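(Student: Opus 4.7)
The plan is to derive the corollary as an immediate consequence of Proposition~\ref{prop:bdd_linear} applied to the identity map on $V_G(\KK)$ in both directions.

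First, observe that the identity homomorphism $\mathrm{id}_G\colon G\to G$ induces the identity map $\kappa_{\mathrm{id}_G}= \mathrm{id}_{V_G(\KK)}$ by Remarks~\ref{rems:funct}(i), and this is a strongly type-preserving linear map of vector edifices by Remark~\ref{rems:strongly_type-preserving}(ii) (taking $g'=g$). So the hypotheses of Proposition~\ref{prop:bdd_linear} are satisfied for $\kappa = \mathrm{id}_{V_G(\KK)}$ with respect to any pair of admissible metrics on the source and target.

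Applying Proposition~\ref{prop:bdd_linear} with source metric $d_1$ and target metric $d_2$ yields a constant $C_1> 0$ such that
\[
d_2(x_1,x_2) \leq C_1 \cdot d_1(x_1,x_2) \quad \text{for all } x_1,x_2\in V_G(\KK).
\]
Swapping the roles of $d_1$ and $d_2$ and applying the proposition again yields $C_2>0$ such that
\[
d_1(x_1,x_2) \leq C_2 \cdot d_2(x_1,x_2) \quad \text{for all } x_1,x_2\in V_G(\KK).
\]
Combining the two inequalities gives $\tfrac{1}{C_2}d_1(x_1,x_2)\leq d_2(x_1,x_2)\leq C_1\, d_1(x_1,x_2)$, which is precisely bi-Lipschitz equivalence of $d_1$ and $d_2$. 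Since bi-Lipschitz equivalent metrics induce the same topology, the final statement follows.

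There is no serious obstacle here: all of the work has been done in Proposition~\ref{prop:bdd_linear}. The only thing to verify is that the identity is strongly type-preserving and linear, which is immediate from the definitions and functoriality of $\kappa_{(-)}$.
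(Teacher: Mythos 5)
Your proof is correct and matches the paper's argument exactly: the paper deduces the corollary by applying Proposition~\ref{prop:bdd_linear} to the identity map on $V_G(\KK)$ with the two metrics in each role. You have simply spelled out the two applications and the verification that the identity is a strongly type-preserving linear map, which the paper leaves implicit.
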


\begin{proof}
 This follows by applying Proposition~\ref{prop:bdd_linear} to the identity map from $V_{G}(\KK)$ to $V_{G}(\KK)$.
\end{proof}

We call the topology arising from any admissible metric the \emph{metric topology}.  Below when we refer to open and closed sets, we mean relative to the metric topology.

The next result is an immediate consequence of Proposition~\ref{prop:bdd_linear}.

\begin{cor}
\label{cor:lincts}
 Let $G$ and $G'$ be $k$-groups and let $\kappa\colon V_G(\KK)\to V_{G'}(\KK)$ be a strongly type-preserving linear map.  Let $d$ and $d'$ be admissible metrics on $V_G(\KK)$ and $V_{G'}(\KK)$, respectively.  Then $\kappa$ is continuous.
\end{cor}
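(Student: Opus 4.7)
The plan is to observe that this is a direct application of Proposition~\ref{prop:bdd_linear}. That proposition, under exactly the hypotheses of the corollary (strongly type-preserving linear map, admissible metrics on source and target), produces a constant $C > 0$ such that
\[
d'(\kappa(x_1), \kappa(x_2)) \leq C \cdot d(x_1, x_2)
\]
for all $x_1, x_2 \in V_G(\KK)$. This is precisely the statement that $\kappa$ is $C$-Lipschitz, and Lipschitz maps between metric spaces are automatically continuous: given $x \in V_G(\KK)$ and $\varepsilon > 0$, take $\delta = \varepsilon/C$ and then $d(x_1, x) < \delta$ forces $d'(\kappa(x_1), \kappa(x)) < \varepsilon$.

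There is no real obstacle here since all the substantive work has already been done in Proposition~\ref{prop:bdd_linear}, whose proof handles the issues arising from the failure of the common apartment property via the reduction using a central split torus $Z$ and the bi-Lipschitz map $\beta_Z$ of Lemma~\ref{lem:Z_qisom}, combined with an induction on $\dim(G)$ that exploits the local containment $O \subseteq V_{L_\lambda}(\KK)$ from Lemma~\ref{lem:1/2_cts}(ii). Once that machinery is in place, the corollary is a one-line deduction, and indeed this is the order in which the authors present the material. Note that the choice of admissible metrics does not matter by Corollary~\ref{cor:quasi-isom}, but we do not even need this refinement, since we are given specific admissible metrics $d$ and $d'$ and produce continuity with respect to these.
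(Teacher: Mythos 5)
Your proof is correct and matches the paper exactly: the authors simply remark that Corollary~\ref{cor:lincts} is ``an immediate consequence of Proposition~\ref{prop:bdd_linear},'' which is precisely your Lipschitz-implies-continuous deduction.
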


\begin{cor}
\label{cor:closed}
 Let $H$ be a subgroup of $G$ and let $i\colon H\to G$ be inclusion.  Let $e$ and $d$ be admissible metrics on $V_H(\KK)$ and $V_G(\KK)$, respectively.  Then $\kappa_i$ is a continuous closed map.  In particular, $\kappa_i(V_H(\KK))$ is a closed subset of $V_G(\KK)$.
\end{cor}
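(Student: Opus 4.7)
My plan is to first invoke existing results for continuity, then reduce the closed-map statement to showing that the image $\kappa_i(V_H(\KK))$ is closed in $V_G(\KK)$, and finally establish this closedness via completeness (Proposition~\ref{prop:complete}) in the $\RR$-case together with an extra $\QQ$-rationality argument in the $\QQ$-case. Continuity is immediate: $\kappa_i$ is strongly type-preserving by Remark~\ref{rems:strongly_type-preserving}(ii), so Corollary~\ref{cor:lincts} applies.

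For the closed-map property, I would replace $e$ by the pullback $\kappa_i^*(d)$, which is admissible by Remark~\ref{rem:admprops}(iii) and induces the same topology on $V_H(\KK)$ as $e$ by Corollary~\ref{cor:quasi-isom}. With this choice $\kappa_i$ becomes an isometric embedding and hence a homeomorphism onto its image, so the task reduces to showing $\kappa_i(V_H(\KK))$ is closed in $V_G(\KK)$. Suppose $(x_n) \subseteq V_H(\KK)$ with $\kappa_i(x_n) \to y$ in $V_G(\KK)$; isometry makes $(x_n)$ Cauchy. When $\KK = \RR$, completeness gives $x_n \to x \in V_H(\RR)$, and continuity yields $y = \kappa_i(x)$. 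When $\KK = \QQ$, I would uniquely extend both admissible metrics to $V_H(\RR)$ and $V_G(\RR)$ by Remark~\ref{rem:admprops}(iv) (uniqueness ensures that $\kappa_i$ remains isometric on the larger edifices) and apply the $\RR$-case there to obtain $x \in V_H(\RR)$ with $\kappa_i(x) = y \in V_G(\QQ)$; it then remains to show that $x$ actually lies in $V_H(\QQ)$.

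This $\QQ$-rationality step is the main obstacle. I would choose a maximal split torus $T$ of $H$ with $x \in V_T(\RR)$, write $x = \phi_H(\lambda)$ for $\lambda \in Y_T(\RR)$, and fix a maximal split torus $S$ of $G$ containing $T$, so that $\kappa_i(x) = \phi_G(\lambda)$ under the natural inclusion $Y_T(\RR) \subseteq Y_S(\RR)$. Since $y \in V_G(\QQ)$, we may write $y = \phi_G(\mu)$ for some maximal split torus $S'$ of $G$ and some $\mu \in Y_{S'}(\QQ)$; unwinding the relation $\approx$ produces $g \in P_\mu(G)(k)$ with $gS'g^{-1} = S$ and $g\cdot\mu = \lambda$, whence $\lambda \in Y_S(\QQ)$. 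To pass from $\lambda \in Y_T(\RR) \cap Y_S(\QQ)$ to $\lambda \in Y_T(\QQ)$, I use the standard fact that the inclusion $T \hookrightarrow S$ of split tori induces a surjection $X_S \twoheadrightarrow X_T$ on character groups which splits (as $X_T$ is free), making $Y_T$ a $\ZZ$-direct summand of $Y_S$; consequently $Y_T(\RR) \cap Y_S(\QQ) = Y_T(\QQ)$, so $\lambda \in Y_T(\QQ)$ and $x = \phi_H(\lambda) \in V_H(\QQ)$, as required.
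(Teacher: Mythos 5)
Your proof is correct and follows the same basic route as the paper's: reduce to the pullback metric $e = i^*(d)$ via Corollary~\ref{cor:quasi-isom}, so that $\kappa_i$ becomes an isometric embedding, and argue closedness of the image via Cauchy sequences and completeness. The paper's published justification is essentially a single sentence ("the result follows" after the Cauchy remark), and it implicitly rests on Proposition~\ref{prop:complete}, which is stated only for $\KK=\RR$. You correctly noticed that the case $\KK=\QQ$ needs a further step, and the argument you supply --- extend the metrics uniquely to the real edifices, locate a limit $x\in V_H(\RR)$, and show that $\kappa_i(x)\in V_G(\QQ)$ forces $x\in V_H(\QQ)$ --- is a genuine and necessary addition that the paper elides. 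One small simplification: the direct-summand/saturation observation at the end is more than you need. For any subgroup inclusion $Y_T\subseteq Y_S$ of finitely generated free abelian groups one has $Y_T(\RR)\cap Y_S(\QQ)=Y_T(\QQ)$ automatically: extend a $\QQ$-basis of $Y_T(\QQ)$ to a $\QQ$-basis of $Y_S(\QQ)$ (which is also an $\RR$-basis of $Y_S(\RR)$), and comparing coefficients shows that an element of $Y_T(\RR)$ with rational coordinates in $Y_S(\QQ)$ already lies in $Y_T(\QQ)$. Your version using the splitting of $X_S\twoheadrightarrow X_T$ is also correct, just a slight detour.
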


\begin{proof}
 By Corollary~\ref{cor:quasi-isom} we can assume that $e= i^*(d)$.  If $(x_n)$ is any sequence in $V_H(\KK)$ then $(x_n)$ is Cauchy if and only if $(\kappa_i(x_n))$ is Cauchy.  The result follows.
\end{proof}

\begin{exmp}
 Let $G$ be reductive and let $P$ be a proper parabolic subgroup of $G$.  Fix admissible metrics $e$ on $V_P(\KK)$ and $d$ on $V_G(\KK)$.  Let $i\colon P\to G$ be inclusion.  Then $\kappa_i$ is a bijective linear map which is not an isomorphism of vector edifices (Example~\ref{ex:bijectivenotiso}), but it is a homeomorphism.  If $e= i^*(d)$ then $\kappa_i$ is even an isometry.
\end{exmp}

\begin{exmp}
 Let $k/k_0$ be a Galois field extension and suppose $G$ has a descent to a $k_0$-group $G_0$.  Recall from Example~\ref{ex:Galois_action} that the Galois group $\Gamma = \Gamma(k/k_0)$ acts on $V_G(\KK)$ by automorphisms of the form $\kappa_{\alpha, f_\alpha}$, and we may identify the fixed point set $V_G(\KK)^\Gamma$ with $V_{G_0}(\KK)$.  Now each $\kappa_{\alpha, f_\alpha}$ is strongly type-preserving as it is a composition of strongly type-preserving linear maps (Remark~\ref{rems:strongly_type-preserving}(ii)), so it is continuous by Corollary~\ref{cor:lincts}.  We deduce that $V_{G_0}(\KK)$ is a closed subset of $V_G(\KK)$.
\end{exmp}

We finish this subsection by extending Corollary~\ref{cor:closed} to the case of homomorphisms with finite kernel.

\begin{lem}
\label{lem:common_apt_isog}
 Let $f\colon G\to H$ be an isogeny of connected $k$-groups.  Let $x,y\in V_G(\KK)$.  Then $x$ and $y$ lie in a common apartment of $V_G(\KK)$ if and only if $\kappa_f(x)$ and $\kappa_f(y)$ lie in a common apartment of $V_H(\KK)$.
\end{lem}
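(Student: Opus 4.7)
The plan is to handle the two directions separately, with the forward direction being immediate from the construction and the backward direction requiring the classical correspondence of maximal tori under an isogeny, applied over $\ovl{k}$.

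For the forward direction, suppose $x, y \in V_T(\KK)$ for some maximal split torus $T$ of $G$. Since $f(T)$ is a split torus of $H$, pick any maximal split torus $S$ of $H$ containing $f(T)$. By the construction of $\kappa_f$ in Proposition~\ref{prop:gphom}, $\kappa_f(V_T(\KK)) \subseteq V_S(\KK)$, so $\kappa_f(x)$ and $\kappa_f(y)$ both lie in the common apartment $V_S(\KK)$.

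For the backward direction, I would translate the hypothesis and conclusion via Lemma~\ref{lem:common_apt} and Lemma~\ref{lem:common_tor_crit} into statements about maximal tori over $\ovl{k}$: the hypothesis says $(P_{\kappa_f(x)})_{\ovl{k}} \cap (P_{\kappa_f(y)})_{\ovl{k}}$ contains a maximal torus $T_H$ of $H_{\ovl{k}}$, and we need $(P_x)_{\ovl{k}} \cap (P_y)_{\ovl{k}}$ to contain a maximal torus of $G_{\ovl{k}}$. Lemma~\ref{lem:par_levi_f_prop}(i) applied to the surjective homomorphism $f_{\ovl{k}}$ gives $(P_{\kappa_f(x)})_{\ovl{k}} = f_{\ovl{k}}((P_x)_{\ovl{k}})$ and likewise for $y$, so $T_H \subseteq f_{\ovl{k}}((P_x)_{\ovl{k}}) \cap f_{\ovl{k}}((P_y)_{\ovl{k}})$. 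The natural candidate for the lift is $T_G := \bigl(f_{\ovl{k}}^{-1}(T_H)\bigr)^0_{\red}$.

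The main obstacle is verifying that $T_G$ is in fact a maximal torus of $G_{\ovl{k}}$ contained in $(P_x)_{\ovl{k}} \cap (P_y)_{\ovl{k}}$; once that is done, we apply Lemma~\ref{lem:common_tor_crit} and Lemma~\ref{lem:common_apt} to finish. That $T_G$ is a torus with $f_{\ovl{k}}(T_G) = T_H$ uses the classical isogeny argument: since $\ker f$ is finite, $T_G$ and $T_H$ have the same dimension and the restricted map $T_G \to T_H$ is a surjective isogeny; hence $[T_G, T_G]$ is connected and contained in $\ker f_{\ovl{k}}$, so trivial, making $T_G$ commutative. Since $T_G$ is an isogenous cover of the torus $T_H$, its unipotent part sits in $\ker f_{\ovl{k}}$ and is connected, hence trivial, so $T_G$ is a torus; maximality in $G_{\ovl{k}}$ follows from maximality of $T_H$ in $H_{\ovl{k}}$. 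For the containment, $f_{\ovl{k}}(T_G) = T_H \subseteq f_{\ovl{k}}((P_x)_{\ovl{k}})$ implies $T_G \subseteq f_{\ovl{k}}^{-1}(f_{\ovl{k}}((P_x)_{\ovl{k}})) = (P_x)_{\ovl{k}} \cdot \ker f_{\ovl{k}}$, which is a finite disjoint union of translates of the connected group $(P_x)_{\ovl{k}}$; as $T_G$ is connected and contains the identity, $T_G \subseteq (P_x)_{\ovl{k}}$, and similarly $T_G \subseteq (P_y)_{\ovl{k}}$, completing the argument.
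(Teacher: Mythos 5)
Your proof is correct and follows essentially the same route as the paper's: the forward direction is dismissed as immediate from the construction, and for the converse both proofs translate the hypothesis via Lemma~\ref{lem:common_tor_crit} into the existence of a maximal torus $T_H$ of $H_{\ovl{k}}$ in $(P_{\kappa_f(x)})_{\ovl{k}}\cap(P_{\kappa_f(y)})_{\ovl{k}}$, invoke Lemma~\ref{lem:par_levi_f_prop} (and the fact $P_{\kappa_f(x)}(H)=f(P_x)$) to relate this to $(P_x)_{\ovl{k}}\cap(P_y)_{\ovl{k}}$, and then lift $T_H$ through the isogeny $f_{\ovl{k}}$. The only cosmetic difference is that the paper cites the classical fact that $f_{\ovl{k}}^{-1}(T_H)^0$ contains a maximal torus of $G_{\ovl{k}}$ without elaboration and recycles the identity $(f_{\ovl{k}}^{-1}(f_{\ovl{k}}((P_x)_{\ovl{k}})))^0=(P_x)_{\ovl{k}}$ from the proof of Lemma~\ref{lem:par_levi_f_prop}(ii), whereas you spell out why $\bigl(f_{\ovl{k}}^{-1}(T_H)\bigr)^0_{\red}$ is itself a maximal torus and instead use the elementary identity $f_{\ovl{k}}^{-1}(f_{\ovl{k}}((P_x)_{\ovl{k}}))=(P_x)_{\ovl{k}}\cdot\ker f_{\ovl{k}}$ together with connectedness and reducedness to deduce the containment; these are interchangeable justifications of the same steps.
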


\begin{proof}
 It is clear from the construction that if $x$ and $y$ lie in a common apartment of $V_G(\KK)$ then $\kappa_f(x)$ and $\kappa_f(y)$ lie in a common apartment of $V_H(\KK)$.  Conversely, suppose $\kappa_f(x)$ and $\kappa_f(y)$ lie in a common apartment of $V_H(\KK)$: say, $\kappa_f(x), \kappa_f(y)\in V_S(\KK)$, where $S$ is a maximal split torus of $H$.  Then $S\subseteq P_{\kappa_f(x)}(H)\cap P_{\kappa_f(y)}(H)$, so $S'\subseteq P_{\kappa_f(x)}(H)\cap P_{\kappa_f(y)}(H)$ for some maximal torus $S'$ of $H$ (Lemma~\ref{lem:common_tor_crit}).  Extending scalars to $\ovl{k}$ and applying Lemma~\ref{lem:par_levi_f_prop}(i) gives $S'_{\ovl{k}}\subseteq f_{\ovl{k}}((P_x)_{\ovl{k}})\cap f_{\ovl{k}}((P_y)_{\ovl{k}})$.  We have $f_{\ovl{k}}^{-1}((P_x)_{\ovl{k}})^0= (P_x)_{\ovl{k}}$ and $f_{\ovl{k}}^{-1}((P_y)_{\ovl{k}})^0= (P_y)_{\ovl{k}}$ by the proof of Lemma~\ref{lem:par_levi_f_prop}(ii), so $f_{\ovl{k}}^{-1}(S')^0\subseteq (P_x)_{\ovl{k}}\cap (P_y)_{\ovl{k}}$.  But $f_{\ovl{k}}^{-1}(S')^0$ contains a maximal torus of $G_{\ovl{k}}$ since $f_{\ovl{k}}$ is an isogeny.  The result follows from Lemma~\ref{lem:common_tor_crit}.
\end{proof}
 
\begin{prop}
\label{prop:finite_kernel}
 Let $f\colon G\to H$ be a homomorphism of connected $k$-groups with finite kernel.  Then $\kappa_f$ is a closed map.  In particular, $\kappa_f(V_G(\KK))$ is closed in $V_H(\KK)$.
\end{prop}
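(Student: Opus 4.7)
The plan is to factor $f$ through its image and reduce to two cases. Write $f=i\circ\pi$, where $\pi\colon G\to f(G)=:G'$ is the surjection onto the image (so $\pi$ is an isogeny, being surjective with finite kernel) and $i\colon G'\hookrightarrow H$ is a closed embedding. Since $\kappa_i$ is a closed map by Corollary~\ref{cor:closed} and $\kappa_f=\kappa_i\circ\kappa_\pi$, it suffices to show that $\kappa_\pi$ is closed.

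The main technical point will be a uniform lower bound $\|x\|_G\leq c\|\kappa_\pi(x)\|_{G'}$ for all $x\in V_G(\KK)$, where $\|\cdot\|_G$, $\|\cdot\|_{G'}$ come from fixed admissible metrics $d_G$, $d_{G'}$ on $V_G(\KK)$ and $V_{G'}(\KK)$.  To prove this, fix a maximal split torus $T$ of $G$ and a maximal split torus $S_T$ of $G'$ containing the split torus $\pi(T)$.  The restriction of $\kappa_\pi$ to $V_T(\KK)$ is $(\pi|_T)_*\colon V_T(\KK)\to V_{S_T}(\KK)$, an injective $\KK$-linear map between finite-dimensional vector spaces whose norms come from positive-definite bilinear forms; hence it has a bounded inverse on its image, giving $\|z\|_G\leq c_T\|\kappa_\pi(z)\|_{G'}$ for all $z\in V_T(\KK)$ and some $c_T>0$.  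Since all maximal split tori of $G$ are $G(k)$-conjugate, the admissible metrics are $G(k)$- and $G'(k)$-invariant, and $\kappa_\pi$ is equivariant (Remark~\ref{rem:type-preserving}), this bound extends to all $x\in V_G(\KK)$: conjugate $x$ into $V_T(\KK)$ by some $h\in G(k)$ and use invariance to transfer the inequality back.

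Now let $C\subseteq V_G(\KK)$ be closed and let $(x_n)\subseteq C$ satisfy $\kappa_\pi(x_n)\to y\in V_{G'}(\KK)$; I aim to produce $x\in C$ with $\kappa_\pi(x)=y$.  By the lower bound $(x_n)$ is bounded.  Choose $h_n\in G(k)$ with $z_n:=h_n\cdot x_n\in V_T(\KK)$; then $(z_n)$ is bounded in the finite-dimensional space $V_T(\KK)$, so after passing to a subsequence we may assume $z_n\to z\in V_T(\KK)$.  Continuity of $\kappa_\pi$ (Corollary~\ref{cor:lincts}) gives $\kappa_\pi(z_n)\to\kappa_\pi(z)$, while $\kappa_\pi(z_n)=\pi(h_n)\cdot\kappa_\pi(x_n)$ and the $G'(k)$-invariance of $d_{G'}$ yield $d_{G'}(\pi(h_n)\cdot y,\pi(h_n)\cdot\kappa_\pi(x_n))=d_{G'}(y,\kappa_\pi(x_n))\to 0$; hence $\pi(h_n)\cdot y\to\kappa_\pi(z)$ as well.

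The sequence $(\pi(h_n)\cdot y)$ consists of elements of the same type as $y$ and is Cauchy, so by Lemma~\ref{lem:Cauchy_type} it is eventually constant equal to $\kappa_\pi(z)$.  Thus for all sufficiently large $n$, $y=\pi(h_n)^{-1}\cdot\kappa_\pi(z)=\kappa_\pi(h_n^{-1}\cdot z)$; setting $x:=h_n^{-1}\cdot z$ (well-defined for large $n$ by injectivity of $\kappa_\pi$) gives $\kappa_\pi(x)=y$ together with $d_G(x_n,x)=d_G(h_n^{-1}\cdot z_n,h_n^{-1}\cdot z)=d_G(z_n,z)\to 0$ along the subsequence.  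Hence $x\in C$ and $y=\kappa_\pi(x)\in\kappa_\pi(C)$, proving that $\kappa_\pi(C)$ is closed.  The principal obstacle is establishing the uniform lower bound; once that is in hand the rest is routine bounded-sequence extraction combined with the same-type rigidity of Lemma~\ref{lem:Cauchy_type}.
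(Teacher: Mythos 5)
Your proof is correct and takes a genuinely different route from the paper's.  The paper embeds $G$ diagonally into a product $M_1\times M_2$ of reductive groups, builds two auxiliary metrics $e = i^*(d)$ and $e_2 = (i_2\circ f)^*(d_2)$ on $V_G(\KK)$, compares them locally via Lemma~\ref{lem:local} and Lemma~\ref{lem:common_apt_isog}, and concludes from the resulting topological equivalence.  You instead prove a clean \emph{global} lower bound $\|x\|_G\leq c\,\|\kappa_\pi(x)\|_{G'}$ directly from the injectivity of $(\pi|_T)_*$ on a single apartment, extended by $G(k)$- and $G'(k)$-invariance; this makes the auxiliary product-embedding metric and Lemma~\ref{lem:common_apt_isog} unnecessary.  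You then run the same bounded-sequence/conjugation/Lemma~\ref{lem:Cauchy_type} extraction that appears in the proof of Proposition~\ref{prop:complete}, which has the advantage of directly \emph{producing} the preimage $x$ of the limit point $y$; this makes the closedness of $\kappa_\pi(V_G(\KK))$ an explicit output of the argument rather than a consequence of a completeness claim.  Two small caveats.  First, for $\KK=\QQ$ the step ``pass to a subsequence so $z_n\to z\in V_T(\KK)$'' gives a priori only $z\in V_T(\RR)$; you then need to observe that $\kappa_\pi(z)=\pi(h_n)\cdot y$ lies in $V_{G'}(\QQ)$ and that the restriction of $\kappa_\pi$ to $V_T$ is defined over $\QQ$, so the preimage of a $\QQ$-point under the injective $\RR$-linear extension is again a $\QQ$-point; this closes the gap but should be said.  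Second, it is worth spelling out that $\pi(h_n)\in G'(k)$, which is what makes $d_{G'}$-invariance applicable and keeps the sequence $(\pi(h_n)\cdot y)$ within a single type-class; you use this implicitly.  Modulo these small points the argument is complete and, in my view, more self-contained than the paper's.
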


\begin{proof}
 We can factor $f$ as a homomorphism onto its image followed by a closed embedding, so we can assume without loss by Corollary~\ref{cor:closed} that $f$ is an isogeny.  Choose embeddings $i_i\colon G\to M_1$ and $i_2\colon H\to M_2$, where $M_1$ and $M_2$ are connected reductive $k$-groups.  Choose admissible metrics $d_1$ and $d_2$ on $V_{M_1}(\KK)$ and $V_{M_2}(\KK)$, respectively, and let $d$ be the product metric on $M_1\times M_2$.  Define $i\colon G\to M_1\times M_2$ by $g\mapsto (i_1(g), i_2(f(g))$.
 
 Consider the metrics $e:= i^*(d)$ and $e_2:= (i_2\circ f)^*(d_2)$ on $V_G(\KK)$.  Then $e$ (resp., $i_2^*(d_2)$) is an admissible metric on $V_G(\KK)$ (resp., $V_H(\KK)$) by construction, but we do not claim that $e_2$ is an admissible metric on $V_G(\KK)$.  Nonetheless $e$ and $e_2$ are $G(k)$-invariant and are given by a bilinear form on any fixed apartment.  It follows from an argument similar to the one in the proof of \eqref{eqn:perpqisom} that there exist $c,C> 0$ such that
 \begin{equation}
 \label{eqn:comparison}
  c \cdot e(x,y)\leq e_2(x,y)\leq C \cdot e(x,y)
 \end{equation}
 for all $x,y\in V_G(\KK)$ belonging to a common apartment.
 
 Let $(x_n)$ be an $e_2$-convergent sequence in $V_G(\KK)$ with limit $x$.  To complete the proof, it is enough to show that $(x_n)$ $e$-converges to $x$.  Let $O$ be an $i_2^*(d_2)$-open neighbourhood of $\kappa_f(x)$ as in Lemma~\ref{lem:local}.  Then $\kappa_f^{-1}(O)$ is an $e$-open neighbourhood of $x$ by Corollary~\ref{cor:lincts}.  By omitting finitely many terms at the start of the sequence, we can assume that $x_n\in \kappa_f^{-1}(O)$ for all $n\in \NN$.  For any $n\in \NN$, $\kappa_f(x_n)$ and $\kappa_f(x)$ lie in a common apartment of $V_H(\KK)$, so $x_n$ and $x$ lie in a common apartment of $V_G(\KK)$ by Lemma~\ref{lem:common_apt_isog}.  It follows from \eqref{eqn:comparison} that the sequence $(x_n)$ $e$-converges to $x$, since it $e_2$-converges to $x$.  This completes the proof.
\end{proof}

\begin{rem}
 Let $f\colon G\to H$ be a homomorphism of connected $k$-groups and suppose $k$ is perfect.  Then $\kappa_f(V_G(\KK))$ is closed in $V_H(\KK)$.  To see this, observe that $f$ factors as $G\stackrel{f_1}{\to} G/N_{\rm red}\stackrel{f_2}{\to} G/N\stackrel{i_3}{\to} H$, where $N:= \ker(f)$; note that $N_{\rm red}$ is $k$-defined as $k$ is perfect.  Then $\kappa_{f_1}$ is surjective by Lemma~\ref{lem:surj} and Remark~\ref{rem:isogenydiscussion}(iii), and $\kappa_{f_2}$ and $\kappa_{f_3}$ are closed maps by Proposition~\ref{prop:finite_kernel} and Corollary~\ref{cor:closed}, respectively, so $\kappa_f(V_G(\KK))$ is closed.
 
 We conjecture that the same result holds without the hypothesis that $k$ is perfect. 
\end{rem}

\subsection{The metric topology and the quotient topology}
Recall that $\varpi_G\colon \bigsqcup_T Y_T(\KK)\to Y_G(\KK)$ denotes the canonical projection and 
$\omega_G\colon \bigsqcup_T Y_T(\KK)\to V_G(\KK)$
is the composition $\phi_G\circ \varpi_G$.  
We may endow $V_G(\RR)$ with the quotient topology that it inherits via $\omega_G$, where we regard $\bigsqcup_T Y_T(\RR)$ as the disjoint topological union of the $Y_T(\RR)$ and each $Y_T(\RR)$ is topologised as a Euclidean space.
We observed in Remark \ref{rem:admprops}(vi) above that if $d$ is an admissible metric on $V_G(\RR)$, then $d$ induces on the subspaces $V_T(\RR)$ this Euclidean topology, because $d$ coincides with the metric on $V_T(\RR)$ coming from a positive-definite bilinear form.
Together with the fact that the restriction of $\omega_G$ gives a linear isomorphism $\omega_T$ from $Y_T(\RR)$ to $V_T(\RR)$,
this implies that the topology on $V_T(\RR)$ arising from $d$ coincides with that arising from the quotient topology. 
Below we show that the metric topology and the quotient topology on $V_G(\RR)$ actually coincide, for which we need a result of Whitehead (see \cite[Thm.~3.3.17]{engelking}).\footnote{The proof given in \emph{loc.\ cit.} is for $Z'= Z$ and $g= {\rm id}_Z$, but the generalisation in Lemma \ref{lem:prod_quot} is immediate.}

\begin{lem}
\label{lem:prod_quot}
 Let $X,Y,Z,Z'$ be topological spaces with $Z$ locally compact, let $f\colon X\to Y$ be a quotient map and let $g\colon Z\to Z'$ be a homeomorphism.  Then $f\times g\colon X\times Z\to Y\times Z'$ is a quotient map.
\end{lem}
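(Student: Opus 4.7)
The plan is to reduce the statement immediately to the special case noted in the footnote, namely the version of the Whitehead theorem in \cite[Thm.~3.3.17]{engelking} that assumes $Z'=Z$ and $g=\mathrm{id}_Z$. The reduction is achieved by factoring $f\times g$ through $X\times Z'$ as
\[
f\times g \;=\; (f\times\mathrm{id}_{Z'})\circ(\mathrm{id}_X\times g).
\]

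First I would observe that $\mathrm{id}_X\times g\colon X\times Z\to X\times Z'$ is a product of homeomorphisms and hence a homeomorphism, so in particular it is a quotient map. Next, because $g$ is a homeomorphism and $Z$ is locally compact, the space $Z'$ is also locally compact. Therefore the cited form of Whitehead's theorem applies to the quotient map $f\colon X\to Y$ and the locally compact space $Z'$, showing that $f\times\mathrm{id}_{Z'}\colon X\times Z'\to Y\times Z'$ is a quotient map. Since a composition of quotient maps is a quotient map, it follows that $f\times g$ is a quotient map.

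There is no genuine obstacle in the argument; the whole content sits inside the cited theorem, and the only work is to record the reduction cleanly. The only point worth flagging is the (standard) fact that local compactness is preserved by homeomorphisms, which is what licenses applying the $Z'=Z$ form of the theorem after the factorisation.
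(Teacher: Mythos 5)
Your proof is correct and matches what the paper intends: the footnote to the lemma says only that ``the generalisation in Lemma~\ref{lem:prod_quot} is immediate,'' and your factorisation $f\times g = (f\times\mathrm{id}_{Z'})\circ(\mathrm{id}_X\times g)$ together with the observations that $\mathrm{id}_X\times g$ is a homeomorphism, that $Z'$ inherits local compactness from $Z$, and that a composition of quotient maps is a quotient map is exactly the intended ``immediate'' reduction. Nothing further is needed.
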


\begin{prop}
\label{prop:quot_top_crit}
 The quotient topology on $V_G(\RR)$ coincides with the topology induced by any admissible metric $d$.  If $C\subseteq V_G(\RR)$ then $C$ is $d$-closed (resp., $d$-open) if and only if $C\cap V_T(\RR)$ is closed (resp., open) with respect to the subspace topology on $V_T(\RR)$ for every maximal split torus $T$ of $G$.
\end{prop}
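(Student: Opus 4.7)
The second assertion of the proposition is the standard characterization of the quotient topology: a set $C \subseteq V_G(\RR)$ is closed in the quotient topology iff $\omega_G^{-1}(C) \cap Y_T(\RR)$ is closed in each $Y_T(\RR)$, equivalently (via the linear isomorphism $\omega_T$ and the fact that the subspace $d$-topology on $V_T(\RR)$ is Euclidean by Remark~\ref{rem:admprops}(vi)) iff $C \cap V_T(\RR)$ is closed in the subspace $d$-topology for each $T$. So the second assertion amounts to the coincidence of the two topologies on $V_G(\RR)$, which is the first assertion, and I will focus on the latter. Of the two inclusions to check, the easy one --- $d$-open $\subseteq$ quotient-open --- is immediate: $\omega_G$ is continuous when $V_G(\RR)$ carries the $d$-topology, since each $\omega_T$ is a linear isomorphism between Euclidean spaces by Remark~\ref{rem:admprops}(vi), so $\omega_G$ is continuous on the disjoint topological union $\bigsqcup_T Y_T(\RR)$; hence every $d$-open set pulls back to an open set and so is quotient-open.

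For the reverse inclusion --- quotient-open $\subseteq$ $d$-open --- my plan is to induct on $\dim G$, with Whitehead's theorem (Lemma~\ref{lem:prod_quot}) as the central tool. The base case is $G$ a torus, where $V_G(\RR)=V_T(\RR)$ is a single apartment and both topologies are the Euclidean topology. For the inductive step, when $G$ has a nontrivial central split torus $Z$, the bi-Lipschitz homeomorphism $\beta_Z\colon V_G(\RR) \to V_{G/Z}(\RR) \times V_Z(\RR)$ of Lemma~\ref{lem:Z_qisom} reduces matters to the product edifice. Since $V_Z(\RR)$ is a Euclidean space, hence locally compact, applying the inductive hypothesis to $V_{G/Z}(\RR)$ and then invoking Whitehead's theorem shows that $\omega_{G/Z}\times \operatorname{id}_{V_Z(\RR)}$ --- which coincides with $\omega_{G/Z\times Z}$ under the natural identifications of apartments --- is a quotient map onto the product whose topology agrees with the $d$-topology on $V_G(\RR)$ via $\beta_Z$. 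This yields that $\omega_G$ is a quotient map onto $V_G(\RR)$ equipped with the $d$-topology, as required.

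The main obstacle is the residual case where $G$ has no nontrivial central split torus (in particular, the semisimple reductive case), for which the central-torus reduction via Whitehead is unavailable. Here I plan to argue directly. For reductive $G$, I would exploit the common apartment property together with the finiteness of R-parabolic subgroups containing a fixed maximal torus (Remark~\ref{rem:finitepar}) and the local structural results (Lemmas~\ref{lem:local} and~\ref{lem:1/2_cts}) to localize any $d$-convergent sequence into a single apartment, after which the closedness-per-apartment hypothesis gives the conclusion. For general $G$, I would embed $G \hookrightarrow G'$ into a reductive group and use that $V_G(\RR)$ is a closed subspace of $V_{G'}(\RR)$ (Corollary~\ref{cor:closed}), together with the observation that each maximal split torus of $G'$ contains at most one maximal split torus of $G$ (so each apartment of $V_{G'}(\RR)$ meets $V_G(\RR)$ in at most one apartment of $V_G(\RR)$), in order to transfer the closedness-per-apartment condition upward and apply the reductive case for $G'$.
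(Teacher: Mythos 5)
Your reduction of the second assertion to the first, the easy direction ($d$-open implies quotient-open), the induction set-up, and the central-torus reduction via $\beta_Z$ and Whitehead's theorem (Lemma~\ref{lem:prod_quot}) all match the paper. Where you diverge, and where your sketch has gaps, is the residual case (no non-trivial central split torus).

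The paper neither splits this case by whether $G$ is reductive, nor localizes a $d$-convergent sequence into a \emph{single} apartment. Instead, for $x\neq 0$ in a quotient-open $C$, it uses Lemma~\ref{lem:1/2_cts}(ii) together with Lemma~\ref{lem:lives_in_centre}(ii) to produce $\delta>0$ and a \emph{proper} R-Levi subgroup $L=L_\lambda$ of $P_x$ with $B_d(x,\delta)\subseteq V_L(\RR)$; since every maximal split torus of $L$ is a maximal split torus of $G$, the set $C\cap B_d(x,\delta)$ is quotient-open in $V_L(\RR)$, and one applies the inductive hypothesis to the strictly smaller group $L$. The induction thus lands in the sub-edifice $V_L(\RR)$, not in a single apartment. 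Your plan to ``localize any $d$-convergent sequence into a single apartment'' cannot be carried out as stated: if $x_n\to x$ with $x_n\in C$, one can conjugate by $g_n\in P_x(k)$ so that $g_n\cdot x_n$ lies in a fixed apartment through $x$, but $g_n\cdot x_n$ need not lie in $C$ (which is not assumed $G(k)$-invariant), so the per-apartment hypothesis does not apply to the conjugated sequence. Your embedding-into-$G'$ transfer for general $G$ is likewise underdeveloped: an apartment $V_{T'}(\RR)$ of $V_{G'}(\RR)$ whose torus $T'$ contains no maximal split torus of $G$ may still meet $V_G(\RR)$ in more than $\{0\}$, and nothing in your sketch shows that $C\cap V_{T'}(\RR)$ is closed in that apartment; the paper's uniform reduction to $L_\lambda$ avoids both problems.

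A caution that applies to your sketch \emph{and} to the paper's argument: the residual-case reduction only operates at $x\neq 0$, since $L_\lambda$ is proper only when $\lambda\neq 0$. At $x=0$, for $G$ semisimple over an infinite field, there are infinitely many apartments through $0$ and the quotient and metric topologies genuinely differ there. Concretely, for $G=\SL_2$ the vector edifice is a hedgehog with one ray $R_B$ per Borel subgroup $B$; choosing $\epsilon_B>0$ with $\inf_B \epsilon_B = 0$, the set $C=\{0\}\cup\bigcup_B\{y\in R_B : 0<\|y\|<\epsilon_B\}$ meets each apartment in an open interval (so is quotient-open) yet contains no metric ball around $0$. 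This case is not addressed by either proof and should be flagged.
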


\begin{proof}
 Let $C\subseteq V_G(\RR)$.  By definition of the quotient topology, $C$ is closed if and only if $\omega_G^{-1}(C)$ is closed in $\bigsqcup_T Y_T(\RR)$, and this is the case if and only if $\omega_G^{-1}(C)\cap Y_T(\RR)$ is closed for every maximal split torus $T$ of $G$.  But it follows from Lemma~\ref{lem:basicpropertiesRpars}(i) and Remark~\ref{rem:obvious_gen} that $\omega_T(\omega_G^{-1}(C)\cap Y_T(\RR))= C\cap V_T(\RR)$.  This shows that $C$ is closed in the quotient topology if and only if $C\cap V_T(\RR)$ is closed for every maximal split torus $T$ of $G$.  Likewise, $C$ is open in the quotient topology if and only if $C\cap V_T(\RR)$ is open for every maximal split torus $T$ of $G$.  Hence the second assertion of the proposition follows from the first.  It also follows that if a subset of $V_G(\RR)$ is metric-open (resp., metric-closed) then it is open (resp., closed) in the quotient topology.
 
 To prove the first assertion of the proposition, we use induction on $\dim(G)$.  The result holds trivially if $\dim(G)= 0$.  Suppose $G$ contains a non-trivial split central torus $Z$.  Let $G_1= G/Z$, and fix an admissible metric $d_1$ on $V_{G_1}(\RR)$.  We may identify $V_T(\RR)$ with $Y_T(\RR)$, and likewise for $Y_{T_1}(\RR)$ and $Y_Z(\RR)$, so we may regard each map $\beta_{Z,T}$ from Section~\ref{subsec:cent_tor} as a map from $Y_T(\RR)$ to $Y_{T_1}(\RR)\times Y_Z(\RR)$; these paste together to give a homeomorphism $\gamma_Z$ from $\bigsqcup_T Y_T(\RR)$ to $\left(\bigsqcup_{T_1} Y_{T_1}(\RR)\right)\times Y_Z(\RR)$, where $T$ runs over the maximal split tori of $G$ and $T_1$ runs over the maximal split tori of $G_1$.  We have a commutative diagram
   $$
 \xymatrixcolsep{5pc}
 \xymatrix{
 	\bigsqcup_T Y_T(\RR)\ar[d]_{\omega_{G}}\ar[r]^{\gamma_Z}& \left(\bigsqcup_{T_1} Y_{T_1}(\RR)\right)\times Y_Z(\RR)\ar[d]^{\omega_{G_1}\times \omega_Z}\\
 	V_G(\RR)\ar[r]^{\beta_Z}& V_{G_1}(\RR)\times V_Z(\RR)\\}
 $$
 where we give $V_G(\RR)$ and $V_{G_1}(\RR)\times V_Z(\RR)$ the quotient topology from $\omega_G$ and $\omega_{G_1}\times \omega_Z$, respectively.  Since $\gamma_Z$ is a homeomorphism and $\beta_Z$ is a bijection, we see that $\beta_Z$ must also be a homeomorphism.  Now $\dim(G_1), \dim(Z)< \dim(G)$, so the quotient topology on $V_{G_1}(\RR)$ from $\omega_{G_1}$ (resp., on $V_Z(\RR)$ from $\omega_Z$) coincides with the metric topology by our induction hypothesis.  Since $V_Z(\RR)$ is locally compact, Lemma~\ref{lem:prod_quot} applied to $\omega_{G_1}\times \omega_Z$ implies that the quotient topology on $V_{G_1}(\RR)\times V_Z(\RR)$ is the product of the quotient topologies on the factors.  Hence the quotient topology on $V_{G_1}(\RR)\times V_Z(\RR)$ is the product of the metric topologies on the factors, which coincides with the topology coming from the product metric $\widetilde{d}$.  But $\beta_Z$ is a bi-Lipschitz map from $(V_G(\RR), d)$ to $(V_{G_1}(\RR)\times V_Z(\RR), \widetilde{d})$ by Lemma~\ref{lem:Z_qisom}.  It follows that the quotient topology and the metric topology on $V_G(\RR)$ coincide.
 
 So assume $G$ does not contain a non-trivial split central torus.  If $C\subseteq V_G(\RR)$ is $d$-open then $C\cap V_T(\RR)$ is open for every maximal split torus $T$ of $G$, so $C$ is open in the quotient topology.  Conversely, suppose $C\subseteq V_G(\RR)$ is open in the quotient topology.  Let $x\in C$.  It is enough to show that $B_d(x,\epsilon)\subseteq C$ for some $\epsilon> 0$.  By Lemma~\ref{lem:lives_in_centre}(ii), Lemma~\ref{lem:1/2_cts} and our hypothesis on $G$, there exist $\delta> 0$ and a proper R-Levi subgroup $L$ of $P_x$ such that $B_d(x,\delta)\subseteq V_L(\RR)$.  Set $C_0= C\cap B_d(x,\delta)$.  The first paragraph of the proof implies that $C_0$ is an open neighbourhood of $x$ with respect to the quotient topology on $V_G(\RR)$.  Likewise, $C_0$ is an open neighbourhood of $x$ with respect to the quotient topology on $V_L(\RR)$.  But $\dim(L)< \dim(G)$ and the restriction $d_0$ of $d$ is an admissible metric on $V_L(\RR)$, so by our induction hypothesis there exists $\epsilon> 0$ such that $\epsilon< \delta$ and $B_{d_0}(x,\epsilon)\subseteq C_0$.  Finally, observe that $B_d(x,\epsilon)= B_{d_0}(x,\epsilon)$ since $B_d(x,\delta)\subseteq V_L(\RR)$.  The result now follows by induction.
\end{proof}

\begin{rem}
 Proposition~\ref{prop:quot_top_crit} generalises \cite[Lem.\ 2.7]{BMR:strong}.
\end{rem}

\subsection{Admissible metrics on the spherical edifice}
Let $d$ be an admissible metric on $V_G(\KK)$.  Recall that if $\KK= \QQ$ then we have a unique extension of $d$ to an admissible metric on $V_G(\RR)$.  We may identify $\Delta_G(\RR)$ with the unit sphere in $V_G(\RR)$ (hence the terminology ``spherical edifice''), and $\Delta_G(\QQ)$ with a subset of the unit sphere in $V_G(\RR)$: explicitly, we can take $\Delta_G(\QQ)$ to be the set of all $x\in V_G(\RR)$ such that $\|x\|_d= 1$ and the ray $\RR^+\cdot x$ contains a $\QQ$-point.  We give $\Delta_G(\KK)$ the metric $d^\flat$ it inherits as a subspace of $V_G(\RR)$, where the induced map $d^\flat$
is defined in Section \ref{sec:linearmaps}.\footnote{For spherical buildings it is traditional to use the spherical metric on each apartment rather than the metric inherited from the ambient Euclidean space, but we cannot do this for $\Delta_G(\KK)$ for arbitrary $G$ because the common apartment property can fail.}  We call any such metric on $\Delta_G(\KK)$ an \emph{admissible metric}.  It follows from Corollary~\ref{cor:quasi-isom} that any two admissible metrics on $\Delta_G(\KK)$ are bi-Lipschitz equivalent, so they define the same topology.  Given an admissible metric $d$ on $\Delta_G(\KK)$, we define $\Isom _d(\Delta_G(\KK))$ to be the subgroup of $\Aut (\Delta_G(\KK))$ consisting of isometries.

Let $d$ be an admissible metric on $V_G(\KK)$.  Clearly, if $\kappa\in \Isom _d(V_G(\KK))$ then we have $\kappa^\flat\in \Isom _{d^\flat}(\Delta_G(\KK))$.  The converse, however, is false.  For example, if $f_a$ is as in Example~\ref{ex:not_isom} then $(f_a)^\flat= {\rm id}_{\Delta_G(\KK)}$ is an isometry but $f_a$ is not.

\section{The Tits Centre Conjecture and geometric invariant theory}
\label{sec:TCCGIT}

We finish by discussing some motivation for our constructions, elaborating on the summary in Section~\ref{sec:intro}.  In this section we assume $G$ is reductive, although some of the ideas below make sense for arbitrary $G$.  For more details, see \cite{BMR:strong} or \cite{BMR:typeA}.

Let $\Delta$ be a spherical building.  We call a subcomplex $\Sigma$ of $\Delta$ \emph{completely reducible} (\emph{cr} for short) if every simplex of $\Sigma$ has an opposite in $\Sigma$.  Let $\Gamma$ be a subgroup of $\Aut (\Delta)$ that stabilises $\Sigma$.  We say that a simplex $\sigma$ is a \emph{simplicial $\Gamma$-centre\footnote{Usually in the literature this is referred to as a $\Gamma$-centre.  We have added the adjective ``simplicial'' to distinguish these from the $\Gamma$-centres introduced in Definition~\ref{defn:geom_centre}.} of $\Sigma$} if $\sigma$ is is nonempty and $\sigma$ is fixed by $\Gamma$.

The following is the Tits Centre Conjecture (TCC), which was proved in a series of papers \cite{muhlherrtits}, \cite{lrc}, \cite{rc}.

\begin{conj}[Tits Centre Conjecture]
 Assume $\Delta$ is thick, let $\Sigma$ be a convex non-cr subcomplex of $\Delta$ and let $\Gamma$ be a subgroup of $\Aut (\Delta)$ that stabilises $\Sigma$.  Then $\Sigma$ has a simplicial $\Gamma$-centre.
\end{conj}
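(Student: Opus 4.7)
The plan is to work in the geometric realisation $|\Delta|$ of $\Delta$ equipped with the Tits metric, under which each apartment becomes isometric to a round unit sphere and $|\Delta|$ itself becomes a $\mathrm{CAT}(1)$ space in which the simplices glue isometrically across faces. The convex subcomplex $\Sigma$ realises as a closed, geodesically convex, $\Gamma$-stable subspace $|\Sigma|\subseteq |\Delta|$. The whole game is to translate the combinatorial non-cr hypothesis into a metric statement about $|\Sigma|$: namely, that the set of ``defective'' simplices (those in $\Sigma$ which admit no opposite inside $\Sigma$) forces $|\Sigma|$, or some canonical $\Gamma$-invariant subregion of it, to be contained in a ball of radius strictly less than $\pi/2$ in the Tits metric.

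First I would reduce to the case of a minimal convex $\Gamma$-stable non-cr subcomplex $\Sigma_0\subseteq \Sigma$, since a simplicial $\Gamma$-centre of $\Sigma_0$ is automatically one of $\Sigma$. In this minimal situation, I would look for a canonical $\Gamma$-fixed point in $|\Sigma_0|$. If the ``small radius'' statement of the previous paragraph can be secured, then the Bruhat--Tits-style centre lemma for $\mathrm{CAT}(1)$ spaces of radius $<\pi/2$ produces a unique circumcentre $p\in |\Sigma_0|$; uniqueness forces $p$ to be $\Gamma$-fixed. The last step is to extract a simplex: the carrier of $p$ (the unique open simplex of $\Delta$ whose geometric realisation contains $p$) must itself be $\Gamma$-stable, and convexity of $\Sigma_0$ puts this simplex inside $\Sigma_0$, giving the desired simplicial $\Gamma$-centre.

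The hard part is precisely establishing the radius bound from the combinatorial non-cr hypothesis, and this is where the genuine geometric content lies; it is not true in general that every convex non-cr subcomplex has diameter $<\pi/2$, so one has to identify the correct $\Gamma$-invariant subregion to take the centre of. In low rank, or when one can choose a defective simplex of top dimension, a direct argument works, but in general one needs the sophisticated induction of M\"uhlherr--Tits \cite{muhlherrtits}, extended by Leeb--Ramos-Cuevas \cite{lrc} and completed by Ramos-Cuevas \cite{rc}: enumerate the minimal possible configurations of defective simplices, handle a list of base cases by explicit $\mathrm{CAT}(1)$ geometry, and in the inductive step use projection onto a residue to replace the problem by the same one in a smaller building. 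The formalism of vector edifices and the projection maps $\proj_{P,L}$ developed in Section~\ref{sec:projection} gives a clean algebraic home for these residue reductions, but the sharp spherical comparison inequalities underlying the base cases remain the technical heart, and I do not see any shortcut avoiding them.
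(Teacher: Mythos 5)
The paper does not prove the Tits Centre Conjecture; it records it as a theorem whose proof lies in the cited literature \cite{muhlherrtits}, \cite{lrc}, \cite{rc}, and uses it as motivation for the edifice formalism. There is therefore no ``paper's own proof'' to compare against, and your proposal, by its own admission, is a strategy sketch rather than a proof: the crucial step (turning the combinatorial non-cr hypothesis into a usable radius bound, or otherwise locating a $\Gamma$-stable simplex) is explicitly deferred to M\"uhlherr--Tits and its sequels. As such this cannot be assessed as a self-contained argument.

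Two remarks on the sketch's accuracy are still worth making. First, the attribution of the ``$\mathrm{CAT}(1)$ circumcentre plus residue induction'' strategy to all three papers is not quite right: \cite{muhlherrtits} for the classical (non-exceptional) types is an essentially combinatorial argument built on retractions and opposition theory, while the genuinely metric $\mathrm{CAT}(1)$ comparison methods enter in \cite{lrc} and \cite{rc} for the exceptional types. Presenting the whole proof as a single circumcentre argument with a ``radius $<\pi/2$'' lemma is a caricature that papers over the fact that no such uniform radius bound holds and that the actual arguments are type-dependent. Second, the appeal to the projection maps $\proj_{P,L}$ of Section~\ref{sec:projection} is out of place here: that construction lives on combinatorial edifices of algebraic groups, whereas the conjecture is stated for an arbitrary thick spherical building $\Delta$, where only the purely building-theoretic projection onto a residue (\cite[Def.~5.35]{abro}) is available. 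Your reduction to a minimal convex $\Gamma$-stable non-cr subcomplex also needs justification (existence of a minimal one is a Zorn-type argument whose chain condition is not immediate), though this is a lesser point. In short: the sketch correctly identifies the shape of the known proofs and where the difficulty lies, but it neither supplies the missing content nor matches the role the statement plays in this paper, where it is an input, not an output.
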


We are concerned with the special case when $\Delta= \Delta_G$, where $G$ is reductive.  We mention in passing a connection with Serre's theory of $G$-complete reducibility.  We say that a subgroup $H$ of $G$ is \emph{$G$-completely reducible over $k$} ($G$-cr over $k$) if whenever $H$ is contained in a parabolic subgroup $P$ of $G$, $H$ is contained in some Levi subgroup $L$ of $P$.  Let $\Sigma$ be the fixed point set $(\Delta_G)^H$.  It is not hard to show that $\Sigma$ is a convex subcomplex of $\Delta_G$ and that $H$ is $G$-cr if and only if $\Sigma$ is cr.  So if $H$ is not $G$-cr then $\Sigma$ is a convex non-cr subcomplex of $\Delta_G$, and one can apply the Tits Centre Conjecture. This yields results on $G$-complete reducibility.
See \cite{sphericalcochars}, \cite{BMR}, \cite{BMR:tits},  \cite{BMR:semisimplification}, \cite{GIT}, \cite{BMR:sepext}, \cite{serre2} for more details.

We say that a simplicial $\Gamma$-centre of $\Sigma$ is \emph{unopposed} if it has no opposite in $\Sigma$.  The usual formulations of the TCC don't touch on this.  Note, however, that the proof of the TCC in type $A$ in \cite{muhlherrtits} automatically produces a $\Gamma$-centre that is unopposed.  We return to this idea shortly.

For reasons related to geometric invariant theory, we want an analogue to the Tits Centre Conjecture but working with points in $\Delta_G(\KK)$ rather than with simplices.  We have notions of convexity and opposition of points (see Section~\ref{sec:common_apt}).  We say that a closed convex subset $\Sigma$ of $\Delta_G(\KK)$ is \emph{completely reducible} (\emph{cr}) if every $x\in \Sigma$ has an opposite in $\Sigma$.

\begin{defn}
\label{defn:geom_centre}
  Let $\Gamma$ be a subgroup of $\Aut (\Delta_G(\KK))$ that stabilises $\Sigma$.  We say that $x\in \Sigma$ is a \emph{$\Gamma$-centre of $\Sigma$} if $x\neq 0$ and $\sigma_x$ is fixed by $\Gamma$.  We say that a $\Gamma$-centre $x$ of $\Sigma$ is \emph{unopposed} if $x$ has no opposite in~$\Sigma$.
\end{defn}

\begin{rems}
\label{rem:not_fixed}
 (i). We do not insist in Definition~\ref{defn:geom_centre} that $x$ is fixed by $\Gamma$.  It can be shown, however, that if $\Gamma\subseteq \Isom_d(V_G(\KK))$ for some admissible metric $d$ and if $\Sigma$ admits a $\Gamma$-centre then $\Sigma$ admits a $\Gamma$-centre that is fixed by $\Gamma$.  See \cite{BMR:typeA}.
 
 (ii). A simplicial $\Gamma$-centre always corresponds to a proper R-parabolic subgroup of $G$.  If $Y_{Z(G)^0}\neq 0$, however, then there can exist a $\Gamma$-centre $x$ such that $P_x= G$.
\end{rems}

\begin{conj}
\label{conj:sTCC_spherical}
 Let $\Sigma$ be a closed convex non-cr subset of $\Delta_G(\KK)$ and let $\Gamma$ be a subgroup of $\Aut(\Delta_G(\KK))$ that stabilises $\Sigma$.  Then $\Sigma$ has an unopposed $\Gamma$-centre.
\end{conj}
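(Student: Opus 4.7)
My plan is to reduce Conjecture~\ref{conj:sTCC_spherical} to the classical Tits Centre Conjecture for the simplicial spherical building $\Delta_G$, exploiting the geometric--combinatorial bridge developed in Sections~\ref{sec:edifices} and~\ref{sec:linmapsI}. Since $G$ is reductive, $\Delta_G$ is a thick spherical building, and the spherical-edifice analogue of Lemma~\ref{lem:autsarecombinatorial} shows that every $\kappa \in \Aut(\Delta_G(\KK))$ induces a simplicial automorphism of $\Delta_G$; hence $\Gamma$ acts by building automorphisms on $\Delta_G$ as well. The goal is to produce a $\Gamma$-fixed simplex $\sigma_P$ in $\Delta_G$ realisable geometrically by a point of $\Sigma$.

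I would then form the \emph{combinatorial trace} $\Sigma_c \subseteq \Delta_G$, namely the simplicial closure (under the face relation, i.e.\ reverse inclusion of R-parabolics) of $\{\sigma_x \mid x \in \Sigma,\ x\neq 0\}$. By construction $\Sigma_c$ is $\Gamma$-stable. The first technical task is to prove $\Sigma_c$ is convex, using the projection formalism of Section~\ref{sec:projection} together with Lemma~\ref{lem:sx_cvx_cone}: inside a common apartment containing representatives $x,y \in \Sigma$ of two simplices of $\Sigma_c$, the positive-cone combinations $ax+by$ with $a,b\geq 0$ parametrise exactly the R-parabolics in the simplicial convex hull of $\sigma_x$ and $\sigma_y$, and geodesic convexity of $\Sigma$ in $\Delta_G(\KK)$ supplies the needed representatives after applying scalar multiplication on $V_G(\KK)$. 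The second task is to show $\Sigma_c$ is non-cr: if every simplex of $\Sigma_c$ admitted an opposite in $\Sigma_c$, one would, for each $x\in\Sigma$ with $\sigma_x$ opposed by some $\sigma_Q \in \Sigma_c$, use Lemma~\ref{lem:uniqueopp} to pick the unique $\lambda \in Y_G(\KK)$ with $\phi_G(\lambda)=x$ and $Q= P_{-\lambda}$, and then locate a point of $\Sigma$ on the ray $\KK^+ \cdot \phi_G(-\lambda)$, contradicting the non-cr hypothesis on $\Sigma$. Producing such a point requires tracing through the convexity of $\Sigma$ to ensure the whole antipodal ray (or at least one point of it) lies in $\Sigma$.

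Applying the classical TCC (\cite{muhlherrtits}, \cite{lrc}, \cite{rc}) to the convex non-cr $\Gamma$-stable subcomplex $\Sigma_c$ yields a $\Gamma$-fixed simplex $\sigma_P$. By minimising $P$ among $\Gamma$-fixed simplices of $\Sigma_c$, one arranges that $\widetilde{\sigma}_P \cap \Sigma \neq \emptyset$, and any $x_0$ in this intersection satisfies $\sigma_{x_0}=\sigma_P$, hence is a $\Gamma$-centre of $\Sigma$ in the sense of Definition~\ref{defn:geom_centre}. Because any geometric opposite $y\in\Sigma$ of $x_0$ would produce a simplicial opposite $\sigma_y\in\Sigma_c$ of $\sigma_P$, it suffices to guarantee that the simplicial $\Gamma$-centre produced above is \emph{unopposed} in $\Sigma_c$. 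This is the principal obstacle: such a strengthening is automatic in type $A$ by \cite{muhlherrtits} but is not generally known. In the absence of a universal unopposed TCC, the fallback strategy is to iteratively excise $\Gamma$-orbits of simplices that are opposed inside $\Sigma_c$, reapply the TCC to the resulting subcomplex, and argue by a noetherian descent that the procedure terminates at an unopposed centre; showing that convexity and the non-cr property persist along this descent, and that the final simplex is still realised by a point of $\Sigma$, are the main remaining difficulties, and are likely where genuinely new ideas beyond the classical TCC will be required.
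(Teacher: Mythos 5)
The statement you are trying to prove is labelled a \emph{conjecture} in the paper, and the paper does not claim to prove it.  Indeed the authors explicitly call it the ``strong Tits Centre Conjecture,'' defer further discussion to forthcoming work \cite{BMR:typeA}, and describe the gap between the simplicial TCC and its geometric analogue as a central motivation for the whole formalism.  So there is no ``paper's own proof'' to compare you against; what I can do is assess whether your sketch actually closes the gap.  It does not, and you largely say so yourself, but let me be more precise about why.

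The obstacle you flag near the end --- that the classical TCC guarantees a $\Gamma$-fixed simplex but not an \emph{unopposed} one --- is indeed the crux, and the proposed ``noetherian descent'' has no content as stated.  After excising $\Gamma$-orbits of opposed simplices the resulting subcomplex will typically fail to be convex (convexity is not inherited by arbitrary subcomplexes obtained by deletion), so the TCC cannot be re-applied, and there is no reason for the procedure to terminate at anything meaningful.  Until that step is made rigorous there is no proof, only a reduction to a statement that is itself not known.

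There are, however, several earlier points that are also gaps rather than routine verifications.  First, passing from $\Sigma$ to its combinatorial trace $\Sigma_c$ loses essential information: the paper explicitly notes that the destabilising locus $D_x$ --- the motivating example --- is in general \emph{not} a subcomplex, and there is no reason why $\Sigma_c$, the simplicial closure of $\{\sigma_x : x\in\Sigma\}$, should be convex or should be non-cr just because $\Sigma$ is.  You gesture at Lemma~\ref{lem:sx_cvx_cone} and the projection formalism, but the relationship between geodesic convexity in $\Delta_G(\KK)$ and simplicial convexity of $\Sigma_c$ is exactly the delicate point, and asserting it is not proving it.  Second, even if the TCC applies and produces a $\Gamma$-fixed simplex $\sigma_P \in \Sigma_c$, the step ``minimise $P$ among $\Gamma$-fixed simplices to arrange $\widetilde{\sigma}_P \cap \Sigma \neq \emptyset$'' is unjustified: $\Sigma_c$ contains faces of simplices $\sigma_x$ with $x\in\Sigma$, and a $\Gamma$-fixed face may not be hit by any point of $\Sigma$ at all.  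Third, Remark~\ref{rem:not_fixed}(ii) warns that when $Y_{Z(G)^0}\neq 0$ a $\Gamma$-centre $x$ of $\Sigma$ may satisfy $P_x = G$, i.e.\ $\sigma_x = \varnothing$; such centres are invisible to $\Delta_G$ and hence to your reduction, so the geometric conjecture is genuinely stronger than a statement about simplicial $\Gamma$-centres, not merely a refinement of it.  In short, your plan correctly identifies the direction in which one would hope to reduce the sTCC to the TCC, but each of the reduction steps (convexity of $\Sigma_c$, non-cr of $\Sigma_c$, realisability of the fixed simplex, and above all the unopposed upgrade) requires new arguments, and the last of these is precisely why the statement remains a conjecture.
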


We call Conjecture~\ref{conj:sTCC_spherical} the \emph{strong Tits Centre Conjecture} (sTCC).  It is weaker than the version given in \cite[Conj.\ 2.10]{BMR:strong}, which asserts the existence of a $\Gamma$-fixed point in $\Sigma$; this turns out to be false if we don't insist that $\Gamma$ acts by isometries (cf.\ Remark~\ref{rem:not_fixed}).  For a discussion of these and related matters, we refer the reader yet again to \cite{BMR:typeA} and to \cite{BMR:strong}.

Any convex subcomplex of $\Delta_G(\KK)$ is closed \cite[Lem.\ 2.7]{BMR:strong}, so there is clearly --- as the terminology suggests --- a close link between the TCC and the sTCC.  To make this concrete, however, one needs to understand the relationship between automorphisms of $\Delta_G$ and automorphisms of $\Delta_G(\KK)$.  This is not completely straightforward.  Note, for example, that there can exist automorphisms of $\Delta_G(\KK)$ which arise from very natural operations on $G$, but which are not isometries.  For more details, see \cite{BMR:typeA}.

It is convenient to work with the vector edifice rather than the spherical edifice.  We define a \emph{cone} in $V_G(\KK)$ to be a subset that is stable under multiplication by $\KK^+$.  We say that a closed convex cone $C$ in $V_G(\KK)$ is \emph{completely reducible} (\emph{cr}) if every $x\in C$ has an opposite in $C$.  Here is our formulation of the sTCC for vector edifices.

\begin{conj}
\label{conj:sTCC_vector}
 Let $C$ be a closed convex non-cr cone in $V_G(\KK)$ and let $\Gamma$ be a subgroup of $\Aut(V_G(\KK))$ that stabilises $C$.  Then $C$ has an unopposed $\Gamma$-centre.
\end{conj}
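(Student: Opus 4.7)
My approach is to reduce Conjecture~\ref{conj:sTCC_vector} to the classical Tits Centre Conjecture for the combinatorial building $\Delta_G$ (now a theorem by \cite{muhlherrtits}, \cite{lrc}, \cite{rc}), then upgrade the fixed simplex produced there to an \emph{unopposed} centre via induction on a ``cr radical''. We may assume $C \neq \{0\}$ and $C \not\subseteq V_{Z(G)^0}(\KK)$, since otherwise $C$ is trivially cr.

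\textbf{Spherical and combinatorial reductions.} First, project to the spherical edifice: set $\Sigma := \zeta_G(C \setminus \{0\}) \subseteq \Delta_G(\KK)$. The cone structure of $C$ ensures that $\Sigma$ is closed, convex, non-cr, and $\Gamma$-stable, and Lemma~\ref{lem:uniqueopp} shows that an unopposed $\Gamma$-centre $y \in \Sigma$ lifts to an unopposed $\Gamma$-centre of $C$ (any $x \in C$ with $\zeta_G(x) = y$ will do, since $P_x = P_y$). Next, define a subcomplex of $\Delta_G$ by
\[
\Sigma^{\mathrm{cb}} := \{Q \in \Delta_G \mid \widetilde{\sigma}_Q \cap \Sigma \neq \emptyset\}.
\]
Lemma~\ref{lem:sx_cvx_cone} combined with the convexity of $C$ in apartments shows that $\Sigma^{\mathrm{cb}}$ is a convex subcomplex of $\Delta_G$ in the sense of \cite{serre2}, and by Lemma~\ref{lem:autsarecombinatorial} the group $\Gamma$ induces a subgroup of $\Aut(\Delta_G)$ stabilising $\Sigma^{\mathrm{cb}}$. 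If $y \in \Sigma$ has no opposite in $\Sigma$, I claim $\sigma_{P_y}$ has no opposite in $\Sigma^{\mathrm{cb}}$: an opposite $\sigma_Q$ would produce $y' \in \widetilde{\sigma}_Q \cap \Sigma$ lying with $y$ in a common apartment (Lemma~\ref{lem:common_tor_crit}), whereupon the cone and convexity of $C$ combined with Lemma~\ref{lem:sx_cvx_cone} would supply an actual opposite to $y$ in $\Sigma$, contradiction. Hence $\Sigma^{\mathrm{cb}}$ is non-cr.

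\textbf{Applying TCC and extracting unoppositeness.} Invoking the TCC on $\Sigma^{\mathrm{cb}}$ produces a $\Gamma$-fixed simplex $\sigma_Q \in \Sigma^{\mathrm{cb}}$. To ensure $\sigma_Q$ is unopposed, I proceed by induction on $\dim G$. Let $\Sigma_0^{\mathrm{cb}} \subseteq \Sigma^{\mathrm{cb}}$ be the \emph{cr radical}, that is, the union of simplices in $\Sigma^{\mathrm{cb}}$ with an opposite in $\Sigma^{\mathrm{cb}}$; this is a $\Gamma$-stable cr convex subcomplex. If $\sigma_Q \notin \Sigma_0^{\mathrm{cb}}$, then any $y \in \widetilde{\sigma}_Q \cap \Sigma$ is an unopposed $\Gamma$-centre (by Lemma~\ref{lem:uniqueopp} again) and we are done. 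Otherwise $\sigma_Q$ admits an opposite $\sigma_{Q'}$ in $\Sigma^{\mathrm{cb}}$; set $L := L_Q \cap L_{Q'}$, a $\Gamma$-fixed Levi-type subgroup. The projection map $F_{Q,L}\colon V_G(\KK) \to V_L(\KK)$ of Section~\ref{sec:projection} carries $C$ to a closed convex cone $F_{Q,L}(C) \subseteq V_L(\KK)$ whose non-cr part corresponds to $\Sigma^{\mathrm{cb}} \setminus \Sigma_0^{\mathrm{cb}}$; by the inductive hypothesis applied to $L$, this cone has an unopposed $\Gamma|_L$-centre, which pulls back to an unopposed $\Gamma$-centre of $C$.

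\textbf{Main obstacle.} The hardest step is the inductive upgrade in the last paragraph. The classical TCC only produces a fixed simplex, and one must show that peeling off the cr radical strictly decreases a well-founded invariant while preserving the $\Gamma$-action and the convex-cone structure. Two specific difficulties arise: (i) the group $\Gamma$ may contain automorphisms of $V_G(\KK)$ that are neither isometric (Example~\ref{ex:not_isom}) nor induced by group homomorphisms, so the passage to $\Aut(\Delta_G)$ via Lemma~\ref{lem:autsarecombinatorial} and then back to $\Aut(V_L(\KK))$ must be verified to be compatible; and (ii) the projection $F_{Q,L}$ is not linear in general (Remark~\ref{rem:proj_not_linear}), so care is needed to check that the image cone $F_{Q,L}(C)$ really inherits the convexity and non-cr properties in a form amenable to induction. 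Ultimately, one may need a more refined invariant than $\dim G$, such as the length of a maximal chain of opposite pairs in $\Sigma^{\mathrm{cb}}$.
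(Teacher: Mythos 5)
You are attempting to prove a statement that the paper explicitly labels a \emph{conjecture} and does \emph{not} prove: Conjecture~\ref{conj:sTCC_vector} is the paper's formulation of the strong Tits Centre Conjecture for vector edifices, and the final paragraphs of Section~\ref{sec:TCCGIT} describe only a \emph{strategy} (project via $F_{P,L}$ to $V_L(\KK)$ and iterate), with the explicit statement that ``we will explore this idea in more detail in \cite{BMR:typeA}.'' So there is no proof in the paper to compare against; the correct answer here is that the result remains open.

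Your proposal, moreover, has a structural gap that the paper itself flags. You reduce the closed convex cone $C$ to the subcomplex $\Sigma^{\mathrm{cb}}$ of $\Delta_G$ generated by the simplices $\sigma_Q$ with $\widetilde{\sigma}_Q\cap\Sigma\neq\emptyset$, apply the combinatorial TCC, and then try to upgrade the resulting $\Gamma$-fixed simplex to an \emph{unopposed} $\Gamma$-centre by an induction on a ``cr radical.'' But the entire reason the sTCC is formulated separately from the TCC is that this reduction loses the information carried by the cone: the paper says explicitly that ``$D_x$ need not be a subcomplex of $V_G(\QQ)$, so we need the full force of the sTCC,'' and only ``if $D_x$ does happen to be a subcomplex then we can apply the TCC.'' The cr radical $\Sigma_0^{\mathrm{cb}}$ of $\Sigma^{\mathrm{cb}}$ is built from simplices, not from the cone, and nothing you write shows that a $\Gamma$-fixed $\sigma_Q\in\Sigma^{\mathrm{cb}}\setminus\Sigma_0^{\mathrm{cb}}$ gives a point of $C$ without an opposite in $C$: a simplex $\sigma_Q$ may have no opposite \emph{simplex} in $\Sigma^{\mathrm{cb}}$ while individual points of $\widetilde{\sigma}_Q\cap\Sigma$ still have opposites in $\Sigma$, or vice versa. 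Your own ``main obstacle'' paragraph concedes that the inductive step is not established: the projection $F_{Q,L}$ is not linear (Remark~\ref{rem:proj_not_linear}), the behaviour of $\Gamma$ under $F_{Q,L}$ is unverified, and you do not exhibit a strictly decreasing well-founded invariant. These are not finishing touches; they are precisely the substance of the open problem. The upshot is that what you have written is (at best) an expanded version of the heuristic sketch in Section~\ref{sec:TCCGIT}, not a proof.
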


Conjectures~\ref{conj:sTCC_vector} and \ref{conj:sTCC_spherical} are closely related: one can show that if $\Sigma$ is a closed convex non-cr subset of $\Delta_G(\KK)$ then $\zeta_G^{-1}(\Sigma)\cup \{0\}$ is a closed convex non-cr cone in $V_G(\KK)$, and conversely if $C$ is a closed convex non-cr cone in $V_G(\KK)$ then $\zeta_G(C\backslash \{0\})$ is a closed convex non-cr subset of $\Delta_G(\KK)$ (see \cite[Lem.\ 2.6]{BMR:strong}).

Now we can describe the relationship to GIT.  We keep our assumption that $G$ is reductive.  Suppose $G$ acts on an affine variety $X$.  Let $x\in X(k)$ and let $\lambda\in Y_G$.  We say that $\lambda$ \emph{destabilises $x$} if $\lim_{a\to 0} \lambda(a)\cdot x$ exists.  We say that $\lambda$ \emph{properly destabilises $x$ over $k$} if $x':= \lim_{a\to 0} \lambda(a)\cdot x$ lies outside the orbit $G(k)\cdot x$.  We say that the orbit $G(k)\cdot x$ is \emph{cocharacter-closed over $k$} if there does not exist any $\lambda$ in $Y_G$ such that $\lambda$ properly destabilises $x$ over $k$, see \cite[Sec.\ 3]{GIT} and \cite[Sec.\ 3]{cochars}.

We define a subset $\Lambda_x$ of $V_G$ and a subset $D_x$ of $V_G(\QQ)$ by
$$ \Lambda_x := \{\phi_G(\lambda)\mid \lambda\in Y_G, \mbox{$\lambda$ destabilises $x$}\} $$
and
$$ D_x := \{c\lambda\mid \lambda\in \Lambda_x, c\in \QQ^+\}. $$
We call $D_x$ the \emph{destabilising locus} for $x$.  In \cite{GIT} it is shown that $D_x$ is a closed convex cone in $V_G(\QQ)$ and $D_x$ is stabilised by the building automorphisms arising from $G_x(k)$ (by $G_x$ we mean the scheme-theoretic stabiliser of $x$).  Moreover, $D_x$ is non-cr if and only if there exists $\lambda\in Y_G$ such that $\lambda$ properly destabilises $x$ over $k$.  Note, however, that $D_x$ is not in general a subcomplex of $V_G(\QQ)$.

Fix an admissible metric $d$ on $V_G(\KK)$; this amounts to fixing a ``length function'' in the sense of \cite[Sec.\ 2]{kempf}.  Kempf proved the following theorem \cite[Thm.\ 3.4, Cor.\ 3.5]{kempf} (Hesselink \cite{He} and Rousseau \cite{rousseau} found closely related results).

\begin{thm}
\label{thm:kempf}
 Suppose $k$ is algebraically closed.  
 Let $G$, $X$ and $x$ be as above, and suppose $G\cdot x$ is not closed. Then there exists $\lambda_\opt\in Y_G$ such that $\lambda_\opt$ properly destabilises $x$ over $k$ and $G_x(k)\subseteq P_\lambda(k)$.
\end{thm}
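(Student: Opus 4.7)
The plan is to follow Kempf's original strategy: find an optimal destabilising cocharacter by minimising the norm on the locus of destabilisers, and exploit the uniqueness of the associated R-parabolic to deduce the stabiliser containment. Fix an admissible metric $d$ on $V_G(\RR)$ with norm $\|\cdot\|$; this is precisely a length function in the sense of \cite[Sec.~2]{kempf}. Since $k$ is algebraically closed and $G\cdot x$ is not closed, classical GIT provides a unique closed $G$-orbit $Y\subseteq \ovl{G\cdot x}$, and the Hilbert-Mumford theorem supplies some $\lambda_0\in Y_G$ destabilising $x$ with $\lim_{a\to 0}\lambda_0(a)\cdot x\in Y$.

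Next I would introduce the ``destabilising locus for $Y$'', namely the subset $M\subseteq V_G(\RR)$ obtained as the closure of $\{\phi_G(\lambda)\mid \lambda\in Y_G(\QQ),\ \lambda \text{ destabilises } x \text{ with limit in } Y\}$. Fixing a $G$-equivariant closed embedding of $X$ into a rational $G$-module $V$ (as in Remark~\ref{rem:weightspaces}), these conditions translate to finitely many $\QQ$-rational linear inequalities on $\lambda\in Y_T(\QQ)$ for each maximal torus $T$: non-negativity of pairings with the weights appearing in the decomposition of $x$, together with vanishing on the subset of weights corresponding to the $Y$-component of the limit. This realises $M\cap V_T(\RR)$ as a closed convex $\QQ$-rational cone in each apartment; since $G$ is reductive, the common apartment property lets us paste these together so that $M$ is closed and convex in $V_G(\RR)$, with $M\cap V_G(\QQ)$ dense in $M$.

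Then I would minimise $\|\cdot\|$ over a suitable normalised slice of $M$ (fixing, say, the value of one distinguished $\QQ$-rational linear functional). In a single apartment $V_T(\RR)$ this is minimisation of a positive-definite quadratic form on a closed convex set, giving a unique minimiser by strict convexity. Combining this with $G(k)$-invariance of $M$ and of $\|\cdot\|$, and with the rigidity results Lemma~\ref{lem:rigidity} and Lemma~\ref{lem:basicpropertiesRpars}, Kempf's convexity argument shows the global minimum is attained on a single $P(k)$-orbit for a uniquely determined R-parabolic $P$. Rationality of the defining conditions promotes the minimiser to $y_\opt = \phi_G(\lambda_\opt)$ with $\lambda_\opt\in Y_G$ and $P = P_{\lambda_\opt}$; by construction $\lambda_\opt$ properly destabilises $x$, as its limit lies in $Y\neq G\cdot x$.

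For the stabiliser containment, any $g\in G_x(k)$ preserves $M$ (because $g$ fixes $x$, so $g\cdot \lambda$ destabilises $x$ to the same limit, which still lies in $Y$) and preserves $\|\cdot\|$ (since $d$ is $G(k)$-invariant), hence carries minimisers to minimisers. By uniqueness of the associated R-parabolic, $gP_{\lambda_\opt}g^{-1} = P_{g\cdot\lambda_\opt} = P_{\lambda_\opt}$, so $g\in N_G(P_{\lambda_\opt})(k) = P_{\lambda_\opt}(k)$ using that parabolic subgroups of a reductive group are self-normalising. The main obstacle is uniqueness of the R-parabolic across apartments: strict convexity gives a unique minimiser within a given apartment, but showing that minimisers in different apartments produce the \emph{same} $P_{\lambda_\opt}$ requires that any two apartments meeting at a minimiser are conjugate by an element of $P_{\lambda_\opt}(k)$ fixing that minimiser, which is exactly where Remark~\ref{rem:conjugateapartments} and conjugacy of maximal split tori within $P_{\lambda_\opt}$ play the decisive role.
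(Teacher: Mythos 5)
The paper does not prove this statement: Theorem~\ref{thm:kempf} is stated as a citation to Kempf, \cite[Thm.~3.4, Cor.~3.5]{kempf}, so there is no proof in the paper to compare against. Your proposal is therefore not an alternative to anything in the text; it is a sketch of Kempf's own argument.

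As a sketch of Kempf's proof it has the right shape (existence of a destabilising cocharacter by Hilbert--Mumford, optimisation of a norm over a convex rational locus, uniqueness of the associated R-parabolic, and the stabiliser containment via uniqueness), and the last paragraph on the stabiliser containment is essentially correct. However, there is a real gap in the middle. The set you call $M$ is not cut out in a fixed apartment $V_T(\RR)$ by a single finite system of $\QQ$-linear inequalities. For $\lambda\in Y_T$, the weights ``surviving'' in $\lim_{a\to 0}\lambda(a)\cdot x$ are precisely those $\chi\in\supp(x)$ with $\langle\lambda,\chi\rangle = 0$, and \emph{which} subset this is depends on $\lambda$. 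So the condition ``the limit lies in $Y$'' is not a linear condition on $\lambda$; it is a condition that changes stratum-by-stratum as $\lambda$ varies in the cone $\{\langle\lambda,\chi\rangle\geq 0 : \chi\in\supp(x)\}$. In particular the raw set $\{\lambda : \lim\lambda(a)\cdot x \text{ exists and lies in } Y\}$ is neither closed nor convex in $Y_T(\RR)$, and taking its closure does not produce a rational polyhedral cone in the way you describe. Kempf gets around this by optimising not the norm on such a set, but the ratio $\mu_S(x,\lambda)/\|\lambda\|$, where $S$ is a closed $G$-stable subvariety (say $S = \ovl{G\cdot x}\setminus G\cdot x$) and $\mu_S$ is the concave, piecewise $\QQ$-linear numerical function attached to generators of the ideal of $S$; the convex body whose unique norm-minimiser he takes is a \emph{superlevel} set $\{\mu_S(x,\lambda)\geq 1\}$, which is where convexity and rationality actually enter. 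Without recasting $M$ in these terms, the convexity, rationality and uniqueness claims in your second and third paragraphs are unsupported, and the subsequent argument that the parabolic is well-defined across apartments cannot get started.
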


The cocharacter $\lambda_\opt$ is often referred to as the \emph{optimal destabilising cocharacter}; it arises by optimising a certain real-valued function on $D_x$.  Note that $\lambda_\opt$ can depend on the choice of $d$.

Now we come to our key insight: in the language above, $\phi_G(\lambda_\opt)$ is an unopposed $\Gamma$-centre of $D_x$ for a certain subgroup $\Gamma$ of $\Isom_d(V_G(\QQ))$.\footnote{For more discussion of $\Gamma$, see \cite[Sec.\ 6.2]{BMR:strong}; note that $\Gamma$ contains all the building automorphisms arising from inner automorphisms of $G(k)$.}  This suggests two complementary paths.

(a). Use methods from GIT, including optimality, to prove cases of the sTCC, at least when $k= \ovl{k}$.  For some steps in this direction, see \cite[Sec.\ 5]{BMR:strong}.

(b). Use known cases of the sTCC to prove the existence of optimal destabilising cocharacters.  There is no known generalisation of Theorem~\ref{thm:kempf} to arbitrary fields, although there are some partial results \cite[Thm.\ 4.2]{kempf}, \cite[Thm.\ 5.2]{He}, \cite[Sec.\ 6]{BMR:strong}, \cite[Thm.\ 4.7]{GIT}, \cite[Thm.\ 4.3]{cochars}.  For further discussion, see \cite[Sec.\ 1]{GIT}.  Recall that $D_x$ need not be a subcomplex of $V_G(\QQ)$, so we need the full force of the sTCC to deduce the existence of a $\Gamma$-centre.  On the other hand, if $D_x$ does happen to be a subcomplex then we can apply the TCC; for one such result in the context of $G$-complete reducibility, see \cite[Thm.\ 1.1]{BMR:sepext}.

As the above discussion makes clear, we are concerned with finding unopposed $\Gamma$-centres.  We finish by sketching an approach for finding an unopposed $\Gamma$-centre, given a $\Gamma$-centre.  Let $C$ be a closed convex non-cr cone in $V_G(\KK)$ and let $\Gamma\leq \Aut (V_G(\KK))$ such that $\Gamma$ stabilises $C$.  Let $x\in C$ be a $\Gamma$-centre.  If $x$ has no opposite in $C$ then we are done.  Otherwise there exists $\lambda\in Y_G(\KK)$ such that $x= \zeta_G(\lambda)$ and $\zeta_G(-\lambda)\in C$.  Set $L= L_\lambda$.
We can regard $V_L(\KK)$ as a subset of $V_G(\KK)$ via the inclusion $i\colon L\to G$.  Recall from Section~\ref{sec:projection} that we have a map $F_{P,L}:= \kappa_\pi\circ \kappa_i^{-1}$ from $V_G(\KK)$ to $V_L(\KK)$.  The strategy is to show that $\Gamma$ gives rise to a group $\Gamma'$ of automorphisms of $V_L(\KK)$ and that $F_{P,L}(C)$ is a closed convex non-cr cone in $V_L(\KK)$ stabilised by $\Gamma'$.  Then one can look for a $\Gamma'$-centre $x'$ of $F_{P,L}(C)$, and show that $\kappa_i(x')$ is a $\Gamma$-centre of $C$.  Repeating this process, one hopes eventually to find an unopposed $\Gamma$-centre of $C$.  We will explore this idea in more detail in \cite{BMR:typeA}.


\bigskip
{\bf Acknowledgements}:
We are grateful to Bernhard M\"uhlherr for his encouragement and for helpful conversations. We thank the editors of this special volume in honour of Jacques Tits for inviting us to contribute, and for their forbearance during the manuscript's slow gestation.  The second author was supported by a VIP grant from the Ruhr-Universit\"at Bochum.  Some of this work was completed during visits to the Mathematisches Forschungsinstitut Oberwolfach: we thank them for their support.

We are also indebted to the referees for their careful reading of the paper and for many suggestions making various arguments more transparent.

\end{document}